\documentclass{amsart}

\usepackage{amsmath,arydshln,multirow}
\usepackage[colorlinks, linkcolor=blue,anchorcolor=Periwinkle,
    citecolor=blue,urlcolor=Emerald]{hyperref}
\usepackage[all]{xy}
\SelectTips{cm}{}
\usepackage{tikz}
\usepackage{mathtools}
\usepackage{extarrows}
\usepackage{amsfonts,amsmath,amssymb,amscd,bbm,amsthm,mathrsfs,dsfont}

\textwidth15.1cm \textheight22.7cm \headheight12pt
\oddsidemargin.4cm \evensidemargin.4cm \topmargin0cm

\setcounter{tocdepth}{2}

\numberwithin{equation}{subsubsection}

\newtheorem{Theorem}[subsubsection]{Theorem}

\newtheorem{Proposition-Definition}[subsubsection]{Proposition-Definition}

\newtheorem{Lemma}[subsubsection]{Lemma}
\newtheorem{Proposition}[subsubsection]{Proposition}
\newtheorem{Corollary}[subsubsection]{Corollary}

\newtheorem{Example}[subsubsection]{Example}
\newtheorem{Remark}[subsubsection]{Remark}
\newtheorem{Definition}[subsubsection]{Definition}
\newcommand{\reminder}[1]{}

\begin{document}

\date{version of \today}

\title{$\mathcal G$-systems}

\author{Peigen Cao}
\address{Peigen Cao
\newline Department
of Mathematics, Zhejiang University (Yuquan Campus), Hangzhou, Zhejiang
310027,  P.R.China}
\email{peigencao@126.com}

\renewcommand{\thefootnote}{\alph{footnote}}

\renewcommand{\thefootnote}{\alph{footnote}}
\setcounter{footnote}{-1} \footnote{\emph{ Mathematics Subject
Classification(2010)}:  13F60, 05E40, 18E30, 16G10, 16S90, 57M99.}
\renewcommand{\thefootnote}{\alph{footnote}}
\setcounter{footnote}{-1} \footnote{ \emph{Keywords}: $\mathcal G$-system, cluster algebra,  co-Bongartz completion, cluster tilting object, silting object, support $\tau$-tilting module,  unpunctured surface.}

\begin{abstract}

A $\mathcal G$-system is a collection of $\mathbb Z$-bases of $\mathbb Z^n$ with some extra axiomatic conditions. There are two kinds of actions ``mutations" and ``co-Bongartz completions" naturally acting on a $\mathcal G$-system, which provide the combinatorial structure of a $\mathcal G$-system. It turns out that ``co-Bongartz completions" have good  compatibility  with ``mutations".

The constructions of ``mutations" are known before in different contexts, including cluster tilting theory, silting theory, $\tau$-tilting theory, cluster algebras, marked surfaces. We found that in addition to ``mutations", there exists another kind of actions ``co-Bongartz completions" naturally appearing in these different theories. With the help of ``co-Bongartz completions" some good combinatorial results can be easily obtained.
In this paper, we give the constructions of ``co-Bongartz completions" in different theories. Then we show that $\mathcal G$-systems naturally arise from these theories, and the ``mutations" and ``co-Bongartz completions" in different theories are compatible with those in $\mathcal G$-systems.
\end{abstract}

\maketitle
\bigskip

  \tableofcontents

\section{Introduction}

Cluster algebras, invented  \cite{FZ} by  Fomin
and  Zelevinsky around the year 2000, are commutative algebras
whose generators and relations are constructed in a recursive manner. The generators of a cluster algebra are called  {\em cluster variables}, which are grouped into overlapping {\em clusters} of the same size. One remarkable feature of cluster algebras is that they have the Laurent phenomenon, which says that for any given cluster ${\bf x}_{t_0}=\{x_{1;t_0},\cdots,x_{n;t_0}\}$, any cluster variable $x_{i;t}$ can be written as a Laurent polynomial in $x_{1;t_0},\cdots,x_{n;t_0}$.

In \cite{CL1} the authors proved that any  skew-symmetrizable cluster algebra $\mathcal A(\mathcal S)$ with principal coefficients at $t_0$ (see \cite{FZ3}) has the enough $g$-pairs property. Roughly speaking, for any cluster ${\bf x}_t$ of $\mathcal A(\mathcal S)$ and any subset $U\subseteq{\bf x}_{t_0}=\{x_{1;t_0},\cdots,x_{n;t_0}\}$, there exists a unique cluster ${\bf x}_{t^\prime}$ such that $U\subseteq{\bf x}_{t^\prime}$ and the $G$-matrices $G_t$ and $G_{t^\prime}$ satisfying that the $i$-th row vector of $G_{t^\prime}^{-1}G_t$ is a nonnegative vector for any $i$ with $x_{i;t^\prime}\notin U$. Using the terminology in \cite{CL1}, $({\bf x}_t,{\bf x}_{t^\prime})$ is called a {\em $g$-pair along ${\bf x}_{t_0}\backslash U$} and
 using the terminology in present paper,  ${\bf x}_{t^\prime}$ is called the {\em co-Bongartz completion} of $(U,{\bf x}_{t_0})$ with respect to ${\bf x}_t$  and  is denoted by ${\bf x}_{t^\prime}=\mathcal T_{(U,{\bf x}_{t_0})}({\bf x}_t)$. From the viewpoints in \cite{CL1,CL2}, some problems in cluster algebras, for example, conjectures on denominator vectors in \cite{FZ3} and unistructural conjecture in \cite{ASV} of cluster algberas would become easier to study when we consider the Laurent expansion of ${\bf x}_{t}$ with respect to the cluster ${\bf x}_{t^\prime}=\mathcal T_{(U,{\bf x}_{t_0})}$ for special choice of $U$.

The starting point of this paper is trying to give categorification explanation and topological explanation  of ``co-Bongartz completions" (or known as $g$-pairs in \cite{CL1}) in cluster algebras. Since the ``co-Bongartz completions" in cluster algebras are essentially defined by a classes of integer matrices known as $G$-matrices, this motivates us to introduce the $\mathcal G$-systems, which can be considered as a kind of axiomatization of $G$-matrices.  We found that some conjectured good combinatorial results in cluster algebras can be obtained in $\mathcal G$-systems with the help of ``co-Bongartz completions".
We construct ``co-Bongartz completions" in different contexts, including  cluster tilting theory,  silting theory,  $\tau$-tilting theory, and  unpunctured  surfaces. Then we use the $\mathcal G$-systems as a common framework to  unify ``co-Bongartz completions" in different theories. These in return give the categorification explanation and topological explanation  of ``co-Bongartz completions" in cluster algebras.

This paper is organized as follows. In Section 2 we recall the $\tau$-tilting theory introduced in \cite{AIR}, and give the definition of {\em co-Bongartz completions in triangulated categories}. In Section 3 basics on cluster tilting theory and silting theory are recalled.

In Section 4 we give a detailed discuss on co-Bongartz completions in $2$-CY triangulated categories. As applications, some combinatorial results on cluster tilting objects  are obtained. Note that this section serves  as a comparison for Section 5. In Section 5 we introduce the $\mathcal G$-systems. We prove that there are two kinds of actions ``mutations" and ``co-Bongartz completions" naturally acting on the $\mathcal G$-systems. Then we found that the similar combinatorial results can be obtained in $\mathcal G$-systems as previous section, while there is no category environment in $\mathcal G$-systems.

In Section 6 we show that $\mathcal G$-system naturally arise from cluster tilting theory, silting theory and $\tau$-tilting theory and we also show that the ``co-Bongartz completions" and ``mutations" in $\mathcal G$-systems are compatible with those in different theories.

In Subsections 7.1 and 7.2 we recall the cluster algebras. In Subsection 7.3 we first  refer to \cite{CL1} to recall the construction of the co-Bongartz completion $\mathcal T_{(U,{\bf x}_{t_0})}({\bf x}_t)$ of $(U,{\bf x}_{t_0})$ with respect to a cluster ${\bf x}_t$ (here $U\subseteq{\bf x}_{t_0}$) in a cluster algebra. Then we prove two things. Firstly, we show that $\mathcal G$-systems naturally arise from cluster algebras and the ``mutations" and ``co-Bongartz completions" in cluster algebras are compatible with the ``mutations" and ``co-Bongartz completions" in $\mathcal G$-systems. Secondly, we show that the co-Bongartz completion $\mathcal T_{(U,{\bf x}_{t_0})}({\bf x}_t)$ is uniquely determined by ${\bf x}_t$ and $U$, not depending on the choice ${\bf x}_{t_0}$.

In Section 8 we construct  ``co-Bongartz completions" on unpunctured surfaces and prove that the ``co-Bongartz completions" on unpunctured surfaces are compatible with the ``co-Bongartz completions" in the associated cluster algebras by showing that they are both compatible with the ``co-Bongartz completions" in $\mathcal G$-systems.

Finally, we want to explain why we consider ``co-Bongartz completions" but ``Bongartz completions". We refer to \cite{Q} for the terminology. Let $\mathcal A$ be a  cluster algebra with principal coefficients at $t_0$, and $x$ be a cluster variable of $\mathcal A$. The {\em degree} (i.e, $g$-vector) and {\em codegree} of $x$ with respect to the initial seed $t_0$ can be defined. If we want to consider ``Bongartz completions" in cluster algebras, we should focus on codegree. Howerver, it is more natural to deal with degree (i.e., $g$-vector) in cluster algebras and this lead us to  consider ``co-Bongartz completions" but ``Bongartz completions".

\section{Preliminaries}

\subsection{Left (right) approximation in additive category}

Let $\mathcal C$ be an additive category, and $\mathcal D$ be a full subcategory which is closed under isomorphism, direct sums and direct summands.
We call a morphism $X\overset{f}{\rightarrow}Y$ in $\mathcal C$  {\bf right minimal} if it does not have a direct summand of the form $X_1\rightarrow 0$ as a complex.
We call  $\xymatrix{X\ar[r]^f&Y}$ a {\bf right $\mathcal D$-approximation of $Y\in\mathcal C$} if $X\in\mathcal D$ and
$${\rm Hom}_{\mathcal C}(-,X)\xrightarrow[]{{\rm Hom}_{\mathcal C}(-,f)}{\rm Hom}_{\mathcal C}(-,Y)\longrightarrow 0$$
is exact as functors on $\mathcal D$. We call a  right $\mathcal D$-approximation minimal if it is right minimal.

 We call $\mathcal D$ a {\bf contravariantly finite subcategory} of $\mathcal C$ if any $Y\in\mathcal C$ has a right
$\mathcal D$-approximation.  Dually,  a left (minimal) $\mathcal D$-approximation,  and a  {\em covariantly finite
subcategory} can be defined.

It is known that minimal right (left)  $\mathcal D$-approximation is unique up to isomorphism. Furthermore, a subcategory $\mathcal D$ of $\mathcal C$ is said to be {\bf functorially finite} in $\mathcal C$ if it is both contravariantly and
covariantly finite in $\mathcal C$.

For an object $X\in \mathcal C$, denote by ${\rm add} (X)$ the category of objects consisting of the direct summands of finite direct sums of $X$, and  by ${\rm Fac}(X)$ the category of all   factor objects of finite direct sums of copies of $X$. We also denote by
\begin{eqnarray}
 X^\bot&:=&\{Y\in \mathcal C| {\rm Hom}_{\mathcal C}(X, Y)=0 \},\nonumber\\
  \prescript{\bot}{}{X}&:=&\{Y\in \mathcal C | {\rm Hom}_{\mathcal C}(Y, X)=0 \}.\nonumber
\end{eqnarray}
We also denote  by $X^\flat$ the basic object (up to isomorphism) in $\mathcal C$  such that ${\rm add}(X^\flat)={\rm add}(X)$.

\subsection{$\tau$-tilting theory}
The $\tau$-tilting theory was introduced by Adachi, Iyama and Reiten in \cite{AIR}. It extends classical tilting theory from the viewpoint of mutation.

In this subsection, we fix a finite dimensional basic algebra $A$ over a field $K$ and  denote by ${\rm mod} A$ the category of  finitely generated left $A$-modules. A module $M\in{\rm mod} A$ satisfying ${\rm Hom}_A(M,\tau M)=0$ is called a {\bf $\tau$-rigid module}.

\begin{Definition}\cite{AIR}
Consider an $A$-module $M$ and a projective $A$-module $P$. The pair $(M,P)$ is called a {\bf $\tau$-rigid pair} if
$M$ is $\tau$-rigid and ${\rm Hom}_A(P,M)=0$.
\end{Definition}

 We say that a $\tau$-rigid
pair $(M,P)$ is a {\bf support $\tau$-tilting pair} if $|M|+|P|=n=|A|$, and a {\bf almost support $\tau$-tilting pair} if $|M|+|P|=n-1$, where $|N|$ is the number of indecomposable direct summands (up to isomorphism) of $N$.  $M\in {\rm mod} A$ is called a {\bf support $\tau$-tilting module} if there exists a projective module $P$ such that $(M,P)$ is a support $\tau$-tilting pair.

\begin{Proposition}\label{prosincere}\cite{AIR} Let $(M,P)$ and $(M,Q)$ be two basic support $\tau$-tilting pairs in ${\rm mod}A$, then $P\cong Q$.
\end{Proposition}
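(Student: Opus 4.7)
The plan is to show that both $P$ and $Q$ must coincide with a single maximal basic projective determined by $M$ alone, so uniqueness is forced.

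First I would pin down the candidates for the projective component. Fix a decomposition $A=\bigoplus_{i=1}^{n}P_{i}$ into pairwise non-isomorphic indecomposable projectives $P_{i}=Ae_{i}$. For any left $A$-module $N$, $\mathrm{Hom}_{A}(P_{i},N)\cong e_{i}N$, so the condition $\mathrm{Hom}_{A}(P,M)=0$ with $P$ basic projective forces $P$ to be a direct summand of $P_{0}:=\bigoplus_{i\in T}P_{i}$, where $T=\{i: e_{i}M=0\}$. The same applies to $Q$. In particular both $P$ and $Q$ sit inside $\mathrm{add}(P_{0})$ and $|P|,|Q|\le|P_{0}|$.

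Next I would bound $|P_{0}|$ from above using a $\tau$-rigid pair count. Since $\mathrm{Hom}_{A}(P_{0},M)=\bigoplus_{i\in T}e_{i}M=0$ by construction and $M$ is $\tau$-rigid, $(M,P_{0})$ is itself a $\tau$-rigid pair. Invoking the AIR bound that any $\tau$-rigid pair $(M',P')$ satisfies $|M'|+|P'|\le n$, we get $|P_{0}|\le n-|M|$. Combined with the support $\tau$-tilting assumption $|P|=|Q|=n-|M|$ and the inequalities $|P|,|Q|\le|P_{0}|$ from the previous paragraph, all three numbers are equal: $|P|=|Q|=|P_{0}|$.

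Finally I would conclude by a basicness argument. Since $P$ and $Q$ are basic modules in $\mathrm{add}(P_{0})$ whose number of indecomposable summands matches $|P_{0}|$, they must exhaust all indecomposable summands of $P_{0}$, hence $\mathrm{add}(P)=\mathrm{add}(P_{0})=\mathrm{add}(Q)$, giving $P\cong Q$. The essential substance of the proof is the AIR inequality $|M'|+|P'|\le n$ for $\tau$-rigid pairs; every other step is a direct translation between $\mathrm{Hom}$-vanishing and idempotent conditions, so this cardinality bound is the only place where genuine work is hidden, and it is safe to cite.
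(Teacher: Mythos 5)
The paper cites this result to \cite{AIR} and gives no proof of its own, so there is no in-paper argument to compare against. Your proof is correct: you identify the maximal basic projective $P_{0}=\bigoplus_{i\in T}P_{i}$ (with $T=\{i:e_{i}M=0\}$) that every admissible $P$ must be a summand of, observe $(M,P_{0})$ is itself a $\tau$-rigid pair, invoke the AIR bound $|M'|+|P'|\le n$ to get $|P_{0}|\le n-|M|$, and then the defining equality $|P|=|Q|=n-|M|$ squeezes $P\cong P_{0}\cong Q$. This is, as far as I can tell, essentially the route taken in \cite[Prop.\ 2.3]{AIR} itself, so you are not taking a genuinely different approach. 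The one point worth being explicit about is non-circularity: the bound $|M'|+|P'|\le n$ for $\tau$-rigid pairs must be established independently of the uniqueness of $P$. It is --- via the identification of basic $\tau$-rigid pairs with two-term presilting complexes in ${\rm K}^{b}({\rm proj}\,A)$, whose indecomposable summands have linearly independent classes in ${\rm K}_{0}$ --- so your citation is safe, and you are right that this bound is where the real content lives.
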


\begin{Theorem}\cite[Theorem 2.18]{AIR} \label{thmair}Let $A$ be a finite dimensional basic $K$-algebra. Then
any basic almost support $\tau$-tilting pair is  a direct summand of exactly
two basic support $\tau$-tilting pairs in ${\rm mod} A$.
\end{Theorem}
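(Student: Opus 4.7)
The plan is to exhibit two distinct support $\tau$-tilting completions of the almost support $\tau$-tilting pair $(M,P)$ and then to show that any completion coincides with one of them. Since $|M|+|P|=n-1$, a completion is obtained by adjoining either a single indecomposable $\tau$-rigid module $X$ to $M$ (yielding $(M\oplus X,P)$) or a single indecomposable projective module $Q$ to $P$ (yielding $(M,P\oplus Q)$).

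First I would produce a ``Bongartz-type'' completion by invoking the (earlier established) result of \cite{AIR} that any $\tau$-rigid pair is a direct summand of some support $\tau$-tilting pair. Applied to $(M,P)$ this yields one such completion, say $(M\oplus X,P)$ with $X$ indecomposable and $X\notin {\rm add}(M)$, or dually $(M,P\oplus Q)$. To produce a second completion I would mutate at the new summand. In the module case, take a minimal left ${\rm add}(M)$-approximation $f\colon X\to E$ and examine its behavior: using $\tau$-rigidity of $M$, one shows that either the cokernel of $f$ is an indecomposable $\tau$-rigid $X^{*}\notin {\rm add}(M)$ making $(M\oplus X^{*},P)$ a support $\tau$-tilting pair, or $f$ fails to be injective in a controlled way, revealing a new indecomposable projective $Q^{*}$ with $(M,P\oplus Q^{*})$ support $\tau$-tilting. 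The projective case is handled dually, via a right ${\rm add}(M)$-approximation starting from $Q$. That the two completions are distinct is automatic since the new indecomposable summand in the second completion is built as a nonzero cokernel (or emerges as a new indecomposable projective) and so cannot be isomorphic to $X$ (resp.\ $Q$). The verifications of $\tau$-rigidity ${\rm Hom}_A(M\oplus X^{*},\tau(M\oplus X^{*}))=0$ and of the support condition ${\rm Hom}_A(P,M\oplus X^{*})=0$ amount to direct computations with the approximation sequence combined with the hypotheses on $(M,P)$.

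The main obstacle, as I expect it, is uniqueness: ruling out a third completion. Given any completion $(M\oplus Z,P)$ (or $(M,P\oplus Z')$), I would argue that $Z$ must fit into the very same exchange sequence as $X$ and $X^{*}$. The key point is that minimality of $f$ and $\tau$-rigidity of $M\oplus Z$ combine to force all indecomposable summands of the middle term $E$ to lie in ${\rm add}(M)$, whence $Z\in\{X,X^{*}\}$. Most naturally this step proceeds via the bijection (established in \cite{AIR}) between basic support $\tau$-tilting modules and functorially finite torsion classes in ${\rm mod}\,A$: the two completions correspond to the two immediate neighbors, in the inclusion order, of the torsion class generated by $M$ subject to being killed by ${\rm Hom}_A(P,-)$, and an almost maximal element in this poset of torsion classes has precisely two covers. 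This is the delicate part and constitutes the heart of the theorem.
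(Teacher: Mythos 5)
This statement is quoted by the paper from \cite{AIR} and is not proved here, so there is no in-paper argument to compare against; what follows is an assessment of your sketch on its own terms and against the original AIR proof.

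Your outline is in the right spirit: existence of one completion via Bongartz-type completion, a second via an exchange/approximation sequence, and uniqueness via the bijection with functorially finite torsion classes. That is the same strategy used in \cite{AIR}. But the uniqueness step as you formulate it is not quite the right mechanism. Completions of an almost complete pair $(U,Q)$ do not correspond to ``covers'' or ``immediate neighbors'' of $\mathrm{Fac}\,U$ in the poset of torsion classes, and the phrase ``an almost maximal element in this poset has precisely two covers'' is not a statement one can lean on. What \cite{AIR} actually uses is Proposition 2.9 (reproduced in this paper as Proposition~\ref{prominmax}): a support $\tau$-tilting pair $(T,P)$ contains $(U,Q)$ as a direct summand if and only if $\mathrm{Fac}\,U\subseteq\mathrm{Fac}\,T\subseteq{}^{\perp}(\tau U)\cap Q^{\perp}$. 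Both endpoints $\mathrm{Fac}\,U$ and ${}^{\perp}(\tau U)\cap Q^{\perp}$ are functorially finite torsion classes (the co-Bongartz and Bongartz completions, respectively, handled via Theorem 2.10 and Proposition 2.9 of \cite{AIR}), and the delicate step is to show that for an \emph{almost complete} $(U,Q)$ no functorially finite torsion class strictly in between can arise as $\mathrm{Fac}\,T$ for a completion $(T,P)$ — this is where almost-completeness enters, and it is exactly the content you correctly flag as the heart of the theorem but do not actually establish. Likewise, your preliminary claim that any third completion $Z$ ``must fit into the very same exchange sequence as $X$ and $X^*$'' is not automatic and would itself require the interval argument. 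So the plan is recoverable, but as written the uniqueness half has a real gap rather than being merely left to the reader.
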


Let $(U,P^U)$ be a basic almost support $\tau$-tilting pair in ${\rm mod} A$, and $(M,P^M)=(U,P^U)\oplus(X,P^X)$, $(M^\prime,P^{M^\prime})=(U,P^U)\oplus(Y,P^Y)$ be the two basic support $\tau$-tilting pairs containing $(U,P^U)$ as their direct summand with $(X,P^X)\ncong (Y,P^Y)$. $(M,P^M)$ is called the {\bf mutation of $(M^\prime,P^{M^\prime})$ at $(Y,P^Y)$} and we denote $(M,P^M)=\mu_{(Y,P^Y)}(M^\prime,P^{M^\prime})$.

\subsection{$g$-vectors in $\tau$-tilting theory}

Let $A$ be a finite dimensional basic algebra over $K$, and $P_1,\cdots,P_n$ be the isomorphism classes of indecomposable projective modules.  For $M\in{\rm mod}A$, take a minimal projective presentation of $M$, say
$$\xymatrix{\bigoplus\limits_{i=1}^nP_i^{b_i}\ar[r]&\bigoplus\limits_{i=1}^nP_i^{a_i}\ar[r]&M\ar[r]&0}.$$
Then the vector ${\bf g}(M):=(a_1-b_1,\cdots,a_n-b_n)^{\rm T}\in\mathbb Z^n$ is called the {\bf $g$-vector} of $M$.

For a $\tau$-rigid pair $(M,P)$, the $g$-vector of $(M,P)$ is defined to be the vector $${\bf g}(M,P):={\bf g}(M)-{\bf g}(P).$$
For a basic support $\tau$-tilting pair $(M,P^M)=\bigoplus\limits_{i=1}^n(M_i,P^M_i)$, the {\bf $G$-matrix} of $(M,P^M)$ is defined to be the matrix $G_{(M,P^M)}=({\bf g}(M_1,P^M_1),\cdots,{\bf g}(M_n,P^M_n))$.

\begin{Theorem}\cite[Theorem 5.1]{AIR} Let $(M,P^M)=\bigoplus\limits_{i=1}^n(M_i,P^M_i)$ be a basic support $\tau$-tilting pair in ${\rm mod}A$, then ${\bf g}(M_1,P^M_1),\cdots,{\bf g}(M_n,P^M_n)$ forms a $\mathbb Z$-basis of $\mathbb Z^n$.
\end{Theorem}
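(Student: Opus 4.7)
I propose to split the claim into two parts: (i) the vectors ${\bf g}(M_1,P^M_1),\ldots,{\bf g}(M_n,P^M_n)$ are $\mathbb Q$-linearly independent; (ii) they generate $\mathbb Z^n$, i.e., the $G$-matrix has determinant $\pm1$.

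For (i), I would use a pairing argument. Fix a basic support $\tau$-tilting pair $(M,P^M)=\bigoplus_{i=1}^n (M_i,P^M_i)$. For each indecomposable summand $(M_i,P^M_i)$, compute the numerical pairing $\langle {\bf g}(N,Q),X\rangle$ (coming from dimension vectors evaluated against $g$-vectors) against the simple top or against a carefully chosen test module. Using the Auslander--Reiten formula together with the $\tau$-rigidity and Hom-vanishing axioms in the definition of a $\tau$-rigid pair, I would show that the resulting $n\times n$ pairing matrix is invertible (for instance, upper triangular after reordering via the partial order on summands, using that $\mathrm{Hom}_A(P^M_j,M_i)=0$ and $\mathrm{Hom}_A(M_i,\tau M_j)=0$). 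This forces the $g$-vectors to be $\mathbb Q$-linearly independent.

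For (ii), my plan is a mutation induction based on Theorem~\ref{thmair}. The base case is the pair $(A,0)=\bigoplus_{i=1}^n (P_i,0)$: the minimal projective presentation of $P_i$ is simply $0\to P_i\to P_i\to 0$, so ${\bf g}(P_i,0)=e_i$ and hence $G_{(A,0)}=I_n$, which has determinant $1$. Next I would show that a mutation $(M',P^{M'})=\mu_{(Y,P^Y)}(M,P^M)$ at an indecomposable summand $(X,P^X)$ (with complement $(U,P^U)$) changes $G$ by a column operation of the form
\[
{\bf g}(Y,P^Y)\;\longmapsto\;-{\bf g}(X,P^X)+\sum_{i}b_i\,{\bf g}(U_i,P^{U_i}),
\]
with $b_i\in\mathbb Z$ read off from the exchange triangle/minimal left $\mathrm{add}(U)$-approximation of $X$ provided by the $\tau$-tilting theory of~\cite{AIR}. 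This is an elementary integer column operation, so it multiplies $\det G$ by $-1$. If every basic support $\tau$-tilting pair is reachable from $(A,0)$ by such mutations, the inductive step gives $\det G_{(M,P^M)}=\pm1$, which combined with (i) yields the $\mathbb Z$-basis conclusion.

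The main obstacle I anticipate is precisely the reachability question: the mutation graph of support $\tau$-tilting pairs need not be connected in general, so the induction above does not literally close on its own. To sidestep this, I would instead argue globally by producing, for an arbitrary $(M,P^M)$, a companion integer matrix $\widehat G$ (built from the ``dual'' $c$-vector style data associated to $(M,P^M)$, or from the Nakayama-transposed support $\tau$-tilting pair in $\mathrm{mod}\,A^{\mathrm{op}}$) and showing directly that $G_{(M,P^M)}\cdot\widehat G$ is $\pm I_n$. Then both determinants are $\pm1$, giving (ii) without relying on connectivity. Establishing this dual identity is where the real work lies; once it is in place, linear independence in (i) is automatic and the theorem follows.
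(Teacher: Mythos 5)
The paper cites this result from \cite{AIR} and does not reprove it; the argument the paper implicitly relies on (Section 3.4, via Theorem \ref{thmk0basis}, Theorem \ref{thmijy}, and Proposition \ref{proggvector}) goes through two-term silting complexes: the bijection $\tilde F$ carries a basic support $\tau$-tilting pair to a two-term silting object in $K^b(\mathrm{proj}\,A)$, identifies the $g$-vector with the class in $K_0$ written in the basis $[S_1],\ldots,[S_n]$ with $S = A$, and the classes of indecomposable summands of any silting object form a $\mathbb Z$-basis of $K_0(\mathcal D)$ (Theorem \ref{thmk0basis}). This gives $\det G_{(M,P^M)} = \pm 1$ immediately and needs no connectivity hypothesis.

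Your plan is genuinely different --- it stays in $\mathrm{mod}\,A$ and avoids the triangulated category --- but as written it has a gap exactly where you say the ``real work lies.'' You correctly observe that a mutation induction from $(A,0)$ does not close, since the exchange graph of support $\tau$-tilting pairs need not be connected; that is precisely why \cite{AIR} passes to silting. The proposed fallback --- producing a companion matrix $\widehat G$ with $G\widehat G = \pm I_n$ from $c$-vector-style or Nakayama-dual data --- is not carried out: there is no concrete definition of $\widehat G$ nor an argument that the product is $\pm I_n$, and the analogous identity in the cluster setting (Proposition \ref{pronzcl}, $S^{-1}G_t^{\rm T}S C_t = I_n$) is itself a substantive theorem rather than a formality, so its $\tau$-tilting analogue cannot simply be invoked. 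Likewise, in part (i) the claim that the pairing matrix becomes upper triangular ``after reordering via the partial order on summands'' is asserted but not established: for a general finite-dimensional algebra there is no obvious total order on the indecomposable summands making the relevant $\mathrm{Hom}$-pairing triangular, and pairing a $g$-vector against the simple modules merely returns the entries of $G$ itself, giving no new information. The silting route sidesteps both issues, which is why it is the standard proof.
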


\begin{Theorem}\cite[Theorem 5.5]{AIR}\label{thmairinj} The map $(M,P)\mapsto {\bf g}(M,P)$ gives a injection from the set of isomorphism classes of $\tau$-rigid pairs to $\mathbb Z^n$.
\end{Theorem}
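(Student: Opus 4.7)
The plan is to derive injectivity from the basis property of support $\tau$-tilting pairs (the theorem stated immediately above) via a Bongartz-type completion argument for $\tau$-rigid pairs.

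\textbf{Step 1 (Bongartz completion).} I would first invoke the result, available in \cite{AIR}, that every basic $\tau$-rigid pair $(M,P)$ embeds as a direct summand of some basic support $\tau$-tilting pair. Given two basic $\tau$-rigid pairs $(M,P)$ and $(M',P')$ with $\mathbf{g}(M,P)=\mathbf{g}(M',P')$, this lets me complete $(M,P)$ to a basic support $\tau$-tilting pair $(T,P^T)=(M,P)\oplus(N,Q)$.

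\textbf{Step 2 (Basis comparison).} By the preceding theorem, the $g$-vectors of the indecomposable summands of $(T,P^T)$ form a $\mathbb{Z}$-basis $v_1,\dots,v_n$ of $\mathbb{Z}^n$. Thus $\mathbf{g}(M,P)=\mathbf{g}(M',P')$ admits a unique expansion in this basis, namely the sum of those $v_i$ arising from the indecomposable summands of $(M,P)$. The strategy is then to show that $(M',P')\oplus(N,Q)$ is itself $\tau$-rigid, complete it to a basic support $\tau$-tilting pair, and invoke the basis theorem again; since the total $g$-vector equals $\mathbf{g}(T,P^T)$, matching indecomposable summands through the common piece $(N,Q)$ forces $(M',P')\cong(M,P)$.

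\textbf{Main obstacle.} The delicate step is verifying $\tau$-rigidity of $(M',P')\oplus(N,Q)$. This requires knowing that the Bongartz completion $(N,Q)$ is compatible not only with $(M,P)$ but with every $\tau$-rigid pair having the same $g$-vector. The cleanest route is to characterize $(N,Q)$ as a maximal $\tau$-rigid pair inside a subcategory (for instance a suitable perpendicular to $\mathrm{Fac}(M)$) determined only by $\mathbf{g}(M,P)$; showing that this subcategory depends only on the $g$-vector of $(M,P)$ and not on $(M,P)$ itself is the core of the argument, and typically uses sign-coherence of $g$-vectors of indecomposable $\tau$-rigid pairs (a nontrivial input from \cite{AIR}). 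Once $\tau$-rigidity is secured, the conclusion reduces to additivity of $\mathbf{g}$ combined with the basis theorem.
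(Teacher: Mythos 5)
This theorem is cited from \cite{AIR} and is not proved in the present paper, so there is no in-paper argument to compare against; I will therefore evaluate your sketch on its own terms. The skeleton --- complete $(M,P)$ to a support $\tau$-tilting pair by Bongartz completion and then use that the indecomposable $g$-vectors of a support $\tau$-tilting pair form a $\mathbb Z$-basis of $\mathbb Z^n$ --- is a sensible starting point, and you are right that once you know $(M',P')\oplus(N,Q)$ is $\tau$-rigid, the basis argument finishes the proof. But the obstacle you flag is not a loose end: it is the whole theorem. Equality $\mathbf{g}(M,P)=\mathbf{g}(M',P')$ is a purely numerical hypothesis and by itself supplies no ${\rm Hom}$-vanishing between $(M',P')$ and the Bongartz complement $(N,Q)$, so nothing in Steps~1--2 forces $(M',P')\oplus(N,Q)$ to be $\tau$-rigid. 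Your proposed remedy --- that the complement, or the perpendicular torsion class defining it, is determined by $\mathbf{g}(M,P)$ alone --- is a reformulation of the very injectivity you want, now phrased for Bongartz completions; stating it does not give you an independent lemma, and sign-coherence of individual $g$-vectors does not by itself yield it.

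What the sketch lacks is a bridge from $g$-vectors back to ${\rm Hom}$-spaces. The standard such bridge is the identity that for a minimal projective presentation $P_1\to P_0\to M\to 0$ and any $N\in{\rm mod}A$ one has
$$\dim{\rm Hom}_A(M,N)-\dim{\rm Hom}_A(N,\tau M)=\dim{\rm Hom}_A(P_0,N)-\dim{\rm Hom}_A(P_1,N),$$
a consequence of ${\rm Hom}_A(P,N)\cong D{\rm Hom}_A(N,\nu P)$ for $P$ projective; the right-hand side is a linear function of $\mathbf{g}(M)$. An input of this kind is what lets one extract module-theoretic constraints from a $g$-vector equality, and it is the real engine behind the injectivity in \cite{AIR}. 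Without some ingredient of this type the proposal, as written, cannot be closed.
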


\subsection{Functorially finite classes}

A {\bf torsion pair} $(\mathfrak{T},\mathfrak{F})$ in  ${\rm mod} A$ is a pair of subcategories of ${\rm mod}A$ satisfying that

(i) ${\rm Hom}_A(T,F)=0$ for any $T\in \mathfrak{T},F\in \mathfrak{F}$;

(ii) for any module $X\in {\rm mod} A$, there exists a short exact sequence
 $0\rightarrow X^{\mathfrak{T}}\rightarrow X\rightarrow X^{\mathfrak{F}}\rightarrow0$ with $X^{\mathfrak{T}}\in\mathfrak{T}$ and $X^{\mathfrak{F}}\in \mathfrak{F}$. This short exact sequence is called the {\bf canonical sequence} for $X$.

The subcategory $\mathfrak T$ (respectively, $\mathfrak F$) in a torsion pair $(\mathfrak T,\mathfrak F)$ is called a {\bf torsion class} (respectively, {\bf torsionfree class}) of ${\rm mod} A$.

We say that $X\in\mathfrak T$ is {\bf Ext-projective} in $\mathfrak{T}$, if ${\rm Ext}_A^1(X,\mathfrak T)=0$. We denote by $\mathcal P(\mathfrak T)$ the direct sum of one copy of each of the indecomposable Ext-projective objects in $\mathfrak T$ up to isomorphism.

We denote by ${\rm s}\tau$-${\rm tilt}A$ the set of basic support $\tau$-tilting modules in ${\rm mod}A$, and by ${\rm f}$-${\rm tors}A$ the set of functorially finite torsion classes in ${\rm mod}A$.
\begin{Theorem}\label{proorder}\cite[Theorem 2.7]{AIR}
There is a bijection
$${\rm s}\tau\text{-}{\rm tilt}A\longleftrightarrow {\rm f\text{-}tors}A$$
given by ${\rm s}\tau\text{-}{\rm tilt}A\ni M\mapsto {\rm Fac}(M)\in {\rm f\text{-}tors}A$ and ${\rm f\text{-}tors}A\ni \mathfrak T\mapsto\mathcal P(\mathfrak T)\in {\rm s}\tau\text{-}{\rm tilt}A$.
\end{Theorem}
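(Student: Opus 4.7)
The plan is to verify that both assignments are well-defined and that they are mutually inverse. For the forward direction, I start with a basic support $\tau$-tilting module $M$ with corresponding pair $(M,P)$, and show that $\mathrm{Fac}(M)$ is a functorially finite torsion class. Closure of $\mathrm{Fac}(M)$ under factor objects is automatic; closure under extensions follows from the $\tau$-rigidity of $M$ via the Auslander--Reiten duality $\mathrm{Ext}^1_A(X,Y)\cong D\overline{\mathrm{Hom}}_A(Y,\tau X)$. Functorial finiteness reduces to contravariant finiteness, which I would establish by composing the canonical epimorphism onto the torsion part of an arbitrary $X\in\mathrm{mod}A$ with a right $\mathrm{add}(M)$-approximation, producing a right $\mathrm{Fac}(M)$-approximation.

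For the backward direction, let $\mathfrak{T}$ be a functorially finite torsion class. Functorial finiteness implies that $\mathfrak{T}=\mathrm{Fac}(N)$ for some $N$, which in turn forces there to be only finitely many indecomposable Ext-projectives in $\mathfrak{T}$ up to isomorphism, so $\mathcal{P}(\mathfrak{T})$ is well-defined. Applying the Auslander--Reiten formula again, Ext-projectivity in $\mathfrak{T}$ implies $\tau$-rigidity, hence $\mathcal{P}(\mathfrak{T})$ is $\tau$-rigid. To promote this to a support $\tau$-tilting pair, I would take $P$ to be a basic projective satisfying $\mathrm{Hom}_A(P,\mathcal{P}(\mathfrak{T}))=0$ and maximal with this property; the key identity $|\mathcal{P}(\mathfrak{T})|+|P|=n$ is what must be checked. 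I expect to prove this via a Bongartz-type construction on each indecomposable projective $P_i$: if $P_i\in\mathfrak{T}$ it contributes to $\mathcal{P}(\mathfrak{T})$; otherwise $P_i$ admits a minimal left $\mathfrak{T}$-approximation whose cokernel is Ext-projective in $\mathfrak{T}$, or $P_i$ annihilates $\mathfrak{T}$ and thus contributes to $P$.

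Finally, I verify the two compositions are identities. For $\mathrm{Fac}(\mathcal{P}(\mathfrak{T}))=\mathfrak{T}$, the inclusion $\subseteq$ is automatic, while $\supseteq$ follows from contravariant finiteness: any $X\in\mathfrak{T}$ admits a surjection from a power of $\mathcal{P}(\mathfrak{T})$ by iteratively taking right approximations. For $\mathcal{P}(\mathrm{Fac}(M))=M$, note that $M$ itself is Ext-projective in $\mathrm{Fac}(M)$ (again by $\tau$-rigidity and the Auslander--Reiten formula), so $M$ is a direct summand of $\mathcal{P}(\mathrm{Fac}(M))$; equality then follows from the count $|M|+|P|=n$ combined with the count established for $\mathcal{P}(\mathrm{Fac}(M))$, together with Proposition~\ref{prosincere} ensuring the projective part is uniquely determined.

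The main obstacle is the numerical equality $|\mathcal{P}(\mathfrak{T})|+|P|=n$ in the backward direction, which is the heart of the theorem: it says that the Ext-projectives of a functorially finite torsion class, together with the projectives annihilating it, perfectly account for the $n$ simple classes. Everything else in the proof is comparatively formal, relying on $\mathrm{Fac}(-)$, torsion pair formalism, and the Auslander--Reiten duality; this counting step, by contrast, is where the specific structure of $\tau$-tilting theory (as opposed to mere Ext-projectivity in an arbitrary torsion class) is essential.
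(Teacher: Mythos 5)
The paper states this as a recalled result, citing \cite[Theorem 2.7]{AIR} without reproducing its proof, so there is no in-paper argument to compare against; I can only assess your sketch against the argument in the cited source.

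Your outline captures the right architecture (both maps well-defined, compositions are identities, Ext-projectivity controlled via Auslander--Reiten duality), and you correctly single out the counting identity $|\mathcal{P}(\mathfrak{T})|+|P|=n$ as the theorem's real content. However, there are two issues in the backward direction. First, a technical slip: in the Bongartz-type step, if $f_i\colon P_i\to T_i$ is a minimal left $\mathfrak{T}$-approximation, the object that is Ext-projective in $\mathfrak{T}$ is the \emph{target} $T_i$ (or, in the universal-extension formulation $0\to P_i\to E_i\to U^{d_i}\to 0$, the \emph{middle term} $E_i$), not the cokernel of $f_i$; the cokernel plays no privileged role here. Second, and more seriously, the counting step is only gestured at, not closed. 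Even granting that each $P_i$ with $\operatorname{Hom}_A(P_i,\mathfrak{T})\neq 0$ produces an Ext-projective $T_i$ and each annihilating $P_i$ contributes to $P$, this does not by itself yield $|\mathcal{P}(\mathfrak{T})|+|P|=n$: you must show both that the family $\{T_i\}$ generates \emph{all} indecomposable Ext-projectives of $\mathfrak{T}$ (equivalently, that $\operatorname{add}(\bigoplus_i T_i)=\operatorname{add}\mathcal{P}(\mathfrak{T})$) and, more delicately, that the number of pairwise non-isomorphic indecomposable summands appearing among the $T_i$ equals the number of non-annihilating $P_i$. The latter is exactly the nontrivial linear-independence/maximality phenomenon specific to $\tau$-tilting theory; in \cite{AIR} it is established via a chain of intermediate results (their Propositions 2.9, 2.14--2.16, Theorem 2.10) rather than by a direct per-projective count. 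As written, your sketch identifies the obstacle but leaves precisely this gap open.
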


\begin{Proposition}
\cite{AS} If $U\in {\rm mod}A$ is $\tau$-rigid, then ${\rm Fac}(U)$ is a functorially finite torsion class and  $U\in {\rm add}(M_U)$, where $M_U:=\mathcal P({\rm Fac}(U))$ is  the support $\tau$-tilting module in ${\rm mod}A$ given by ${\rm Fac}(U)$.
\end{Proposition}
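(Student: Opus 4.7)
The proposition consists of three assertions: that ${\rm Fac}(U)$ is a torsion class, that it is functorially finite, and that $U\in{\rm add}(M_U)$ for $M_U=\mathcal P({\rm Fac}(U))$. The central tool is the Auslander--Reiten duality formula combined with the hypothesis ${\rm Hom}_A(U,\tau U)=0$.

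For the torsion class claim, closure of ${\rm Fac}(U)$ under quotients is immediate from the definition. For closure under extensions, I would take a short exact sequence $0\to L\to M\to N\to 0$ with $L,N\in{\rm Fac}(U)$, fix a surjection $\pi\colon U^k\twoheadrightarrow N$, and try to lift $\pi$ along $M\to N$. The obstruction lies in ${\rm Ext}^1_A(U^k,L)$, which is dual via Auslander--Reiten duality to a Hom group into $\tau U$. Since $L$ is a quotient of some $U^m$ and ${\rm Hom}_A(U^m,\tau U)=0$ by $\tau$-rigidity, one obtains ${\rm Hom}_A(L,\tau U)=0$ and hence the desired vanishing. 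Combining the lift with the inclusion $L\hookrightarrow M$ exhibits $M$ as a quotient of a finite power of $U$.

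Contravariant finiteness of ${\rm Fac}(U)$ is then easy: for any $X\in{\rm mod}A$, the trace ${\rm tr}_U(X)=\sum_{f\colon U\to X}{\rm Im}(f)$ lies in ${\rm Fac}(U)$ and the inclusion ${\rm tr}_U(X)\hookrightarrow X$ is a right ${\rm Fac}(U)$-approximation. The main obstacle is covariant finiteness, where a direct construction of left approximations is required. My plan is to use the extension closure just established to promote $({\rm Fac}(U),\,{\rm Fac}(U)^{\perp})$ to a genuine torsion pair, and then construct a left ${\rm Fac}(U)$-approximation of $X$ by iteratively forming cokernels of right ${\rm add}(U)$-approximations of auxiliary syzygies arising from $\tau U$. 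Finite-dimensionality of $A$ ensures this iteration terminates, so the construction produces a finite-dimensional object in ${\rm Fac}(U)$ universal for maps out of $X$.

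Finally, for $U\in{\rm add}(M_U)$, Theorem~\ref{proorder} identifies $M_U$ with the direct sum of one copy of each indecomposable Ext-projective in ${\rm Fac}(U)$, so it suffices to show that every indecomposable direct summand of $U$ is Ext-projective in ${\rm Fac}(U)$, i.e.\ that ${\rm Ext}^1_A(U,F)=0$ for all $F\in{\rm Fac}(U)$. This follows by the same Auslander--Reiten duality argument used above: the Ext group is dual to a Hom group into $\tau U$, which vanishes because $F$ is a quotient of a power of $U$ and ${\rm Hom}_A(U,\tau U)=0$. This gives the inclusion ${\rm add}(U)\subseteq{\rm add}(M_U)$ and completes the proof.
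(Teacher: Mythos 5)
Your arguments for three of the four pieces are correct and are the standard ones.  The Auslander--Reiten duality computation
\[
{\rm Ext}^1_A(U,L)\;\cong\;D\,\overline{{\rm Hom}}_A(L,\tau U),
\]
together with the observation that an epimorphism $U^{m}\twoheadrightarrow L$ forces ${\rm Hom}_A(L,\tau U)=0$ whenever ${\rm Hom}_A(U,\tau U)=0$, correctly yields both the extension-closure of ${\rm Fac}(U)$ (lifting a cover $U^{k}\twoheadrightarrow N$ along $M\twoheadrightarrow N$) and the Ext-projectivity of $U$ in ${\rm Fac}(U)$, whence $U\in{\rm add}(M_U)$.  The trace argument for contravariant finiteness is also fine, since ${\rm tr}_U(X)$ is the torsion submodule of $X$ for the torsion pair $({\rm Fac}(U),U^{\perp})$ and every $F\in{\rm Fac}(U)$ maps into it.

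The covariant finiteness step, however, is a genuine gap.  Having a torsion pair $({\rm Fac}(U),{\rm Fac}(U)^{\perp})$ only hands you the canonical exact sequence $0\to tX\to X\to X/tX\to 0$, i.e.\ a \emph{right} ${\rm Fac}(U)$-approximation of $X$; it says nothing about left approximations.  The sentence about ``iteratively forming cokernels of right ${\rm add}(U)$-approximations of auxiliary syzygies arising from $\tau U$'' does not describe a concrete morphism $X\to X_U$ with $X_U\in{\rm Fac}(U)$, nor does it explain why any map from $X$ into ${\rm Fac}(U)$ would factor through it; and the appeal to ``finite-dimensionality of $A$ ensures this iteration terminates'' is unjustified --- finite-dimensionality of $A$ by itself does not bound the length of such iterative constructions.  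The covariant finiteness of ${\rm Gen}(U)$ is precisely the nontrivial content credited here to Auslander--Smal{\o} (and Smal{\o}'s theorem that a torsion class is functorially finite if and only if it is of the form ${\rm Gen}(M)$), and its proof proceeds via a universal-extension construction of the left approximation that is quite different from your sketch.  You should either import that result as a black box, or supply an actual construction: for instance, show that for $X\in{\rm mod}A$ the pushout of $X$ along a suitably chosen ``universal'' extension by copies of $U$ lands in ${\rm Fac}(U)$ and is universal among maps $X\to{\rm Fac}(U)$, and explain why the construction closes up after one step using ${\rm Ext}^1_A(U,{\rm Fac}(U))=0$.
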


\begin{Theorem}\cite[Theorem 2.10]{AIR} Let $U$ be a $\tau$-rigid $A$-module. Then $\mathfrak T:=\prescript{\bot}{}({\tau U})$ is a functorially finite
torsion class and $T_U := \mathcal P(\mathfrak T)$ is a support $\tau$-tilting $A$-module satisfying $U\in {\rm add}(T_U)$ and $\prescript{\bot}{}({\tau U})={\rm Fac}(T_U)$.
\end{Theorem}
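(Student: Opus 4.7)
The plan is to verify the three assertions (torsion class, functorial finiteness, Ext-projectivity of $U$) and then invoke Theorem 2.7. First I would check that $\mathfrak T:=\prescript{\bot}{}(\tau U)$ is closed under quotients and extensions: a surjection $X\twoheadrightarrow Y$ gives an injection ${\rm Hom}_A(Y,\tau U)\hookrightarrow{\rm Hom}_A(X,\tau U)$ by left-exactness of ${\rm Hom}_A(-,\tau U)$, and the long exact sequence handles extensions. From the surjection $U^{\oplus k}\twoheadrightarrow Z$ available for any $Z\in{\rm Fac}(U)$ and the $\tau$-rigidity condition ${\rm Hom}_A(U,\tau U)=0$, one deduces ${\rm Fac}(U)\subseteq\mathfrak T$; in particular $U\in\mathfrak T$.

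Next, for functorial finiteness—which I expect to be the main obstacle—I would run a Bongartz-style completion. Fix a minimal left ${\rm add}(U)$-approximation $A\xrightarrow{f}U^{0}$ of the regular module $A$, complete it to a short exact sequence $0\to A\xrightarrow{f} U^{0}\to C\to 0$, and set $T:=(U\oplus C)^{\flat}$. A direct computation using $\tau$-rigidity of $U$ together with the Auslander--Reiten formula shows that $T$ is $\tau$-rigid and ${\rm Fac}(T)\subseteq\mathfrak T$. Conversely, given any $X\in{\rm mod}A$, tensoring/pulling back $X$ along the approximation $f$ and tracking kernels should produce an explicit morphism from an object of ${\rm add}(T)$ to $X$ that is a right ${\rm add}(T)$-approximation; since ${\rm add}(T)\subseteq\mathfrak T$ and $\mathfrak T$ is closed under quotients, this yields a right $\mathfrak T$-approximation. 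A dual argument gives left approximations, so $\mathfrak T$ is functorially finite and moreover ${\rm Fac}(T)=\mathfrak T$.

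With $\mathfrak T$ known to be a functorially finite torsion class, the bijection Theorem 2.7 applies and produces $T_U=\mathcal P(\mathfrak T)$ as a basic support $\tau$-tilting module with ${\rm Fac}(T_U)=\mathfrak T=\prescript{\bot}{}(\tau U)$. Finally, the Auslander--Reiten formula gives ${\rm Ext}^{1}_{A}(U,Z)\cong D\underline{{\rm Hom}}_{A}(Z,\tau U)=0$ for every $Z\in\mathfrak T$, so $U$ is Ext-projective in $\mathfrak T$, which forces $U\in{\rm add}(T_U)$ by the very definition of $\mathcal P(\mathfrak T)$.

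The main difficulty lies in the Bongartz-type construction of the second paragraph: one must produce the candidate $T$ and then promote approximations by ${\rm add}(T)$ to approximations by the larger class $\mathfrak T$, using $\tau$-rigidity repeatedly via the AR formula. The remaining steps are then essentially bookkeeping on top of Theorem 2.7.
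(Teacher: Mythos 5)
The paper does not give a proof of this statement; it is quoted as background from [AIR, Theorem~2.10], where the functorial finiteness of $\prescript{\bot}{}(\tau U)$ is imported from Auslander--Smal{\o} [AS] rather than established by an explicit Bongartz-type construction.

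Your second paragraph contains a genuine gap. Since $U^0\in{\rm add}(U)$ and $C={\rm Coker}(f)$ is a quotient of $U^0$, one has $C\in{\rm Fac}(U)$, hence ${\rm Fac}(T)={\rm Fac}(U\oplus C)={\rm Fac}(U)$. This torsion class is in general strictly smaller than $\mathfrak T=\prescript{\bot}{}(\tau U)$: take $A=kQ$ with $Q\colon 1\to 2$ and $U=S_1$; then $\tau U=S_2$, so $\prescript{\bot}{}(\tau U)={\rm add}(P_1\oplus S_1)$, while ${\rm Fac}(S_1)={\rm add}(S_1)$. In that same example $f\colon A\to S_1$ is surjective with nonzero kernel, so the short exact sequence $0\to A\xrightarrow{f}U^0\to C\to 0$ you write does not exist; one only has the exact sequence $A\xrightarrow{f}U^0\to C\to 0$. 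What your recipe actually produces is what the paper calls the \emph{co}-Bongartz completion $\mathcal P({\rm Fac}\,U)$, not the Bongartz completion $\mathcal P(\prescript{\bot}{}(\tau U))$, so the claimed identity ${\rm Fac}(T)=\mathfrak T$ fails and the approximation argument built on it does not establish functorial finiteness. (Also note: for any torsion class, contravariant finiteness is automatic, since the trace $t(X)\hookrightarrow X$ is a minimal right $\mathfrak T$-approximation; the real content is covariant finiteness, i.e.\ the existence of left $\mathfrak T$-approximations, and that is exactly what the nontrivial citation of [AS] supplies in [AIR].) Your first paragraph (closure under quotients and extensions, ${\rm Fac}(U)\subseteq\mathfrak T$) and your third paragraph (Ext-projectivity of $U$ via the AR formula plus Theorem~2.7) are fine as written; it is the middle step that needs a different source.
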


\begin{Proposition}\label{prominmax}\cite[Proposition 2.9]{AIR}
Let $U$ be a $\tau$-rigid module and $\mathfrak T$ be a functorially finite class. Then $U\in {\rm add}(\mathcal P(\mathfrak T))$ if and only if ${\rm Fac}(U)\subseteq \mathfrak T\subseteq \prescript{\bot}{}({\tau U})$.
\end{Proposition}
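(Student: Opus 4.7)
The plan is to split the claimed biconditional into two independent reductions, one for each containment on the right. The first, ${\rm Fac}(U)\subseteq \mathfrak T \Leftrightarrow U\in\mathfrak T$, is essentially free: $\mathfrak T$ is a torsion class, hence closed under factor objects, so containing $U$ is equivalent to containing ${\rm Fac}(U)$. Membership $U\in\mathfrak T$ is automatic from $U\in{\rm add}(\mathcal P(\mathfrak T))$ on the forward side, and follows from ${\rm Fac}(U)\subseteq\mathfrak T$ on the reverse side.

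The substantive content lies in the second containment $\mathfrak T\subseteq \prescript{\bot}{}{(\tau U)}$, which I would show is equivalent to Ext-projectivity of $U$ in $\mathfrak T$ (once $U\in\mathfrak T$ is known). For this I would invoke the Auslander--Smalo formula
\[
{\rm Hom}_A(N,\tau M)=0 \iff {\rm Ext}^1_A(M,{\rm Fac}(N))=0,
\]
applied with $M=U$. If $\mathfrak T\subseteq \prescript{\bot}{}{(\tau U)}$, then ${\rm Hom}_A(Y,\tau U)=0$ for every $Y\in\mathfrak T$, so ${\rm Ext}^1_A(U,{\rm Fac}(Y))=0$; ranging $Y$ over $\mathfrak T$ and using ${\rm Fac}(Y)\subseteq\mathfrak T$ (again by torsion-class closure under quotients) yields ${\rm Ext}^1_A(U,\mathfrak T)=0$, i.e.\ $U$ is Ext-projective in $\mathfrak T$. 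Together with $U\in\mathfrak T$ this places $U$ in ${\rm add}(\mathcal P(\mathfrak T))$. The reverse direction runs backwards along the same chain: Ext-projectivity of $U$ in $\mathfrak T$ gives ${\rm Ext}^1_A(U,{\rm Fac}(Y))=0$ for each $Y\in\mathfrak T$, and the Auslander--Smalo formula converts this to ${\rm Hom}_A(Y,\tau U)=0$, i.e.\ $\mathfrak T\subseteq\prescript{\bot}{}{(\tau U)}$.

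The main obstacle is the Auslander--Smalo identity bridging ${\rm Hom}(-,\tau U)$ and ${\rm Ext}^1(U,-)$; it is the asymmetric appearance of ${\rm Fac}(N)$ on the Ext side that forces the proof to use closure of $\mathfrak T$ under factors in an essential way, and this is the single nontrivial input the argument depends on. Once it is in hand the remainder is bookkeeping: the defining property of $\mathcal P(\mathfrak T)$ as the Ext-projective generator packages the two conditions $U\in\mathfrak T$ and ${\rm Ext}^1_A(U,\mathfrak T)=0$ into the single statement $U\in{\rm add}(\mathcal P(\mathfrak T))$, and the two equivalences above show that this pair of conditions is precisely ${\rm Fac}(U)\subseteq\mathfrak T\subseteq\prescript{\bot}{}{(\tau U)}$.
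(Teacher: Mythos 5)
Your proof is correct, and it is the standard argument underlying AIR's Proposition 2.9, which the paper cites without reproducing: the Auslander--Smal{\o} equivalence ${\rm Hom}_A(N,\tau M)=0 \iff {\rm Ext}^1_A(M,{\rm Fac}(N))=0$ together with the definition of $\mathcal P(\mathfrak T)$ as the additive generator of the Ext-projectives in $\mathfrak T$ is exactly what is needed, and you have assembled the pieces correctly. One small bookkeeping remark: when you derive ${\rm Ext}^1_A(U,\mathfrak T)=0$ from $\mathfrak T\subseteq\prescript{\bot}{}({\tau U})$, you only need the trivial containment $Y\in{\rm Fac}(Y)$, not that ${\rm Fac}(Y)\subseteq\mathfrak T$; the quotient-closure of $\mathfrak T$ is instead what makes the \emph{reverse} implication work (passing from ${\rm Ext}^1_A(U,\mathfrak T)=0$ to ${\rm Ext}^1_A(U,{\rm Fac}(Y))=0$). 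Your closing paragraph does identify this asymmetry correctly, so the misattribution occurs only in the middle sentence and does not affect the logic.
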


\begin{Definition}Let $U$ be a $\tau$-rigid module, the support $\tau$-tilting module $\mathcal P({\rm Fac}(U))$ is called the {\bf co-Bongartz completion} of $U$ and the  support $\tau$-tilting module $\mathcal P(\prescript{\bot}{}({\tau U}))$ is called the {\bf Bongartz completion} of $U$.
\end{Definition}

\subsection{Co-Bongartz completions in triangulated categories}

Recall that for an object $X$ in a additive category, $X^\flat$ denotes the basic object (up to isomorphism) such that ${\rm add}(X^\flat)={\rm add}(X)$.
\begin{Definition}
Let $\mathcal D$ be a $K$-linear, Krull-Schmidt, Hom-finite  triangulated category, and $U, W$ be two objects in $\mathcal D$. Consider the following triangle
$$
\xymatrix{W[-1] \ar[r]^f&U^\prime \ar[r]&X_U \ar[r]&W},
$$
where $f$ is a left minimal ${\rm add}(U)$-approximation of $W[-1]$. We call the basic object $\mathcal T_U(W):=(U\oplus X_U)^\flat$ the {\bf co-Bongartz completion} of $U$ with respect to $W$. If $U$ is an indecomposable  object, we call $\mathcal T_U(W)=(U\oplus X_U)^\flat$ the {\bf elementary co-Bongartz completion} of $U$ with respect to $W$.
\end{Definition}

\section{Cluster tilting theory and silting theory}
\subsection{Basics on cluster tilting theory}
In this subsection, we fix a $K$-linear, Krull-Schmidt, Hom-finite $2$-Calabi-Yau triangulated category $\mathcal C$ with a  basic cluster tilting object $T=\bigoplus\limits_{i=1}^nT_i$.

Recall that $\mathcal C$ is $2$-Calabi-Yau ($2$-CY for short) means that we have a  bifunctorial isomorphism
$${\rm Hom}_{\mathcal C}(X,Y)=D{\rm Hom}_{\mathcal C}(Y,X[2]),$$
for any $X,Y\in\mathcal C$, where $D={\rm Hom}_K(-,K)$.
\begin{itemize}
 \item An object $T\in\mathcal C$ is {\bf cluster tilting} if
$${\rm add}(T)=\{X\in\mathcal C|{\rm Hom}_{\mathcal C}(T,X[1])=0\}.$$
\item An object $U\in\mathcal C$ is {\bf almost cluster tilting} if there exists an indecomposable object $X\notin {\rm add}(U)$ such that $U\oplus X$ is a cluster tilting object in $\mathcal C$.
     \item An object $U\in\mathcal C$ is  {\bf rigid} if ${\rm Hom}_{\mathcal C}(U,U[1])=0$. Clearly, both cluster tilting objects and almost cluster tilting objects are rigid.
         \end{itemize}

For $M\in \mathcal C$, denote by $|M|$ the number of non-isomorphic indecomposable direct summands of $M$.
It is known from \cite[Theorem 2.4]{DK} that $|T|=|T^\prime|$ for any two cluster tilting objects in $\mathcal C$.

\begin{Proposition}\cite{KR,Pl}\label{protilt}
Let $\mathcal C$ be a $K$-linear, Krull-Schmidt, Hom-finite $2$-Calabi-Yau triangulated category with a basic cluster tilting object $T=\bigoplus\limits_{i=1}^nT_i$. Then for any  object $X\in \mathcal C$, there exists a triangle
$$\xymatrix{T_1^X\ar[r]^{\alpha_2}& T_0^X\ar[r]^{\alpha_1}& X\ar[r]^{\alpha_3}& T_1^X[1]},$$
where $\alpha_1$ is a right ${\rm add}(T)$-approximation of $X$ and $T_0^X=\bigoplus\limits_{i=1}^nT_i^{a_i}\in {\rm add}(T)$ and $T_1^X=\bigoplus\limits_{i=1}^nT_i^{b_i}\in{\rm add}(T)$.
\end{Proposition}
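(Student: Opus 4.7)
The plan is to build the triangle from a right $\operatorname{add}(T)$-approximation of $X$ and then use the cluster tilting property of $T$ to force the third term to lie in $\operatorname{add}(T)$.

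First I would establish existence of a right $\operatorname{add}(T)$-approximation $\alpha_1 \colon T_0^X \to X$. Since $\mathcal C$ is Hom-finite and $T$ is basic with finitely many indecomposable summands $T_1,\dots,T_n$, each space $\operatorname{Hom}_{\mathcal C}(T_i,X)$ is finite dimensional. Choosing a $K$-basis of each $\operatorname{Hom}_{\mathcal C}(T_i,X)$ and assembling the corresponding maps produces a morphism $T_0^X = \bigoplus T_i^{a_i} \to X$ that is surjective on $\operatorname{Hom}_{\mathcal C}(T,-)$, hence a right $\operatorname{add}(T)$-approximation. Equivalently, $\operatorname{add}(T)$ is contravariantly finite in $\mathcal C$.

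Next I would embed $\alpha_1$ into a triangle $T_1^X \xrightarrow{\alpha_2} T_0^X \xrightarrow{\alpha_1} X \xrightarrow{\alpha_3} T_1^X[1]$ using the axioms of triangulated categories. The only nontrivial point is to verify $T_1^X \in \operatorname{add}(T)$. I apply the functor $\operatorname{Hom}_{\mathcal C}(T,-)$ to this triangle and extract the piece
\[
\operatorname{Hom}_{\mathcal C}(T,T_0^X) \longrightarrow \operatorname{Hom}_{\mathcal C}(T,X) \longrightarrow \operatorname{Hom}_{\mathcal C}(T,T_1^X[1]) \longrightarrow \operatorname{Hom}_{\mathcal C}(T,T_0^X[1]).
\]
The first arrow is surjective because $\alpha_1$ is a right $\operatorname{add}(T)$-approximation, and the last group vanishes since $T_0^X \in \operatorname{add}(T)$ and $T$ is cluster tilting, so $\operatorname{Hom}_{\mathcal C}(T,T[1])=0$. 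Hence $\operatorname{Hom}_{\mathcal C}(T,T_1^X[1])=0$, and invoking the cluster tilting characterization $\operatorname{add}(T)=\{Y \mid \operatorname{Hom}_{\mathcal C}(T,Y[1])=0\}$ yields $T_1^X \in \operatorname{add}(T)$, so $T_1^X \cong \bigoplus T_i^{b_i}$ as required.

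The construction of the approximation and the triangle are routine; the only genuine step is the vanishing argument that forces $T_1^X$ into $\operatorname{add}(T)$, and this is where the cluster tilting hypothesis on $T$ is used decisively. If one wanted the $T_j^X$ to be chosen canonically, one could start from a \emph{minimal} right $\operatorname{add}(T)$-approximation and appeal to uniqueness up to isomorphism, but this refinement is not required by the statement.
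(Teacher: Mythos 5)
Your argument is correct and is exactly the standard proof of this fact as it appears in the cited sources (Keller--Reiten and Plamondon); the paper itself records the statement as a quoted result and gives no proof. The three steps — existence of a right $\operatorname{add}(T)$-approximation from Hom-finiteness and the finiteness of the set of indecomposable summands of $T$, completion to a triangle, and the vanishing of $\operatorname{Hom}_{\mathcal C}(T,T_1^X[1])$ via the long exact sequence together with the characterization $\operatorname{add}(T)=\{Y \mid \operatorname{Hom}_{\mathcal C}(T,Y[1])=0\}$ — are all sound and in the right order, and the remark about passing to a minimal approximation for canonicity is apt but correctly flagged as optional.
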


\begin{Theorem}\cite[Theorem 5.3]{IY} \label{thm2tilt} Let $\mathcal C$ be a $K$-linear, Krull-Schmidt, Hom-finite $2$-CY triangulated category. Then any basic almost cluster tilting object in $\mathcal C$ is a direct summand of exactly two basic cluster tilting objects in $\mathcal C$.
\end{Theorem}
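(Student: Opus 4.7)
The plan is to prove this via the standard exchange triangle construction, following the approach of Iyama--Yoshino. Fix a basic almost cluster tilting object $U$ and let $X$ be the unique (up to isomorphism) indecomposable complement provided by the hypothesis, so that $T := U \oplus X$ is a basic cluster tilting object. I will produce a second complement $Y$ from an exchange triangle, then verify it is cluster tilting and that it is the only other option.

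First, since $T$ is rigid, $\mathrm{Hom}_{\mathcal C}(X, U[1]) = 0$, and using Proposition \ref{protilt} applied to $X[1]$ (or equivalently taking a left minimal $\mathrm{add}(U)$-approximation of $X$) we obtain a triangle
$$
X \xrightarrow{\;f\;} U_X \xrightarrow{\;g\;} Y \xrightarrow{\;h\;} X[1],
$$
with $f$ a left minimal $\mathrm{add}(U)$-approximation. The candidate second complement is $Y^\flat$. I would first check that $Y^\flat \not\cong X$: if they agreed, $h: X \to X[1]$ would be nonzero because $f$ cannot be a split monomorphism (otherwise $X$ would be a summand of $U_X \in \mathrm{add}(U)$, contradicting $X \notin \mathrm{add}(U)$), but rigidity of $X$ gives $\mathrm{Hom}_{\mathcal C}(X,X[1])=0$, a contradiction.

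Second, I would show $U \oplus Y$ is cluster tilting. Applying $\mathrm{Hom}_{\mathcal C}(U,-)$ to the exchange triangle and using that $f$ is a left $\mathrm{add}(U)$-approximation yields $\mathrm{Hom}_{\mathcal C}(U, Y[1]) = 0$; the 2-CY duality $\mathrm{Hom}_{\mathcal C}(Y,U[1]) \cong D\,\mathrm{Hom}_{\mathcal C}(U, Y[1])$ then gives $\mathrm{Hom}_{\mathcal C}(Y, U[1]) = 0$. For $\mathrm{Hom}_{\mathcal C}(Y, Y[1]) = 0$, apply $\mathrm{Hom}_{\mathcal C}(-, Y[1])$ to the triangle; the relevant terms vanish by the preceding computation together with the rigidity of $X$ and the 2-CY duality. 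To upgrade the resulting rigidity of $U \oplus Y$ to the cluster tilting condition $\mathrm{add}(U \oplus Y) = \{Z \mid \mathrm{Hom}_{\mathcal C}(U \oplus Y, Z[1]) = 0\}$, take any such $Z$ and use Proposition \ref{protilt} to write a triangle $T_1^Z \to T_0^Z \to Z \to T_1^Z[1]$ with $T_i^Z \in \mathrm{add}(U \oplus X)$; then splice with the exchange triangle to rewrite $Z$ using $U$ and $Y$, replacing every occurrence of the summand $X$ in $T_i^Z$ by the corresponding piece of the exchange triangle. The hypothesis on $Z$ forces the relevant connecting maps to factor through $\mathrm{add}(U)$, and one concludes $Z \in \mathrm{add}(U \oplus Y)$.

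Finally, uniqueness. Suppose $U \oplus Y'$ is a basic cluster tilting object with $Y'$ indecomposable and $Y' \notin \mathrm{add}(U)$. Running the exchange construction with $Y'$ in place of $X$ produces a triangle $Y' \to U_{Y'} \to Z' \to Y'[1]$ with $Z' \notin \mathrm{add}(U)$, and by the previous paragraph $U \oplus Z'$ is cluster tilting. A short argument based on the universal property of minimal approximations and the fact that exchange triangles are characterized by their outer terms (together with minimality) forces $\{Y', Z'^\flat\} = \{X, Y^\flat\}$ as sets of isomorphism classes, so $Y' \cong X$ or $Y' \cong Y^\flat$. Exactly two basic cluster tilting objects contain $U$ as a summand.

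The main obstacle is the third step of the second paragraph: promoting the $\mathrm{Ext}^1$-rigidity of $U \oplus Y$ to the full cluster tilting property. The Hom-vanishing calculations are formal consequences of approximation and 2-CY duality, but the characterization $\mathrm{add}(U \oplus Y) = (U \oplus Y)^{\perp_1}$ requires a delicate splicing of the approximation triangle of an arbitrary test object $Z$ with the exchange triangle, and this is where the substantive part of the Iyama--Yoshino argument lives.
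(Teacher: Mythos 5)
The paper does not prove this theorem; it simply recalls it from Iyama--Yoshino \cite[Theorem~5.3]{IY}, so there is no internal proof to compare against. Evaluating your sketch on its own terms, the overall exchange-triangle strategy is sound in spirit, but there are a few concrete problems.

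First, the Hom-vanishing computation is run in the wrong direction. The approximation property of $f$ lives in the functor $\mathrm{Hom}_{\mathcal C}(-,U)$, not $\mathrm{Hom}_{\mathcal C}(U,-)$: applying $\mathrm{Hom}_{\mathcal C}(-,U)$ to the exchange triangle, surjectivity of $\mathrm{Hom}_{\mathcal C}(f,U)$ together with $\mathrm{Hom}_{\mathcal C}(U_X,U[1])=0$ gives $\mathrm{Hom}_{\mathcal C}(Y,U[1])=0$ directly, and then the 2-CY duality yields $\mathrm{Hom}_{\mathcal C}(U,Y[1])=0$. Applying $\mathrm{Hom}_{\mathcal C}(U,-)$ as you propose only gives an injection $\mathrm{Hom}_{\mathcal C}(U,Y[1])\hookrightarrow\mathrm{Hom}_{\mathcal C}(U,X[2])\cong D\,\mathrm{Hom}_{\mathcal C}(X,U)$, and the latter is generally nonzero, so nothing vanishes from that side. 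The conclusion is correct but the claimed derivation would not go through.

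Second, you correctly flag the upgrade from rigidity of $U\oplus Y$ to the full cluster tilting property as the substantive step, but the splicing description is too vague to count as an argument; the Claims I--III style of proof in Theorem~\ref{thmsantwi} of this paper (co-Bongartz completion of a rigid object at a cluster tilting object) is the kind of careful bookkeeping required, and none of it appears here.

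Third, and most seriously, the uniqueness argument is circular as written. Given an arbitrary second complement $Y'$, you run the exchange construction and obtain some $Z'$, and then assert that ``exchange triangles are characterized by their outer terms'' forces $\{Y',Z'^\flat\}=\{X,Y^\flat\}$. But nothing you have proved rules out $Y'$ and $Z'^\flat$ being \emph{new} complements distinct from both $X$ and $Y^\flat$; showing that the set of complements has size exactly two is precisely the content of the theorem, and the cited characterization of exchange triangles only pins down the triangle once the two end objects are fixed, which is what you are trying to establish. The actual Iyama--Yoshino proof sidesteps this by passing to a subquotient category (roughly $U^{\perp_1}\cap{}^{\perp_1}U$ modulo $\mathrm{add}\,U$), which inherits a triangulated and $2$-CY structure in which the statement reduces to a rank-one case, or equivalently one can invoke the Dehy--Keller $g$-vector/index argument. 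Some such structural input is needed; the sketch as given leaves the count of complements genuinely open.
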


Let $U$ be a basic almost cluster tilting object in $\mathcal C$, and $T=U\oplus X,\;T^\prime=U\oplus Y$ be the two basic cluster tilting objects containing $U$ as their direct summand with $X\ncong Y$. $T$ is called the {\bf mutation of $T^\prime$ at $Y$} and we denote $T=\mu_Y(T^\prime)$. Also  $T^\prime$ is called the {\bf mutation of $T$ at $X$} and is denoted by $T^\prime=\mu_X(T)$.

\subsection{$g$-vectors in cluster tilting theory}

Let $\mathcal C$ be a $K$-linear, Krull-Schmidt, Hom-finite $2$-Calabi-Yau triangulated category with a basic cluster tilting object $R=\bigoplus\limits_{i=1}^nR_i$. By Proposition \ref{protilt}, for any  object $X\in\mathcal C$, there exists a triangle
\begin{eqnarray}\label{eqndefg}
\bigoplus\limits_{i=1}^nR_i^{b_i}\overset{\alpha_2}{\longrightarrow}\bigoplus\limits_{i=1}^nR_i^{a_i}
\overset{\alpha_1}{\longrightarrow} X\overset{\alpha_3}{\longrightarrow}(\bigoplus\limits_{i=1}^nR_i^{b_i})[1],
\end{eqnarray}
where $\alpha_1$ is a right ${\rm add}(R)$-approximation of $X$.
\begin{Definition}
Keep the notations above, the {\bf $g$-vector (index)} of $X$ with respect to the basic cluster tilting object $R$ is the integer vector
$${\bf g}^R(X)=(a_1-b_1,\cdots,a_n-b_n)^{\rm T}\in\mathbb Z^n.$$
\end{Definition}

Note that even though the triangle in (\ref{eqndefg}) is not unique, the  $g$-vector (index) of $X$ is well-defined.

Let $T=\bigoplus\limits_{i=1}^nT_i$ be another basic cluster tilting object in $\mathcal C$, then we can define the {\bf $G$-matrix} $G_{T}^R$ of $T$ with respect to $R$ by
$$G_{T}^R=({\bf g}^R(T_1),\cdots,{\bf g}^R(T_n)).$$

\begin{Proposition}\label{prodk} Let $R$ be a basic cluster tilting object in $\mathcal C$.

(i) \cite{DK} For any basic cluster tilting object $T\in\mathcal C$, we have  $det(G_{T}^R)=\pm 1$;

(ii) \cite{DK} For any two rigid objects $M,N\in\mathcal C$, if ${\bf g}^R(M)={\bf g}^R(N)$, then $M\cong N$.

(iii) \cite{P} If $X\longrightarrow Y\longrightarrow Z\overset{f}{\longrightarrow} X[1]$ is a triangle, and if $f$ factors though an object in ${\rm add}(R[1])$, then ${\bf g}^R(Y)={\bf g}^R(X)+{\bf g}^R(Z)$.
\end{Proposition}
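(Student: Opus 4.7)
The plan is to prove (iii) first, then use it for (i), and handle (ii) with a separate rigidity argument.

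For (iii), the key tool is the octahedral axiom combined with the cluster tilting property ${\rm Hom}_{\mathcal C}(R,R[1])=0$. Fix index triangles $R_1^X\to R_0^X\xrightarrow{p_X}X\to R_1^X[1]$ and $R_1^Z\to R_0^Z\xrightarrow{p_Z}Z\to R_1^Z[1]$ supplied by Proposition \ref{protilt}. Write the factorization $f=g\circ h$ with $h:Z\to R''[1]$ and $g:R''[1]\to X[1]$, where $R''\in{\rm add}(R)$. Since $R_0^Z,R''\in{\rm add}(R)$, we have ${\rm Hom}_{\mathcal C}(R_0^Z,R''[1])=0$, so $h\circ p_Z=0$, hence $f\circ p_Z=0$. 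Thus $p_Z$ lifts to $\beta:R_0^Z\to Y$ through $Y\to Z$, and combined with $\alpha:R_0^X\xrightarrow{p_X}X\to Y$ this gives $(\alpha,\beta):R_0^X\oplus R_0^Z\to Y$. The octahedral axiom applied to this map together with the given triangle produces a triangle $R_1^X\oplus R_1^Z\to R_0^X\oplus R_0^Z\to Y\to(R_1^X\oplus R_1^Z)[1]$, which presents $Y$ with all middle terms in ${\rm add}(R)$; reading off the $g$-vector gives ${\bf g}^R(Y)={\bf g}^R(X)+{\bf g}^R(Z)$.

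For (i), I would establish $G_T^R\cdot G_R^T=I_n$, forcing $\det G_T^R=\pm 1$. For each indecomposable summand $R_j$ of $R$, take its index triangle with respect to $T$: $T_1^{R_j}\to T_0^{R_j}\to R_j\to T_1^{R_j}[1]$. Applying (iii) — after exhibiting the connecting map $R_j\to T_1^{R_j}[1]$ as factoring through some element of ${\rm add}(R[1])$ via further $R$-approximations of $T_1^{R_j}$ — one obtains ${\bf g}^R(R_j)={\bf g}^R(T_0^{R_j})-{\bf g}^R(T_1^{R_j})$. Expanding $T_0^{R_j}$ and $T_1^{R_j}$ as direct sums of the $T_i$'s yields ${\bf g}^R(R_j)=G_T^R\cdot{\bf g}^T(R_j)$. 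Since ${\bf g}^R(R_j)=e_j$, this is exactly the $j$th column of $G_T^R\cdot G_R^T$, so $G_T^R\cdot G_R^T=I_n$.

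For (ii), given rigid $M,N$ with ${\bf g}^R(M)={\bf g}^R(N)$, take their minimal index triangles $R_1^M\to R_0^M\to M\to R_1^M[1]$ and $R_1^N\to R_0^N\to N\to R_1^N[1]$. Equality of $g$-vectors forces $[R_0^M]-[R_1^M]=[R_0^N]-[R_1^N]$ in the split Grothendieck group of ${\rm add}(R)$, and minimality (no common indecomposable summand of $R_0$ and $R_1$) upgrades this to $R_0^M\cong R_0^N$ and $R_1^M\cong R_1^N$. Rigidity of $M$ and $N$, together with the 2-CY duality, then pins down the extension class classifying the triangle up to equivalence, yielding $M\cong N$.

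The main obstacle is the use of (iii) in part (i): the connecting map $R_j\to T_1^{R_j}[1]$ lands in ${\rm add}(T[1])$ rather than ${\rm add}(R[1])$, so one needs an iterated approximation argument or a strengthening of (iii) to bridge the gap. A subsidiary challenge in (ii) is converting the $K_0$-level equality into an isomorphism of triangles, which requires careful bookkeeping of minimal approximations and a direct use of ${\rm Hom}_{\mathcal C}(M,M[1])=0$.
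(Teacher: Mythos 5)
The paper offers no proof of this proposition; all three parts are cited from \cite{DK} and \cite{P}, so there is no authorial argument to compare against. Reviewing your reconstruction on its own terms, each part has a genuine gap.

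For (iii), the step ``The octahedral axiom applied to this map together with the given triangle produces a triangle $R_1^X\oplus R_1^Z\to R_0^X\oplus R_0^Z\to Y\to(R_1^X\oplus R_1^Z)[1]$'' is not a valid application of the axiom, which applies to a composable pair of morphisms, not to a morphism and a triangle. If you instead apply it to the composition $R_0^X\oplus R_0^Z\xrightarrow{(\alpha,\beta)}Y\xrightarrow{v}Z$, whose composite is $(0,p_Z)$, what pops out is a triangle relating $\mathrm{Cone}(\alpha,\beta)$, $R_0^X[1]\oplus R_1^Z[1]$ and $X[1]$ --- not the one you wrote, and the identification $\mathrm{Cone}(\alpha,\beta)\cong (R_1^X\oplus R_1^Z)[1]$ does not fall out for free. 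The ingredients you have assembled are the right ones: you do get that ${\rm Hom}_{\mathcal C}(R,f)=0$ (so $(\alpha,\beta)$ is a right ${\rm add}(R)$-approximation of $Y$, hence by the cluster-tilting property its cone lies in ${\rm add}(R[1])$), but pinning the cone's class to $[R_1^X]+[R_1^Z]$ in $K_0^{\rm split}({\rm add}(R))$ requires a further argument (for instance, a two-step use of the octahedral axiom together with the easy special case of a triangle whose third term already lies in ${\rm add}(R[1])$, applied first to $R_0^Z\xrightarrow{\beta}Y\xrightarrow{v}Z$ and then to the resulting triangle on $\mathrm{Cone}(\beta)$).

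For (i), you have correctly identified the obstruction: the connecting map $R_j\to T_1^{R_j}[1]$ lands in ${\rm add}(T[1])$, and in a $2$-CY category ${\rm Hom}(R_j,T_1^{R_j}[1])$ need not factor through ${\rm add}(R[1])$ --- indeed ${\rm Hom}(R_j,R''[2])\cong D{\rm Hom}(R'',R_j)$ is typically nonzero, so an iterated approximation of $T_1^{R_j}$ by ${\rm add}(R)$ does not resolve the issue. This is a real gap, not a matter of bookkeeping: Dehy--Keller's argument for $\det(G_T^R)=\pm1$ does not go through additivity but through the equivalence $\mathcal C/[R[1]]\simeq{\rm mod}\,{\rm End}(R)^{\rm op}$ and $K_0$ of the cluster-tilted algebra.

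For (ii), the reduction to minimal presentations $R_1\xrightarrow{a_M}R_0\to M$ and $R_1\xrightarrow{a_N}R_0\to N$ with the same $R_0,R_1$ (no common indecomposable summand, so the $K_0$ equation splits) is fine. The final sentence ``Rigidity of $M$ and $N$, together with the 2-CY duality, then pins down the extension class'' is where the entire content of \cite{DK}'s Theorem 2.3 lives, and it cannot be dispatched in a sentence: two different morphisms $R_1\to R_0$ can a priori have different cones, and showing that rigidity of both cones forces an isomorphism is the delicate part. As written this is not a proof but a pointer to the place where a proof would have to live.
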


\begin{Proposition}\cite{BMR,KZ}
Let $\mathcal C$ be a $K$-linear, Krull-Schmidt, Hom-finite $2$-CY triangulated category. Let $R=\bigoplus\limits_{i=1}^nR_i$ be  a basic cluster tilting object in $\mathcal C$, and $A=A_R:={\rm End}_{\mathcal C}^{\rm op}(R)$. The functor $H:={\rm Hom}_{\mathcal D}(R,-): \mathcal C\rightarrow {\rm mod}A$ induces an equivalence of the categories
$$H:\frac{\mathcal C}{[R[1]]}\rightarrow {\rm mod}A,$$
where $[R[1]]$ is the ideal of $\mathcal C$ consisting of morphisms factor through ${\rm add}(R[1])$.
\end{Proposition}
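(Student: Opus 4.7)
The plan is to verify the four usual steps — well-definedness on the quotient, essential surjectivity, fullness, and faithfulness — exploiting the defining property \(\mathrm{Hom}_{\mathcal C}(R,R[1])=0\) of a cluster tilting object together with the triangle from Proposition \ref{protilt}.

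First I would observe that $H$ restricted to $\mathrm{add}(R)$ is an equivalence onto $\mathrm{proj}\, A$, with $H(R)=A$ as a rank-one free module; this is formal from $A=\mathrm{End}_{\mathcal C}^{\mathrm{op}}(R)$ and Yoneda. The well-definedness on the quotient $\mathcal C/[R[1]]$ is then immediate: for any morphism $f=gh$ with $h:X\to R'[1]$ and $R'\in\mathrm{add}(R)$, we have $H(h)\in\mathrm{Hom}_{\mathcal C}(R,R'[1])=0$, so $H(f)=0$. For essential surjectivity, starting from $M\in\mathrm{mod}\,A$, take a projective presentation $P_1\to P_0\to M\to 0$. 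By the above equivalence $\mathrm{add}(R)\simeq\mathrm{proj}\,A$, the map $P_1\to P_0$ lifts to a morphism $R^{(1)}\to R^{(0)}$ in $\mathcal C$, and I would complete it to a triangle $R^{(1)}\to R^{(0)}\to X\to R^{(1)}[1]$. Applying $H$ and using $\mathrm{Hom}_{\mathcal C}(R,R^{(1)}[1])=0$ yields an exact sequence $H(R^{(1)})\to H(R^{(0)})\to H(X)\to 0$, whence $H(X)\cong M$.

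For fullness, given $\phi:H(X)\to H(Y)$, I would apply Proposition \ref{protilt} to both $X$ and $Y$ to obtain triangles $R_1^X\to R_0^X\to X\to R_1^X[1]$ and $R_1^Y\to R_0^Y\to Y\to R_1^Y[1]$, whose images under $H$ are the beginnings of projective presentations of $H(X)$ and $H(Y)$. Since $H(R_0^X)$ is projective, $\phi$ lifts to a morphism of presentations $(H(R_1^X)\to H(R_0^X))\to(H(R_1^Y)\to H(R_0^Y))$; by $\mathrm{add}(R)\simeq\mathrm{proj}\,A$ this pair of morphisms comes from a genuine commutative square in $\mathcal C$, and the axioms of triangulated categories extend it to a morphism of triangles, producing $f:X\to Y$ with $H(f)=\phi$ (the identification on cokernels is forced by the surjection $H(R_0^X)\twoheadrightarrow H(X)$).

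Faithfulness is the step I expect to be the most delicate, although still routine given the framework. Suppose $f:X\to Y$ satisfies $H(f)=0$. Using the triangle $R_0^X\xrightarrow{\alpha_1}X\xrightarrow{\alpha_3}R_1^X[1]$ of Proposition \ref{protilt}, the composition $f\circ\alpha_1$ is a morphism from $R_0^X\in\mathrm{add}(R)$ to $Y$ with $H(f\circ\alpha_1)=0$; by Yoneda (i.e., faithfulness of $H$ on $\mathrm{add}(R)$), we obtain $f\circ\alpha_1=0$ in $\mathcal C$. The triangle axiom then yields $g:R_1^X[1]\to Y$ with $f=g\circ\alpha_3$, exhibiting $f\in[R[1]]$. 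Combining these four steps gives that $H$ descends to a fully faithful and dense functor $\mathcal C/[R[1]]\to\mathrm{mod}\,A$, hence an equivalence. The main subtlety throughout is to be careful that the triangle in Proposition \ref{protilt} genuinely serves as a ``projective presentation'' after applying $H$, which rests on the cluster tilting condition $\mathrm{Hom}_{\mathcal C}(R,R[1])=0$.
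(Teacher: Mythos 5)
The paper does not prove this proposition; it is cited from Buan--Marsh--Reiten and Koenig--Zhu, so there is no in-text argument to compare against. Your reconstruction is correct and is the standard one: well-definedness and faithfulness use rigidity of $R$ together with the triangle of Proposition \ref{protilt} to show that any $f$ with $H(f)=0$ factors through $R_1^X[1]\in\operatorname{add}(R[1])$; essential surjectivity and fullness use the equivalence $\operatorname{add}(R)\simeq\operatorname{proj}A$ to lift projective presentations and morphisms between them to triangles and morphisms of triangles in $\mathcal C$. The one point worth stating a bit more explicitly is that exactness of $H(R_1^X)\to H(R_0^X)\to H(X)\to 0$ uses both that $\alpha_1$ is a right $\operatorname{add}(R)$-approximation (surjectivity on the right) and that $\operatorname{Hom}_{\mathcal C}(R,R_1^X[1])=0$ (exactness in the middle), which you do invoke correctly.
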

We denote by ${\rm iso}\;\mathcal C$ the set of isomorphism classes of objects in a category $\mathcal C$. Then by the above proposition, we have a bijection
\begin{eqnarray}
\tilde H:{\rm iso}\;\mathcal C\longrightarrow {\rm iso}\;{\rm mod}A\times {\rm iso}\;{\rm add}(R),
\end{eqnarray}
given by $X=X_0\oplus X_1\mapsto(H(X_0),H(X_1[-1]))$, where $X_1$ is a maximal direct summand of $X$ that
belongs to ${\rm add}(R[1])$.

\begin{Theorem}\cite[Theorem 4.1, Corollary 5.2]{AIR}\label{thmairbijection}
Let $\mathcal C$ be a $K$-linear, Krull-Schmidt, Hom-finite $2$-CY triangulated category. Let $R=\bigoplus\limits_{i=1}^nR_i$ be  a basic cluster tilting object in $\mathcal C$, and $A=A_R:={\rm End}_{\mathcal C}^{\rm op}(R)$. Then

(i) $\tilde H$ induces a bijection
from the set of basic rigid objects in $\mathcal C$ to the set of basic $\tau$-rigid pairs in ${\rm mod}A$, which induces a bijection from the set of basic cluster tilting objects in $\mathcal C$ to the set of basic support $\tau$-tilting pairs in ${\rm mod}A$.

(ii) for any rigid object $X$ in $\mathcal C$,  the $g$-vector ${\bf g}^R(X)$ of $X$ with respect to $R$ in $\mathcal C$ is equal to the $g$-vector ${\bf g}(\tilde H(X))$ of $\tilde H(X)$ in ${\rm mod}A$.
\end{Theorem}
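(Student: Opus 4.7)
The overall strategy is to reduce both statements to the equivalence $H:\mathcal C/[R[1]]\xrightarrow{\sim}\mathrm{mod}\,A$, the $2$-CY duality, and the Auslander--Reiten characterisation of $\tau$-rigidity. First I would fix the data: write a rigid $X\in\mathcal C$ as $X=X_0\oplus X_1$ with $X_1$ the maximal summand in $\mathrm{add}(R[1])$, set $M:=H(X_0)$ and $P:=H(X_1[-1])$. Since $A=H(R)$ decomposes as the sum of indecomposable projectives, $P$ is a basic projective $A$-module, and the equivalence tells us that $(M,P)$ recovers $X$ up to isomorphism; this already gives well-definedness and injectivity of $\tilde H$.

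For part (i), the rigidity condition $\mathrm{Hom}_{\mathcal C}(X,X[1])=0$ splits into four blocks. The block $\mathrm{Hom}(X_1,X_1[1])$ vanishes automatically from the rigidity of $R$. For the mixed blocks, using $2$-CY one has $\mathrm{Hom}(X_0,X_1[1])\cong D\,\mathrm{Hom}(R',X_0)$ where $X_1=R'[1]$, and since no morphism $R'\to X_0$ factors through $R[1]$ (as $R$ is rigid and $X_0$ has no summand in $\mathrm{add}(R[1])$), this is $D\,\mathrm{Hom}_A(P,M)$. Similarly $\mathrm{Hom}(X_1,X_0[1])\cong\mathrm{Hom}_A(P,M)$. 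So both mixed blocks vanish iff $\mathrm{Hom}_A(P,M)=0$. The heart of the matter is the diagonal block, and here I would apply $H$ to the triangle $T_1^{X_0}\to T_0^{X_0}\to X_0\to T_1^{X_0}[1]$ of Proposition \ref{protilt}, producing a minimal projective presentation $H(T_1^{X_0})\to H(T_0^{X_0})\to M\to 0$. Combining the $2$-CY duality with the standard AR-formula expression of $\tau M$ as the cokernel of the map induced on syzygies gives the identification
\[
\mathrm{Hom}_{\mathcal C}(X_0,X_0[1])\;\cong\;D\,\mathrm{Hom}_A(M,\tau M),
\]
so that $X_0$ is rigid in $\mathcal C$ iff $M$ is $\tau$-rigid in $\mathrm{mod}\,A$. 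This gives the bijection on rigid objects / $\tau$-rigid pairs; surjectivity follows by lifting a minimal projective presentation of $M$ through $H$ to a triangle in $\mathcal C$. The restriction to cluster tilting objects and support $\tau$-tilting pairs is then a counting statement: $|X|=|X_0|+|X_1|$, cluster tilting objects have exactly $n$ indecomposable summands by \cite[Thm.\ 2.4]{DK}, and support $\tau$-tilting pairs are defined by $|M|+|P|=n$; together with Theorem \ref{thmair} and Theorem \ref{thm2tilt} this forces the restricted correspondence.

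For part (ii), I would unwind the two definitions of $g$-vector against each other summand by summand. Write the defining triangle $\bigoplus R_i^{b_i}\to\bigoplus R_i^{a_i}\to X\to\bigoplus R_i^{b_i}[1]$ and split it according to $X=X_0\oplus X_1$. For the $X_0$-part, applying $H$ turns the triangle into the minimal projective presentation $\bigoplus P_i^{b_i'}\to\bigoplus P_i^{a_i'}\to M\to 0$ of $M$, so the contribution to ${\bf g}^R(X_0)$ matches ${\bf g}(M)$ componentwise. For the $X_1$-part, $X_1=R'[1]$ with $R'\in\mathrm{add}(R)$ admits the trivial triangle $R'\to 0\to R'[1]\to R'[1]$, contributing $-[R']$ to ${\bf g}^R$. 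Under $\tilde H$ the corresponding projective is $P=H(R')$, and since $-{\bf g}(P)=-[R']$ in our coordinates, this contribution equals the $-{\bf g}(P)$-term in ${\bf g}(\tilde H(X))={\bf g}(M)-{\bf g}(P)$. Summing the two pieces gives ${\bf g}^R(X)={\bf g}(\tilde H(X))$.

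The main obstacle is the identification $\mathrm{Hom}_{\mathcal C}(X_0,X_0[1])\cong D\mathrm{Hom}_A(M,\tau M)$: this is where the $2$-CY hypothesis, the cohomological behaviour of $H$ on the Proposition \ref{protilt} triangle, and the functorial definition of $\tau$ all have to be combined with care. Once that identification and its naturality in $X_0$ are in place, the remaining verifications for (i) are bookkeeping via the equivalence $H$, and (ii) follows by a direct comparison of triangles with projective presentations together with Proposition \ref{prodk}(iii) to handle compatibility with direct sums.
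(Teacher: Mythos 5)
This statement is quoted from \cite[Theorem 4.1, Corollary 5.2]{AIR} with no proof supplied in the paper; there is therefore no in-paper argument to compare against, and your write-up is a reconstruction of Adachi--Iyama--Reiten's proof rather than a reproduction of anything in this text.

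Your strategy is the correct one and does match the proof in the cited reference: decompose $X=X_0\oplus X_1$ with $X_1$ maximal in ${\rm add}(R[1])$, pass through the equivalence $H\colon\mathcal C/[R[1]]\to{\rm mod}A$, and split the rigidity condition into blocks. The cross-term computations via $2$-CY and the observation that maps $R'\to X_0$ cannot factor through ${\rm add}(R[1])$ (because $R$ is rigid) are right. The part~(ii) argument --- apply $H$ to the Proposition~\ref{protilt} triangle to get a minimal projective presentation of $H(X_0)$, then handle the $X_1={R'}[1]$ summand via the trivial triangle contributing $-[R']$ --- is literally the same computation the author \emph{does} carry out in the paper, but in the silting setting: Proposition~\ref{proggvector} is, in the author's own words, ``a silting version of Theorem~\ref{thmairbijection}(ii),'' and its proof is a line-by-line analogue of your argument.

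The one place your sketch understates the difficulty is the claimed isomorphism $\mathrm{Hom}_{\mathcal C}(X_0,X_0[1])\cong D\,\mathrm{Hom}_A(M,\tau M)$. In \cite{AIR} (Proposition 4.3 and its corollaries) what one actually obtains for two objects $X_0,Y_0$ with no ${\rm add}(R[1])$-summands is a functorial short exact sequence relating $\mathrm{Hom}_{\mathcal C}(X_0,Y_0[1])$ to two terms built from $\tau$ applied to $H(X_0)$ and $H(Y_0)$; the single-term identification you wrote is not literally an isomorphism. For the diagonal case $X_0=Y_0$ the two conditions forced by the exact sequence coincide precisely because of the $2$-CY duality, which is why $\tau$-rigidity of $M$ \emph{is} equivalent to rigidity of $X_0$. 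You flag this identification as ``the main obstacle,'' which is the right instinct; just be aware that the clean isomorphism you assert has to be derived from (and replaced by, in the non-diagonal case) a four-term exact sequence. With that caveat, the essential surjectivity argument (lift a minimal projective presentation to a triangle and check rigidity) and the counting argument restricting to cluster tilting objects and support $\tau$-tilting pairs are exactly as in \cite{AIR}.
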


\subsection{Basics on silting theory}

 In this subsection,  we fix  a $K$-linear, Krull-Schmidt, Hom-finite  triangulated category $\mathcal D$ with a  basic silting object $S=\bigoplus\limits_{i=1}^nS_i$.

 Recall that $U\in\mathcal D$ is {\bf presilting} if ${\rm Hom}_{\mathcal D}(U,U[i])=0$ for any positive integer $i>0$.
A presilting object $S\in\mathcal D$ is {\bf silting} if $\mathcal D={\rm thick}(S)$.
A presilting object $U\in\mathcal D$ is {\bf almost silting} if there exists an indecomposable object $X\notin{\rm add}(U)$ such that $U\oplus X$ is silting.  We denote the set of isomorphism classes of all basic silting objects in $\mathcal D$ by ${\rm silt}(\mathcal D)$.

\begin{Theorem}\cite[Theorem 2.27]{AI}\label{thmk0basis}
Let $\mathcal D$ be a $K$-linear, Krull-Schmidt, Hom-finite  triangulated category with a  basic silting object $S=\bigoplus\limits_{i=1}^nS_i$, then  the
Grothendieck group ${\rm K}_0(\mathcal D)$ of $\mathcal D$ is a free abelian group with a basis $[S_1],\cdots,[S_n]$.
\end{Theorem}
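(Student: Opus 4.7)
The plan is to establish both generation and linear independence of $\{[S_1], \ldots, [S_n]\}$ in ${\rm K}_0(\mathcal D)$.

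For generation, I would use the silting hypothesis $\mathcal D = {\rm thick}(S)$ directly. Recall ${\rm thick}(S)$ is the smallest triangulated subcategory of $\mathcal D$ containing $S$ and closed under isomorphism and direct summands. Thus every object $X \in \mathcal D$ is obtained from $S_1, \ldots, S_n$ by finitely many shifts, cones, and direct-summand operations. In ${\rm K}_0(\mathcal D)$ these correspond to $[X[1]] = -[X]$, $[Y] = [X] + [Z]$ for a triangle $X \to Y \to Z \to X[1]$, and $[X \oplus Y] = [X] + [Y]$ respectively. Hence every class in ${\rm K}_0(\mathcal D)$ is a $\mathbb Z$-linear combination of the $[S_i]$.

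For linear independence, I would construct a group homomorphism $\Phi : {\rm K}_0(\mathcal D) \to \mathbb Z^n$ satisfying $\Phi([S_i]) = e_i$; together with generation this forces the $[S_i]$ to be a basis. The construction leverages the bounded co-$t$-structure on $\mathcal D$ generated by the silting object $S$ (a standard consequence of the silting hypothesis, cf.\ Bondarko, Keller-Vossieck, Mendoza-S\'aenz-Santiago-Souto), whose co-heart is precisely ${\rm add}(S)$. For $X \in \mathcal D$, iterating the truncation triangles of this co-$t$-structure yields a finite tower whose successive cones lie in ${\rm add}(S[k])$ for various integers $k$. By Krull-Schmidt in the Hom-finite category ${\rm add}(S[k])$, each cone has a well-defined multiplicity $m_{i,k}$ of each indecomposable $S_i[k]$. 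I define $\Phi(X) \in \mathbb Z^n$ to have $i$-th coordinate $\sum_k (-1)^k m_{i,k}$.

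The main obstacle is verifying that $\Phi$ descends to a group homomorphism on ${\rm K}_0(\mathcal D)$: both independence of the silting tower chosen and additivity on triangles must be checked. Independence follows from the essential uniqueness (up to non-canonical isomorphism) of co-$t$-structure truncations together with Krull-Schmidt on the co-heart. Additivity along a triangle $X \to Y \to Z \to X[1]$ requires an octahedral argument that compares the silting towers of $X$, $Y$, and $Z$ and propagates the triangle through each level; the shift relation $\Phi(X[1]) = -\Phi(X)$ is immediate from the sign convention. Once $\Phi$ is in place, the identity $\Phi([S_i]) = e_i$ is evident from the trivial tower $0 \to S_i \to S_i$, and combined with generation this completes the argument.
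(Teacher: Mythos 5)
The paper does not prove this result but cites it from Aihara--Iyama; your route via the bounded co-$t$-structure generated by $S$, with coheart ${\rm add}(S)$, is indeed the standard argument appearing there and in Bondarko's work on weight structures. Your second paragraph correctly identifies the heart of the matter: building $\Phi$ from weight (silting) towers, and noting that well-definedness and additivity along triangles are the genuine obstacles, handled by uniqueness of weight truncations and an octahedron comparison of towers.

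There is, however, a gap in your first paragraph. The full subcategory $\{X\in\mathcal D : [X]\in\langle[S_1],\dots,[S_n]\rangle\}$ is closed under shifts and cones but not, a priori, under direct summands; by Thomason's classification of dense triangulated subcategories via subgroups of $K_0$, it could in principle be a proper dense triangulated subcategory of $\mathcal D$ containing $S$. So $\mathcal D={\rm thick}(S)$ alone does \emph{not} let you conclude that every $[X]$ lies in the $\mathbb Z$-span of the $[S_i]$: the direct-summand step is exactly where the argument breaks. Generation must instead be read off from the same silting tower you use to define $\Phi$: the finite weight decomposition of $X$ gives $[X]=\sum_k(-1)^k[M_k]$ with each $M_k\in{\rm add}(S)$, hence $[X]\in\langle[S_1],\dots,[S_n]\rangle$. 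In other words, the co-$t$-structure machinery of your second paragraph is what actually yields both generation and independence; the ``thick subcategory'' shortcut in the first paragraph should be replaced by this.
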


For any $X,Y\in\mathcal D$, we write $X\ast Y$ for the full subcategory of $\mathcal D$ consisting of all objects $Z\in\mathcal D$ such that there exists a triangle
$$\xymatrix{X^\prime\ar[r]&Z\ar[r]&Y^\prime\ar[r]&X^\prime[1]},$$
where $X^\prime\in{\rm add}(X)$ and $Y^\prime\in{\rm add}(Y)$.
It is known from  \cite[Proposition 2.1(1)]{IY}, if ${\rm Hom}_{\mathcal D}(X,Y)=0$, then $X\ast Y$ is closed under direct summand.

 \begin{Proposition}\cite[Proposition 4.4]{J}\label{projasso} Let $S$ be a silting object in $\mathcal D$ and $M\in\mathcal D$. Then $M\in S\ast S[1]$ if and only if ${\rm Hom}_{\mathcal D}(S,M[i])=0$ and ${\rm Hom}_{\mathcal D}(M,S[j+1])=0$ for any $i,j>0$.
 \end{Proposition}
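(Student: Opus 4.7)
My plan for the forward direction $(\Rightarrow)$: if $M\in S\ast S[1]$, then by definition there is a triangle
$$
\xymatrix{S^a\ar[r]& M\ar[r]& S^b[1]\ar[r]& S^a[1]}
$$
with $S^a, S^b\in{\rm add}(S)$. Applying ${\rm Hom}_{\mathcal D}(S,-)$ and using the presilting vanishing ${\rm Hom}_{\mathcal D}(S,S[k])=0$ for $k>0$ kills both adjacent terms in the long exact sequence, giving ${\rm Hom}_{\mathcal D}(S,M[i])=0$ for all $i>0$. Dually, applying ${\rm Hom}_{\mathcal D}(-,S[j+1])$ to the same triangle and using presilting at both ends gives ${\rm Hom}_{\mathcal D}(M,S[j+1])=0$ for $j>0$.

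For the converse $(\Leftarrow)$, assume both vanishing hypotheses. Since $\mathcal D$ is Hom-finite and $S$ is basic with finitely many indecomposable summands, a right ${\rm add}(S)$-approximation $\alpha:S^a\to M$ exists; complete it to a triangle
$$
\xymatrix{Y\ar[r]& S^a\ar[r]^{\alpha}& M\ar[r]& Y[1]}.
$$
The goal reduces to showing $Y\in{\rm add}(S[1])$, since the displayed triangle would then exhibit $M\in S\ast S[1]$. Applying ${\rm Hom}_{\mathcal D}(S,-)$ to this triangle and combining (i) surjectivity of ${\rm Hom}_{\mathcal D}(S,\alpha)$ (from the approximation property), (ii) presilting of $S$, and (iii) the hypothesis ${\rm Hom}_{\mathcal D}(S,M[i])=0$ for $i>0$, one obtains ${\rm Hom}_{\mathcal D}(S,Y[i])=0$ for all $i\geq 1$. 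Dually, applying ${\rm Hom}_{\mathcal D}(-,S[j])$ and using the hypothesis ${\rm Hom}_{\mathcal D}(M,S[j+1])=0$ for $j>0$ together with presilting gives ${\rm Hom}_{\mathcal D}(Y,S[j])=0$ for all $j\geq 2$.

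The hard step is promoting these vanishing conditions to $Y\in{\rm add}(S[1])$. My plan is to take a minimal right ${\rm add}(S)$-approximation $\gamma:S^c\to Y[-1]$, complete to a triangle $Z\to S^c\to Y[-1]\to Z[1]$, and argue $Z=0$ by another round of long-exact-sequence chases, feeding in the vanishing established above together with the silting hypothesis $\mathcal D={\rm thick}(S)$. It is here that silting (rather than merely presilting) is essential: it provides a bounded co-$t$-structure on $\mathcal D$ with co-heart ${\rm add}(S)$, so any object which is ${\rm Hom}$-orthogonal to $S$ in the appropriate range on both sides must already lie in ${\rm add}(S)$. Once $Y[-1]\in{\rm add}(S)$ is established, the triangle $Y\to S^a\to M\to Y[1]$ witnesses $M\in S\ast S[1]$ directly.
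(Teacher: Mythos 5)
The paper states this result only as a citation (\cite[Proposition~4.4]{J}) and gives no proof, so there is no internal argument to compare against; I can only assess your attempt on its own. The forward direction is correct. The converse has two problems, one of which is fatal to the plan as written.

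First, a shift error: from the right minimal ${\rm add}(S)$-approximation $\alpha\colon S^a\to M$ you form the triangle $Y\to S^a\xrightarrow{\alpha} M\to Y[1]$ and declare the goal to be $Y\in{\rm add}(S[1])$. That is off by one. Rotating gives $S^a\to M\to Y[1]\to S^a[1]$, so to exhibit $M\in S\ast S[1]$ you need $Y[1]\in{\rm add}(S[1])$, i.e.\ $Y\in{\rm add}(S)$; with $Y\in{\rm add}(S[1])$ you would only get $M\in S\ast S[2]$. As a sanity check, for $M=S[1]$ the right minimal ${\rm add}(S)$-approximation is $0\to M$ (since ${\rm Hom}(S,S[1])=0$), whose cocone is $Y=S\in{\rm add}(S)$, not ${\rm add}(S[1])$. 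Accordingly your subsidiary minimal approximation should target $Y$ rather than $Y[-1]$.

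Second, and more seriously, the ``hard step'' is only gestured at. Your long exact sequence chases correctly give ${\rm Hom}(S,Y[i])=0$ for $i\geq 1$ and ${\rm Hom}(Y,S[j])=0$ for $j\geq 1$ (in fact $j\geq 1$, not merely $j\geq 2$: the needed neighbor ${\rm Hom}(M,S[j+1])$ vanishes already for $j\geq 1$ by hypothesis). But these two conditions alone do not terminate the recursion you propose: if $\gamma\colon S^c\to Y$ is a right minimal ${\rm add}(S)$-approximation with cocone $Z$, the same chases show that $Z$ satisfies exactly the same two vanishing conditions, so iterating gains nothing by itself. To conclude, one must either additionally establish ${\rm Hom}(Y,Y[i])=0$ for $i>0$, whence $S\oplus Y$ is presilting and therefore silting in $\mathcal D={\rm thick}(S)$, and \cite[Theorem 2.27]{AI} forces $Y\in{\rm add}(S)$ (this is the strategy the paper itself uses in Proposition~\ref{pronocom1}, but there the needed factoring comes from the rigidity ${\rm Hom}(U,U[1])=0$, which you do not have for a general $M$); or one must invoke a precise, independently proved statement that the co-heart ${\rm add}(S)$ of the bounded silting co-$t$-structure is exactly the class of objects ${\rm Hom}$-orthogonal to $S$ in all positive degrees on both sides. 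The latter is a genuine nontrivial fact, closely related to the very proposition you are proving; asserting it without proof or reference leaves the argument incomplete.
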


 \begin{Corollary}\label{corclosed}
 Let $S$ be a silting object in $\mathcal D$, then $S\ast S[1]$ is closed under extension.
 \end{Corollary}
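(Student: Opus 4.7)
The plan is to reduce the closure-under-extensions statement to the cohomological characterization provided by Proposition \ref{projasso}. Specifically, if $\xymatrix{X \ar[r] & Y \ar[r] & Z \ar[r] & X[1]}$ is a triangle in $\mathcal D$ with $X, Z \in S \ast S[1]$, then I need to show $Y \in S \ast S[1]$; by Proposition \ref{projasso} this is equivalent to verifying that ${\rm Hom}_{\mathcal D}(S, Y[i]) = 0$ and ${\rm Hom}_{\mathcal D}(Y, S[j+1]) = 0$ for all $i, j > 0$.

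First I would apply the cohomological functor ${\rm Hom}_{\mathcal D}(S,-)$ to the given triangle, obtaining a long exact sequence
\[
\cdots \longrightarrow {\rm Hom}_{\mathcal D}(S, X[i]) \longrightarrow {\rm Hom}_{\mathcal D}(S, Y[i]) \longrightarrow {\rm Hom}_{\mathcal D}(S, Z[i]) \longrightarrow \cdots
\]
Since $X, Z \in S \ast S[1]$, Proposition \ref{projasso} forces the outer terms to vanish for every $i > 0$, so the middle term vanishes as well.

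Dually, I would apply ${\rm Hom}_{\mathcal D}(-, S[j+1])$ to the triangle and use the analogous long exact sequence; again the outer terms vanish for $j > 0$ by Proposition \ref{projasso} applied to $X$ and $Z$, which yields ${\rm Hom}_{\mathcal D}(Y, S[j+1]) = 0$ for all $j > 0$. Invoking Proposition \ref{projasso} in the reverse direction then gives $Y \in S \ast S[1]$, which is exactly the closure under extensions.

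There is essentially no obstacle here: the only subtle point is to make sure the characterization in Proposition \ref{projasso} is stated as an ``if and only if'', so that it can be used both to extract the vanishings from $X, Z$ and to conclude membership of $Y$, which is indeed the form in which it is recorded above.
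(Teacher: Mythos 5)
Your argument is correct and is essentially the same as the paper's: both reduce to the cohomological characterization in Proposition \ref{projasso}, apply ${\rm Hom}_{\mathcal D}(S,-)$ and its dual to the triangle to get the required vanishings for $Y$, and then invoke the proposition in the converse direction. The only cosmetic difference is that the paper writes the long exact sequence via ${\rm Hom}_{\mathcal D}(S[-i],-)$ rather than shifting $Y$, which is the same computation.
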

\begin{proof}
Let $X\rightarrow Y\rightarrow Z\rightarrow X[1]$ be a triangle in $\mathcal D$ with $X, Z\in S\ast S[1]$. By Proposition \ref{projasso}, we know that ${\rm Hom}_{\mathcal D}(S,X[i])=0,\;{\rm Hom}_{\mathcal D}(X,S[j+1])=0$ and ${\rm Hom}_{\mathcal D}(S,Z[i])=0,\;{\rm Hom}_{\mathcal D}(Z,S[j+1])=0$ for any $i,j>0$.

For any $i>0$, applying the functor ${\rm Hom}_{\mathcal D}(S[-i],-)$ to the triangle $X\rightarrow Y\rightarrow Z\rightarrow X[1]$, we get the following exact sequence
$$0={\rm Hom}_{\mathcal D}(S[-i],X)\rightarrow {\rm Hom}_{\mathcal D}(S[-i],Y)\rightarrow {\rm Hom}_{\mathcal D}(S[-i],Z)=0.$$
So ${\rm Hom}_{\mathcal D}(S,Y[i])\cong {\rm Hom}_{\mathcal D}(S[-i],Y)=0$.
Similarly, we can show ${\rm Hom}_{\mathcal D}(Y,S[j+1])=0$ for any $j>0$. By Proposition \ref{projasso}, we get $Y\in S\ast S[1]$. Thus $S\ast S[1]$ is closed under extension.
\end{proof}

Let $S$ be a basic silting object in $\mathcal D$, then the silting objects in $S\ast S[1]$  are called {\bf two-term silting objects at $S$}.
\begin{Proposition}\cite[Proposition 6.2(3)]{IY}
Let $S$ be a basic silting object in $\mathcal D$, and $A=A_S:={\rm End}_{\mathcal D}^{\rm op}(S)$. The functor $F:={\rm Hom}_{\mathcal D}(S,-): S\ast S[1]\rightarrow {\rm mod}A$ induces an equivalence of the categories
$$F:\frac{S\ast S[1]}{[S[1]]}\rightarrow {\rm mod}A,$$
where $[S[1]]$ is the ideal of $\mathcal D$ consisting of morphisms factor through ${\rm add}(S[1])$.
\end{Proposition}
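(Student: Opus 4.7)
My plan is to verify the three standard conditions for $\bar F$ (the functor induced by $F$ on the quotient) to be an equivalence: essential surjectivity, fullness, and faithfulness. Before anything, I would check that $F$ genuinely lands in $\mathrm{mod}\,A$ and kills $[S[1]]$. For $X\in S\ast S[1]$ pick a defining triangle $S_1\to S_0\to X\to S_1[1]$ with $S_0,S_1\in\mathrm{add}(S)$ and apply $\mathrm{Hom}_{\mathcal D}(S,-)$; the presilting property $\mathrm{Hom}_{\mathcal D}(S,S[1])=0$ turns this into an exact sequence $F(S_1)\to F(S_0)\to F(X)\to 0$ whose first two terms are finitely generated projective, so $F(X)\in\mathrm{mod}\,A$. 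The same vanishing forces $F$ to annihilate any morphism factoring through $\mathrm{add}(S[1])$, so $F$ descends to $\bar F:(S\ast S[1])/[S[1]]\to\mathrm{mod}\,A$.

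For essential surjectivity, I would use that the restriction $F|_{\mathrm{add}(S)}:\mathrm{add}(S)\to\mathrm{proj}\,A$ is an equivalence (a standard consequence of $A=\mathrm{End}_{\mathcal D}^{\mathrm{op}}(S)$). Given $M\in\mathrm{mod}\,A$, take a projective presentation $P_1\xrightarrow{p}P_0\to M\to 0$, lift $p$ to $q:S_1\to S_0$ in $\mathrm{add}(S)$, and embed in a triangle $S_1\xrightarrow{q}S_0\to X\to S_1[1]$. Then $X\in S\ast S[1]$ and the exact sequence above identifies $F(X)$ with $\mathrm{coker}(p)\cong M$.

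For fullness and faithfulness, fix triangles $S_1\xrightarrow{q}S_0\xrightarrow{i}X\xrightarrow{\delta}S_1[1]$ and $T_1\xrightarrow{r}T_0\xrightarrow{\pi}Y\xrightarrow{\epsilon}T_1[1]$. Given $\varphi:F(X)\to F(Y)$, I would lift $\varphi$ to a chain map $(\alpha_1,\alpha_0)$ between the projective presentations (using projectivity of $F(S_j)$), transport $\alpha_j$ through the equivalence on $\mathrm{add}(S)$ to obtain $a_j:S_j\to T_j$ in $\mathcal D$, note that $r\circ a_1=a_0\circ q$ because this identity holds after applying the faithful $F|_{\mathrm{add}(S)}$, and complete this square to a morphism of triangles, yielding $f:X\to Y$; surjectivity of $F(i):F(S_0)\to F(X)$ then forces $F(f)=\varphi$. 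For faithfulness, suppose $F(f)=0$. Then $F(f\circ i)=0$, and since $\mathrm{Hom}_{\mathcal D}(S_0,T_1[1])=0$ the map $f\circ i$ lifts along $\pi$ to some $\tilde g:S_0\to T_0$. The relation $F(\pi)\circ F(\tilde g)=0$ combined with projectivity of $F(S_0)$ produces $\beta:F(S_0)\to F(T_1)$ with $F(r)\circ\beta=F(\tilde g)$; lifting $\beta$ to $b:S_0\to T_1$ through the equivalence on $\mathrm{add}(S)$ gives $\tilde g=r\circ b$, hence $f\circ i=\pi\circ r\circ b=0$. Applying $\mathrm{Hom}_{\mathcal D}(-,Y)$ to the triangle for $X$, the vanishing $f\circ i=0$ factorizes $f$ through $\delta:X\to S_1[1]$, so $f\in[S[1]](X,Y)$.

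The main obstacle will be the faithfulness step: essential surjectivity and fullness reduce cleanly to lifting projective presentations against the equivalence $F|_{\mathrm{add}(S)}$, but faithfulness requires the more delicate dance above, in which one uses the presilting vanishing, the projectivity of $F(S_0)$, the faithfulness of $F|_{\mathrm{add}(S)}$, and the triangle axioms in combination to promote a mod-$[S[1]]$ vanishing of $f$ to an actual factorization through $S_1[1]\in\mathrm{add}(S[1])$. Once $f\circ i=0$ is established strictly (not only up to $[S[1]]$), the factorization through $\delta$ is a formal consequence of the long exact sequence.
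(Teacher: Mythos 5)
The paper does not supply its own proof of this proposition; it is cited from \cite[Proposition 6.2(3)]{IY}, so there is no in-paper argument to compare against. That said, your proof is correct and is the standard argument: well-definedness and the fact that $F$ lands in $\operatorname{mod}A$ and kills $[S[1]]$ both reduce to the presilting vanishing $\operatorname{Hom}_{\mathcal D}(S,S[1])=0$; essential surjectivity and fullness come from lifting projective presentations through the equivalence $F|_{\operatorname{add}(S)}\colon \operatorname{add}(S)\to\operatorname{proj}A$ and completing a commuting square to a morphism of triangles; and for faithfulness your chain of reductions (lift $f\circ i$ along $\pi$ using $\operatorname{Hom}(S_0,T_1[1])=0$, use projectivity of $F(S_0)$ and faithfulness of $F|_{\operatorname{add}(S)}$ to force $f\circ i=0$ on the nose, then read off the factorization of $f$ through $\delta\colon X\to S_1[1]$ from the long exact sequence) is exactly the right way to upgrade the $A$-linear vanishing $F(f)=0$ to a genuine factorization through $\operatorname{add}(S[1])$. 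No gaps.
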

We denote by ${\rm iso}\;\mathcal D$ the set of isomorphism classes of objects in a category $\mathcal D$. Then by the above proposition, we have a bijection
\begin{eqnarray}\label{eqnfbijection}
\tilde F:{\rm iso}\;S\ast S[1]\longrightarrow {\rm iso}\;{\rm mod}A\times {\rm iso}\;{\rm add}(S),
\end{eqnarray}
given by $X=X_0\oplus X_1\mapsto(F(X_0),F(X_1[-1]))$, where $X_1$ is a maximal direct summand of $X$ that
belongs to ${\rm add}(S[1])$.

\begin{Theorem}\cite{IJY}\label{thmijy} Let $S$ be a basic silting object in $\mathcal D$,  and $A=A_S:={End}_{\mathcal D}(S)^{\rm op}$. Then $\tilde F$ induces a bijection
from the set of basic presilting objects in $S\ast S[1]$ to the set of basic $\tau$-rigid pairs in ${\rm mod}A$, which induces a bijection from the set of basic silting objects in $S\ast S[1]$ to the set of basic support $\tau$-tilting pairs in ${\rm mod}A$.
\end{Theorem}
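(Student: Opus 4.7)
The plan is to verify the theorem by reducing each structural property (presilting, silting) on the triangulated side to its module-theoretic counterpart ($\tau$-rigid, support $\tau$-tilting), using the equivalence from (\ref{eqnfbijection}). Write $X\in S\ast S[1]$ as $X=X_0\oplus X_1$ with $X_1\in{\rm add}(S[1])$ chosen maximal, so that $\tilde F(X)=(M,P)$ with $M=F(X_0)\in{\rm mod}A$ and $P=F(X_1[-1])$ a projective $A$-module, because $F$ restricted to ${\rm add}(S)$ identifies ${\rm add}(S)$ with the projective $A$-modules.

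First I would decompose ${\rm Hom}_{\mathcal D}(X,X[1])$ into its four components ${\rm Hom}_{\mathcal D}(X_i,X_j[1])$. Writing $X_1=S'[1]$ with $S'\in{\rm add}(S)$, the piece ${\rm Hom}_{\mathcal D}(X_1,X_1[1])={\rm Hom}_{\mathcal D}(S',S'[1])$ vanishes because $S$ is presilting, and ${\rm Hom}_{\mathcal D}(X_0,X_1[1])={\rm Hom}_{\mathcal D}(X_0,S'[2])=0$ follows from Proposition \ref{projasso} applied to $X_0\in S\ast S[1]$. The cross term ${\rm Hom}_{\mathcal D}(X_1,X_0[1])\cong{\rm Hom}_{\mathcal D}(S',X_0)\cong{\rm Hom}_A(F(S'),F(X_0))={\rm Hom}_A(P,M)$ translates exactly to the $\tau$-rigid pair compatibility condition ${\rm Hom}_A(P,M)=0$. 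The only remaining piece is the diagonal ${\rm Hom}_{\mathcal D}(X_0,X_0[1])$, which must be identified with ${\rm Hom}_A(M,\tau M)$. For this I would use the defining triangle $S_1\to S_0\to X_0\to S_1[1]$ with $S_0,S_1\in{\rm add}(S)$ guaranteed by $X_0\in S\ast S[1]$, apply the functor $F={\rm Hom}_{\mathcal D}(S,-)$ to obtain a minimal projective presentation $F(S_1)\to F(S_0)\to M\to 0$, and then invoke the Brenner--Butler / Auslander--Reiten type formula that compares ${\rm Hom}_{\mathcal D}(-,X_0[1])$ on this triangle with the sequence defining $\tau M$ through the Nakayama functor on projective presentations.

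For the silting refinement, I would use a counting argument. By Theorem \ref{thmk0basis}, every basic silting object in $\mathcal D$ has exactly $n=|S|$ indecomposable summands; for a basic silting object in $S\ast S[1]$ this forces $|X_0|+|X_1|=n$, and since $\tilde F$ preserves decomposability we get $|M|+|P|=n$, which is precisely the support $\tau$-tilting condition. The converse direction, lifting a support $\tau$-tilting pair $(M,P)$ back to an object of $S\ast S[1]$ and checking that the resulting presilting object is silting, comes from the bijection on $\tau$-rigid pairs and Corollary \ref{corclosed}: once the pair condition forces ${\rm Hom}_{\mathcal D}(X,X[1])=0$, maximality in the presilting order on $S\ast S[1]$ coincides with attaining the maximal possible rank $n$ in the Grothendieck group.

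The main obstacle is the diagonal identification ${\rm Hom}_{\mathcal D}(X_0,X_0[1])\cong{\rm Hom}_A(M,\tau M)$. The equivalence $F$ only kills morphisms factoring through ${\rm add}(S[1])$, so when one applies $F$ to a map $X_0\to X_0[1]$ one must track precisely which components survive in the quotient, and dually reconstruct $\tau M$ from its projective presentation in a way compatible with the shift. This bookkeeping, carried out uniformly as $X_0$ ranges over $S\ast S[1]$, is where the real content of the theorem lies; the silting/support $\tau$-tilting step on top is then essentially formal.
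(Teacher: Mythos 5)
This theorem is cited from \cite{IJY} and is not proved in the paper, so there is no in-paper argument to compare against; what follows assesses your reconstruction on its own terms. Your overall strategy---decomposing $\mathrm{Hom}_{\mathcal D}(X,X[1])$ into four blocks and transporting each across $F$---is indeed the correct skeleton, and your handling of the three off-diagonal blocks is sound (the two automatic vanishings from Proposition \ref{projasso} and presilting of $S$, and $\mathrm{Hom}_{\mathcal D}(S',X_0)\cong\mathrm{Hom}_A(P,M)$ because $\mathrm{Hom}_{\mathcal D}(S',S''[1])=0$ kills the ideal $[S[1]]$ on that Hom-space). You should also state explicitly that Lemma \ref{lemclosed} disposes of $\mathrm{Hom}_{\mathcal D}(X,X[i])$ for $i\geq 2$, so that ``presilting'' reduces to the vanishing of $\mathrm{Hom}_{\mathcal D}(X,X[1])$ alone.

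There are two real gaps. First, for the diagonal block you should not expect an isomorphism $\mathrm{Hom}_{\mathcal D}(X_0,X_0[1])\cong\mathrm{Hom}_A(M,\tau M)$; what is available is \emph{simultaneous vanishing}. The precise lever is the Adachi--Iyama--Reiten surjectivity criterion from \cite{AIR}: for a minimal projective presentation $P_1\to P_0\to M\to 0$, one has $\mathrm{Hom}_A(M,\tau M)=0$ iff $\mathrm{Hom}_A(P_0,M)\to\mathrm{Hom}_A(P_1,M)$ is surjective. One then applies $\mathrm{Hom}_{\mathcal D}(-,X_0)$ to the triangle $S_1\to S_0\to X_0\to S_1[1]$, uses full faithfulness of $F$ on $\mathrm{add}(S)$, and identifies the cokernel of $\mathrm{Hom}_{\mathcal D}(S_0,X_0)\to\mathrm{Hom}_{\mathcal D}(S_1,X_0)$ with $\mathrm{Hom}_{\mathcal D}(X_0,X_0[1])$. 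Naming a ``Brenner--Butler / AR-type formula'' without performing this comparison leaves the crux unproved. Second, and more seriously, the silting step is \emph{not} ``essentially formal.'' Your counting argument gives $|X|=n\Leftrightarrow|M|+|P|=n$, but a basic presilting object of $S\ast S[1]$ with $n$ indecomposable summands whose classes are linearly independent in $K_0(\mathcal D)$ is not automatically silting: Theorem \ref{thmk0basis} gives only the forward implication, and $\mathrm{thick}(X)=\mathcal D$ does not follow from a rank count in $K_0$. You also cannot simply transport the Bongartz completion across your rigid-level bijection to finish, because concluding that the completed object is silting --- rather than merely a maximal presilting object of rank $n$ --- is exactly the point at issue. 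IJY close this by relating two-term silting objects to functorially finite torsion classes (compare Theorem \ref{proorder}); some argument of this kind is needed, and without it your ``converse direction'' is asserted rather than proved.
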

The following corollary follow from Theorem \ref{thmijy} and Theorem \ref{thmair}.
\begin{Corollary}\label{corsmutation}
Let $\mathcal D$ be a $K$-linear, Krull-Schmidt, Hom-finite  triangulated category $\mathcal D$ with a  basic silting object $S$. Then for any basic  almost silting object $U\in S\ast S[1]$, there are exactly two basic silting objects in  $S\ast S[1]$ containing $U$ as a direct summand.
\end{Corollary}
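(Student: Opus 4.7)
The plan is to transport the statement through the bijection $\tilde F$ of Theorem \ref{thmijy} and reduce it to Theorem \ref{thmair} on support $\tau$-tilting pairs.

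First, I would verify that the bijection $\tilde F$ is compatible with direct summands and with the numerical invariant $|\cdot|$ (the number of non-isomorphic indecomposable summands). Given a decomposition $X = X_0 \oplus X_1$ with $X_1$ the maximal summand in $\operatorname{add}(S[1])$, the definition $\tilde F(X) = (F(X_0), F(X_1[-1]))$ sends indecomposable summands of $X$ to indecomposable summands of either $F(X_0)$ or $F(X_1[-1])$, and no indecomposable of $\operatorname{add}(S[1])$ is equivalent to an object of $\operatorname{add}(S)$ modulo the ideal $[S[1]]$. In particular, $|X| = |F(X_0)| + |F(X_1[-1])|$. Applied to a basic silting object $T \in S \ast S[1]$, Theorem \ref{thmk0basis} forces $|T| = n$, matching the condition $|M| + |P| = n$ for a basic support $\tau$-tilting pair $(M, P) = \tilde F(T)$.

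Next, I would translate the notion of "almost silting direct summand" across $\tilde F$. Let $U \in S \ast S[1]$ be a basic almost silting object, so that $U \oplus Y$ is silting in $S \ast S[1]$ for some indecomposable $Y \notin \operatorname{add}(U)$. Then $\tilde F(U)$ is a basic $\tau$-rigid pair with $|\tilde F(U)| = n-1$, i.e.\ a basic almost support $\tau$-tilting pair in $\operatorname{mod} A$. Conversely, direct summand containment in $\operatorname{mod} A$ corresponds to direct summand containment in $S \ast S[1]$ under $\tilde F^{-1}$: a basic silting object $T \in S \ast S[1]$ contains $U$ as a summand if and only if the basic support $\tau$-tilting pair $\tilde F(T)$ contains $\tilde F(U)$ as a summand. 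This follows because $\tilde F$ is a bijection on isomorphism classes and is additive on the $\operatorname{add}$ decomposition.

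Finally, Theorem \ref{thmair} states that the basic almost support $\tau$-tilting pair $\tilde F(U)$ is a direct summand of exactly two basic support $\tau$-tilting pairs in $\operatorname{mod} A$. Pulling back along the bijection $\tilde F^{-1}$ (restricted to the image of silting objects inside $S\ast S[1]$, which is precisely the set of basic support $\tau$-tilting pairs by Theorem \ref{thmijy}) yields exactly two basic silting objects in $S \ast S[1]$ containing $U$ as a direct summand, which is the desired conclusion.

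The main obstacle is the bookkeeping in the second step, checking that $\tilde F$ truly respects direct-summand relations across the two "sides" (shifted summands in $\operatorname{add}(S[1])$ versus projective parts of pairs); but this is built into the very definition of $\tilde F$ as an additive bijection whose two components are essentially disjoint, so no serious issue arises beyond matching notation.
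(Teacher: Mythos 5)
Your proposal is correct and follows exactly the route the paper takes: the paper's proof is the one-line remark that the corollary ``follows from Theorem \ref{thmijy} and Theorem \ref{thmair}'', i.e.\ transport through the bijection $\tilde F$ and invoke the $\tau$-tilting statement, which is precisely what you spell out. You simply make explicit the bookkeeping (compatibility of $\tilde F$ with direct summands and with $|\cdot|$, using Theorem \ref{thmk0basis} to pin down $|T|=n$) that the paper leaves implicit.
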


\begin{Remark}
Let $A$ be a finite-dimensional algebra over a field $K$, and $\mathcal D={\rm K}^b({\rm proj}A)$. We know that $S:=A\in\mathcal D$ is a silting object in $\mathcal D$. In this case, the result in Corollary \ref{corsmutation} was first given in \cite{DF} by  Derksen and Fei.  Lately, Adachi,  Iyama and Reiten in \cite{AIR} reprove this result in the context of $\tau$-tilting theory.
\end{Remark}

Let $S$ be a basic silting object in $\mathcal D$ and $U\in S\ast S[1]$ be a basic almost silting object. Let $T=U\oplus X$ and $T^\prime=U\oplus Y$ be the two basic silting objects in  $S\ast S[1]$ with $X\ncong Y$. $T$ is called the {\bf mutation of $T^\prime$ at $Y$} and we denote $T=\mu_Y(T^\prime)$. Also  $T^\prime$ is called the {\bf mutation of $T$ at $X$} and is denoted by $T^\prime=\mu_X(T)$.

\subsection{$g$-vectors in silting theory}

Let  $\mathcal D$  be a $K$-linear, Krull-Schmidt, Hom-finite  triangulated category with a basic silting object $S=\bigoplus\limits_{i=1}^nS_i$. By Theorem \ref{thmk0basis}, $[S_1],\cdots,[S_n]$ forms a $\mathbb Z$-basis of ${\rm K}_0(\mathcal D)\cong \mathbb Z^n$. For any $X\in \mathcal D$, say

$$[X]=g_1[S_1]+\cdots+ g_n[S_n]\in{\rm K}_0(\mathcal D),$$
then  the vector ${\bf g}^S(X)=(g_1,\cdots,g_n)^{\rm}\in\mathbb Z^n$ is called the {\bf $g$-vector} of $X$ with respect to $S=\bigoplus\limits_{i=1}^nS_i$.

Let $T=\bigoplus\limits_{i=1}^nT_i$ be another basic silting object in $\mathcal D$, we call the martix
$G_T^S=({\bf g}^S(T_1),\cdots,{\bf g}^S(T_n))$ the {\bf $G$-matrix} of $T$ with respect to $S$.

\begin{Remark}\label{rmksbasis}
Keep the above notations.
By Theorem \ref{thmk0basis}, both $[S_1],\cdots,[S_n]$ and $[T_1],\cdots, [T_n]$ are $\mathbb Z$-bases of ${\rm K}_0(\mathcal D)\cong\mathbb Z^n$, we know that $det(G_T^S)=\pm 1$.
\end{Remark}

\begin{Proposition} \label{pronocom1} Let  $\mathcal D$  be a $K$-linear, Krull-Schmidt, Hom-finite  triangulated category with a basic silting object $S=\bigoplus\limits_{i=1}^nS_i$. Let $U$ be a rigid object in $S\ast S[1]$, consider the following triangle
\begin{eqnarray}\label{eqnsilting}
\xymatrix{U[-1]\ar[r]^h&S^{\prime\prime}\ar[r]^g&S^\prime\ar[r]^f&U,
}
\end{eqnarray}
 where $f$ is a  right minimal ${\rm add}(S)$-approximation of $U$. Then  $S^{\prime\prime}\in{\rm add}(S)$.
\end{Proposition}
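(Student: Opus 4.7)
The plan is to exploit $U\in S\ast S[1]$ to produce an auxiliary triangle with both outer terms in $\mathrm{add}(S)$ and then identify $S''$ as a direct summand of its leftmost term. First, by the definition of $S\ast S[1]$ (after rotation), I get a triangle
$$\xymatrix{S_1\ar[r] & S_0\ar[r]^-{\beta} & U\ar[r] & S_1[1]}$$
with $S_0,S_1\in\mathrm{add}(S)$. Applying $\mathrm{Hom}_{\mathcal D}(X,-)$ for any $X\in\mathrm{add}(S)$ and using the silting vanishing $\mathrm{Hom}_{\mathcal D}(X,S_1[1])=0$, every morphism $X\to U$ factors through $\beta$, so $\beta$ is itself a right $\mathrm{add}(S)$-approximation of $U$.

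Next I will apply the standard splitting of right minimal approximations as direct summands of arbitrary right approximations. Using the two approximation properties, write $f=\beta\alpha$ and $\beta=f\gamma$ for suitable $\alpha:S'\to S_0$ and $\gamma:S_0\to S'$. Then $f\gamma\alpha=f$, and right minimality of $f$ forces $\gamma\alpha\in\mathrm{Aut}(S')$. After renormalizing $\alpha$ so that $\gamma\alpha=\mathrm{id}_{S'}$ and setting $S'_c:=\ker\gamma$, I obtain $S_0\cong S'\oplus S'_c$ with $S'_c\in\mathrm{add}(S)$, and in this decomposition $\beta$ takes the block form $(f,0):S'\oplus S'_c\to U$.

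Finally, the triangle $S_1\to S_0\xrightarrow{\beta} U\to S_1[1]$ is isomorphic to the direct sum of the rotation $S''\to S'\xrightarrow{f}U\to S''[1]$ of the given triangle and the contractible triangle $S'_c\xrightarrow{\mathrm{id}}S'_c\to 0\to S'_c[1]$. Since the completion of a morphism to a distinguished triangle is unique up to isomorphism, the leftmost terms must agree: $S_1\cong S''\oplus S'_c$. Therefore $S''$ is a direct summand of $S_1\in\mathrm{add}(S)$, whence $S''\in\mathrm{add}(S)$. The only delicate step is the Krull--Schmidt splitting in the second paragraph; notably, the rigidity hypothesis on $U$ plays no role in this argument.
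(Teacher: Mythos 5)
Your argument is correct, and it takes a genuinely different route than the paper's. The paper proves the claim by showing directly that $S\oplus S''$ is a presilting (hence silting) object — a sequence of $\mathrm{Hom}$-vanishing checks, one of which, namely $\mathrm{Hom}_{\mathcal D}(U[-1],U)=0$, is precisely where the rigidity of $U$ is used — and then invokes the $K_0$-rank comparison of Theorem \ref{thmk0basis} to conclude $S''\in\mathrm{add}(S)$. You instead note that the membership $U\in S\ast S[1]$ already produces a two-term $\mathrm{add}(S)$-presentation $S_1\to S_0\xrightarrow{\beta}U\to S_1[1]$ whose middle map $\beta$ is automatically a right $\mathrm{add}(S)$-approximation (thanks to $\mathrm{Hom}_{\mathcal D}(S,S[1])=0$), and then split the minimal approximation $f$ off of $\beta$ by the standard Krull--Schmidt comparison, using uniqueness of cones to identify $S''$ as a direct summand of $S_1$. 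This avoids the silting classification and the Grothendieck-group counting entirely, is more elementary, and — as you correctly flag — never uses that $U$ is rigid, so it proves the stronger statement for arbitrary $U\in S\ast S[1]$. One minor wording point: in a plain additive category "$\ker\gamma$" is not literally defined; what you want is the image of the split idempotent $1-\alpha\gamma$ (after renormalizing so that $\gamma\alpha=\mathrm{id}_{S'}$), which exists because idempotents split in a Krull--Schmidt category. With that understood, the argument is complete.
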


\begin{proof}

By $U\in S\ast S[1]$ and Proposition \ref{projasso}, we know that ${\rm Hom}_{\mathcal D}(S,U[i])=0$ and ${\rm Hom}_{\mathcal D}(U,S[1+j])=0$ for any $i, j>0$. By $S^\prime\in{\rm add}(S)$, we get that ${\rm Hom}_{\mathcal D}(S,S^\prime[i])=0$ and ${\rm Hom}_{\mathcal D}(S^\prime,S[i])=0$ for any $i>0$.

From the triangle (\ref{eqnsilting}), we know that  $S^{\prime\prime}$ is an extension of $U[-1]$ and $S^\prime$. Thus ${\rm Hom}_{\mathcal D}(S^{\prime\prime},S[i])=0$ and ${\rm Hom}_{\mathcal D}(S,S^{\prime\prime}[1+i])=0$ for any $i>0$.

Now we show that ${\rm Hom}_{\mathcal D}(S,S^{\prime\prime}[1])=0$. Applying the functor ${\rm Hom}_{\mathcal D}(S,-)$ to the triangle (\ref{eqnsilting}), we get the following exact sequence.
$$\xymatrix{{\rm Hom}_{\mathcal D}(S,S^\prime)\ar[rr]^{{\rm Hom}_{\mathcal D}(S,f)}&&{\rm Hom}_{\mathcal D}(S,U)\ar[r]&{\rm Hom}_{\mathcal D}(S,S^{\prime\prime}[1])\ar[r]&{\rm Hom}_{\mathcal D}(S,S^\prime[1])=0
}.$$
Since $f$ is a  right minimal ${\rm add}(S)$-approximation of $U$, we know that ${\rm Hom}_{\mathcal D}(S,f)$ is surjective. So ${\rm Hom}_{\mathcal D}(S,S^{\prime\prime}[1])=0$.

By $U\in S\ast S[1]$, there exists a triangle of the form $$\xymatrix{S_U^1\ar[r]&S_U^0\ar[r]&U\ar[r]&S_U^1[1]},$$
where $S_U^0, S_U^1\in{\rm add}(S)$. Since ${\rm Hom}_{\mathcal D}(S^{\prime\prime},S[i])=0$ for any $i>0$, we have ${\rm Hom}_{\mathcal D}(S^{\prime\prime},S_U^0[i])=0$ and ${\rm Hom}_{\mathcal D}(S^{\prime\prime},(S_U^1[1])[i])=0$ for any $i> 0$. Thus we can get ${\rm Hom}_{\mathcal D}(S^{\prime\prime},U[i])=0$ for any $i> 0$.

By the triangle (\ref{eqnsilting}), and ${\rm Hom}_{\mathcal D}(S^{\prime\prime},S^\prime[i])=0$ and $${\rm Hom}_{\mathcal D}(S^{\prime\prime},(U[-1])[i+1])={\rm Hom}_{\mathcal D}(S^{\prime\prime},U[i])=0,$$ for any $i>0$, we can get ${\rm Hom}_{\mathcal D}(S^{\prime\prime},S^{\prime\prime}[i+1])=0$ for any $i>0$.

Now we show that ${\rm Hom}_{\mathcal D}(S^{\prime\prime},S^{\prime\prime}[1])=0$. Applying the functors ${\rm Hom}_{\mathcal D}(-,U)$ and ${\rm Hom}_{\mathcal D}(S^{\prime\prime},-)$ to the triangle (\ref{eqnsilting}), we get the following two exact sequences.
\begin{eqnarray}
&&\xymatrix{{\rm Hom}_{\mathcal D}(S^\prime,U)\ar[rr]^{{\rm Hom}_{\mathcal D}(g,U)}&&{\rm Hom}_{\mathcal D}(S^{\prime\prime},U)\ar[r]&{\rm Hom}_{\mathcal D}(U[-1],U)=0
}\nonumber\\
&&\xymatrix{{\rm Hom}_{\mathcal D}(S^{\prime\prime},S^\prime)\ar[rr]^{{\rm Hom}_{\mathcal D}(S^{\prime\prime},f)}&&{\rm Hom}_{\mathcal D}(S^{\prime\prime},U)\ar[r]&{\rm Hom}_{\mathcal D}(S^{\prime\prime},S^{\prime\prime}[1])\ar[r]&{\rm Hom}_{\mathcal D}(S^{\prime\prime},S^\prime[1])=0
}\nonumber
\end{eqnarray}
For any $a\in {\rm Hom}_{\mathcal D}(S^{\prime\prime},U)$, since ${\rm Hom}_{\mathcal D}(g,U)$ is surjective, there exists $b\in {\rm Hom}_{\mathcal D}(S^\prime,U)$ such that $a=bg$. Since $f$ is a right ${\rm add}(S)$-approximation of $U$, for $b\in {\rm Hom}_{\mathcal D}(S^\prime,U)$, there exists $c\in {\rm Hom}_{\mathcal D}(S^\prime,\mathcal S^\prime)$ such that $b=fc$. Thus $a=bg=fcg$, which implies that ${\rm Hom}_{\mathcal D}(S^{\prime\prime},f)$ is surjective. Thus ${\rm Hom}_{\mathcal D}(S^{\prime\prime},S^{\prime\prime}[1])=0$.

In summary, we have ${\rm Hom}_{\mathcal D}(S\oplus S^{\prime\prime},S\oplus S^{\prime\prime}[i])=0$ for any $i>0$. So $S\oplus S^{\prime\prime}$ is presilting. Since $S$ is silting, we know that $\mathcal D={\rm thick}(S)\subseteq {\rm thick}(S\oplus S^{\prime\prime})\subseteq \mathcal D$. So $S\oplus S^{\prime\prime}$ is a silting object in $\mathcal D$. By Theorem \ref{thmk0basis}, we can get $S^{\prime\prime}\in {\rm add}(S)$.
\end{proof}

The following proposition is a silting version of Theorem \ref{thmairbijection} (ii).
 \begin{Proposition}\label{proggvector}
Let  $\mathcal D$  be a $K$-linear, Krull-Schmidt, Hom-finite  triangulated category with a basic silting object $S=\bigoplus\limits_{i=1}^nS_i$ and $A={\rm End}_{\mathcal D}(S)^{\rm op}$. Then for any presilting object $X=X_0\oplus X_1\in S\ast S[1]$, where $X_1$ is a maximal direct summand of $X$ that belongs to ${\rm add}(S[1])$,  the $g$-vector ${\bf g}^S(X)$ of $X$ with respect to $S$ in $\mathcal D$ is equal to the $g$-vector ${\bf g}(\tilde F(X))$ of the $\tau$-rigid pair $\tilde F(X)=(F(X_0),F(X_1[-1]))$ in ${\rm mod}A$.
 \end{Proposition}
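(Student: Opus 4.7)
The plan is to split $X=X_0\oplus X_1$ along the silting structure, reduce to the ``module-like'' part $X_0$, apply Proposition \ref{pronocom1} to get a triangle built from ${\rm add}(S)$-pieces, and then show that applying $F$ converts this triangle into the minimal projective presentation of $F(X_0)$ in ${\rm mod}A$. The heart of the argument is the minimality claim, which will use uniqueness of the cone of a morphism together with the hypothesis that $X_1$ is a maximal summand in ${\rm add}(S[1])$.

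First I would handle $X_1$ by a direct computation. Writing $X_1=Y[1]$ with $Y=\bigoplus_{i=1}^{n}S_i^{c_i}\in{\rm add}(S)$, we have $[X_1]=-[Y]=-\sum c_i[S_i]$ in ${\rm K}_0(\mathcal D)$, so ${\bf g}^S(X_1)=-(c_1,\cdots,c_n)^{\rm T}$. On the other side, $F(X_1[-1])=F(Y)=\bigoplus P_i^{c_i}$ is projective (where $P_i=F(S_i)$), hence ${\bf g}(F(X_1[-1]))=(c_1,\cdots,c_n)^{\rm T}$. Thus ${\bf g}^S(X_1)=-{\bf g}(F(X_1[-1]))$, and by additivity of $[-]$ in ${\rm K}_0(\mathcal D)$ and of the $g$-vector of $\tau$-rigid pairs, it suffices to prove ${\bf g}^S(X_0)={\bf g}(F(X_0))$.

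Next, apply Proposition \ref{pronocom1} to the rigid object $X_0\in S\ast S[1]$ to obtain a triangle $X_0[-1]\xrightarrow{h}S''\xrightarrow{g}S'\xrightarrow{f}X_0$ with $f$ a right minimal ${\rm add}(S)$-approximation and $S',S''\in{\rm add}(S)$, say $S'=\bigoplus S_i^{a_i}$ and $S''=\bigoplus S_i^{b_i}$. The identity $[X_0]=[S']-[S'']$ in ${\rm K}_0(\mathcal D)$ gives ${\bf g}^S(X_0)=(a_1-b_1,\cdots,a_n-b_n)^{\rm T}$. Applying $F={\rm Hom}_{\mathcal D}(S,-)$, the long exact sequence together with ${\rm Hom}_{\mathcal D}(S,S''[1])=0$ (from $S$ silting) yields the exact sequence of $A$-modules
$$F(S'')\xrightarrow{F(g)}F(S')\xrightarrow{F(f)}F(X_0)\to 0,$$
with $F(S')=\bigoplus P_i^{a_i}$ and $F(S'')=\bigoplus P_i^{b_i}$ projective. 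The task is to show this is the \emph{minimal} projective presentation. Since $F$ restricts to an equivalence ${\rm add}(S)\simeq{\rm proj}A$ and $f$ is right minimal, $F(f)$ is a right minimal surjection from a projective, hence a projective cover of $F(X_0)$. For $F(g)$, suppose it has a direct summand of the form $P'\to 0$ with $P'\neq 0$; the equivalence lifts this to a decomposition $S''=\tilde P\oplus\tilde P'$ with $g=(g_P,0)$ and $\tilde P'\neq 0$. Then ${\rm cone}(g)\cong{\rm cone}(g_P)\oplus\tilde P'[1]$, so $X_0\cong{\rm cone}(g_P)\oplus\tilde P'[1]$ would contain a nonzero direct summand in ${\rm add}(S[1])$, contradicting the maximality of $X_1$ in the decomposition $X=X_0\oplus X_1$. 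Hence $F(g)$ is left minimal and $F(S'')\twoheadrightarrow\ker F(f)$ is a projective cover. Combining, the displayed sequence is the minimal projective presentation, so ${\bf g}(F(X_0))=(a_1-b_1,\cdots,a_n-b_n)^{\rm T}={\bf g}^S(X_0)$.

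The only non-formal step is the minimality of $F(g)$; everything else is the long exact sequence of $F$ and full faithfulness of $F|_{{\rm add}(S)}$. The key input that drives minimality is the uniqueness (up to isomorphism) of the cone of a morphism in $\mathcal D$ combined with the maximality of $X_1\in{\rm add}(S[1])$, which together forbid a trivial summand in $g$.
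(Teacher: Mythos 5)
Your proof is correct and follows the paper's route exactly: decompose $X=X_0\oplus X_1$, treat the ${\rm add}(S[1])$ part $X_1$ by direct computation, apply Proposition \ref{pronocom1} to $X_0$, and push the resulting triangle through $F$ to obtain a projective presentation of $F(X_0)$. The paper merely asserts that this presentation is \emph{minimal}, while you supply the justification via the cone argument using the maximality of $X_1$ --- the one slip is terminological: the property you verify for $F(g)$ (no direct summand of the form $P'\rightarrow 0$) is \emph{right} minimality in the paper's convention, not left minimality, which is what the minimal-projective-presentation criterion actually requires.
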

 \begin{proof}
By $X=X_0\oplus X_1\in S\ast S[1]$ and $S\ast S[1]$ is closed under direct summand, we get $X_0\in S\ast S[1]$. Note that $X_1\in{\rm add}(S[1])$. Then by
 Proposition \ref{pronocom1}, we can get the following triangles
\begin{eqnarray}
X_0[-1]\rightarrow S^1_{X_0}\rightarrow S_{X_0}^0\overset{f}{\rightarrow} X_0\;\;\;\text{and}\;\;\;
X_1[-1]\rightarrow S^1_{X_1}\rightarrow 0\rightarrow X_1,\nonumber
\end{eqnarray}
where $f$ is a right minimal ${\rm add}(S)$-approximation of $X_0$, and $S_{X_0}^0, S_{X_0}^1\in{\rm add}(S)$ and $S_{X_1}^1\cong X_1[-1]\in{\rm add}(S)$. So
$${\bf g}^S(X)={\bf g}^S(X_0)+{\bf g}^S(X_1)=({\bf g}^S(S_{X_0}^0)-{\bf g}^S(S_{X_0}^1))-{\bf g}^S(S_{X_1}^1).$$

 By applying the functor $F={\rm Hom}_{\mathcal D}(S,-)$ to the triangle $X_0[-1]\rightarrow S^1_{X_0}\rightarrow S_{X_0}^0\rightarrow X_0$, we get the minimal projective present of $F(X_0)$ in ${\rm mod}A$.
$$F(S_{X_0}^1)\rightarrow F(S_{X_0}^0)\rightarrow F(X_0)\rightarrow 0.$$
 So we have $${\bf g}(\tilde F(X))={\bf g}(F(X_0))-{\bf g}(F(X_1[-1]))=({\bf g}(F(S_{X_0}^0))-{\bf g}(F(S_{X_0}^1)))-{\bf g}(F(S_{X_1}^1)).$$
Since $S_{X_0}^0, S_{X_0}^1,S_{X_1}^1\in{\rm add}(S)$, we know that
$${\bf g}^S(S_{X_0}^0)={\bf g}(F(S_{X_0}^0)),\;\;\;{\bf g}^S(S_{X_0}^1)={\bf g}(F(S_{X_0}^1)),\;\;\;{\bf g}^S(S_{X_1}^1)={\bf g}(F(S_{X_1}^1)).$$
So ${\bf g}^S(X)={\bf g}(\tilde F(X))$.
\end{proof}

\begin{Corollary}\label{corsmu}
Let  $\mathcal D$  be a $K$-linear, Krull-Schmidt, Hom-finite  triangulated category with a basic silting object $S=\bigoplus\limits_{i=1}^nS_i$. Then the map $X\mapsto {\bf g}^S(X)$ is a injection from the set of presilting objects in $S\ast S[1]$ to $\mathbb Z^n$.
\end{Corollary}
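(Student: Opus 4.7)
The plan is to deduce this injectivity from the three results assembled just before it, namely Theorem \ref{thmijy}, Proposition \ref{proggvector}, and Theorem \ref{thmairinj}. Set $A := \mathrm{End}_{\mathcal D}(S)^{\mathrm{op}}$ and let $\tilde F$ be the bijection of (\ref{eqnfbijection}).

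First I would take two basic presilting objects $X, Y \in S\ast S[1]$ with ${\bf g}^S(X) = {\bf g}^S(Y)$ and aim to show $X \cong Y$. By Theorem \ref{thmijy}, $\tilde F$ sends basic presilting objects in $S\ast S[1]$ bijectively to basic $\tau$-rigid pairs in $\mathrm{mod}\,A$, so it suffices to show $\tilde F(X) \cong \tilde F(Y)$ as $\tau$-rigid pairs. Next, by Proposition \ref{proggvector}, the $g$-vector computed in $\mathcal D$ agrees with the $g$-vector of the associated $\tau$-rigid pair in $\mathrm{mod}\,A$:
\[
{\bf g}(\tilde F(X)) = {\bf g}^S(X) = {\bf g}^S(Y) = {\bf g}(\tilde F(Y)).
\]
Finally, Theorem \ref{thmairinj} tells us that the $g$-vector map on isomorphism classes of $\tau$-rigid pairs in $\mathrm{mod}\,A$ is injective, so $\tilde F(X) \cong \tilde F(Y)$, and hence $X \cong Y$. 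For the non-basic case one reduces to the basic one by passing to $X^\flat$ and $Y^\flat$ (recall from the opening subsection that ${\bf g}^S$ is additive on direct sums, so the multiplicity of each indecomposable summand is recoverable from the basic case once we know it).

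There is no real obstacle here since all the heavy lifting is done by the cited results; the only point to be a little careful about is the interaction between the two components of $\tilde F(X) = (F(X_0), F(X_1[-1]))$ and the decomposition ${\bf g}^S(X) = {\bf g}^S(X_0) + {\bf g}^S(X_1)$ used in Proposition \ref{proggvector}, but this is exactly what Proposition \ref{proggvector} already records. So the proof is essentially a one-line composition of these three facts.
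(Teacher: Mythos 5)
Your proof is correct and follows exactly the same route as the paper, which simply cites Theorem \ref{thmijy}, Proposition \ref{proggvector}, and Theorem \ref{thmairinj}; you have merely unpacked the one-line argument. (Your closing remark about the non-basic case is unnecessary: Theorem \ref{thmairinj} already applies to all $\tau$-rigid pairs, and $\tilde F$ is injective on all of ${\rm iso}\;S\ast S[1]$, so no reduction to $X^\flat$ is needed.)
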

\begin{proof}
It follows from Theorem \ref{thmijy}, Proposition \ref{proggvector} and Theorem \ref{thmairinj}.
\end{proof}

\section{Co-Bongartz completions in $2$-CY triangulated categories}

 In this section,  we fix  a $K$-linear, Krull-Schmidt, Hom-finite $2$-CY triangulated category $\mathcal C$ with a  cluster-tilting object $T$.

\subsection{Co-Bongartz completions  in $2$-CY triangulated categories}
Recall that the co-Bongartz completions  in triangulated categories are defined in Subsection 2.5. In this subsection, we will study the properties of co-Bongartz completions  in $2$-CY triangulated categories.

\begin{Proposition}\label{protrigid}
Let $\mathcal C$ be a $K$-linear, Krull-Schmidt, Hom-finite $2$-CY triangulated category. If both $U$ and $W$ are rigid objects in $\mathcal C$, then ${\rm \mathcal T}_U(W)$ is a basic rigid object in $\mathcal C$.
\end{Proposition}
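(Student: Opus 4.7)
The plan is to verify directly that $U\oplus X_U$ is rigid; since $\mathcal T_U(W)=(U\oplus X_U)^\flat$ has the same additive closure, rigidity transfers automatically. Granting that $U$ is already rigid, the task reduces to the three vanishings
$$\mathrm{Hom}_{\mathcal C}(X_U,U[1])=0,\quad \mathrm{Hom}_{\mathcal C}(U,X_U[1])=0,\quad \mathrm{Hom}_{\mathcal C}(X_U,X_U[1])=0.$$
The $2$-CY duality $\mathrm{Hom}_{\mathcal C}(U,X_U[1])\cong D\,\mathrm{Hom}_{\mathcal C}(X_U,U[1])$ collapses the first two into one, so it suffices to establish the first and the third.

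For $\mathrm{Hom}_{\mathcal C}(X_U,U[1])=0$, I would apply $\mathrm{Hom}_{\mathcal C}(-,U)$ to the defining triangle $W[-1]\xrightarrow{f}U'\to X_U\to W$. The relevant piece of the induced long exact sequence reads
$$\mathrm{Hom}_{\mathcal C}(U',U)\xrightarrow{\mathrm{Hom}(f,U)}\mathrm{Hom}_{\mathcal C}(W[-1],U)\to\mathrm{Hom}_{\mathcal C}(X_U,U[1])\to\mathrm{Hom}_{\mathcal C}(U',U[1]).$$
The rightmost group is zero by rigidity of $U$ together with $U'\in\mathrm{add}(U)$, and the leftmost arrow is surjective by the very definition of the left $\mathrm{add}(U)$-approximation $f$. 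Hence the middle group vanishes, and $2$-CY duality then yields $\mathrm{Hom}_{\mathcal C}(U,X_U[1])=0$ as well, and in particular $\mathrm{Hom}_{\mathcal C}(U',X_U[1])=0$.

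The main obstacle is the self-vanishing $\mathrm{Hom}_{\mathcal C}(X_U,X_U[1])=0$, which I would obtain by sandwiching this group between two injections and showing that their composite is already the zero map. Writing the rotated triangle as $U'\xrightarrow{\alpha}X_U\xrightarrow{\beta}W\xrightarrow{\gamma}U'[1]$ and applying $\mathrm{Hom}_{\mathcal C}(X_U,-)$, the vanishing $\mathrm{Hom}_{\mathcal C}(X_U,U'[1])=0$ (recorded above) supplies an injection
$$\mathrm{Hom}_{\mathcal C}(X_U,X_U[1])\hookrightarrow\mathrm{Hom}_{\mathcal C}(X_U,W[1]),\qquad \phi\mapsto\beta[1]\circ\phi.$$
Next, applying $\mathrm{Hom}_{\mathcal C}(-,W[1])$ to the original triangle and invoking rigidity of $W$ (so that $\mathrm{Hom}_{\mathcal C}(W,W[1])=0$) gives a second injection
$$\mathrm{Hom}_{\mathcal C}(X_U,W[1])\hookrightarrow\mathrm{Hom}_{\mathcal C}(U',W[1]),\qquad \psi\mapsto\psi\circ\alpha.$$
The composite sends $\phi$ to $\beta[1]\circ\phi\circ\alpha$, but $\phi\circ\alpha\in\mathrm{Hom}_{\mathcal C}(U',X_U[1])=0$. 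So the composition of two injections is the zero map, which forces $\mathrm{Hom}_{\mathcal C}(X_U,X_U[1])=0$.

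The only genuinely non-formal maneuver I expect is the two-step injection/composition-is-zero trick in the final paragraph; everything else is routine long-exact-sequence chasing combined with the defining property of the left $\mathrm{add}(U)$-approximation $f$, the rigidity of $U$ and $W$, and the $2$-CY duality.
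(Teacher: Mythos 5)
Your proof is correct and uses the same three key vanishings as the paper — $\mathrm{Hom}(X_U,U'[1])=0$, $\mathrm{Hom}(W,W[1])=0$, $\mathrm{Hom}(U',X_U[1])=0$ — arising from the same two triangles; the paper phrases the final step as a direct factoring chase on a given $a\in\mathrm{Hom}(X_U,X_U[1])$ (successively factoring $a$ through $W$ and then $U'[1]$), while you package the same content as a composition of two injections equal to the zero map. The small wording slip (you say $\mathrm{Hom}(X_U,U'[1])=0$ was ``recorded above'' when you actually recorded $\mathrm{Hom}(X_U,U[1])=0$; it follows immediately from $U'\in\mathrm{add}(U)$) does not affect the argument.
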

\begin{proof}
 Consider the following triangle
$$\xymatrix{W[-1]\ar[r]^f&U^\prime\ar[r]^g&X_U\ar[r]^h&W},$$
where $f$ is a left minimal ${\rm add}(U)$-approximation of $W[-1]$. Then ${\rm \mathcal T}_U(W)=(U\oplus X_U)^\flat$.

Applying the functor ${\rm Hom}_{\mathcal C}(-,U)$ to the above triangle, we get the following exact sequence.
$$\xymatrix{{\rm Hom}_{\mathcal C}(U^\prime,U)\ar[rr]^{{\rm Hom}_{\mathcal C}(f,U)}&&{\rm Hom}_{\mathcal C}(W[-1],U)\ar[r]&{\rm Hom}_{\mathcal C}(X_U,U[1])\ar[r]&{\rm Hom}_{\mathcal C}(U^\prime,U[1])}=0.$$
Since $f$ is a minimal left ${\rm add} (U)$-approximation of  $W[-1]$, we have ${\rm Hom}_{\mathcal C}(f,U)$ is surjective. Thus ${\rm Hom}_{\mathcal C}(X_U,U[1])=0$. Since $\mathcal C$ is $2$-CY, we also have ${\rm Hom}_{\mathcal C}(U,X_U[1])=0$.

Now we show that ${\rm Hom}_{\mathcal C}(X_U,X_U[1])=0$. Let $a\in {\rm Hom}_{\mathcal C}(X_U,X_U[1])$. Since $$ag\in{\rm Hom}_{\mathcal C}(U^\prime,X_U[1])=0,$$ there exists $c:W\rightarrow X_U[1]$ such that $a=ch$, i.e., we have the following commutative diagram.
$$\xymatrix{W[-1]\ar[r]^f&U^\prime\ar[r]^g&X_U\ar[r]^h\ar[d]^a&W\ar@{.>}[ld]_c\\
W\ar[r]^{f[1]}&U^\prime[1]\ar[r]^{g[1]}&X_U[1]\ar[r]^{h[1]}&W[1]}.$$
 Since $(h[1])c\in{\rm Hom}_{\mathcal C}(W, W[1])=0$, there exists $d:W\rightarrow U^\prime[1]$ such that $c=(g[1])d$. Since $dh\in {\rm Hom}_{\mathcal C}(X_U,U^\prime[1])=0$, we get that $a=ch=(g[1])dh=0$. So ${\rm Hom}_{\mathcal C}(X_U,X_U[1])=0$.

Thus we have proved that  $U\oplus X_U$ is a rigid object in $\mathcal C$. So  ${\rm \mathcal T}_U(W)=(U\oplus X_U)^\flat$ is a basic rigid object in  $\mathcal C$.
\end{proof}
The following theorem can be viewed as a dual version of \cite[Definition-Proposition 4.22]{J}. The dual version is necessary for our considerations. Here we provide a different proof.
\begin{Theorem}\label{thmsantwi}
Let $\mathcal C$ be a $K$-linear, Krull-Schmidt, Hom-finite $2$-CY triangulated category with a basic cluster tilting object $T=\bigoplus\limits_{i=1}^nT_i$, and $U$ be a rigid object in $\mathcal C$. Then $\mathcal T_U(T)$ is a basic cluster tilting object in $\mathcal C$.
\end{Theorem}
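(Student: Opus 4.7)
My strategy is to transfer the problem to $\tau$-tilting theory via Theorem \ref{thmairbijection} and invoke the $\tau$-tilting analogue. Concretely, by Theorem \ref{thmairbijection}(i) it suffices to show that $\tilde H(\mathcal T_U(T))$ is a basic support $\tau$-tilting pair in $\mathrm{mod}\,A$, where $A:=\mathrm{End}_{\mathcal C}(T)^{\mathrm{op}}$; the cluster tilting assertion in $\mathcal C$ then follows from the bijection. Proposition \ref{protrigid} already guarantees that $\mathcal T_U(T)=(U\oplus X_U)^\flat$ is basic and rigid, so $\tilde H(\mathcal T_U(T))$ is at least a basic $\tau$-rigid pair, and the task reduces to identifying it with a support $\tau$-tilting pair of the expected form.

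Write $U=U_0\oplus U_1$ with $U_1$ the maximal direct summand of $U$ in $\mathrm{add}(T[1])$, so that $\tilde H(U)=(M,P):=(H(U_0),H(U_1[-1]))$ is a basic $\tau$-rigid pair. The natural candidate for $\tilde H(\mathcal T_U(T))$ is the basic support $\tau$-tilting pair $(\widetilde M,\widetilde P)$ obtained from $(M,P)$ by the $\tau$-tilting constructions recalled above (Theorem \ref{proorder} and Proposition \ref{prominmax}): taking $\widetilde M:=\mathcal P(\mathrm{Fac}(M))$ in $\mathrm{mod}(A/\langle e_P\rangle)$ and $\widetilde P\supseteq P$ the corresponding projective part yields a basic support $\tau$-tilting pair with $|\widetilde M|+|\widetilde P|=n$ and containing $(M,P)$ as a direct summand. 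To pin down $\tilde H(\mathcal T_U(T))=(\widetilde M,\widetilde P)$, I would apply $H=\mathrm{Hom}_{\mathcal C}(T,-)$ to the defining triangle
\[ T[-1]\xrightarrow{f}U^\prime\longrightarrow X_U\longrightarrow T \]
and track the resulting data in $\mathrm{mod}\,A$: the left-$\mathrm{add}(U)$-approximation and minimality conditions on $f$ should, under $H$, place $H(X_U)$ inside $\mathrm{Fac}(M)$ as the remaining Ext-projective generator complementing $M$ in $\widetilde M$. A comparison of $g$-vectors via Theorem \ref{thmairbijection}(ii), together with the injectivity of $(M,P)\mapsto\mathbf g(M,P)$ from Theorem \ref{thmairinj}, then finishes the identification.

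The main obstacle is precisely this identification. The definition of $X_U$ is \emph{left}-sided (using a left minimal $\mathrm{add}(U)$-approximation of $T[-1]$), whereas the $\tau$-tilting description of $\mathcal P(\mathrm{Fac}(M))$ is intrinsically \emph{right}-sided (via factor objects and Ext-projectivity in $\mathrm{Fac}(M)$). Reconciling these asymmetric viewpoints will rely essentially on the $2$-CY Serre duality $\mathrm{Hom}_{\mathcal C}(X,Y)\cong D\mathrm{Hom}_{\mathcal C}(Y,X[2])$ to convert left approximations into right approximations (up to a shift by $[1]$), together with careful handling of how the summand $U_1\in\mathrm{add}(T[1])$ of $U$ contributes to the projective part $P$ of the associated $\tau$-rigid pair.
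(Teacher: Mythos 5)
The paper's own proof is entirely self-contained inside the $2$-CY category $\mathcal C$: it takes the rigid object $R=\mathcal T_U(T)$ from Proposition \ref{protrigid} and shows directly that any $Y$ with $\mathrm{Hom}_{\mathcal C}(R,Y[1])=0$ lies in $\mathrm{add}(R)$, by resolving $Y[1]$ against $T$, composing approximations, invoking the octahedral axiom, and proving three auxiliary Claims. It never passes to $\mathrm{mod}\,A$. Your route -- transfer through $\tilde H$ and invoke the $\tau$-tilting analogue -- is essentially the route the paper itself declines in the remark that the theorem is a dual of \cite[Definition-Proposition 4.22]{J} and that ``Here we provide a different proof.'' So the approach is a priori reasonable, but it is not what the paper does, and more importantly you have not actually carried it out.

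The genuine gap is exactly the step you flag as ``the main obstacle'': identifying $\tilde H(\mathcal T_U(T))$ with the $\tau$-tilting co-Bongartz completion $(\widetilde M,\widetilde P)$. Your sketch of how $H$ acting on the triangle $T[-1]\to U'\to X_U\to T$ should place $H(X_U)$ inside $\mathrm{Fac}(M)$ glosses over the fact that $H(T[-1])$ and $H(U'[1])$ do not vanish in general, so the image under $H$ is a long exact sequence in $\mathrm{mod}\,A$ rather than a short one, and extracting the Ext-projective structure of $\mathrm{Fac}(M)$ from it is a real argument, not a formality. The appeal to $2$-CY Serre duality does not close this: dualizing the surjection $\mathrm{Hom}(U',V)\twoheadrightarrow\mathrm{Hom}(T[-1],V)$ for $V\in\mathrm{add}(U)$ gives injectivity of $\mathrm{Hom}(V,T[1])\hookrightarrow\mathrm{Hom}(V,U'[2])$, which is not a right-approximation statement of the kind you would need on the module side. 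You would also need to nail down precisely how the projective part $P$ enters (the paper only defines the $\tau$-tilting co-Bongartz completion for a module, not a pair) and how the completion taken in $\mathrm{mod}(A/\langle e_P\rangle)$ lifts to a support $\tau$-tilting pair in $\mathrm{mod}\,A$. In short: a plausible and known alternative strategy, with the central identification left unproved -- which is where all the work lies.
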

\begin{proof}
Consider the following  triangle
\begin{eqnarray}\label{eqntriangle}
\xymatrix{T[-1]\ar[r]^f&U^\prime\ar[r]^g&X_U\ar[r]^h&T},
\end{eqnarray}
where $f$ is a minimal left  ${\rm add} (U)$-approximation of $T[-1]$. Then $\mathcal T_U(T)=(U\oplus X_U)^\flat$.

By Proposition \ref{protrigid}, $R:=\mathcal T_U(T)$ is a basic rigid object in $\mathcal C$. So it suffices to show that if ${\rm Hom}_{\mathcal C}(R, Y[1])=0$, then $Y\in {\rm add}(R)$.

Let $Y$ be an object in $\mathcal C$ satisfying that ${\rm Hom}_{\mathcal C}(R, Y[1])=0$.
By Proposition \ref{protilt}, for the object $Y[1]$, we have a triangle
\begin{eqnarray}\label{eqnytriangle}
\xymatrix{Y\ar[r]^{\alpha_3}&T_1^{\prime}\ar[r]^{\alpha_2}&T_0^{\prime}\ar[r]^{\alpha_1}&Y[1]},
\end{eqnarray}
where $T_1^{\prime},T_0^{\prime}\in{\rm add}(T)$.

Let $\beta_1:T_0^{\prime}[-1]\rightarrow U_0$ be a left minimal ${\rm add}(U)$-approximation of $T_0^{\prime}[-1]$ and extend it to a triangle.
$$\xymatrix{T_0^{\prime}[-1]\ar[r]^{\beta_1}&U_0\ar[r]&X_0\ar[r]&T_0^{\prime}}.$$
By the triangle (\ref{eqntriangle}) and $T_0^\prime[-1]\in{\rm add}(T[-1])$, we know that $U_0\in{\rm add}(U^\prime)$ and $X_0\in{\rm add}(X_U)$.

Let $\beta_2:=\beta_1(\alpha_2[-1]): T_1^\prime[-1]\rightarrow U_0$ and extend it to a triangle
$$\xymatrix{T_1^{\prime}[-1]\ar[r]^{\beta_2}&U_0\ar[r]&R_1^\prime \ar[r]&T_1^{\prime}}.$$

{\bf Claim I:} $Y$ is isomorphism to a direct summand of $R_1^\prime$.

{\bf Claim II:} $\beta_2=\beta_1(\alpha_2[-1]): T_1^\prime[-1]\rightarrow U_0$ is a left ${\rm add}(U)$-approximation of $T_1^\prime[-1]$;

{\bf Claim III:} $R_1^\prime\in{\rm add}(R)$;

Proof of Claim I: By $\beta_2=\beta_1(\alpha_2[-1])$ and the octahedral axiom, we have the following commutative diagram and the third column is a triangle in $\mathcal C$.
$$\xymatrix{
Y[-1]\ar[r]\ar[d]&0\ar[d]\ar[r]&Y\ar[d]\ar@{=}[r]&Y\ar[d]^{\alpha_3}\\
T_1^\prime[-1]\ar[d]^{\alpha_2[-1]}\ar[r]^{\beta_2}&U_0\ar@{=}[d]\ar[r]&R_1^\prime\ar[d]\ar[r]&T_1^\prime\ar[d]^{\alpha_2}\\
T_0^\prime[-1]\ar[d]\ar[r]^{\beta_1}&U_0\ar[d]\ar[r]&X_0\ar[d]\ar[r]&T_0^\prime\ar[d]^{\alpha_1}\\
Y\ar[r]&0\ar[r]&Y[1]\ar@{=}[r]&Y[1]
}$$
Since  ${\rm Hom}_{\mathcal C}(R,Y[1])=0$ and $X_0\in{\rm add}(X_U)\subseteq {\rm add}(R)$, we know ${\rm Hom}_{\mathcal C}(X_0,Y[1])=0$. Thus the triangle $\xymatrix{Y\ar[r]&R_1^\prime\ar[r]&X_0\ar[r]&Y[1]}$ is split and thus $Y$ is isomorphism to a direct summand of $R_1^\prime$.

Proof of Claim II: It suffices to show the following sequence is exact for any $U_1\in {\rm add}(U)$.
$$\xymatrix{{\rm Hom_{\mathcal C}}(U_0,U_1)\ar[rr]^{{\rm Hom_{\mathcal C}}(\beta_2,U_1)}&&{\rm Hom_{\mathcal C}}(T_1^\prime[-1],U_1)\ar[r]&0}.$$
Applying the functor ${\rm Hom}_{\mathcal C}(-,U_1)$ to the triangle
$$\xymatrix{Y[-1]\ar[r]^{\alpha_3[-1]}&T_1^{\prime}[-1]\ar[r]^{\alpha_2[-1]}&T_0^{\prime}[-1]\ar[r]^{\alpha_1[-1]}&Y},$$
we get the exact sequence
$$\xymatrix{{\rm Hom_{\mathcal C}}(T_0^\prime[-1],U_1)\ar[rrr]^{{\rm Hom_{\mathcal C}}(\alpha_2[-1],U_1)}&&&{\rm Hom_{\mathcal C}}(T_1^\prime[-1],U_1)\ar[rrr]^{{\rm Hom_{\mathcal C}}(\alpha_3[-1],U_1)}&&&
{\rm Hom_{\mathcal C}}(Y[-1],U_1).}$$
Since $U_1\in{\rm add}(U)\subseteq{\rm add}(R)$ and ${\rm Hom}_{\mathcal C}(R,Y[1])=0$, we know that
$${\rm Hom_{\mathcal C}}(Y[-1],U_1)\cong{\rm Hom_{\mathcal C}}(Y,U_1[1])\cong D{\rm Hom_{\mathcal C}}(U_1,Y[1])=0.$$
So we have the  following exact sequence.

$$\xymatrix{{\rm Hom_{\mathcal C}}(T_0^\prime[-1],U_1)\ar[rrr]^{{\rm Hom_{\mathcal C}}(\alpha_2[-1],U_1)}&&&{\rm Hom_{\mathcal C}}(T_1^\prime[-1],U_1)\ar[r]&0.}$$
Thus for any $b\in {\rm Hom}_{\mathcal C}(T_1^\prime[-1],U_1)$, there exists $b_0\in {\rm Hom_{\mathcal C}}(T_0^\prime[-1],U_1)$ such that $b=b_0(\alpha_2[-1])$.
Since $\beta_1:T_0^{\prime}[-1]\rightarrow U_0$ is a left minimal ${\rm add}(U)$-approximation of $T_0^{\prime}$,
we have the following exact sequence.
$$\xymatrix{{\rm Hom_{\mathcal C}}(U_0,U_1)\ar[rr]^{{\rm Hom_{\mathcal C}}(\beta_1,U_1)}&&{\rm Hom_{\mathcal C}}(T_0^\prime[-1],U_1)\ar[r]&0}.$$
Thus for $b_0\in {\rm Hom_{\mathcal C}}(T_0^\prime[-1],U_1)$, there exists $c_0\in {\rm Hom_{\mathcal C}}(U_0,U_1)$ such that $b_0=c_0\beta_1$. So $$a=b_0(\alpha_2[-1])=c_0\beta_1(\alpha_2[-1])=c_0\beta_2,$$ which implies that the following sequence
$$\xymatrix{{\rm Hom_{\mathcal C}}(U_0,U_1)\ar[rr]^{{\rm Hom_{\mathcal C}}(\beta_2,U_1)}&&{\rm Hom_{\mathcal C}}(T_1^\prime[-1],U_1)\ar[r]&0},$$
is exact. So $\beta_2=\beta_1(\alpha_2[-1]): T_1^\prime[-1]\rightarrow U_0$ is a left ${\rm add}(U)$-approximation of $T_1^\prime[-1]$.

Proof of Claim III: Let $\beta_2^\prime: T_1^\prime[-1]\rightarrow U_0^\prime$ be a left minimal ${\rm add}(U)$-approximation of $T_1^\prime[-1]$ and extend it to a triangle
$\xymatrix{T_1^{\prime}[-1]\ar[r]^{\beta_2^\prime}&U_0^\prime\ar[r]&R_1^{\prime \prime}\ar[r]&T_1^{\prime}}$.

By the triangle (\ref{eqntriangle}) and $T_1^\prime[-1]\in{\rm add}(T[-1])$, we know that $U_0^\prime\in{\rm add}(U^\prime)$ and $R_1^{\prime\prime}\in{\rm add}(X_U)\subseteq{\rm add}(R)$. Since $\beta_2$ is a  left ${\rm add}(U)$-approximation of $T_1^\prime[-1]$
and $\beta_2^\prime$ is a left minimal ${\rm add}(U)$-approximation of $T_1^\prime[-1]$, we know that the triangle
$$\xymatrix{T_1^{\prime}[-1]\ar[r]^{\beta_2}&U_0\ar[r]&R_1^\prime \ar[r]&T_1^{\prime}}$$ is the direct sum of the triangle $\xymatrix{T_1^{\prime}[-1]\ar[r]^{\beta_2^\prime}&U_0^\prime\ar[r]&R_1^{\prime \prime} \ar[r]&T_1^{\prime}}$ with a triangle of the form
$$\xymatrix{0\ar[r]&U_0^{\prime \prime}\ar[r]^{id}&U_0^{\prime \prime} \ar[r]&0},$$
where $U_0^{\prime \prime}\in {\rm add}(U)$. So $R_1^\prime\cong R_1^{\prime \prime}\oplus U_0^{\prime \prime}\in {\rm add}(R)$.

By Claim I and Claim III, we get that $Y\in{\rm add}(R)$.
So ${\rm add}(R)=\{Y\in\mathcal C|{\rm Hom}_{\mathcal C}(R,Y[1])=0\}$ and thus $R=\mathcal T_U(T)$ is a basic cluster tilting object in $\mathcal C$.
\end{proof}

The following proposition indicates that mutations can be understood as elementary co-Bongartz completions.
\begin{Proposition} \label{promutwi}Let  $T=U\oplus Z\in\mathcal C$ be a basic cluster tilting object with $Z$ indecomposable, and $T^\prime=U\oplus Y=\mu_Z(T)$. Then $T^\prime$ is the elementary co-Bongartz completion of $Y$ with respect to $T$, i.e., $\mu_Z(T)\cong\mathcal T_Y(T)$.
\end{Proposition}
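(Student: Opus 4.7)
The plan is to unpack the definition of $\mathcal{T}_Y(T)$ directly, compute the required left minimal $\mathrm{add}(Y)$-approximation of $T[-1]$ by treating the two summands $U[-1]$ and $Z[-1]$ separately, and then read off the cone.

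First I would split $T[-1] = U[-1] \oplus Z[-1]$ and handle $U[-1]$: by 2-CY duality,
\[
\mathrm{Hom}_{\mathcal{C}}(U[-1], Y) \cong D\,\mathrm{Hom}_{\mathcal{C}}(Y, U[1]) = 0,
\]
the last equality because $T' = U \oplus Y$ is cluster tilting, hence rigid. So the left minimal $\mathrm{add}(Y)$-approximation of $U[-1]$ is the zero map $U[-1] \to 0$, whose cone in the defining triangle contributes $U$ to $X_Y$.

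Next I would use the exchange triangles for the mutation $T = U \oplus Z \leadsto T' = U \oplus Y$ at $Z$: there is a triangle $Y \to B \to Z \to Y[1]$ in $\mathcal{C}$ with $B \in \mathrm{add}(U)$. Rotating gives
\[
Z[-1] \xrightarrow{f_2} Y \to B \to Z.
\]
Applying $\mathrm{Hom}_{\mathcal{C}}(-, Y_1)$ for any $Y_1 \in \mathrm{add}(Y)$ produces the exact sequence
\[
\mathrm{Hom}_{\mathcal{C}}(B, Y_1) \to \mathrm{Hom}_{\mathcal{C}}(Y, Y_1) \to \mathrm{Hom}_{\mathcal{C}}(Z[-1], Y_1) \to \mathrm{Hom}_{\mathcal{C}}(B[-1], Y_1),
\]
and the right-hand term is $\mathrm{Hom}_{\mathcal{C}}(B, Y_1[1]) = 0$ since $B \in \mathrm{add}(U)$ and $U \oplus Y$ is rigid. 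Hence $f_2$ is a left $\mathrm{add}(Y)$-approximation of $Z[-1]$. Minimality is automatic because the target $Y$ is indecomposable and $f_2 \neq 0$ (it is the shift of the non-split connecting morphism of an exchange triangle, so cannot vanish). The cone of $f_2$ is $B$.

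Combining the two, the direct sum
\[
T[-1] = U[-1] \oplus Z[-1] \xrightarrow{0 \oplus f_2} 0 \oplus Y = Y
\]
is a left minimal $\mathrm{add}(Y)$-approximation of $T[-1]$, and its cone in the defining triangle is $U \oplus B$. Therefore
\[
\mathcal{T}_Y(T) = (Y \oplus U \oplus B)^{\flat} = (U \oplus Y)^{\flat} = U \oplus Y = \mu_Z(T),
\]
where the middle equality uses $B \in \mathrm{add}(U)$.

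The one point that needs care is the minimality claim for the approximation of $Z[-1]$; everything else reduces to the rigidity of $U \oplus Y$ and to the standard form of the exchange triangle. The minimality is easy here because $Y$ is indecomposable, but it is the step whose verification I would present most carefully.
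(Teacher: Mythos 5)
Your proof is correct. The initial step, splitting $T[-1]=U[-1]\oplus Z[-1]$ and observing that $\mathrm{Hom}_{\mathcal C}(U[-1],Y)=0$ by rigidity of $T'$, is identical to the paper's. The genuine difference is in how the $Z[-1]$ component is handled. The paper does not identify the approximation of $Z[-1]$ or its cone: it merely names them $f_Z$ and $X_0$, notes that $\mathrm{add}(\mathcal T_Y(T))\supseteq\mathrm{add}(U\oplus Y)=\mathrm{add}(T')$, and then concludes equality by invoking Theorem \ref{thmsantwi} (so $\mathcal T_Y(T)$ is cluster tilting) together with the uniqueness of a basic cluster tilting object with a given additive closure. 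You instead recognize $f_2\colon Z[-1]\to Y$ as the rotation of the connecting morphism of the exchange triangle $Y\to B\to Z\to Y[1]$ with $B\in\mathrm{add}(U)$, verify it is a left $\mathrm{add}(Y)$-approximation via the long exact sequence and the vanishing $\mathrm{Hom}_{\mathcal C}(B,Y_1[1])=0$, and check minimality (which here collapses to $f_2\neq 0$ since $Y$ is indecomposable, and $f_2=0$ would force $Z\in\mathrm{add}(U)$, contradicting basicness of $T$). This identifies the cone as $B\in\mathrm{add}(U)$ outright, so $\mathcal T_Y(T)=(Y\oplus U\oplus B)^\flat=U\oplus Y$ with no appeal to Theorem \ref{thmsantwi} or to the fact that cluster tilting objects have a fixed number of indecomposable summands. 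The trade-off: your argument is more self-contained at this spot but imports the Iyama--Yoshino exchange-triangle structure of the mutation, whereas the paper's argument is shorter because it has Theorem \ref{thmsantwi} already in hand and is content to leave $X_0$ unidentified.
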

\begin{proof}
Consider the following triangle
\begin{eqnarray}\label{eqnmutation}
\xymatrix{T[-1]\ar[r]^f&Y^\prime\ar[r]&X_Y\ar[r]&T},
\end{eqnarray}
where $f$ is a left minimal ${\rm add}(Y)$-approximation of $T[-1]$.
Then $\mathcal T_Y(T)=(Y\oplus X_Y)^\flat$. By Theorem \ref{thmsantwi}, $\mathcal T_Y(T)$ is a basic cluster tilting object.

Since $T^\prime=U\oplus Y$ is rigid, we know that ${\rm Hom}_{\mathcal C}(U[-1],Y)=0$. Then by the fact $T[-1]=U[-1]\oplus Z[-1]$, we know the triangle (\ref{eqnmutation}) is the direct sum of the following two triangles (as complexes).
\begin{eqnarray}
&\xymatrix{U[-1]\ar[r]&0\ar[r]&U\ar[r]^{id}&U}&,\nonumber\\
&\xymatrix{Z[-1]\ar[r]^{f_Z}&Y^{\prime\prime}\ar[r]&X_0\ar[r]&Z}&.\nonumber
\end{eqnarray}
So $Y^\prime\cong Y^{\prime\prime}$ and $X_Y\cong U\oplus X_0$. Thus $${\rm add}(\mathcal T_Y(T))={\rm add}(Y\oplus X_Y)={\rm add}(Y\oplus U\oplus X_0)\supseteq{\rm add}(U\oplus Y)= {\rm add}(T^\prime).$$
Since both $\mathcal T_Y(T)$ and $T^\prime$ are basic cluster tilting objects, we get that $$\mu_Z(T)=T^\prime\cong \mathcal T_Y(T).$$
\end{proof}

\begin{Theorem}\label{thmdirectsum}Let $\mathcal C$ be a $K$-linear, Krull-Schmidt, Hom-finite $2$-CY triangulated category with a basic cluster tilting object $T=\bigoplus\limits_{i=1}^nT_i$, and  $U=V\oplus W$ be a rigid object in $\mathcal C$. For $X\in\{U,V,W\}$, denote by $M_X=\mathcal T_X(T)$, $M_{V,W}=\mathcal T_W(M_V)$ and $M_{W,V}=\mathcal T_V(M_W)$. Then $M_{V,W}\cong M_U\cong M_{W,V}$, i.e., we have the following diagram:
$$\xymatrix{M_V\ar[d]^{\mathcal T_W}&T\ar[l]_{\mathcal T_V}\ar[d]^{\mathcal T_U}\ar[r]^{\mathcal T_W}&M_W\ar[d]^{\mathcal T_V}\\
M_{V,W}\ar@{=}[r]&M_U\ar@{=}[r]&M_{W,V}}$$

\end{Theorem}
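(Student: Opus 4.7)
The plan is to prove $M_{V,W}\cong M_U$; the isomorphism $M_{W,V}\cong M_U$ will then follow by swapping the roles of $V$ and $W$. By Theorem \ref{thmsantwi}, applied first to the rigid object $V$ together with the cluster tilting object $T$, and then to the rigid object $W$ together with the cluster tilting object $M_V$, both $M_V$ and $M_{V,W}$ are basic cluster tilting objects in $\mathcal C$. A key consequence of the rigidity of $U=V\oplus W$ combined with $2$-CY duality is
$${\rm Hom}_{\mathcal C}(V[-1],W)\cong D{\rm Hom}_{\mathcal C}(W,V[1])=0,$$
so the zero morphism $V[-1]\to 0$ is a left minimal ${\rm add}(W)$-approximation of $V[-1]$.

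Let $T[-1]\xrightarrow{\alpha}V_1\to X_V\xrightarrow{\gamma}T$ be the triangle defining $M_V=(V\oplus X_V)^\flat$, with $V_1\in{\rm add}(V)$. To build the defining triangle of $M_{V,W}=\mathcal T_W(M_V)$, one takes a left minimal ${\rm add}(W)$-approximation of $M_V[-1]=V[-1]\oplus X_V[-1]$. By the previous paragraph this approximation splits as the direct sum of $V[-1]\to 0$ and a left minimal ${\rm add}(W)$-approximation $X_V[-1]\xrightarrow{\alpha'}W_1'$ of $X_V[-1]$, and the completing triangle splits as the direct sum of $V[-1]\to 0\to V\to V$ and a triangle $X_V[-1]\xrightarrow{\alpha'}W_1'\to X''\xrightarrow{\gamma'}X_V$. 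Consequently $M_{V,W}=(W\oplus V\oplus X'')^\flat=(U\oplus X'')^\flat$.

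Now apply the octahedral axiom to the composite $X''\xrightarrow{\gamma'}X_V\xrightarrow{\gamma}T$. The cones of $\gamma'$ and $\gamma$ are $W_1'[1]$ and $V_1[1]$ respectively, so the octahedron provides a triangle $W_1'[1]\to C\to V_1[1]\to W_1'[2]$, where $C$ is the cone of $\gamma\gamma'$. Since $V_1\oplus W_1'\in{\rm add}(U)$ is rigid, ${\rm Hom}_{\mathcal C}(V_1,W_1'[1])=0$, so this triangle splits and $C\cong V_1[1]\oplus W_1'[1]$. Rotating the triangle $X''\xrightarrow{\gamma\gamma'}T\to V_1[1]\oplus W_1'[1]\to X''[1]$ backwards twice gives
$$T[-1]\xrightarrow{\tilde\alpha}V_1\oplus W_1'\to X''\to T,$$
with $V_1\oplus W_1'\in{\rm add}(U)$.

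To conclude, it suffices to show $\tilde\alpha$ is a (not necessarily minimal) left ${\rm add}(U)$-approximation of $T[-1]$. Since $M_{V,W}=(U\oplus X'')^\flat$ is cluster tilting and hence rigid, ${\rm Hom}_{\mathcal C}(U,X''[1])=0$, and by $2$-CY duality ${\rm Hom}_{\mathcal C}(X''[-1],U'')=0$ for every $U''\in{\rm add}(U)$. Applying ${\rm Hom}_{\mathcal C}(-,U'')$ to the displayed triangle and reading off the long exact sequence shows that ${\rm Hom}_{\mathcal C}(V_1\oplus W_1',U'')\to{\rm Hom}_{\mathcal C}(T[-1],U'')$ is surjective, i.e.\ $\tilde\alpha$ is a left ${\rm add}(U)$-approximation. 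By the standard decomposition of an approximation into a minimal one plus an identity, $\tilde\alpha$ differs from the left minimal ${\rm add}(U)$-approximation defining $M_U=(U\oplus X_U)^\flat$ by a trivial summand ${\rm id}_{U_0}$ with $U_0\in{\rm add}(U)$, and the cone decomposes accordingly as $X''\cong X_U\oplus U_0$. Hence $M_{V,W}=(U\oplus X'')^\flat=(U\oplus X_U\oplus U_0)^\flat=(U\oplus X_U)^\flat=M_U$, where the last equality uses $U_0\in{\rm add}(U)$. The main obstacle is the octahedral construction together with the use of $2$-CY duality to promote the rigidity of the already-constructed cluster tilting object $M_{V,W}$ into the approximation property of $\tilde\alpha$; once this is in place, the conclusion is a direct comparison with the defining triangle of $M_U$.
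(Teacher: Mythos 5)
Your proof is correct, but it takes a genuinely different route from the paper. The paper proves $M_U\cong M_{V,W}$ by showing the single ${\rm Hom}$-vanishing ${\rm Hom}_{\mathcal C}(M_U,M_{V,W}[1])=0$ (reducing this to ${\rm Hom}_{\mathcal C}(V,X_{V,W}[1])=0$ and ${\rm Hom}_{\mathcal C}(X_U,X_{V,W}[1])=0$ via factorization diagrams, with $X_{V,W}$ the cone in the defining triangle of $\mathcal T_W(M_V)$), and then concludes from both being basic cluster tilting. You instead build the defining triangle of $\mathcal T_U(T)$ explicitly: you compose the approximation triangle of $T$ along ${\rm add}(V)$ with the approximation triangle of $X_V$ along ${\rm add}(W)$, splice them via the octahedral axiom into a triangle $T[-1]\to V_1\oplus W_1'\to X''\to T$ with middle term in ${\rm add}(U)$, check (using the rigidity of the already-constructed $M_{V,W}$) that the first map is a left ${\rm add}(U)$-approximation, and then compare with the minimal one by stripping off a trivial ${\rm add}(U)$-summand. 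Your argument is more constructive — it exhibits how the two-step approximation triangles assemble into the one-step one — whereas the paper leans on the rigidity/cluster-tilting dichotomy. Two minor remarks: the invocations of $2$-CY duality in your proof are superfluous (the identities ${\rm Hom}_{\mathcal C}(V[-1],W)\cong{\rm Hom}_{\mathcal C}(V,W[1])$ and ${\rm Hom}_{\mathcal C}(X''[-1],U'')\cong{\rm Hom}_{\mathcal C}(X'',U''[1])$ are just shift adjunctions, and both sides vanish directly from rigidity), and the decomposition $M_V[-1]=V[-1]\oplus X_V[-1]$ should be read only up to ${\rm add}$-equivalence since $M_V=(V\oplus X_V)^\flat$ is basic, but this does not affect the conclusion since $\mathcal T_W$ depends only on ${\rm add}(M_V)$.
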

\begin{proof}
Consider the following triangles
\begin{eqnarray}
&\xymatrix{T[-1]\ar[r]^{f_U}&U^\prime\ar[r]^{g_U}&X_U\ar[r]^{h_U}&T},&\nonumber\\
&\xymatrix{T[-1]\ar[r]^{f_V}&V^\prime\ar[r]^{g_V}&X_V\ar[r]^{h_V}&T},&\nonumber\\
&\xymatrix{M_V[-1]\ar[r]^{f_{V,W}}&W^\prime\ar[r]^{g_{V,W}}&X_{V,W}\ar[r]^{h_{V,W}}& M_V},&\nonumber
\end{eqnarray}
where $f_U$ (respectively, $f_V$) is a left minimal ${\rm add}(U)$-approximation (respectively, ${\rm add}(V)$-approximation)\\ of $T[-1]$ and $f_{V,W}$ is  a left minimal ${\rm add}(W)$-approximation of $M_V[-1]$.
We know that $$M_U=(U\oplus X_U)^\flat=(V\oplus W\oplus X_U)^\flat,\;\;M_V=(V\oplus X_V)^\flat\;\;\text{and }\;\;M_{V,W}=(W\oplus X_{V,W})^\flat$$ are basic cluster tilting objects in $\mathcal C$, by Theorem \ref{thmsantwi}.

We will show that ${\rm Hom}_{\mathcal C}(M_U,M_{V,W}[1])=0$. Since $M_U=(U\oplus X_U)^\flat=(V\oplus W\oplus X_U)^\flat$ and  $M_{V,W}=(W\oplus X_{V,W})^\flat$ are rigid, it suffices to show that $${\rm Hom}_{\mathcal C}(V,X_{V,W}[1])=0\;\;\text{ and }\;\;{\rm Hom}_{\mathcal C}(X_U,X_{V,W}[1])=0.$$

Now we show ${\rm Hom}_{\mathcal C}(V,X_{V,W}[1])=0$. Applying the functor ${\rm Hom}_{\mathcal C}(V,-)$ to the triangle $$\xymatrix{M_V[-1]\ar[r]^{f_{V,W}}&W^\prime\ar[r]&X_{V,W}\ar[r]& M_V},$$ we get the following exact sequence,

$$\xymatrix{{\rm Hom}_{\mathcal C}(V,W^\prime[1])\ar[r]&{\rm Hom}_{\mathcal C}(V,X_{V,W}[1])\ar[r]&{\rm Hom}_{\mathcal C}(V,M_V[1])}.$$
By $W^\prime\in {\rm add}(W)$ and the fact that both $U=V\oplus W$ and $M_V=(V\oplus X_V)^b$ are rigid, we know ${\rm Hom}_{\mathcal C}(V,W^\prime[1])=0$ and ${\rm Hom}_{\mathcal C}(V,M_V[1])=0$. So ${\rm Hom}_{\mathcal C}(V,X_{V,W}[1])=0$.

Now we show ${\rm Hom}_{\mathcal C}(X_U,X_{V,W}[1])=0$. Since $f_U:T[-1]\rightarrow U^\prime$ is a left  ${\rm add}(U)$-approximation of $T[-1]$ and $V^\prime\in{\rm add}(V)\subseteq {\rm add}(U)$, there exists $f: U^\prime\rightarrow V^\prime$ such that $f_{V}=f\circ f_U$. Thus we can get the following commutative diagram.

$$\xymatrix{T[-1]\ar[r]^{f_U}\ar@{=}[d]&U^\prime\ar[r]^{g_U}\ar[d]^f&X_U\ar[r]^{h_U}\ar@{.>}[d]^g&T\ar@{=}[d]\\
T[-1]\ar[r]^{f_V}&V^\prime\ar[r]^{g_V}&X_V\ar[r]^{h_V}&T}
$$
where $g:X_U\rightarrow X_V$ satisfies that $h_U=h_Vg$. Let $\alpha\in {\rm Hom}_{\mathcal C}(X_U,X_{V,W}[1])$, we show $\alpha=0$ by showing there exists a commutative diagram  of the following form.

$$\xymatrix{T[-1]\ar[r]^{f_U}&U^\prime\ar[r]^{g_U}&X_U\ar[rrrr]^{h_U}\ar[dd]^{\alpha}\ar[rrrd]^{g}&&&&T\ar@{=}[dd]\\
&&&W^{\prime\prime}[1]\ar@{.>}[ld]^{\beta_2}&&X_V\ar[ru]^{h_V}\ar@{.>}[ll]^{\beta_1}&\\
&&X_{V,W}[1]&&&&T\ar@{.>}[llll]^{\alpha_1}
}
$$

We have proved that ${\rm Hom}_{\mathcal C}(U, X_{V,W}[1])={\rm Hom}_{\mathcal C}(V, X_{V,W}[1])\oplus {\rm Hom}_{\mathcal C}(W, X_{V,W}[1])=0$, so $$\alpha g_U\in {\rm Hom}_{\mathcal C}(U^\prime, X_{V,W}[1])=0,$$ by $U^\prime \in {\rm add}(U)$. Thus there exists $\alpha_1:T\rightarrow X_{V,W}[1]$ such that $\alpha=\alpha_1h_U$.
Since $h_U=h_Vg$, we get that $\alpha=\alpha_1h_Vg=(\alpha_1h_V)g$.

Consider the morphism $\alpha_1h_V:X_V\rightarrow X_{V,W}[1]$, we claim that it factors an object $W^{\prime\prime}[1]\in{\rm add}(W[1])$. Consider the triangle,
\begin{eqnarray}\label{eqnpart}
\xymatrix{X_V[-1]\ar[r]^{\beta_1[-1]}&W^{\prime\prime}\ar[r]&X_{V,W}^\prime\ar[r]& X_V},
\end{eqnarray}
where $\beta_1[-1]:X_V[-1]\rightarrow W^{\prime\prime}$ is a left minimal ${\rm add}(W)$-approximation of $X_V[-1]$.
We know that this triangle is a direct summand of some copies of the triangle (as complexs) $$\xymatrix{M_V[-1]\ar[r]^{f_{V,W}}&W^\prime\ar[r]^{g_{V,W}}&X_{V,W}\ar[r]^{h_{V,W}}& M_V},$$
by $X_V\in{\rm add}(M_{V,W})$. So $W^{\prime\prime}\in {\rm add}(W^\prime)\subseteq {\rm add}(W)$ and $X_{V,W}^\prime \in {\rm add}(X_{V,W})$. Applying the functor ${\rm Hom}_{\mathcal C}(-,X_{V,W}[1])$ to the triangle
$$
\xymatrix{X_V\ar[r]^{\beta_1}&W^{\prime\prime}[1]\ar[r]&X_{V,W}^\prime[1]\ar[r]& X_V[1]},
$$
which is obtained from the triangle (\ref{eqnpart}) by rotations, we get the following exact sequence.
$$\xymatrix{{\rm Hom}_{\mathcal C}(W^{\prime\prime}[1],X_{V,W}[1])\ar[rrr]^{{\rm Hom}_{\mathcal C}(\beta_1,X_{V,W}[1])}&&&{\rm Hom}_{\mathcal C}(X_V,X_{V,W}[1])\ar[r]&{\rm Hom}_{\mathcal C}(X_{V,W}^\prime[-1],X_{V,W})=0}.$$
So for $\alpha_1h_U\in {\rm Hom}_{\mathcal C}(X_V,X_{V,W}[1])$, there exists $\beta_2\in {\rm Hom}_{\mathcal C}(W^{\prime\prime}[1],X_{V,W}[1])$ such that $$\alpha_1h_V=\beta_2\beta_1.$$
Thus we finished the proof of the claim before. We also get that

$$\alpha=\alpha_1h_Vg=(\alpha_1h_V)g=\beta_2\beta_1g.$$
Since $X_U,W^{\prime\prime}\in{\rm add}(M_U)$ and $M_U$ is rigid, we know that
$\beta_1g\in{\rm Hom}_{\mathcal C}(X_U,W^{\prime\prime}[1])=0$.
So $\alpha=\beta_2\beta_1g=0$ and thus ${\rm Hom}_{\mathcal C}(X_U,X_{V,W}[1])=0$.

Hence, ${\rm Hom}_{\mathcal C}(M_U,M_{V,W}[1])=0$.
Since both $M_U$ and $M_{V,W}$ are basic cluster tilting objects, we get that
$M_{V,W}\in{\rm add}(M_U)$ and $M_{U}\in {\rm add}(M_{V,W})$. Thus $M_U\cong M_{V,W}$.

Similarly, we can show $M_U\cong M_{W,V}$. This completes the proof.
\end{proof}

The following corollary follows directly from the above theorem.
\begin{Corollary}\label{cortwiseq}
Let $\mathcal C$ be a $K$-linear, Krull-Schmidt, Hom-finite $2$-CY triangulated category, and $T=\bigoplus\limits_{i=1}^nT_i$ be a basic cluster tilting object in $\mathcal C$. Let $U=\bigoplus\limits_{i=1}^sU_i$ be a basic rigid object in $\mathcal C$. Then $$\mathcal T_U(T)\cong\mathcal T_{U_{i_s}}\cdots\mathcal T_{U_{i_2}}\mathcal T_{U_{i_1}}(T),$$ where $i_1,\cdots,i_s$ is any permutation of $1,\cdots,s$.
\end{Corollary}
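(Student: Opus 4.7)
The plan is to prove Corollary \ref{cortwiseq} by induction on $s=|U|$, using Theorem \ref{thmdirectsum} as the single engine. For $s=1$ the statement is a tautology, since the only permutation of $\{1\}$ is the identity and $\mathcal T_U(T)=\mathcal T_{U_1}(T)$.

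For the inductive step, fix an arbitrary permutation $i_1,\dots,i_s$ of $1,\dots,s$ and split $U$ as $V\oplus W$, where $V=U_{i_1}\oplus\cdots\oplus U_{i_{s-1}}$ and $W=U_{i_s}$. Since $U$ is rigid, so are both $V$ and $W$. Applying Theorem \ref{thmdirectsum} with this decomposition yields
$$\mathcal T_U(T)\;\cong\;\mathcal T_W(\mathcal T_V(T))\;=\;\mathcal T_{U_{i_s}}\bigl(\mathcal T_{U_{i_1}\oplus\cdots\oplus U_{i_{s-1}}}(T)\bigr).$$
Now $V$ is a basic rigid object with $s-1$ indecomposable summands, so the inductive hypothesis applies: for the identity permutation of its indices, we obtain
$$\mathcal T_V(T)\;\cong\;\mathcal T_{U_{i_{s-1}}}\cdots\mathcal T_{U_{i_2}}\mathcal T_{U_{i_1}}(T).$$
Substituting this into the previous line gives $\mathcal T_U(T)\cong\mathcal T_{U_{i_s}}\cdots\mathcal T_{U_{i_2}}\mathcal T_{U_{i_1}}(T)$, as required.

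There is no substantive obstacle: Theorem \ref{thmdirectsum} already furnishes both the commutation of co-Bongartz completions at $V$ and $W$ and the identification $\mathcal T_U\cong\mathcal T_W\circ\mathcal T_V$, so the only role of the induction is to peel off one summand at a time. The key subtlety one should double-check is merely that at each stage the object being inserted into $\mathcal T_{(-)}(\cdot)$ is still rigid (so Theorem \ref{thmsantwi} applies to keep the intermediate outputs cluster tilting and Theorem \ref{thmdirectsum} can be reapplied); this is automatic from $U$ being rigid, since rigidity passes to direct summands.
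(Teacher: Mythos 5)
Your proof is correct and matches the paper's intent exactly: the paper states that the corollary "follows directly from" Theorem \ref{thmdirectsum}, and the induction you spell out (peeling off one indecomposable summand at a time via the decomposition $U=V\oplus W$, then invoking Theorem \ref{thmdirectsum} to get $\mathcal T_U(T)\cong\mathcal T_W(\mathcal T_V(T))$) is precisely the routine argument being left to the reader. Your extra check that rigidity descends to direct summands, and that $\mathcal T_W(-)$ respects isomorphisms so the inductive substitution is legitimate, is sound.
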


\begin{Corollary}
 For any two basic cluster tilting objects $T=\bigoplus\limits_{i=1}^n T_i$ and $M=\bigoplus\limits_{i=1}^n M_i$ in $\mathcal C$, we have
 $$M=\mathcal T_M(T)=\mathcal T_{M_{i_n}}\cdots\mathcal T_{M_{i_2}}\mathcal T_{M_{i_1}}(T),$$
 where $i_1,\cdots,i_n$ is any permutation of $1,\cdots,n$.
\end{Corollary}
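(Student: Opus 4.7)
The corollary is essentially a direct combination of two earlier results, so the plan is short. First I would verify the equality $M = \mathcal T_M(T)$, and then deduce the iterated description from Corollary \ref{cortwiseq}.

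For the first equality, note that $M$ is a basic cluster tilting object, hence in particular a basic rigid object. Therefore Theorem \ref{thmsantwi} applies with $U := M$ and tells us that $\mathcal T_M(T)$ is a basic cluster tilting object in $\mathcal C$. By the definition of co-Bongartz completion we have $\mathcal T_M(T) = (M \oplus X_M)^\flat$, so $M$ appears as a direct summand of $\mathcal T_M(T)$. Now invoke the fact recalled just before Proposition \ref{protilt} (from \cite[Theorem 2.4]{DK}) that any two basic cluster tilting objects in $\mathcal C$ have the same number of indecomposable direct summands. Both $M$ and $\mathcal T_M(T)$ are basic cluster tilting, hence each has exactly $n$ indecomposable summands. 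Since $M$ is a direct summand of $\mathcal T_M(T)$ and both are basic with the same number $n$ of indecomposable summands, we conclude $\mathcal T_M(T) \cong M$.

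For the iterated description, observe that $M = \bigoplus_{i=1}^n M_i$ is a basic rigid object (being cluster tilting), so Corollary \ref{cortwiseq} applies directly with $U = M$ and $s = n$: for any permutation $i_1,\dots,i_n$ of $1,\dots,n$,
\[
\mathcal T_M(T) \cong \mathcal T_{M_{i_n}} \cdots \mathcal T_{M_{i_2}} \mathcal T_{M_{i_1}}(T).
\]
Combining this with the first paragraph yields $M = \mathcal T_{M_{i_n}} \cdots \mathcal T_{M_{i_2}} \mathcal T_{M_{i_1}}(T)$, completing the proof.

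There is no real obstacle here; the statement is a packaging of Theorem \ref{thmsantwi} (to guarantee that $\mathcal T_M(T)$ is cluster tilting and so has the right ``size''), the constancy of $|T'|$ across basic cluster tilting objects, and the commutativity/associativity of elementary co-Bongartz completions already encoded in Corollary \ref{cortwiseq}. The only point worth being explicit about is the step that turns ``$M$ is a summand of $\mathcal T_M(T)$'' into ``$M \cong \mathcal T_M(T)$'', which relies crucially on the invariance of the number of indecomposable summands for cluster tilting objects.
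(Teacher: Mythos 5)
Your proof is correct and follows essentially the same route as the paper: show $\mathcal T_M(T)$ is a basic cluster tilting object containing $M$ as a summand, conclude $M\cong\mathcal T_M(T)$, then invoke Corollary \ref{cortwiseq}. The paper leaves the step $M\cong\mathcal T_M(T)$ implicit; your explicit justification via the invariance $|T|=|T'|$ (from \cite[Theorem 2.4]{DK}) is a perfectly valid way to fill it in.
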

\begin{proof}
We know that $\mathcal T_M(T)$ is a basic cluster tilting object satisfying $M\in{\rm add}(\mathcal T_M(T))$. Since $M$ is a basic cluster tilting object, we must have $M=\mathcal T_M(T)$. Then the result follows from Corollary \ref{cortwiseq}.
\end{proof}

\subsection{Compatibility between co-Bongartz completions and mutations}

\begin{Lemma}\label{lemalmost}
Let $\mathcal C$ be a $K$-linear, Krull-Schmidt, Hom-finite $2$-CY triangulated category with a basic cluster tilting object $T=\bigoplus\limits_{i=1}^nT_i$, and  $U=\bigoplus\limits_{i=1}^sU_i$ be a basic rigid object in $\mathcal C$. If there exists a triangle of the form
$$\xymatrix{U[-1]\ar[r]^f&T_U^1\ar[r]&T_U^0\ar[r]&U},$$
where $T_U^0, T_U^1\in {\rm add}(T)$ and $f$ is a left minimal ${\rm add}(T)$-approximation of $U[-1]$. Then $|U|=s\leq |T_U^0\oplus T_U^1|$. In particular, if $U$ is a almost cluster tilting object, then $T_U^0\oplus T_U^1$ is either almost cluster tilting or cluster tilting.
\end{Lemma}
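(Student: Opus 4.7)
The plan is to reduce the statement to a dimension count by passing to $g$-vectors with respect to $T$. The essential numerical input is that ${\bf g}^T(U_1),\dots,{\bf g}^T(U_s)$ are $\mathbb Z$-linearly independent in $\mathbb Z^n$: by Theorem \ref{thmsantwi}, $\mathcal T_U(T)$ is a basic cluster tilting object, and $U$ is a direct summand of $\mathcal T_U(T)$ by its very construction, so by Proposition \ref{prodk}(i) the columns of the $G$-matrix of $\mathcal T_U(T)$ with respect to $T$ form a $\mathbb Z$-basis of $\mathbb Z^n$, and the subfamily $\{{\bf g}^T(U_i)\}_{i=1}^s$ is in particular $\mathbb Z$-linearly independent.

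The next step is to split the given triangle along $U=\bigoplus_{i=1}^s U_i$. In a Krull--Schmidt category a direct sum of left minimal morphisms is again left minimal, and a direct sum of left ${\rm add}(T)$-approximations of the $U_i[-1]$ is a left ${\rm add}(T)$-approximation of $U[-1]$; by uniqueness (up to isomorphism) of the left minimal ${\rm add}(T)$-approximation of $U[-1]$, this forces $f\cong\bigoplus_i f_i$, where each $f_i\colon U_i[-1]\to T_U^{1,i}$ is the left minimal ${\rm add}(T)$-approximation of $U_i[-1]$ and $T_U^1\cong\bigoplus_i T_U^{1,i}$. Completing each $f_i$ to a triangle and summing then yields $T_U^0\cong\bigoplus_i T_U^{0,i}$ with triangles
\[
U_i[-1]\longrightarrow T_U^{1,i}\longrightarrow T_U^{0,i}\longrightarrow U_i
\]
for $i=1,\dots,s$.

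For each $i$, rotate to $T_U^{1,i}\to T_U^{0,i}\to U_i\to T_U^{1,i}[1]$; since the connecting morphism lands in $T_U^{1,i}[1]\in{\rm add}(T[1])$, Proposition \ref{prodk}(iii) applies and gives
\[
{\bf g}^T(U_i)={\bf g}^T(T_U^{0,i})-{\bf g}^T(T_U^{1,i}),
\]
a vector whose support, in the standard basis of $\mathbb Z^n$ indexed by $T_1,\dots,T_n$, is contained in the subset $S\subseteq\{1,\dots,n\}$ of indices $j$ such that $T_j$ is an indecomposable summand of $T_U^0\oplus T_U^1$; by definition of $|\cdot|$, one has $|S|=|T_U^0\oplus T_U^1|$. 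Hence the $s$ linearly independent vectors ${\bf g}^T(U_1),\dots,{\bf g}^T(U_s)$ all lie in the rank-$|S|$ coordinate sublattice $\mathbb Z^S\subseteq\mathbb Z^n$, forcing $s\le|S|=|T_U^0\oplus T_U^1|$, which is the first assertion.

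For the final statement, if $U$ is almost cluster tilting then $s=n-1$, and combining the bound just established with $|T_U^0\oplus T_U^1|\le n$ (since $T_U^0\oplus T_U^1\in{\rm add}(T)$ and $T$ has exactly $n$ non-isomorphic indecomposable summands) gives $|T_U^0\oplus T_U^1|\in\{n-1,n\}$; the basic form of $T_U^0\oplus T_U^1$ is therefore either $T$ itself (cluster tilting) or $T$ with exactly one indecomposable summand removed, which is almost cluster tilting by definition. The step requiring most care is the triangle decomposition along $U=\bigoplus_{i=1}^s U_i$, since a left ${\rm add}(T)$-approximation of a direct sum need not a priori respect the decomposition; the argument above circumvents this by combining the Krull--Schmidt observation that a direct sum of left minimal morphisms is left minimal with uniqueness of minimal left approximations up to isomorphism.
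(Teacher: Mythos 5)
Your proof is correct and follows essentially the same line of argument as the paper. The paper works in the split Grothendieck group $\mathrm{K}_0(T)$ and invokes \cite[Theorem 2.4]{DK} directly to get the linear independence of $[T_{M_1}^0]-[T_{M_1}^1],\dots,[T_{M_s}^0]-[T_{M_s}^1]$ via the cluster tilting object $M_U=\mathcal T_U(T)$; you reach the same independence by combining Theorem \ref{thmsantwi} with Proposition \ref{prodk}(i), which is an equivalent route since ${\bf g}^T(-)$ is just the coordinate vector of the class in $\mathrm{K}_0(T)$. The decomposition of the approximation triangle along $U=\bigoplus_i U_i$ and the final support/dimension count are the same in both proofs, and your added justification (Krull--Schmidt uniqueness of the minimal left approximation, and the explicit $(n-1)$-versus-$n$ dichotomy at the end) merely spells out steps the paper leaves implicit.
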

\begin{proof}

Let $M_U=\bigoplus\limits_{i=1}^nM_i=\mathcal T_U(T)$, then we know that $M_U$ is a basic cluster tilting object satisfying $U\in{\rm add}(M_U)$, by Theorem \ref{thmsantwi}. Without loss of generality, we can assume that $U_i=M_i$ for $i=1,\cdots,s$. Consider the following triangle

$$\xymatrix{M_i[-1]\ar[r]^{f_i}&T_{M_i}^1\ar[r]&T_{M_i}^0\ar[r]&M_i},$$
where $f_i$ is a left minimal ${\rm add}(T)$-approximation of $M_i[-1]$. By $U=\bigoplus\limits_{i=1}^sM_i$, we know that
$$T_U^1\cong\bigoplus\limits_{j=1}^sT_{M_j}^1\;\;\text{ and }\;\;T_U^0\cong\bigoplus\limits_{j=1}^sT_{M_j}^0.$$

Let ${\rm K}_0(T)$ be the (split) Grothendieck group  of ${\rm add}(T)$ (as additive category). Since $M_U=\bigoplus\limits_{i=1}^nM_i$ and $T$ are  basic cluster tilting objects, we know that $[T_{M_1}^0]-[T_{M_1}^1],\cdots, [T_{M_n}^0]-[T_{M_n}^1]$ form  a basis of ${\rm K}_0(T)$, by \cite[Theorem 2.4]{DK}. In particular,  $[T_{M_1}^0]-[T_{M_1}^1],\cdots, [T_{M_s}^0]-[T_{M_s}^1]$ are linearly independent.
For each $j=1,\cdots,s$, $[T_{M_j}^0]-[T_{M_j}^1]=[T_{U_j}^0]-[T_{U_j}^1]$ can be linearly spanned by the set $\{[X]\in {\rm K}_0(T)|X\in{\rm add}(T_U^0\oplus T_U^1)\}$.
So  $|U|=s\leq |T_U^0\oplus T_U^1|$.  In particular, if $U$ is a almost cluster tilting object, then $T_U^0\oplus T_U^1$ is either almost cluster tilting or cluster tilting.
\end{proof}

\begin{Theorem}\label{thmsubseq}
Let  $T=W\oplus X\in\mathcal C$ be a basic cluster tilting object with $X$ indecomposable, and $T^\prime=W\oplus Y=\mu_X(T)$. Let $U$ be a rigid object in $\mathcal C$, and $M_U^{T}=\mathcal T_U(T)$ and $M_U^{T^\prime}=\mathcal T_U(T^\prime)$. Then either $M_U^{T}\cong M_U^{T^\prime}$ or $M_U^{T}$ and $ M_U^{T^\prime}$ are obtained from each other by once mutation, i.e., one of the following two diagram holds.
$$\xymatrix{T=W\oplus X\ar[r]^{\mu_X}\ar[d]^{\mathcal T_U}&T^\prime=W\oplus Y\ar[d]^{\mathcal T_U}\\
M_U^{T}\ar@{=}[r]&M_U^{T^\prime}}\hspace{15mm}
\xymatrix{T=W\oplus X\ar[r]^{\mu_X}\ar[d]^{\mathcal T_U}&T^\prime=W\oplus Y\ar[d]^{\mathcal T_U}\\
M_U^{T}\ar[r]^{\mu_{\ast}}&M_U^{T^\prime}}$$
\end{Theorem}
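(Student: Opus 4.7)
The strategy is to exhibit a common direct summand $N$ of $M_U^T$ and $M_U^{T'}$ of size at least $n-1$ and then invoke Theorem \ref{thm2tilt}. Because a left minimal $\mathrm{add}(U)$-approximation of a direct sum is the direct sum of the individual left minimal approximations, the decomposition $T = W \oplus X$ makes the triangle defining $\mathcal T_U(T)$ split into the corresponding triangles for $W[-1]$ and $X[-1]$. This gives
\[
M_U^T = (U \oplus X_U^W \oplus X_U^X)^{\flat} = (N \oplus X_U^X)^{\flat}, \qquad M_U^{T'} = (N \oplus X_U^Y)^{\flat},
\]
where $N := \mathcal T_U(W) = (U \oplus X_U^W)^{\flat}$, and $X_U^X$, $X_U^Y$ arise from the left minimal $\mathrm{add}(U)$-approximation triangles of $X[-1]$ and $Y[-1]$. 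By Theorem \ref{thmsantwi}, both $M_U^T$ and $M_U^{T'}$ are basic cluster tilting objects with $n$ indecomposable summands, so
$|N| + |X_U^X \setminus \mathrm{add}(N)| = n = |N| + |X_U^Y \setminus \mathrm{add}(N)|$.

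Once $|N| \geq n-1$ is established the theorem follows: either $N$ is itself cluster tilting (forcing $M_U^T = N = M_U^{T'}$), or $N$ is almost cluster tilting, in which case Theorem \ref{thm2tilt} says the (at most) two cluster tilting objects containing $N$ must be $M_U^T$ and $M_U^{T'}$, which therefore either coincide or differ by a single mutation. To prove $|N| \geq n-1$, I would split into cases based on the rigidity of $U \oplus X$. If $U \oplus X$ is rigid, then the $2$-Calabi-Yau duality $\mathrm{Hom}(X[-1], U) \cong D\mathrm{Hom}(U, X[1]) = 0$ forces the left minimal $\mathrm{add}(U)$-approximation of $X[-1]$ to be zero, so the triangle collapses to give $X_U^X \cong X$; since $X$ is indecomposable, $|X_U^X \setminus \mathrm{add}(N)| \leq 1$ and hence $|N| \geq n-1$. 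If $U \oplus X$ is not rigid but $U \oplus Y$ is, the symmetric argument applied to $M_U^{T'}$ yields $X_U^Y \cong Y$ and again gives $|N| \geq n-1$.

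The remaining case, where neither $U \oplus X$ nor $U \oplus Y$ is rigid, is the main obstacle. Here I would use the exchange triangle $Y \to B \to X \to Y[1]$ from the mutation $\mu_X$, noting $B \in \mathrm{add}(W)$, and apply the octahedral axiom to compare the three left minimal $\mathrm{add}(U)$-approximation triangles for $X[-1]$, $B[-1]$, $Y[-1]$. Since $B \in \mathrm{add}(W)$ forces $X_U^B \in \mathrm{add}(X_U^W) \subseteq \mathrm{add}(N)$, the octahedral comparison should identify $X_U^X$ with $X_U^Y$ modulo $\mathrm{add}(N)$, concluding $M_U^T \cong M_U^{T'}$ directly in this case. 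The delicate technical step is to verify that left minimality of the approximations is preserved under the octahedral manipulation so that the comparison of the ``new'' indecomposable summands on each side is clean.
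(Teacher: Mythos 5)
Your splitting of the approximation triangles along $T = W\oplus X$ and $T' = W\oplus Y$, and the reduction to proving $|N|\geq n-1$ for $N:=\mathcal T_U(W)$, are both correct and are essentially the paper's reduction too (the paper works with the common piece $U_W\oplus Z$, where $W[-1]\to U_W\to Z\to W$ is the $W$-component of both approximation triangles; this is a direct summand of your $N$). Your first two cases are also fine: if $U\oplus X$ is rigid then $2$-CY duality kills ${\rm Hom}(X[-1],U)$, the approximation of $X[-1]$ is zero, $X_U^X\cong X$ is indecomposable, and $|N|\geq n-1$ follows. The genuine gap is your third case. The octahedral-axiom sketch is not a proof, and the statement you aim to prove there --- that $M_U^T\cong M_U^{T'}$ whenever neither $U\oplus X$ nor $U\oplus Y$ is rigid --- is not justified and should not be assumed: nothing rules out the mutation branch of the theorem in that situation. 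The obstruction you flag is real: composing the exchange triangle $Y[-1]\to B[-1]\to X[-1]\to Y$ with the minimal ${\rm add}(U)$-approximation of $X[-1]$ does not in general produce a minimal (or even any) ${\rm add}(U)$-approximation of $B[-1]$ or of $Y[-1]$, so the octahedral rearrangement does not yield the clean identification of $X_U^X$ with $X_U^Y$ modulo ${\rm add}(N)$ that your argument needs.

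The paper bypasses the rigidity case split entirely via a $g$-vector counting argument (Lemma \ref{lemalmost}). Write $W=\bigoplus_{i=1}^{n-1}W_i$. Since $W$ is a summand of the cluster tilting object $T$, the indices ${\bf g}^{M_U^T}(W_1),\dots,{\bf g}^{M_U^T}(W_{n-1})$ are linearly independent in $\mathbb Z^n$ by Dehy--Keller. The component triangles $U_{W_i}\to Z_i\to W_i\to U_{W_i}[1]$ have $U_{W_i}\in{\rm add}(U)\subseteq{\rm add}(M_U^T)$, so Proposition \ref{prodk}(iii) gives ${\bf g}^{M_U^T}(W_i)={\bf g}^{M_U^T}(Z_i)-{\bf g}^{M_U^T}(U_{W_i})$, a vector supported on those summands of $M_U^T$ appearing in $U_W\oplus Z$. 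Having $n-1$ independent vectors in that span forces $|U_W\oplus Z|\geq n-1$, hence $|N|\geq n-1$, with no rigidity hypothesis on $U\oplus X$ or $U\oplus Y$. Substituting this uniform counting argument for your case split closes the gap; your subsequent use of Theorem \ref{thm2tilt} then finishes the proof exactly as you describe.
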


\begin{proof}
Consider the following triangles
\begin{eqnarray}
&\xymatrix{T[-1]\ar[r]^{f_T}&U_T\ar[r]&R\ar[r]&T},&\nonumber\\
&\xymatrix{T^\prime[-1]\ar[r]^{f_{T^\prime}}&U_{T^\prime}\ar[r]&S\ar[r]&T^\prime},&\nonumber
\end{eqnarray}
where $f_T$ (respectively, $f_{T^\prime}$) is a left minimal ${\rm add}(U)$-approximation of $T[-1]$ (respectively, $T^\prime[-1]$). We know that $M_U^{T}=\mathcal T_U(T)=(U\oplus R)^\flat$ and $M_U^{T^\prime}=\mathcal T_U(T^\prime)= (U\oplus S)^\flat$ are basic cluster tilting objects.
We consider the common direct summand of the above two triangles (as complexes) given by $W[-1]\in{\rm add}(T[-1])\cap {\rm add}(T^\prime[-1])$,
$$\xymatrix{W[-1]\ar[r]^{f_W}&U_W\ar[r]&Z\ar[r]&W},$$
where $f_W$ is a left minimal ${\rm add}(U)$-approximation of $W[-1]$. We know that $U_W\in{\rm add}(U_X)\cap{\rm add}(U_Y)$ and $Z\in{\rm add}(R)\cap{\rm add}(S)$.
Since $W$ is a almost cluster tilting object, we know that $U_W\oplus Z$ is either almost cluster tilting or cluster tilting, by Lemma \ref{lemalmost}. Since $U_W\oplus Z\in {\rm add}(M_U^{T})\cap{\rm add}(M_U^{T^\prime})$, we get that $M_U^{T}$ and $M_U^{T^\prime}$ contain either a common almost cluster tilting object  or a common cluster tilting object. So either $M_U^{T}\cong M_U^{T^\prime}$ or $M_U^{T}$ and $M_U^{T^\prime}$ are obtained from each other by once mutation.
\end{proof}

Let $T=\bigoplus\limits_{i=1}^nT_i$ be a basic cluster tilting object in $\mathcal C$, and $U\in\mathcal C$ be a rigid object. $U$ is said to be {\bf $T$-mutation-reachable} if there exists a basic cluster tilting object $M$ such that  $U\in{\rm add}(M)$ and $M$ can be obtained from $T$ by a sequence of mutations.

\begin{Theorem}\label{thmreachable}
Let $\mathcal C$ be a $K$-linear, Krull-Schmidt, Hom-finite $2$-CY triangulated category with a basic cluster tilting object $T=\bigoplus\limits_{i=1}^nT_i$.  Let $U$ be a rigid object in $\mathcal C$ and $T_U=\mathcal T_U(T)$. If $U$ is  $T$-mutation-reachable, so is $T_U$.
\end{Theorem}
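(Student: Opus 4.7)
The plan is to apply Theorem \ref{thmsubseq} inductively along a sequence of mutations witnessing that $U$ is $T$-mutation-reachable, after first establishing that the co-Bongartz completion of a cluster tilting object along one of its own direct summands is trivial.

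The preliminary observation is: \emph{if $U$ is a direct summand of a basic cluster tilting object $M$, then $\mathcal{T}_U(M) \cong M$.} Indeed, rigidity of $M$ together with $U\in{\rm add}(M)$ gives
\[
{\rm Hom}_{\mathcal C}(M[-1], U) \cong D{\rm Hom}_{\mathcal C}(U, M[1]) = 0,
\]
so the zero morphism is the (unique up to iso) minimal left ${\rm add}(U)$-approximation of $M[-1]$. The defining triangle of $\mathcal{T}_U(M)$ therefore takes the form $M[-1] \to 0 \to M \xrightarrow{{\rm id}} M$, which shows $X_U \cong M$ and hence $\mathcal{T}_U(M) = (U \oplus M)^\flat \cong M$.

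Next, using that $U$ is $T$-mutation-reachable, fix a basic cluster tilting object $M$ with $U \in {\rm add}(M)$ together with a sequence of mutations
\[
T = T^{(0)} \xrightarrow{\mu_{X_1}} T^{(1)} \xrightarrow{\mu_{X_2}} \cdots \xrightarrow{\mu_{X_k}} T^{(k)} = M.
\]
Applying Theorem \ref{thmsubseq} at each single step with the fixed rigid object $U$, the basic cluster tilting objects $\mathcal{T}_U(T^{(i)})$ and $\mathcal{T}_U(T^{(i+1)})$ are either isomorphic or obtained from each other by a single mutation. Concatenating these transitions shows that $T_U = \mathcal{T}_U(T^{(0)})$ is connected by a (possibly trivial) sequence of mutations to $\mathcal{T}_U(T^{(k)}) = \mathcal{T}_U(M)$, which equals $M$ by the preliminary observation. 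Since $M$ itself is reachable from $T$ by mutations, so is $T_U$; and since $T_U$ is cluster tilting with $T_U \in {\rm add}(T_U)$, this exhibits $T_U$ as $T$-mutation-reachable.

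The main obstacle I anticipate is the preliminary observation $\mathcal{T}_U(M) \cong M$: one has to verify that the honest zero map really is the minimal left ${\rm add}(U)$-approximation and that the resulting defining triangle recovers $M$ as $X_U$. Once this is settled, the rest of the proof is a clean concatenation argument built on Theorem \ref{thmsubseq}.
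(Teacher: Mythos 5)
Your proof is correct and follows essentially the same route as the paper's: fix a mutation sequence from $T$ to a cluster tilting object $M$ containing $U$, apply Theorem \ref{thmsubseq} step by step to obtain a chain from $T_U$ to $\mathcal T_U(M)=M$, and concatenate. Your preliminary observation (that $\mathcal T_U(M)\cong M$ when $U\in{\rm add}(M)$, because the left $\operatorname{add}(U)$-approximation of $M[-1]$ vanishes) is exactly the fact the paper invokes without spelling out, and your verification of it is correct.
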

\begin{proof}
Since $U$ is $T$-mutation-reachable, there exists a basic cluster tilting object $M$ such that $U\in{\rm add}(M)$ and $M$ can be obtained from $T$ by a sequence of mutations. We can assume that $$M=\mu_{X_{s}}\cdots\mu_{X_{2}}\mu_{X_{1}}(T),$$
 where $X_{i+1}$ is a indecomposable direct summand of $\mu_{X_i}\cdots\mu_{X_2}\mu_{X_1}(T)$ for $i=0,\cdots,s-1$.

We denote $T^{t_0}=T$ and $T^{t_i}=\mu_{X_i}\cdots\mu_{X_2}\mu_{X_1}(T)$ for $i=1,\cdots,s$.
Let $T^{t_i}_U=\mathcal T_U(T^{t_i})$ for $i=0,1,\cdots,s$.
Since  $U\in {\rm add}(M)={\rm add}(T^{t_s})$, we know that $T_U^{t_s}=\mathcal T_U(T^{t_s})=T^{t_s}=M$.

By Theorem \ref{thmsubseq}, we have the following diagram:
$$\xymatrix{T=T^{t_0}\ar[rr]^{\mu_{X_1}}\ar[d]^{\mathcal T_U}&&T^{t_1}\ar[rr]^{\mu_{X_2}}\ar[d]^{\mathcal T_U}&&T^{t_3}\ar[rr]^{\mu_{X_3}}
\ar[d]^{\mathcal T_U}&&\;\;\;\;\cdots& T^{t_{s-1}}
\ar[rr]^{\mu_{X_{s}}}\ar[d]^{\mathcal T_U}&&T^{t_s}=M\ar@{=}[d]^{\mathcal T_U}\\
T_U=T^{t_0}_U\ar[rr]^{\varphi_1}&&T^{t_1}_U\ar[rr]^{\varphi_2}&&T^{t_3}_U\ar[rr]^{\varphi_3}&&\;\;\;\;\cdots&
T^{t_{s-1}}_U
\ar[rr]^{\varphi_s}&&T^{t_s}_U=M
}$$
where each $\varphi_i:T_U^{t_{i-1}}\rightarrow T_U^{t_i}$ is either a isomorphism or a mutation, and in both cases, it can go back from $T_U^{t_i}$ to $T_U^{t_{i-1}}$ by a isomorphism or a mutation for $i=1,\cdots,s$. We denote $\psi_i:T_U^{t_{i}}\rightarrow T_U^{t_{i-1}}$ such that $\psi_i\varphi_i:T_U^{t_{i-1}}\rightarrow T_U^{t_{i-1}}$ is the identity. We know that each $\psi_i$ is also either a isomorphism or a mutation for $i=1,\cdots,s$.

So $T_U=T_U^{t_0}$ can be obtained from $T=T^{t_0}$ by the sequence $$(\mu_{X_1},\cdots,\mu_{X_s},id_{T^{t_s}},\psi_s,\cdots,\psi_2,\psi_1)$$ and this sequence can reduce to a sequence of mutations by deleting the isomorphisms appearing in it. This completes the proof.
\end{proof}

The following theorem is inspired by \cite[Conjecture 4.14(3)]{FZ2}.

\begin{Theorem}
Let $\mathcal C$ be a $K$-linear, Krull-Schmidt, Hom-finite $2$-CY triangulated category with two basic cluster tilting objects $T=\bigoplus\limits_{i=1}^nT_i$ and $M=\bigoplus\limits_{i=1}^nM_i$.  Let $U$ be the maximal direct summand of $T$ satisfying $U\in{\rm add}(T)\cap {\rm add}(M)$.  If $M$ is $T$-mutation-reachable, then there exists a sequence of mutations $(\mu_{Y_1},\cdots,\mu_{Y_m})$ such that $M=\mu_{Y_m}\cdots\mu_{Y_2}\mu_{Y_1}(T)$ and $Y_j\notin{\rm add}(U)$ for $j=1,\cdots,m$.
\end{Theorem}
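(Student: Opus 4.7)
The plan is to apply the co-Bongartz completion $\mathcal T_U$ termwise to a given mutation sequence from $T$ to $M$, and to observe that the resulting sequence, after dropping identity steps, is a mutation sequence avoiding $\mathrm{add}(U)$.

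The key preliminary observation is that $\mathcal T_U(T)=T$ and $\mathcal T_U(M)=M$. Since $U\in\mathrm{add}(T)$ and $T$ is rigid, we have
\[
\mathrm{Hom}_{\mathcal C}(T[-1],U)\;\cong\;\mathrm{Hom}_{\mathcal C}(T,U[1])\;=\;0,
\]
so the zero morphism $T[-1]\to 0$ is a left minimal $\mathrm{add}(U)$-approximation of $T[-1]$. The defining triangle $T[-1]\to 0\to X_U\to T$ then forces $X_U\cong T$, hence $\mathcal T_U(T)=(U\oplus T)^\flat=T$. The same argument applied to $M$ gives $\mathcal T_U(M)=M$.

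Now write $M=\mu_{X_s}\cdots\mu_{X_1}(T)$ and set $T^{(0)}=T$, $T^{(i)}=\mu_{X_i}(T^{(i-1)})$ for $i=1,\dots,s$, so $T^{(s)}=M$. Put $T_U^{(i)}:=\mathcal T_U(T^{(i)})$. By the previous paragraph, $T_U^{(0)}=T$ and $T_U^{(s)}=M$. By Theorem \ref{thmsubseq}, for every $i$ either $T_U^{(i-1)}\cong T_U^{(i)}$ or $T_U^{(i-1)}$ and $T_U^{(i)}$ are related by a single mutation. Deleting the identity steps produces a mutation sequence $(\mu_{Y_1},\dots,\mu_{Y_m})$ transforming $T$ into $M$.

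It remains to check that each $Y_j\notin\mathrm{add}(U)$. By construction of $\mathcal T_U$, every $T_U^{(i)}$ has $U$ as a direct summand. If $T_U^{(i-1)}$ and $T_U^{(i)}$ are related by a mutation $\mu_{Y_j}$ with $Y_j$ indecomposable, they agree on all but one indecomposable summand, namely $Y_j$; since $U$ is a common direct summand and both objects are basic, $Y_j$ cannot lie in $\mathrm{add}(U)$. I do not expect a serious obstacle here: the bulk of the work is already encoded in Theorem \ref{thmsantwi} (which guarantees that $\mathcal T_U$ produces cluster tilting objects) and Theorem \ref{thmsubseq} (which controls how $\mathcal T_U$ interacts with a single mutation). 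The only delicate point is the initial computation that $\mathcal T_U$ fixes any basic cluster tilting object containing $U$ as a summand, which is the step that allows the sequence $(T_U^{(i)})$ to begin at $T$ and end at $M$.
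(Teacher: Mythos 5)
Your proof follows the paper's argument exactly: apply $\mathcal T_U$ termwise to a mutation path from $T$ to $M$, invoke Theorem~\ref{thmsubseq} to see that consecutive images are equal or differ by one mutation, and observe that since $U$ is a common direct summand of every $T_U^{(i)}$, none of the surviving mutations can occur at a summand of $U$. The only addition is your explicit triangle computation verifying $\mathcal T_U(T)=T$ and $\mathcal T_U(M)=M$, a fact the paper states directly from $U\in\mathrm{add}(T)\cap\mathrm{add}(M)$ without spelling out the approximation argument.
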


\begin{proof}
The proof is similar with that of Theorem \ref{thmreachable}.
Since $M$ is $T$-mutation-reachable, we can assume that
$$M=\mu_{X_{s}}\cdots\mu_{X_{2}}\mu_{X_{1}}(T),$$
 where $X_{i+1}$ is a indecomposable direct summand of $\mu_{X_i}\cdots\mu_{X_2}\mu_{X_1}(T)$ for $i=0,\cdots,s-1$.

We denote $T^{t_0}=T$ and $T^{t_i}=\mu_{X_i}\cdots\mu_{X_2}\mu_{X_1}(T)$ for $i=1,\cdots,s$.
Let $T^{t_i}_U=\mathcal T_U(T^{t_i})$ for $i=0,1,\cdots,s$,
then $U$ is a direct summand of $T^{t_i}_U$ for $i=0,1,\cdots,s$.
Since  $U\in{\rm add}(T)\cap {\rm add}(M)$, we know that $T_U^{t_0}=T=T^{t_0}$ and $T_U^{t_s}=M=T^{t_s}$.

By Theorem \ref{thmsubseq}, we have the following diagram:
$$\xymatrix{T=T^{t_0}\ar[rr]^{\mu_{X_1}}\ar@{=}[d]^{\mathcal T_U}&&T^{t_1}\ar[rr]^{\mu_{X_2}}\ar[d]^{\mathcal T_U}&&T^{t_3}\ar[rr]^{\mu_{X_3}}
\ar[d]^{\mathcal T_U}&&\;\;\;\;\cdots& T^{t_{s-1}}
\ar[rr]^{\mu_{X_{s}}}\ar[d]^{\mathcal T_U}&&T^{t_s}=M\ar@{=}[d]^{\mathcal T_U}\\
T=T^{t_0}_U\ar[rr]^{\varphi_1}&&T^{t_1}_U\ar[rr]^{\varphi_2}&&T^{t_3}_U\ar[rr]^{\varphi_3}&&\;\;\;\;\cdots&
T^{t_{s-1}}_U
\ar[rr]^{\varphi_s}&&T^{t_s}_U=M
}$$
where each $\varphi_i:T_U^{t_{i-1}}\rightarrow T_U^{t_i}$ is either a isomorphism or a mutation.
If $\varphi_i:T_U^{t_{i-1}}\rightarrow T_U^{t_i}$ is a mutation, say $\varphi_i=\mu_{Y_{k_i}}$, then $Y_{k_i}\notin {\rm add}(U)$, by the fact that $U$ is a common direct summand of $T_U^{t_{i-1}}$ and $T_U^{t_{i}}$.

So when we go from $T=T_U^{t_0}$ to $M=T_U^{t_s}$ along the sequence
$(\varphi_1,\varphi_2,\cdots,\varphi_s)$, we will not do any mutation of the form $\mu_Y$ with $Y\in{\rm add}(U)$.
By deleting the isomorphisms appearing in the sequence $(\varphi_1,\varphi_2,\cdots,\varphi_s)$, we can get a sequence of mutations $(\mu_{Y_1},\cdots,\mu_{Y_m})$ such that $M=\mu_{Y_m}\cdots\mu_{Y_2}\mu_{Y_1}(T)$ and $Y_j\notin{\rm add}(U)$ for $j=1,\cdots,m$.
\end{proof}

\section{$\mathcal G$-system}

In this section we introduce $\mathcal G$-system.  The similar combinatorial results can be obtained in $\mathcal G$-system as  Section 4 without the category environment.

\subsection{Co-Bongartz completions and mutations in $\mathcal G$-system}
\begin{Definition}
Let $\mathbf{T}$ be an index set and $\mathcal G_\mathbf{T}$ be a collection of $\mathbb Z$-bases of $\mathbb Z^n$ indexed by $\mathbf{T}$, i.e., we have a map:
\begin{eqnarray}
\mathbf{T}&\longrightarrow& \mathcal G_\mathbf{T}\nonumber\\
t&\longmapsto& G_t=\{{\bf g}_{1;t},\cdots,{\bf g}_{n;t}\}.\nonumber
\end{eqnarray}
$\mathcal G_\mathbf{T}$ is called a {\bf $\mathcal G$-system at $t_0\in \mathbf{T}$} if it satisfies the following three conditions.

{\bf Mutation Condition:} For any $G_t=\{{\bf g}_{1;t},\cdots,{\bf g}_{n;t}\}\in\mathcal G_\mathbf{T}$ and $k\in\{1,\cdots,n\}$, there exists a $G_{t_1}\in\mathcal G_\mathbf{T}$ such that $G_t\cap G_{t_1}=G_t\backslash\{{\bf g}_{k;t}\}$.

{\bf Co-Bongartz Completion Condition:} For any $G_t\in \mathcal G_\mathbf{T}$ and a subset $J\subseteq\{{\bf g}_{1;t_0},\cdots,{\bf g}_{n;t_0}\}$, there exists a $G_{t^\prime}\in \mathcal G_\mathbf{T}$ satisfying the following two statements.

(a) $J\subseteq G_{t^\prime}=\{{\bf g}_{1;t^\prime},\cdots,{\bf g}_{n;t^\prime}\}$.

(b)  If ${\bf g}_{k;t}=r_{1k}^{t^\prime}{\bf g}_{1;t^\prime}+\cdots+r_{nk}^{t^\prime}{\bf g}_{n;t^\prime}$, then
 $r_{ik}^{t^\prime}\geq 0$ for any $i$ satisfying ${\bf g}_{i;t^\prime}\notin J$ and $k=1,\cdots,n$.

{\bf Uniqueness Condition:} For any $G_u,G_{v}\in \mathcal G_\mathbf{T}$, if $$\sum \limits_{i\in I}r_i{\bf g}_{i;u}=\sum\limits_{j\in I^\prime}r_j^\prime{\bf g}_{j;v},$$
for some $I, I^\prime\subseteq \{1,\cdots,n\}$ and $r_i,r_j^\prime> 0$ with $i\in I, j\in I^\prime$, then there exists a bijection $\sigma: I^\prime\rightarrow I$  such that $r_j^\prime=r_{\sigma(j)}$ and ${\bf g}_{j;v}={\bf g}_{\sigma(j);u}$ for any $j\in I^\prime$.
\end{Definition}

\begin{Remark}\label{rmkcondb}
Let $R_t^{t^\prime}=(r_{ij}^{t^\prime})$ be the transition matrix from the basis $\{{\bf g}_{1;t^\prime},\cdots,{\bf g}_{n;t^\prime}\}$ to the basis $\{{\bf g}_{1;t},\cdots,{\bf g}_{n;t}\}$, i.e.,
$$({\bf g}_{1;t},\cdots,{\bf g}_{n;t})=({\bf g}_{1;t^\prime},\cdots,{\bf g}_{n;t^\prime})R_t^{t^\prime}.$$
The statement (b) in Co-Bongartz Completion Condition is equivalent  to say that the $i$-th row vector of $R_t^{t^\prime}$ is a nonnegative vector for any $i$ with ${\bf g}_{i;t^\prime}\notin J$.
\end{Remark}

Let $\mathcal G_\mathbf{T}$ be a $\mathcal G$-system at $t_0$.
Each $G_t=\{{\bf g}_{1;t},\cdots,{\bf g}_{n;t}\}\in \mathcal G_\mathbf{T}$ is called a {\bf $g$-cluster}  and each vector in $G_t$ is called a {\bf $g$-vector}.  $\{{\bf g}_1,\cdots,{\bf g}_{n-1}\}$ is  called a {\bf almost $g$-cluster} if there exists a vector ${\bf g}_n\in\mathbb Z_n$ such that $\{{\bf g}_1,\cdots,{\bf g}_{n-1},{\bf g}_n\}\in \mathcal G_\mathbf{T}$. The $g$-cluster $G_{t_0}$ is called the {\bf initial $g$-cluster} of  $\mathcal G_\mathbf{T}$ and the vectors in $G_{t_0}$ are called the {\bf initial $g$-vectors}.
Sometimes we also use the notation $G_t^{t_0}$ to denote the $g$-cluster  at $t$, when $\mathcal G_\mathbf{T}$ is a $\mathcal G$-system at $t_0$.

\begin{Proposition-Definition}\label{protwog}
Let $\mathcal G_\mathbf{T}$ be a  $\mathcal G$-system at $t_0$, then for any  $G_t=\{{\bf g}_{1;t},\cdots,{\bf g}_{n;t}\}\in\mathcal G_\mathbf{T}$ and $k\in\{1,\cdots,n\}$, there exists a unique $G_{t_1}\in\mathcal G_\mathbf{T}$ such that $G_t\cap G_{t_1}=G_t\backslash\{{\bf g}_{k;t}\}$.

The unique $G_{t_1}\in\mathcal G_\mathbf{T}$ is called the {\bf mutation} of $G_t$ at ${\bf g}_{k;t}$, and is denoted by  $G_{t_1}=\mu_{{\bf g}_{k;t}}(G_t)$.
\end{Proposition-Definition}

\begin{proof}
The existence of $G_{t_1}$ follows from Mutation Condition  in the definition of $G$-system. Now we show the uniqueness.

For convenience, we write $G_t=\{{\bf g}_1,\cdots,{\bf g}_n\}$. Without loss of generality, we can assume that $k=n$. Suppose there exists $G_{t_1},G_{t_2}\in\mathcal G_\mathbf{T}$ such that $$G_t\cap G_{t_1}=\{{\bf g}_1,\cdots,{\bf g}_{n-1}\},\;\;\;\;G_t\cap G_{t_2}=\{{\bf g}_1,\cdots,{\bf g}_{n-1}\}.$$
Then there exists  ${\bf w}_1, {\bf w}_2\in\mathbb Z^n$ with ${\bf w}_1\neq{\bf g}_n\neq{\bf w}_2$ such that
$G_{t_1}=\{{\bf g}_1,\cdots,{\bf g}_{n-1},{\bf w}_1\}$ and $G_{t_2}=\{{\bf g}_1,\cdots,{\bf g}_{n-1},{\bf w}_2\}$.
Since both $G_{t_1}$ and $G_{t_2}$ are $\mathbb Z$-bases of $\mathbb Z^n$, there exist $k_i,k_i^\prime\in\mathbb Z$ for $i=1,\cdots,n$ such that
\begin{eqnarray}
{\bf g}_n&=&k_1{\bf g}_1+\cdots+k_{n-1}{\bf g}_{n-1}+k_n{\bf w}_1\nonumber\\
&=&k_1^\prime{\bf g}_1+\cdots+k_{n-1}^\prime{\bf g}_{n-1}+k_n^\prime{\bf w}_2.\nonumber
\end{eqnarray}
 Because ${\bf g}_1,\cdots,{\bf g}_{n-1},{\bf g}_n$ are linearly independent, we know that $k_n\neq 0$ and $k_n^\prime\neq 0$. Thus either $k_nk_n^\prime>0$ or $k_nk_n^\prime<0$.

Choose $N$ large enough such that $N>0$, $N+k_i>0$ and $N+k_i^\prime>0$ for $i=1,\cdots,n-1$.
We have the following equality.
\begin{eqnarray}
N{\bf g}_1+\cdots+N{\bf g}_{n-1}+{\bf g}_n&=&(N+k_1){\bf g}_1+\cdots+(N+k_{n-1}){\bf g}_{n-1}+k_n{\bf w}_1\nonumber\\
&=&(N+k_1^\prime){\bf g}_1+\cdots+(N+k_{n-1}^\prime){\bf g}_{n-1}+k_n^\prime{\bf w}_2.\nonumber
\end{eqnarray}

If $k_nk_n^\prime<0$, without loss of generality, we can assume that $k_n>0$ and $k_n^\prime<0$.
Then by the Uniqueness Condition in the definition of $\mathcal G$-system and the equality
$$N{\bf g}_1+\cdots+N{\bf g}_{n-1}+{\bf g}_n=(N+k_1){\bf g}_1+\cdots+(N+k_{n-1}){\bf g}_{n-1}+k_n{\bf w}_1,$$
we can get $k_1=\cdots=k_{n-1}=0$, $k_n=1$ and ${\bf g}_n={\bf w}_1$. This contradicts ${\bf g}_n\neq {\bf w}_1$.

If $k_nk_n^\prime>0$, then either $k_n>0, k_n^\prime>0$ or $k_n<0, \;k_n^\prime<0$. If $k_n>0, k_n^\prime>0$, by considering the equality
$$(N+k_1){\bf g}_1+\cdots+(N+k_{n-1}){\bf g}_{n-1}+k_n{\bf w}_1
=(N+k_1^\prime){\bf g}_1+\cdots+(N+k_{n-1}^\prime){\bf g}_{n-1}+k_n^\prime{\bf w}_2,$$
we can get $k_i=k_i^\prime$ for $i=1,\cdots,n$ and ${\bf w}_1={\bf w}_2$. So we get that $G_{t_1}=G_{t_2}$. If $k_n<0,\; k_n^\prime<0$, we need to consider the equality
$$(N+k_1){\bf g}_1+\cdots+(N+k_{n-1}){\bf g}_{n-1}+(-k_n^\prime){\bf w}_2
=(N+k_1^\prime){\bf g}_1+\cdots+(N+k_{n-1}^\prime){\bf g}_{n-1}+(-k_n){\bf w}_1,$$
and we can also get that $k_i=k_i^\prime$ for $i=1,\cdots,n$ and ${\bf w}_1={\bf w}_2$. So $G_{t_1}=G_{t_2}$. This completes the proof.
\end{proof}

\begin{Proposition-Definition}\label{prodef}
Let $\mathcal G_\mathbf{T}$ be a  $\mathcal G$-system at $t_0$. Then  for any $G_t\in \mathcal G_\mathbf{T}$ and a subset $J\subseteq\{{\bf g}_{1;t_0},\cdots,{\bf g}_{n;t_0}\}$, there exists a unique $G_{t^\prime}\in \mathcal G_\mathbf{T}$ satisfying the following two statements.

(a) $J\subseteq G_{t^\prime}=\{{\bf g}_{1;t^\prime},\cdots,{\bf g}_{n;t^\prime}\}$.

(b) If ${\bf g}_{k;t}=r_{1k}^{t^\prime}{\bf g}_{1;t^\prime}+\cdots+r_{nk}^{t^\prime}{\bf g}_{n;t^\prime}$,
then $r_{ik}^{t^\prime}\geq 0$ for any $i$ satisfying ${\bf g}_{i;t^\prime}\notin J$ and $k=1,\cdots,n$.

The unique $G_{t^\prime}$ is called the {\bf co-Bongartz completion} of $J$ with respect to $G_t$, and is denoted by  $G_{t^\prime}=\mathcal T_J(G_t)$. If $J$ contains only one element, say $J=\{{\bf g}_{j;t_0}\}$, then we call $G_{t^\prime}$  the {\bf elementary co-Bongartz completion} of $J=\{{\bf g}_{j;t_0}\}$ with respect to $G_t$.
\end{Proposition-Definition}

\begin{proof}
The existence of $G_{t^\prime}$ is just the Co-Bongartz Completion Condition  in the definition of $\mathcal G$-system. Now we give the reason for uniqueness.

Assume  that there exists $G_{t_1}, G_{t_2}\in\mathcal G(\mathcal T)$ satisfying (a) and (b).
By (a), we know  $J\subseteq \{{\bf g}_{1;t_1},\cdots,{\bf g}_{n;t_1}\}\cap\{{\bf g}_{1;t_2},\cdots,{\bf g}_{n;t_2}\}$.

We now show that $\{{\bf g}_{1;t_1},\cdots,{\bf g}_{n;t_1}\}\subseteq \{{\bf g}_{1;t_2},\cdots,{\bf g}_{n;t_2}\}$.
For each ${\bf g}_{j_0;t_1}\in G_{t_1}$, if ${\bf g}_{j_0;t_1}\in J$, clearly, we have ${\bf g}_{j_0;t_1}\in G_{t_2}$. So we mainly consider the case ${\bf g}_{j_0;t_1}\notin J$.

Since $J\subseteq \{{\bf g}_{1;t_1},\cdots,{\bf g}_{n;t_1}\}\cap\{{\bf g}_{1;t_2},\cdots,{\bf g}_{n;t_2}\}$, without loss of generality, we can assume  ${\bf g}_{j;t_1}={\bf g}_{j;t_0}={\bf g}_{j;t_2}$ for ${\bf g}_{j;t_0}\in J$.
By (b), we can get that
\begin{eqnarray}
{\bf g}_{k;t}&=&\sum\limits_{j=1}^nr_{jk}^{t_1}{\bf g}_{j;t_1}
=\sum\limits_{{\bf g}_{j;t_1}\in J}r_{jk}^{t_1}{\bf g}_{j;t_1}+\sum\limits_{{\bf g}_{j;t_1}\notin J}r_{jk}^{t_1}{\bf g}_{j;t_1}=\sum\limits_{{\bf g}_{j;t_0}\in J}r_{jk}^{t_1}{\bf g}_{j;t_0}+\sum\limits_{{\bf g}_{j;t_1}\notin J}r_{jk}^{t_1}{\bf g}_{j;t_1}\nonumber\\
{\bf g}_{k;t}&=&\sum\limits_{j=1}^nr_{jk}^{t_2}{\bf g}_{j;t_2}
=\sum\limits_{{\bf g}_{j;t_2}\in J}r_{jk}^{t_2}{\bf g}_{j;t_2}+\sum\limits_{{\bf g}_{j;t_2}\notin J}r_{jk}^{t_2}{\bf g}_{j;t_2}=
\sum\limits_{{\bf g}_{j;t_0}\in J}r_{jk}^{t_2}{\bf g}_{j;t_0}+\sum\limits_{{\bf g}_{j;t_2}\notin J}r_{jk}^{t_2}{\bf g}_{j;t_2},\nonumber
\end{eqnarray}
where $r_{jk}^{t_1},r_{jk}^{t_2}\geq 0$ for any $j$ with ${\bf g}_{j;t_1}\notin J$ and ${\bf g}_{j;t_2}\notin J$.

Denote by $R^{t_1}=(r_{ij}^{t_1})_{n\times n}$ and $R^{t_2}=(r_{ij}^{t_2})_{n\times n}$.
Since $G_t$ and $G_{t_1}$ are two $\mathbb Z$-bases of $\mathbb Z^n$ and
$$({\bf g}_{1;t},\cdots,{\bf g}_{n;t})=({\bf g}_{1;t_1},\cdots, {\bf g}_{n;t_1})R^{t_1},$$
we know that $R^{t_1}$ is full rank.
So $R^{t_1}$ does not have zero row. Thus for ${\bf g}_{j_0;t_1}\notin J$, there exists a $k_0\in\{1,\cdots,n\}$ such that
 $r_{j_0k_0}^{t_1}\neq 0$. One the other hand, we know that $r_{j_0k_0}^{t_1}\geq 0$, by ${\bf g}_{j_0;t_1}\notin J$. So $r_{j_0k_0}^{t_1}> 0$.

Choose $N$ large enough such that $N>0$, $N+r_{jk_0}^{t_1}>0$ and $N+r_{jk_0}^{t_2}>0$ for any $j=1,\cdots,n$.
We have the following equality.

\begin{eqnarray}\label{eqnnonne}
{\bf g}_{k_0;t}+\sum\limits_{{\bf g}_{j;t_1}\in J}N{\bf g}_{j;t_0}=\sum\limits_{{\bf g}_{j;t_1}\in J}(r_{jk_0}^{t_1}+N){\bf g}_{j;t_1}+\sum\limits_{{\bf g}_{j;t_1}\notin J}r_{jk_0}^{t_1}{\bf g}_{j;t_1}
=\sum\limits_{{\bf g}_{j;t_2}\in J}(r_{jk_0}^{t_2}+N){\bf g}_{j;t_2}+\sum\limits_{{\bf g}_{j;t_2}\notin J}r_{jk_0}^{t_2}{\bf g}_{j;t_2}.\nonumber
\end{eqnarray}
All the coefficients appearing in the above equality are non-negative. Then by $r_{j_0k_0}^{t_1}> 0$ and the Uniqueness Condition in the definition of the $\mathcal G$-system,  there  exists some ${\bf g}_{i_0;t_2}\notin J$ such that ${\bf g}_{j_0;t_1}={\bf g}_{i_0;t_2}\in G_{t_2}$. So  $\{{\bf g}_{1;t_1},\cdots,{\bf g}_{n;t_1}\}\subseteq \{{\bf g}_{1;t_2},\cdots,{\bf g}_{n;t_2}\}$.

Similarly, we can show that $\{{\bf g}_{1;t_2},\cdots,{\bf g}_{n;t_2}\}\subseteq\{{\bf g}_{1;t_1},\cdots,{\bf g}_{n;t_1}\}$. Thus we get that $$\{{\bf g}_{1;t_1},\cdots,{\bf g}_{n;t_1}\}=\{{\bf g}_{1;t_2},\cdots,{\bf g}_{n;t_2}\}.$$
This completes the proof.
\end{proof}

\begin{Remark}\label{rmkselft}
(i) If $G_{t^\prime}=\mathcal T_J(G_t)$ for some $J\subseteq G_{t_0}$, then $J\subseteq G_{t^\prime}$.

(ii) If $J\subseteq G_t\cap G_{t_0}$,  it is easy to see that $\mathcal T_J(G_t)=G_t$.
\end{Remark}

By the discussions before, there exists two kinds of actions ``mutations" and ``co-Bongartz completions" naturally acting on a $\mathcal G$-system. The following proposition indicates that in some special cases, mutations action can be realized as the action of elementary co-Bongartz completions.
\begin{Theorem}\label{thmtm}
Let $\mathcal G_\mathbf{T}$ be a  $\mathcal G$-system at $t_0$, $G_t\in \mathcal G_\mathbf{T}$, and $G_{t^\prime}=\mu_{{\bf g}_{n;t}}(G_t)$.
 If ${\bf g}_{n;t^\prime}\in G_{t_0}$, then
$\mu_{{\bf g}_{n;t}}(G_t)=G_{t^\prime}=\mathcal T_{{\bf g}_{n;t^\prime}}(G_t).$
\end{Theorem}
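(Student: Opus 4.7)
The plan is to show that the co-Bongartz completion $G_{t''}:=\mathcal{T}_{\{{\bf g}_{n;t'}\}}(G_t)$ coincides with $G_{t'}$; the hypothesis ${\bf g}_{n;t'}\in G_{t_0}$ is exactly what is needed so that $G_{t''}$ is defined via Proposition-Definition~5.1.3. After a harmless reindexing I may assume $G_t=\{{\bf g}_{1;t},\ldots,{\bf g}_{n;t}\}$, $G_{t'}=\{{\bf g}_{1;t},\ldots,{\bf g}_{n-1;t},{\bf g}_{n;t'}\}$ (using $G_t\cap G_{t'}=G_t\setminus\{{\bf g}_{n;t}\}$), and ${\bf g}_{n;t''}={\bf g}_{n;t'}$. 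Observe that ${\bf g}_{n;t'}\notin G_t$: otherwise it would lie in $G_t\cap G_{t'}$ and coincide with some ${\bf g}_{i;t'}$ with $i<n$, contradicting the basis property of $G_{t'}$. Consequently, once I prove the inclusion $\{{\bf g}_{1;t},\ldots,{\bf g}_{n-1;t}\}\subseteq G_{t''}$, a cardinality count forces $G_{t''}=\{{\bf g}_{1;t},\ldots,{\bf g}_{n-1;t},{\bf g}_{n;t'}\}=G_{t'}$, giving the desired equality.

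To prove that inclusion, fix $j<n$ and expand ${\bf g}_{j;t}=\sum_i r_{ij}^{t''}{\bf g}_{i;t''}$. Co-Bongartz Completion Condition~(b) guarantees $r_{ij}^{t''}\geq 0$ for every $i<n$; only $r_{nj}^{t''}$ is a priori unconstrained, since ${\bf g}_{n;t''}={\bf g}_{n;t'}$ is the one vector in the chosen subset $J$. When $r_{nj}^{t''}\geq 0$, all coefficients are non-negative, and the Uniqueness Condition with $G_u=G_t$, $I=\{j\}$, $G_v=G_{t''}$ collapses the sum: it forces exactly one strictly positive term in the expansion, whose basis vector must equal ${\bf g}_{j;t}$, so ${\bf g}_{j;t}\in G_{t''}$.

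The main obstacle is to rule out $r_{nj}^{t''}<0$. The crucial trick is to rewrite the expansion, using ${\bf g}_{j;t}={\bf g}_{j;t'}$, as
\[{\bf g}_{j;t'}+(-r_{nj}^{t''}){\bf g}_{n;t'}=\sum_{i<n}r_{ij}^{t''}{\bf g}_{i;t''}.\]
Keeping only strictly positive terms on the right, both sides become positive $\mathbb{Z}$-linear combinations of basis vectors: the left in $G_{t'}$, the right in $G_{t''}$. So the Uniqueness Condition applies with $G_u=G_{t'}$, $G_v=G_{t''}$, and its bijection would have to identify ${\bf g}_{n;t'}$ with some ${\bf g}_{i;t''}$ for an index $i<n$. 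But ${\bf g}_{n;t'}={\bf g}_{n;t''}$, so this would force ${\bf g}_{i;t''}={\bf g}_{n;t''}$ with $i\neq n$, contradicting the fact that $G_{t''}$ is a basis. This contradiction excludes the case $r_{nj}^{t''}<0$ and completes the proof.
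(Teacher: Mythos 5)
Your proof is correct and follows essentially the same strategy as the paper's: expand each ${\bf g}_{j;t}$ in the co-Bongartz basis $G_{t''}$, use Co-Bongartz nonnegativity on the coordinates other than ${\bf g}_{n;t'}$, and invoke the Uniqueness Condition (comparing with $G_{t'}$ via ${\bf g}_{j;t}={\bf g}_{j;t'}$ and ${\bf g}_{n;t'}\in G_{t'}$) to conclude ${\bf g}_{j;t}\in G_{t''}$. The only difference is that the paper handles the single possibly-negative coefficient uniformly by adding $N{\bf g}_{n;t'}$ to both sides for $N$ large, whereas you split into cases on its sign; these are interchangeable cosmetic variants of the same idea.
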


\begin{proof}
By $G_{t^\prime}=\mu_{{\bf g}_{n;t}}(G_t)$, we know ${\bf g}_{j;t^\prime}={\bf g}_{j;t}$ for $j=1,\cdots,n-1$. For convenience, we write $G_t=\{{\bf g}_1,\cdots,{\bf g}_{n-1}, {\bf g}_n\}$ and $G_{t^\prime}=\{{\bf g}_1,\cdots,{\bf g}_{n-1}, {\bf g}_n^\prime\}$. By ${\bf g}_{n;t^\prime}\in G_{t_0}$, we know ${\bf g}_n^\prime\in G_{t_0}$.

Let $G_{t_1}=\mathcal T_{{\bf g}_n^\prime}(G_t)$, we know that ${\bf g}_n^\prime$ is also a $g$-vector in $G_{t_1}$. Without loss of generality, we can assume that ${\bf g}_{n;t_1}={\bf g}_{n}^\prime$. By the definition of co-Bongartz completion, we know that  each ${\bf g}_k\in G_t$ has the form of
 $${\bf g}_k=r_{1k}{\bf g}_{1;t_1}+\cdots+r_{n-1;k}{\bf g}_{n-1;t_1}+r_{nk}{\bf g}_{n}^\prime,$$
where $r_{ik}\geq 0$ for $i=1,\cdots,n-1$.
Choose $N$ large enough such that $N>0$ and $N+r_{nk}>0$ for $k=1,\cdots,n-1$.
We consider the following equality for each $k\in\{1,\cdots,n-1\}$.
$${\bf g}_k+N{\bf g}_{n}^\prime=r_{1k}{\bf g}_{1;t_1}+\cdots+r_{n-1;k}{\bf g}_{n-1;t_1}+(r_{nk}+N){\bf g}_{n}^\prime,$$
where the coefficients appearing in the above equality are nonnegative.
Then by applying the Uniqueness Condition  in the definition of  the $\mathcal G$-system to the $g$-clusters $G_{t^\prime}$ and $G_{t_1}$,
we can obtain that ${\bf g}_k\in \{{\bf g}_{1;t_1},\cdots,{\bf g}_{n;t_1}\}$ for $k=1,\cdots,n-1$.
Since ${\bf g}_n^\prime={\bf g}_{n;t_1}$, we get that $$G_{t^\prime}=\{{\bf g}_{1},\cdots,{\bf g}_{n-1},{\bf g}_{n}^\prime\}=\{{\bf g}_{1;t_1},\cdots,{\bf g}_{n;t_1}\}=G_{t_1}.$$
So $\mu_{{\bf g}_{n;t}}(G_t)=G_{t^\prime}=G_{t_1}=\mathcal T_{{\bf g}_n^\prime}(G_t)$.
\end{proof}

\begin{Proposition}\label{promumat}
Let $\mathcal G_\mathbf{T}$ be a  $\mathcal G$-system at $t_0$,  $G_t\in\mathcal G_\mathbf{T}$, and $G_{t^\prime}=\mu_{{\bf g}_{n;t}}(G_t)$. Let $R_{t^\prime}^t=(r_{ij;t^\prime}^t)$ be  the transition matrix from the basis $G_t$ to the basis $G_{t^\prime}$, i,e,
$$({\bf g}_{1;t^\prime},\cdots,{\bf g}_{n-1;t^\prime},{\bf g}_{n;t^\prime})=({\bf g}_{1;t},\cdots,{\bf g}_{n-1;t},{\bf g}_{n;t})R_{t^\prime}^t.$$
Then $R_{t^\prime}^t$ has the form of
$$R_{t^\prime}^t=\begin{pmatrix}I_{n-1}&\alpha\\0&-1\end{pmatrix},$$
where $\alpha$ is a column vector in $\mathbb Z^{n-1}$.
If further, ${\bf g}_{n;t}\in G_{t_0}$, then $\alpha$ is a column vector in $\mathbb Z_{\geq 0}^{n-1}$.
\end{Proposition}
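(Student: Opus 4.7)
First, I use the mutation hypothesis: since $G_{t'} = \mu_{\mathbf{g}_{n;t}}(G_t)$, one has $\mathbf{g}_{i;t'} = \mathbf{g}_{i;t}$ for $i = 1,\ldots,n-1$, so the first $n-1$ columns of $R_{t'}^t$ are the standard basis vectors $e_1,\ldots,e_{n-1}$. This forces the block shape
\[
R_{t'}^t = \begin{pmatrix} I_{n-1} & \alpha \\ 0 & c \end{pmatrix}
\]
for some $\alpha \in \mathbb{Z}^{n-1}$ and $c \in \mathbb{Z}$. Because both $G_t$ and $G_{t'}$ are $\mathbb{Z}$-bases of $\mathbb{Z}^n$, $\det R_{t'}^t = \pm 1$; expanding along the last row gives $\det R_{t'}^t = c$, so $c \in \{+1,-1\}$.

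Next I would force $c = -1$ by contradiction. Assume $c = +1$; then $\mathbf{g}_{n;t'} - \mathbf{g}_{n;t} = \sum_{i<n}\alpha_i\,\mathbf{g}_{i;t}$. Using $\mathbf{g}_{i;t} = \mathbf{g}_{i;t'}$ for $i < n$ and moving the negative $\alpha_i$ terms to the opposite side, the equation becomes
\[
\mathbf{g}_{n;t'} + \sum_{\alpha_i<0}(-\alpha_i)\mathbf{g}_{i;t'} \;=\; \mathbf{g}_{n;t} + \sum_{\alpha_i>0}\alpha_i\,\mathbf{g}_{i;t},
\]
with every coefficient strictly positive. The Uniqueness Condition then yields a bijection $\sigma$ between the two supports matching coefficients and basis vectors; in particular the distinguished coefficient-$1$ term $\mathbf{g}_{n;t}$ on the right must be matched with some coefficient-$1$ term on the left. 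The only candidates are $\mathbf{g}_{n;t'}$, giving $\mathbf{g}_{n;t} = \mathbf{g}_{n;t'}$ (impossible because mutation changes the $n$-th vector), or some $\mathbf{g}_{j;t'}$ with $j < n$ and $\alpha_j = -1$, giving $\mathbf{g}_{n;t} = \mathbf{g}_{j;t'} = \mathbf{g}_{j;t}$ (impossible because $G_t$ is a basis). Hence $c = -1$.

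For the sign assertion, assume $\mathbf{g}_{n;t} \in G_{t_0}$. Uniqueness of mutation (Proposition-Definition \ref{protwog}) makes $\mu$ involutive, so $G_t = \mu_{\mathbf{g}_{n;t'}}(G_{t'})$, and the ``new'' vector of this reversed mutation is $\mathbf{g}_{n;t}$, which lies in $G_{t_0}$ by assumption. Theorem \ref{thmtm}, applied with $G_{t'}$ playing the role of the source cluster, then identifies $G_t$ with the elementary co-Bongartz completion $\mathcal{T}_{\{\mathbf{g}_{n;t}\}}(G_{t'})$. Clause (b) of the Co-Bongartz Completion Condition (with $J = \{\mathbf{g}_{n;t}\}$) asserts that the $(i,k)$-entry of $R_{t'}^t$ is nonnegative whenever $\mathbf{g}_{i;t} \notin J$, i.e., for all $i < n$; specialising to $k = n$ yields $\alpha_i \geq 0$ for $i = 1,\ldots,n-1$, as desired.

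The main obstacle is the middle paragraph: one must arrange the integer combination so that the Uniqueness Condition is applicable (all coefficients strictly positive on both sides) and then read off from it an inconsistency with either the mutation axiom or basis-ness of $G_t$. The sign statement, once involutivity of mutation is recognised, is essentially a formal consequence of Theorem \ref{thmtm} and the definition of co-Bongartz completion.
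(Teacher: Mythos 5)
Your proof is correct and follows essentially the same strategy as the paper: the block shape and determinant give $c=\pm1$, the Uniqueness Condition excludes $c=+1$, and the combination of Proposition-Definition \ref{protwog}, Theorem \ref{thmtm}, and clause~(b) of the Co-Bongartz Completion Condition yields the nonnegativity of $\alpha$. The only (harmless) variation is in how you set up the Uniqueness Condition argument — you move the negative $\alpha_i$ terms across the equation, whereas the paper adds a large constant $N$ to all of the first $n-1$ terms on both sides; your version is a bit more explicit about the two possible matches for $\mathbf{g}_{n;t}$ under the resulting bijection, but the underlying idea is identical.
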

\begin{proof}
Since $G_{t^\prime}=\mu_{{\bf g}_{n;t}}(G_t)$, we know that ${\bf g}_{j;t^\prime}={\bf g}_{j;t}$ for $j=1,\cdots,n-1$. For convenience, we write $G_t=\{{\bf g}_1,\cdots,{\bf g}_{n-1}, {\bf g}_n\}$ and $G_{t^\prime}=\{{\bf g}_1,\cdots,{\bf g}_{n-1}, {\bf g}_n^\prime\}$.
Clearly, for $j\leq n-1$, we have $$r_{ij;t^\prime}^t=\begin{cases}1&i=j\\0&i\neq j\end{cases}.$$
Since both $G_t$ and $G_{t^\prime}$ are $\mathbb Z$-bases of $\mathbb Z^n$, we know that ${\rm det}(R_{t^\prime}^t)=\pm 1$. Thus we can get that $r_{nn;t^\prime}^t=\pm 1$.
If $r_{nn;t^\prime}^t=1$, we have
\begin{eqnarray}\label{eqnx}
{\bf g}_{n}^\prime=r_{1n;t^\prime}^t{\bf g}_1+\cdots+r_{n-1;n;t^\prime}^t{\bf g}_{n-1}+{\bf g}_n.
\end{eqnarray}
Choose $N$ large enough such that $N>0$, $N+r_{in;t^\prime}^t>0$ for $i=1,\cdots,n-1$.
We have the following equality.
$$N{\bf g}_1+\cdots+N{\bf g}_{n-1}+{\bf g}_{n}^\prime=(r_{1n;t^\prime}^t+N){\bf g}_1+\cdots+(r_{n-1;n;t^\prime}^t+N){\bf g}_{n-1}+{\bf g}_n,$$
where the coefficients appearing in the above equality are nonnegative. Applying the Uniqueness Condition  in the definition of $\mathcal G$-system for the $g$-clusters $G_t$ and $G_{t^\prime}$, we can get ${\bf g}_n={\bf g}_n^\prime$. This is a contradiction, so  $r_{nn;t^\prime}^t\neq1$. Thus $r_{nn;t^\prime}^t=-1$.

By $G_{t^\prime}=\mu_{{\bf g}_{n}}(G_t)$, we know that $G_t=\mu_{{\bf g}_n^\prime}(G_{t^\prime})$.
If ${\bf g}_{n;t}={\bf g}_n\in G_{t_0}$,  we get that
 $G_{t}=\mathcal T_{{\bf g}_n}(G_{t^\prime})$, by Theorem \ref{thmtm}. By the definition of co-Bongartz completion, we know that the coefficient in the equality (\ref{eqnx}) before ${\bf g}_i$ is nonnegative for $i=1,\cdots,n-1$. Thus if  ${\bf g}_{n;t}\in G_{t_0}$, then $\alpha$ is a column vector in $\mathbb Z_{\geq 0}^{n-1}$.
\end{proof}

Let $A=(a_{ij})_{n\times n}$ be a matrix over $\mathbb R$. We say that $A$ is {\bf row sign-coherent}, if each row vector of $A$ is either a nonpositive vector or a nonnegative vector.
\begin{Theorem}(Row sign-coherence)
Let $\mathcal G_\mathbf{T}$ be a  $\mathcal G$-system at $t_0$. For any $G_u, G_v\in\mathcal G_\mathbf{T}$, denote by $R_{v}^u=(r_{ij;v}^u)$  the transition matrix from the basis $G_u$ to the basis $G_{v}$, i,e,
$$({\bf g}_{1;v},\cdots,{\bf g}_{n;v})=({\bf g}_{1;u},\cdots,{\bf g}_{n;u})R_{v}^u.$$
Then the matrix $R_t^{t_0}$ is row sign-coherent for any $G_t\in\mathcal G_\mathbf{T}$.
\end{Theorem}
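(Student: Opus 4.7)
The plan is to fix a row index $i\in\{1,\ldots,n\}$ and choose a carefully designed co-Bongartz completion that controls exactly row $i$ of $R_t^{t_0}$. Set
$$J := \{{\bf g}_{1;t_0},\ldots,{\bf g}_{n;t_0}\}\setminus\{{\bf g}_{i;t_0}\}$$
and let $G_{t'}:=\mathcal T_J(G_t)$. By Proposition-Definition \ref{prodef}, $J\subseteq G_{t'}$, and since $|J|=n-1$, the $g$-cluster $G_{t'}$ consists of the $n-1$ elements of $J$ together with exactly one additional vector, which I will denote ${\bf w}={\bf g}_{l_0;t'}$ for the appropriate index $l_0$.

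Next I would expand ${\bf w}$ in the initial basis, ${\bf w}=c_1{\bf g}_{1;t_0}+\cdots+c_n{\bf g}_{n;t_0}$ with $c_l\in\mathbb Z$. A key point is that $c_i\neq 0$: if $c_i$ were zero, then ${\bf w}$ would lie in the $\mathbb Z$-span of $J$, so $G_{t'}=J\cup\{{\bf w}\}$ could not be a $\mathbb Z$-basis of $\mathbb Z^n$, contradicting the definition of a $\mathcal G$-system. Now I apply the Co-Bongartz Completion Condition to $G_t$ and $G_{t'}$: for each $k$, writing ${\bf g}_{k;t}=\sum_{l=1}^n a_{lk}\,{\bf g}_{l;t'}$, one has $a_{l_0 k}\geq 0$, since ${\bf g}_{l_0;t'}={\bf w}$ is the unique element of $G_{t'}$ that does not belong to $J$. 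Substituting the expansion of ${\bf w}$ in $G_{t_0}$ into this identity and collecting the coefficient of ${\bf g}_{i;t_0}$ — which can arise only through ${\bf w}$, because every other element of $G_{t'}$ lies in $J=G_{t_0}\setminus\{{\bf g}_{i;t_0}\}$ — I obtain the clean formula
$$r_{ik;t}^{t_0} = a_{l_0 k}\cdot c_i \qquad (k=1,\ldots,n).$$
Since $a_{l_0 k}\geq 0$ for every $k$ and $c_i$ is a fixed nonzero integer, the entries $r_{i1;t}^{t_0},\ldots,r_{in;t}^{t_0}$ all carry the common sign of $c_i$; that is, the $i$-th row of $R_t^{t_0}$ is sign-coherent. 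Letting $i$ range over $\{1,\ldots,n\}$ finishes the proof.

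The only genuine content is the non-vanishing $c_i\neq 0$ (a consequence of $G_{t'}$ being a $\mathbb Z$-basis) together with the observation that row $i$ of $R_t^{t_0}$ factors through the single scalar $c_i$ via the nonnegative coefficients $a_{l_0 k}$ — everything else amounts to unwinding definitions, once the correct completion $\mathcal T_J(G_t)$ with $J=G_{t_0}\setminus\{{\bf g}_{i;t_0}\}$ has been identified as the right auxiliary $g$-cluster. I do not expect any serious obstacle; the whole argument is essentially an exercise in choosing the co-Bongartz completion that isolates the desired initial coordinate.
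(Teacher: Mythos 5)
Your proof is correct, and it reaches the conclusion by a cleaner route than the paper. Both arguments choose the same auxiliary object, the co-Bongartz completion $G_{t'}=\mathcal T_J(G_t)$ with $J=G_{t_0}\setminus\{{\bf g}_{i;t_0}\}$, and exploit that $J\subseteq G_{t'}$ leaves exactly one remaining vector ${\bf w}$. The paper, however, then runs a case analysis: it first identifies $G_{t'}$ as either $G_{t_0}$ itself or $\mu_{{\bf g}_{i;t_0}}(G_{t_0})$, invokes Proposition~\ref{promumat} to pin down the transition matrix $R_{t'}^{t_0}$ as $\bigl(\begin{smallmatrix}I_{n-1}&\alpha\\0&-1\end{smallmatrix}\bigr)$, and finally multiplies block matrices to read off the $i$-th row of $R_t^{t_0}=R_{t'}^{t_0}R_t^{t'}$. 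You bypass all of this: by substituting the expansion ${\bf w}=\sum_m c_m{\bf g}_{m;t_0}$ into ${\bf g}_{k;t}=\sum_l a_{lk}{\bf g}_{l;t'}$ and noting that the contributions to the ${\bf g}_{i;t_0}$-coordinate can only come through ${\bf w}$, you get $r_{ik;t}^{t_0}=a_{l_0k}\,c_i$ directly, and the only facts you need are $a_{l_0k}\ge 0$ (from the Co-Bongartz Completion Condition applied to the unique element of $G_{t'}$ outside $J$) and $c_i\neq 0$ (since otherwise $G_{t'}$ would not span $\mathbb Z^n$). In particular you never need to know that $c_i=\pm1$, so Proposition~\ref{promumat} and the two-case split become unnecessary. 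The upshot is a shorter argument that is also more self-contained, at the small cost of being slightly less explicit about what the auxiliary cluster $G_{t'}$ actually is.
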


\begin{proof}
For any $k\in\{1,\cdots,n\}$, it suffices to show that the $k$-th row vector of $R_t^{t_0}$ is either a nonnegative vector or a nonpositive vector. Without loss of generality, we assume that $k=n$.

Let $J=\{{\bf g}_{1;t_0},\cdots,{\bf g}_{n-1;t_0}\}$ and $G_{t^\prime}=\mathcal T_J(G_t)$. We know that $J\subseteq G_{t^\prime}$, thus $G_{t^\prime}$ and $G_{t_0}$ have at least $n-1$ common $g$-vectors. So either $G_{t^\prime}=\mu_{{\bf g}_{n;t_0}}(G_{t_0})$ or $G_{t^\prime}=G_{t_0}$.

If $G_{t^\prime}=G_{t_0}$, then $R_t^{t^\prime}=R_t^{t_0}$. By Remark \ref{rmkcondb}, we know that the $n$-th row vector of  $R_t^{t^\prime}=R_t^{t_0}$ is a nonnegative vector.

If $G_{t^\prime}=\mu_{{\bf g}_{n;t_0}}(G_{t_0})$, we know that $R_{t^\prime}^{t_0}$ has the form of
$$R_{t^\prime}^{t_0}=\begin{pmatrix}I_{n-1}&\alpha\\0&-1\end{pmatrix},$$
where $\alpha$ is a column vector in $\mathbb Z_{\geq 0}^{n-1}$, by Proposition \ref{promumat}.
By  $G_{t^\prime}=\mathcal T_J(G_t)$, we know that $R_t^{t^\prime}$ has the form of
$$R_t^{t^\prime}=\begin{pmatrix}R^{11}_{(n-1)\times (n-1)}&R^{12}\\R^{21}&R^{22}\end{pmatrix},$$
where $\begin{pmatrix}R^{21}&R^{22}\end{pmatrix}$ is a  nonnegative row vector.

Since $G_t=G_{t^\prime}R_{t}^{t^\prime}=G_{t_0}R_{t^\prime}^{t_0}R_{t}^{t^\prime}$, we get that
$$R_t^{t_0}=R_{t^\prime}^{t_0}R_{t}^{t^\prime}=\begin{pmatrix}R^{11}+\alpha R^{21}&R^{12}+\alpha R^{22}\\-R^{21}&-R^{22}\end{pmatrix}.$$
So the $n$-th row vector of $R_t^{t^\prime}=R_t^{t_0}$  is the vector $\begin{pmatrix}-R^{21}&-R^{22}\end{pmatrix}$, which is a nonpositive row vector. This completes the proof.
\end{proof}

\begin{Theorem}
Let $\mathcal G_\mathbf{T}$ be a  $\mathcal G$-system at $t_0$, $G_t\in\mathcal G_\mathbf{T}$, and $J=J_1\sqcup J_2\subseteq G_{t_0}$. Then we have the following commutative diagram.
$$\xymatrix{G_{t_3}\ar[d]^{\mathcal T_{J_1}}&G_t\ar[l]_{\mathcal T_{J_2}}\ar[d]^{\mathcal T_{J}}\ar[r]^{\mathcal T_{J_1}}&G_{t_1}\ar[d]^{\mathcal T_{J_2}}\\
G_{t_4}\ar@{=}[r]&G_{t^\prime}\ar@{=}[r]&G_{t_2}}$$
\end{Theorem}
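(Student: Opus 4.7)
The plan is to prove the two equalities $\mathcal{T}_{J_2}(\mathcal{T}_{J_1}(G_t)) = \mathcal{T}_J(G_t) = \mathcal{T}_{J_1}(\mathcal{T}_{J_2}(G_t))$ by invoking the \emph{uniqueness} half of Proposition-Definition \ref{prodef}. By symmetry it suffices to treat the first equality. Write $G_{t_1} = \mathcal{T}_{J_1}(G_t)$ and $G_{t_2} = \mathcal{T}_{J_2}(G_{t_1})$; my goal is to verify that $G_{t_2}$ satisfies the two defining axioms (a) and (b) for the co-Bongartz completion $\mathcal{T}_J(G_t)$, from which uniqueness will force $G_{t_2} = \mathcal{T}_J(G_t)$.

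For (a), the inclusion $J_2 \subseteq G_{t_2}$ is automatic from the construction. The substantive point is to prove $J_1 \subseteq G_{t_2}$, and this is where I expect the main obstacle to lie: the outer application $\mathcal{T}_{J_2}$ only ``protects'' members of $J_2$, so one must argue that members of $J_1 \subseteq G_{t_1}$ survive the second step. Fix ${\bf g}_{i;t_1} \in J_1 \subseteq G_{t_0}$, expand it in the basis $G_{t_2}$ as ${\bf g}_{i;t_1} = \sum_j s_{ji}\,{\bf g}_{j;t_2}$, and use that $s_{ji} \geq 0$ for every $j$ with ${\bf g}_{j;t_2} \notin J_2$ (by the defining property of $\mathcal{T}_{J_2}$). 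Move the terms with negative coefficients (which, crucially, can only occur with ${\bf g}_{j;t_2} \in J_2 \subseteq G_{t_0}$) to the left-hand side, obtaining an identity with only nonnegative coefficients and with left-hand side supported in $G_{t_0}$ and right-hand side supported in $G_{t_2}$. Applying the Uniqueness Condition of the $\mathcal G$-system to this identity, and using the coefficient $1$ in front of ${\bf g}_{i;t_1}$ together with $J_1 \cap J_2 = \varnothing$, one forces ${\bf g}_{i;t_1}$ to appear verbatim on the right-hand side, hence ${\bf g}_{i;t_1} \in G_{t_2}$.

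For (b), factor the transition matrix as $R_t^{t_2} = R_{t_1}^{t_2} R_t^{t_1}$, so that
\[
(R_t^{t_2})_{jk} = \sum_i s_{ji}\, r_{ik},
\]
where $r_{ik} \geq 0$ whenever ${\bf g}_{i;t_1} \notin J_1$ and $s_{ji} \geq 0$ whenever ${\bf g}_{j;t_2} \notin J_2$. Fix $j$ with ${\bf g}_{j;t_2} \notin J$. Split the sum over $i$ into those with ${\bf g}_{i;t_1} \notin J_1$ (both factors nonnegative) and those with ${\bf g}_{i;t_1} \in J_1$. For the latter, step~(a) shows ${\bf g}_{i;t_1} \in J_1 \subseteq G_{t_2}$; since ${\bf g}_{j;t_2} \notin J \supseteq J_1$, linear independence of $G_{t_2}$ forces $s_{ji} = 0$. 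Hence every summand is nonnegative and $(R_t^{t_2})_{jk} \geq 0$, establishing (b).

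With (a) and (b) in hand, the uniqueness clause of Proposition-Definition \ref{prodef} yields $G_{t_2} = \mathcal{T}_J(G_t)$. Running the identical argument with the roles of $J_1$ and $J_2$ interchanged gives $\mathcal{T}_{J_1}(\mathcal{T}_{J_2}(G_t)) = \mathcal{T}_J(G_t)$, completing the commutative diagram. The only delicate step is the verification $J_1 \subseteq G_{t_2}$, since everything else reduces to a bookkeeping of signs; the Uniqueness Condition of the $\mathcal G$-system is precisely the axiomatic input that makes this bookkeeping go through.
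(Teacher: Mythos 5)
Your proof is correct and follows essentially the same route as the paper: verify that the two-step composition satisfies conditions (a) and (b) defining $\mathcal T_J(G_t)$ and then invoke the uniqueness clause of Proposition-Definition \ref{prodef}. The paper's version differs only cosmetically---it shows $J_2 \subseteq G_{t_4}$ by the ``add a large multiple of $N$'' manipulation rather than by moving negative terms across the equality, and it organizes the nonnegativity check for (b) as a block-matrix computation rather than entrywise---but both rest on exactly the observation that containment $J_1\subseteq G_{t_2}$ (resp.\ $J_2\subseteq G_{t_4}$) forces the dangerous cross-terms in the factored transition matrix to vanish.
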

\begin{proof}
For any two $g$-clusters $G_u$ and $G_v$, we denote by $R_v^u=(r_{ij;v}^u)$ the transition matrix from the basis $G_u$ to the basis $G_v$, i.e.,
$$({\bf g}_{1;v},\cdots,{\bf g}_{n;v})=({\bf g}_{1;u},\cdots,{\bf g}_{n;u})R_v^u.$$
Without loss of generality, we can assume that $$J_1=\{{\bf g}_{p+1;t_0},{\bf g}_{p+2;t_0},\cdots,{\bf g}_{p+q;t_0}\}\;\;\text{and } \;\;\;J_2=\{{\bf g}_{p+q+1;t_0},{\bf g}_{p+q+2;t_0},\cdots,{\bf g}_{n;t_0}\}.$$ Thus $J=\{{\bf g}_{p+1;t_0}, {\bf g}_{p+2;t_0},\cdots,{\bf g}_{n;t_0}\}$.

We first show that $J=J_1\sqcup J_2\subseteq G_{t_4}$. Since $G_{t_4}=\mathcal T_{J_1}(G_{t_3})$, and $G_{t_3}=\mathcal T_{J_2}(G_{t})$, we know that $J_1\subseteq G_{t_4}$ and $J_2\subseteq G_{t_3}$.
Now we show that $J_2\subseteq G_{t_4}$. Consider the expansion of ${\bf g}_{k;t_3}\in J_2\subseteq G_{t_3}$ with respect to the basis $G_{t_4}$,
$${\bf g}_{k;t_3}=r_{1k;t_3}^{t_4}{\bf g}_{1;t_4}+\cdots+r_{nk;t_3}^{t_4}{\bf g}_{n;t_4}=\sum\limits_{{\bf g}_{j;t_4}\notin J_1}r_{jk;t_3}^{t_4}{\bf g}_{j;t_4}+\sum\limits_{{\bf g}_{j;t_4}\in J_1}r_{jk;t_3}^{t_4}{\bf g}_{j;t_4},$$
where the coefficients before ${\bf g}_{j;t_4}\notin J_1$ are nonnegative, by $G_{t_4}=\mathcal T_{J_1}(G_{t_3})$.
Choose $N$ large enough such that $N>0$ and $N+r_{jk;t_3}^{t_4}>0$ for any $j$ with ${\bf g}_{j;t_4}\in J_1$. We have the following equality,
$${\bf g}_{k;t_3}+\sum\limits_{{\bf g}_{j;t_4}\in J_1}N{\bf g}_{j;t_4}=\sum\limits_{{\bf g}_{j;t_4}\notin J_1}r_{jk;t_3}^{t_4}{\bf g}_{j;t_4}+\sum\limits_{{\bf g}_{j;t_4}\in J_1}(r_{jk;t_3}^{t_4}+N){\bf g}_{j;t_4},$$
where the coefficients appearing in the above equality are nonnegative.
Note that ${\bf g}_{k;t_3}\in J_2\subseteq G_{t_0}$, and $J_1\subseteq G_{t_0}$. By applying the Uniqueness Condition  in the definition of $\mathcal G$-system to $G_{t_0}$ and $G_{t_4}$, we can get that ${\bf g}_{k;t_3}\in G_{t_4}$. So $J_2\subseteq G_{t_4}$ and thus $J\subseteq G_{t_4}$.

By $J\subseteq G_{t_4}$ and $J_2\subseteq G_{t_3}$, we can assume that
${\bf g}_{i;t_0}={\bf g}_{i;t_4}$ and ${\bf g}_{j;t_0}={\bf g}_{j;t_3}$ for $i=p+1,\cdots,n$ and  $j=p+q+1,\cdots,n$.
Since $G_{t_4}=\mathcal T_{J_1}(G_{t_3})$, and $G_{t_3}=\mathcal T_{J_2}(G_{t})$, we know that $i$-th row vector of $R_{t_3}^{t_4}$ and $j$-th row vector of $R_t^{t_3}$ are nonnegative vectors for $i$ with ${\bf g}_{i;t_4}\notin J_1$ and $j$ with ${\bf g}_{j;t_3}\notin J_2$, by Remark \ref{rmkcondb}. Namely,   $i$-th row vector of $R_{t_3}^{t_4}$ and $j$-th row vector of $R_t^{t_3}$ are nonnegative vectors for $i\notin\{p+1,\cdots,p+q\}$ and $j\notin\{p+q+1,\cdots,n\}$.

Let $r:=n-p-q$, we write $R_t^{t_3}$ and $R_{t_3}^{t_4}$ as block matrices.
\begin{eqnarray}
R_{t_3}^{t_4}=\begin{pmatrix}S^{11}_{p\times p}&S^{12}_{p\times q}&S^{13}_{p\times r}\\
S^{21}_{q\times p}&S^{22}_{q\times q}&S^{23}_{q\times r}\\
S^{31}_{r\times p}&S^{32}_{r\times q}&S^{23}_{r\times r}\end{pmatrix}
\;\;\;\text{and}\;\;\;
R_{t}^{t_3}=\begin{pmatrix}R^{11}_{p\times p}&R^{12}_{p\times q}&R^{13}_{p\times r}\\
R^{21}_{q\times p}&R^{22}_{q\times q}&R^{23}_{q\times r}\\
R^{31}_{r\times p}&R^{32}_{r\times q}&R^{23}_{r\times r}\end{pmatrix}.\nonumber
\end{eqnarray}
We have $S^{11}, S^{12}, S^{13}, S^{31}, S^{32}, S^{33}$ and $R^{11},R^{12},R^{13}, R^{21}, R^{22}, R^{23}$ are nonnegative matrices.
Since ${\bf g}_{j;t_4}={\bf g}_{j;t_0}={\bf g}_{j;t_3}$ for $j=p+q+1,\cdots,n$, we get that $S^{13}=0$, $S^{23}=0$ and $S^{33}=I_{r}$.

We know that
\begin{eqnarray}
({\bf g}_{1;t},\cdots,{\bf g}_{n;t})&=&({\bf g}_{1;t_3},\cdots,{\bf g}_{n;t_3})R_{t}^{t_3}\nonumber\\
&=&({\bf g}_{1;t_4},\cdots,{\bf g}_{n;t_4})R_{t_3}^{t_4}R_{t}^{t_3}\nonumber
\end{eqnarray}
Thus $R_t^{t_4}=R_{t_3}^{t_4}R_{t}^{t_3}$. Denote by $R_{[1,p]}$ the submatrix of $R_t^{t_4}$ given by its first $p$ rows. We know that
\begin{eqnarray}
R_{[1,p]}&=&\begin{pmatrix}S^{11}R^{11}+S^{12}R^{21}+S^{13}R^{31}&S^{11}R^{12}+S^{12}R^{22}+S^{13}R^{32}&
S^{11}R^{13}+S^{12}R^{23}+S^{13}R^{33}\end{pmatrix}\nonumber\\
&=&\begin{pmatrix}S^{11}R^{11}+S^{12}R^{21}&S^{11}R^{12}+S^{12}R^{22}&
S^{11}R^{13}+S^{12}R^{23}\end{pmatrix}\nonumber
\end{eqnarray}
So $R_{[1,p]}$ is a nonnegative matrix, i.e., the $i$-th row vector of $R_t^{t_4}$ is a nonnegative vector for $i=1,\cdots, p$. Since $J=\{{\bf g}_{p+1;t_0}, {\bf g}_{p+2;t_0},\cdots,{\bf g}_{n;t_0}\}\subseteq G_{t_4}$ and ${\bf g}_{j;t_4}={\bf g}_{j;t_0}$ for $j=p+1,\cdots,n$, we get $G_{t_4}=\mathcal T_{J}(G_t)$ by the definition of co-Bongartz completion. On the other hand, $G_{t^\prime}=\mathcal T_{J}(G_t)$.
So $G_{t_4}=G_{t^\prime}$ by the uniqueness of co-Bongartz completion.

Similarly, we can show that $G_{t_2}=G_{t^\prime}$. This completes the proof.
\end{proof}

The following corollary follows directly from the above theorem, which says that any co-Bongartz completion can be obtained by a sequence of elementary co-Bongartz completions.

\begin{Corollary}\label{cormintwista}
Let $\mathcal G_\mathbf{T}$ be a  $\mathcal G$-system at $t_0$, and $J=\{{\bf g}_{1;t_0},\cdots,{\bf g}_{p;t_0}\}\subseteq G_{t_0}$. Then for any $G_t\in\mathcal G_\mathbf{T}$, we have $$\mathcal T_J(G_t)=\mathcal T_{{\bf g}_{i_p;t_0}}\cdots \mathcal T_{{\bf g}_{i_2;t_0}}\mathcal T_{{\bf g}_{i_1;t_0}}(G_t),$$
where $(i_1,\cdots,i_p)$ is any permutation of $1,\cdots, p$.
\end{Corollary}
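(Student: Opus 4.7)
The plan is to deduce this corollary by iterating the commutative-diagram statement from the preceding theorem, which already handles the case of splitting $J$ into two parts. So the heart of the argument is a simple induction on $p = |J|$; the substantive combinatorial/analytic content has been discharged in the proof of the previous theorem.

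More explicitly, I would induct on $p$. The base case $p=1$ is vacuous: $\mathcal T_J(G_t) = \mathcal T_{{\bf g}_{i_1;t_0}}(G_t)$ by definition. For the inductive step, fix a permutation $(i_1,\ldots,i_p)$ of $\{1,\ldots,p\}$ and set
\[
J_1 := \{{\bf g}_{i_1;t_0}\}, \qquad J_2 := \{{\bf g}_{i_2;t_0},\ldots,{\bf g}_{i_p;t_0}\},
\]
so that $J = J_1 \sqcup J_2 \subseteq G_{t_0}$. Applying the previous theorem with this decomposition yields
\[
\mathcal T_J(G_t) \;=\; \mathcal T_{J_2}\bigl(\mathcal T_{J_1}(G_t)\bigr) \;=\; \mathcal T_{J_2}\bigl(\mathcal T_{{\bf g}_{i_1;t_0}}(G_t)\bigr).
\]

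Now the key observation is that the inductive hypothesis can be applied to $J_2$: indeed $J_2 \subseteq G_{t_0}$ has cardinality $p-1$, the permutation $(i_2,\ldots,i_p)$ of the indices of $J_2$ is arbitrary, and the ``input'' $g$-cluster $\mathcal T_{{\bf g}_{i_1;t_0}}(G_t)$ is still a member of $\mathcal G_{\mathbf T}$. Therefore
\[
\mathcal T_{J_2}\bigl(\mathcal T_{{\bf g}_{i_1;t_0}}(G_t)\bigr) \;=\; \mathcal T_{{\bf g}_{i_p;t_0}}\cdots \mathcal T_{{\bf g}_{i_2;t_0}}\mathcal T_{{\bf g}_{i_1;t_0}}(G_t),
\]
which is exactly what we want.

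There is essentially no obstacle here beyond making sure the disjoint union $J = J_1 \sqcup J_2$ really does sit inside $G_{t_0}$ (which is automatic from $J \subseteq G_{t_0}$) and that the previous theorem applies verbatim; the invariance of the result under permutations of $(i_1,\ldots,i_p)$ comes free from the symmetry already built into the preceding commutative diagram (the two paths $\mathcal T_{J_2}\circ\mathcal T_{J_1}$ and $\mathcal T_{J_1}\circ\mathcal T_{J_2}$ agree). In short, the proof is a one-line induction plus a citation of the theorem just proved.
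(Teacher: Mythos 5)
Your proof is correct and matches the paper's approach: the paper simply remarks that the corollary ``follows directly from the above theorem,'' and your induction on $p$, peeling off a singleton $J_1 = \{{\bf g}_{i_1;t_0}\}$ and applying the two-part commutative diagram, is exactly the expansion of that remark.
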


\subsection{Compatibility between co-Bongartz completions and mutations in $\mathcal G$-system}
\begin{Theorem}\label{thmisomu}
Let $\mathcal G_\mathbf{T}$ be a  $\mathcal G$-system at $t_0$, $G_t\in\mathcal G_\mathbf{T}$, and $G_{t^\prime}=\mu_{{\bf g}_{n;t}}(G_t)$. Let $J$ be a subset of $G_{t_0}$, and
$G_u=\mathcal T_J(G_t),\; G_v=\mathcal T_J(G_{t^\prime})$. Then either $G_u=G_v$ or $G_u$ and $G_v$ are obtained each other by once mutation, i.e., exactly one of the following two diagrams holds.
$$\xymatrix{G_t\ar[r]^{\mu_{{\bf g}_{n;t}}}\ar[d]_{\mathcal T_J}&G_{t^\prime}\ar[d]^{\mathcal T_J}\\
G_u\ar@{=}[r]&G_v}\hspace{15mm}
\xymatrix{G_t\ar[r]^{\mu_{{\bf g}_{n;t}}}\ar[d]_{\mathcal T_J}&G_{t^\prime}\ar[d]^{\mathcal T_J}\\
G_u\ar[r]^{\mu_\ast}&G_v}$$
\end{Theorem}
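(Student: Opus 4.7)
The plan is to reduce to the elementary case $|J|=1$ by Corollary \ref{cormintwista}, then prove this case by a sign analysis using the Uniqueness Condition.

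\textbf{Reduction to $|J|=1$.} By Corollary \ref{cormintwista}, $\mathcal T_J$ factors as a composition of elementary co-Bongartz completions $\mathcal T_{{\bf g}_{j_1;t_0}},\ldots,\mathcal T_{{\bf g}_{j_p;t_0}}$. I induct on $|J|$: the case $|J|=0$ is immediate from Remark \ref{rmkselft}(ii) since then $G_u=G_t$ and $G_v=G_{t'}$. For the inductive step, write $J=J'\sqcup\{{\bf g}_{j_0;t_0}\}$. By the induction hypothesis, $\mathcal T_{J'}(G_t)$ and $\mathcal T_{J'}(G_{t'})$ are either equal or differ by a single mutation; applying the $|J|=1$ case to this pair and the elementary completion $\mathcal T_{\{{\bf g}_{j_0;t_0}\}}$ yields the same dichotomy for the final clusters $G_u$ and $G_v$.

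\textbf{Elementary case.} Take $J=\{{\bf g}_{j;t_0}\}$ and relabel so that ${\bf g}_{n;u}={\bf g}_{n;v}={\bf g}_{j;t_0}$; write $G_t=\{{\bf g}_1,\ldots,{\bf g}_{n-1},{\bf g}_n\}$ and $G_{t'}=\{{\bf g}_1,\ldots,{\bf g}_{n-1},{\bf g}'_n\}$. For each $k\leq n-1$ the common vector ${\bf g}_k$ admits two expansions
\[
\sum_{i=1}^{n-1}r_{ik}{\bf g}_{i;u}+r_{nk}{\bf g}_{j;t_0}={\bf g}_k=\sum_{i=1}^{n-1}s_{ik}{\bf g}_{i;v}+s_{nk}{\bf g}_{j;t_0}
\]
with $r_{ik},s_{ik}\geq 0$ for $i\leq n-1$. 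I would first show $r_{nk}=s_{nk}$: if $s_{nk}>r_{nk}$, moving terms produces an identity in which the right-hand side features ${\bf g}_{j;t_0}$ with a positive coefficient while the left-hand side does not, and no ${\bf g}_{i;u}$ ($i\leq n-1$) equals ${\bf g}_{j;t_0}$, so the Uniqueness Condition is violated; the case $s_{nk}<r_{nk}$ is symmetric. With $r_{nk}=s_{nk}$, the Uniqueness Condition identifies, for each $k$, the set $\{{\bf g}_{i;u}:r_{ik}>0,\,i\leq n-1\}$ with $\{{\bf g}_{i;v}:s_{ik}>0,\,i\leq n-1\}$ as multisets with matching coefficients, yielding a partial bijection between the ``active'' indices of $G_u$ and $G_v$ under which the associated ${\bf g}$-vectors coincide.

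\textbf{Rank count and conclusion.} Since ${\bf g}_1,\ldots,{\bf g}_{n-1}$ are linearly independent, the $n\times(n-1)$ matrix $(r_{ik})$ has rank $n-1$ and hence at most one zero row, so the active index set has cardinality $\geq n-2$. Combined with the common element ${\bf g}_{j;t_0}$ this forces $|G_u\cap G_v|\geq n-1$. If equality is strict, then $G_u=G_v$; otherwise $G_u$ and $G_v$ share exactly $n-1$ ${\bf g}$-vectors, and the uniqueness of mutation in Proposition-Definition \ref{protwog} forces $G_v=\mu_{{\bf h}}(G_u)$ for the unique ${\bf h}\in G_u\setminus G_v$.

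The main obstacle is the sign analysis forcing $r_{nk}=s_{nk}$: it hinges on the fact that ${\bf g}_{j;t_0}$ occupies the unique ``unconstrained'' slot in both bases, so that an imbalance between the two sides would produce a positive occurrence of ${\bf g}_{j;t_0}$ without counterpart---precisely the obstruction the Uniqueness Condition rules out. Once this identity is secured, the rank-counting step plays the combinatorial role that the categorical Lemma \ref{lemalmost} played in the analogous Theorem \ref{thmsubseq} of Section 4.
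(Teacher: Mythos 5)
Your proof is correct, and it takes a genuinely different route from the paper. The paper works with an arbitrary $J\subseteq G_{t_0}$ directly, never reducing to the case $|J|=1$: it writes the expansions of the common vectors ${\bf g}_1,\ldots,{\bf g}_{n-1}$ in the bases $G_u$ and $G_v$, applies the same rank count on the $n\times(n-1)$ coefficient matrix to get at most one zero row, and then for each nonzero row $j_0$ with ${\bf g}_{j_0;u}\notin J$ uses a ``large-$N$'' trick -- adding $N{\bf g}_{j;t_0}$ for every ${\bf g}_{j;t_0}\in J$ to both sides so that \emph{all} coefficients become nonnegative -- before invoking the Uniqueness Condition to place ${\bf g}_{j_0;u}$ in $G_v$. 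You instead reduce to $|J|=1$ via Corollary \ref{cormintwista} and an induction on $|J|$, and then prove the stronger intermediate fact $r_{nk}=s_{nk}$ by a sign analysis so that the $J$-term cancels exactly. Both are valid; the paper's large-$N$ trick is slightly more economical because it bypasses the reduction entirely (and would also shorten your $|J|=1$ case: you could skip the $r_{nk}=s_{nk}$ step and just add $N{\bf g}_{j;t_0}$ to both sides). What your argument buys is the crisper structural fact that for $|J|=1$ the coefficient in the distinguished slot agrees on both sides, which is a cleaner statement than the asymmetric ``for each nonzero row, that vector lies in the other cluster.'' One small caveat: your appeal to Corollary \ref{cormintwista} is essential and genuinely needed, since the cancellation technique breaks when $|J|\geq 2$ (the coefficients of the $J$-vectors need not match between the two expansions, because the complementary bases differ).
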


\begin{proof}
For convenience, we write $G_t=\{{\bf g}_1,\cdots,{\bf g}_{n-1}, {\bf g}_n\}$ and $G_{t^\prime}=\{{\bf g}_1,\cdots,{\bf g}_{n-1}, {\bf g}_n^\prime\}$.

Consider the expansion of ${\bf g}_{k}\in G_t\cap G_{t^\prime}$ with respect the basis $G_u$ (respectively, $G_v$) for $k=1,\cdots,n-1$.
\begin{eqnarray}
{\bf g}_k&=&r_{1k;t}^u{\bf g}_{1;u}+\cdots+r_{nk;t}^u{\bf g}_{n;u}=\sum\limits_{{\bf g}_{j;u}\notin J}r_{jk;t}^u{\bf g}_{j;u}+\sum\limits_{{\bf g}_{j;u}\in J}r_{jk;t}^u{\bf g}_{j;u},\nonumber\\
{\bf g}_k&=&r_{1k;t^\prime}^v{\bf g}_{1;v}+\cdots+r_{nk;t^\prime}^v{\bf g}_{n;v}=\sum\limits_{{\bf g}_{j;v}\notin J}r_{jk;t^\prime}^v{\bf g}_{j;v}+\sum\limits_{{\bf g}_{j;v}\in J}r_{jk;t^\prime}^v{\bf g}_{j;v}.\nonumber
\end{eqnarray}
Since $G_u=\mathcal T_J(G_t)$ and $G_v=\mathcal T_J(G_{t^\prime})$, we know that the coefficients before
${\bf g}_{j;u}\notin J$ and before ${\bf g}_{j;v}\notin J$ are nonnegative.

Now we show that $G_u$ and $G_v$ have at least $n-1$ common $g$-vectors. Since $G_u=\mathcal T_J(G_t)$ and $G_v=\mathcal T_J(G_{t^\prime})$, we know that $J\subseteq G_u\cap G_v$.
Since ${\bf g}_1,\cdots,{\bf g}_{n-1}$ are linearly independent, we know the matrix $R_1=(r_{ij;t}^u)_{n\times(n-1)}$ has at most one zero row, i.e., $R_1$ has at least $(n-1)$ non-zero row. If $j_0$-th row of $R_1$ is nonzero, then there exists some $k_0\in\{1,\cdots,n-1\}$ such that $r_{j_0k_0;t}^u\neq 0$. If ${\bf g}_{j_0;u}\in J\subseteq G_u$, then  ${\bf g}_{j_0;u}\in J\subseteq G_v$. If ${\bf g}_{j_0;u}\notin J\subseteq G_u$, we get $r_{j_0k_0;t}^u>0$, by $r_{jk_0;t}^u\geq 0$ for any ${\bf g}_{j;u}\notin J$.
Choose $N$ large enough such that $N>0$, $N+r_{jk_0;t}^u>0$ and $N+r_{ik_0;t^\prime}^v>0$ for any $j$ and $i$ satisfying ${\bf g}_{j;u}\in J$ and ${\bf g}_{i;v}\in J$.
We consider the following equality.
$$
\sum\limits_{{\bf g}_{j;u}\notin J}r_{jk_0;t}^u{\bf g}_{j;u}+\sum\limits_{{\bf g}_{j;u}\in J}(r_{jk_0;t}^u+N){\bf g}_{j;u}=
\sum\limits_{{\bf g}_{j;v}\notin J}r_{jk_0;t^\prime}^v{\bf g}_{j;v}+\sum\limits_{{\bf g}_{j;v}\in J}(r_{jk_0;t^\prime}^v+N){\bf g}_{j;v}.$$
The coefficients appearing in the above equality are nonnegative and $r_{j_0k_0;t}^u>0$ (here ${\bf g}_{j_0;u}\notin J\subseteq G_u$). Applying the Uniqueness Condition  in the definition of $\mathcal G$-system for the $g$-clusters $G_u$ and $G_v$, we can get that ${\bf g}_{j_0;u}\in G_v$.
Since $R_1$ has at least $(n-1)$ non-zero row, we obtain that $G_v$ contains at least $n-1$ vectors in $G_u$.

If $G_u$ and $G_v$ contain $n-1$ common $g$-vectors, then $G_u$ and $G_v$ are obtained from each other by once mutation. If $G_u$ and $G_v$ contain $n$ common $g$-vectors, then $G_u=G_v$.
\end{proof}

Let $\mathcal G_\mathbf{T}$ be a  $\mathcal G$-system at $t_0$, and $G_t\in \mathcal G_\mathbf{T}$, and
$U$ be a subset of $g$-vectors of $\mathcal G_\mathbf{T}$. We say that $U$ is {\bf $G_t$-mutation-reachable}, if there exists  $G_{u}\in \mathcal G_\mathbf{T}$ such that $U\subseteq G_{u}$ and $G_{u}$ can be obtained from $G_t$ by a sequence of mutations.

\begin{Theorem}\label{thmtwrea}
Let $\mathcal G_\mathbf{T}$ be a  $\mathcal G$-system at $t_0$, $G_t\in \mathcal G_\mathbf{T}$, and $J\subseteq G_{t_0}$.  If $J$ is  $G_t$-mutation reachable, so is $G_{t^\prime}:=\mathcal T_J(G_t)$.
\end{Theorem}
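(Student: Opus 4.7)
The plan is to mirror the categorical proof of Theorem \ref{thmreachable}, replacing Theorem \ref{thmsubseq} by its $\mathcal G$-system counterpart Theorem \ref{thmisomu}. The target is to exhibit a finite chain of mutations from $G_t$ to $G_{t'}$ itself, which suffices because $G_{t'}\subseteq G_{t'}$ trivially witnesses mutation reachability.

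First, unpack the hypothesis: choose $G_w\in\mathcal G_\mathbf{T}$ with $J\subseteq G_w$ together with a mutation sequence $G_w=\mu_{\mathbf{h}_s}\cdots\mu_{\mathbf{h}_1}(G_t)$. Set $G_{w_0}:=G_t$ and $G_{w_i}:=\mu_{\mathbf{h}_i}\cdots\mu_{\mathbf{h}_1}(G_t)$ for $1\le i\le s$, so $G_{w_s}=G_w$. Put $H_i:=\mathcal T_J(G_{w_i})$ for $0\le i\le s$. By definition $H_0=\mathcal T_J(G_t)=G_{t'}$; on the other hand, $J\subseteq G_{t_0}$ by hypothesis and $J\subseteq G_w=G_{w_s}$ by the choice of $G_w$, so Remark \ref{rmkselft}(ii) yields $H_s=\mathcal T_J(G_w)=G_w$.

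Next, apply Theorem \ref{thmisomu} to each single mutation step $G_{w_{i-1}}\to G_{w_i}$ with the same subset $J\subseteq G_{t_0}$: either $H_{i-1}=H_i$, or $H_{i-1}$ and $H_i$ are related by exactly one mutation. By the invertibility of mutations (Proposition-Definition \ref{protwog}), one may then choose $\psi_i\colon H_i\to H_{i-1}$ inside $\mathcal G_\mathbf{T}$ which is either the identity or a single mutation. Concatenating the original top leg $G_t=G_{w_0}\to\cdots\to G_{w_s}=H_s$ with the reverse bottom leg $H_s\to H_{s-1}\to\cdots\to H_0=G_{t'}$ obtained from the $\psi_i$, and discarding the identity steps, produces the required sequence of mutations from $G_t$ to $G_{t'}$. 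No real obstacle arises: everything rests on the compatibility dichotomy already proved in Theorem \ref{thmisomu} and the reversibility built into Proposition-Definition \ref{protwog}; the rest is bookkeeping.
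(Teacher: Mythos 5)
Your argument matches the paper's proof essentially step for step: lift the mutation sequence from $G_t$ to a $g$-cluster containing $J$, apply $\mathcal T_J$ along the way, invoke Theorem \ref{thmisomu} for the isomorphism-or-mutation dichotomy at each step, observe via Remark \ref{rmkselft}(ii) that the end of the bottom row coincides with the top row, and concatenate. No gaps; the only cosmetic difference is your explicit appeal to Proposition-Definition \ref{protwog} for reversibility, which the paper uses implicitly.
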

\begin{proof}
Since $J$ is $G_t$-mutation-reachable, there exists  $G_u\in \mathcal G_\mathbf{T}$ such that $J\subseteq G_u$ and $G_u$ can be obtained from $G_t$ by a sequence of mutations. We can assume that $$G_u=\mu_{{\bf g}_{s}}\cdots\mu_{{\bf g}_{2}}\mu_{{\bf g}_{1}}(G_t),$$
 where ${\bf g}_{i+1}$ is a $g$-vector in $\mu_{{\bf g}_i}\cdots\mu_{{\bf g}_2}\mu_{{\bf g}_1}(G_t)$ for $i=0,\cdots,s-1$.

We denote $G_{u_0}=G_t$ and $G_{u_i}=\mu_{{\bf g}_i}\cdots\mu_{{\bf g}_2}\mu_{{\bf g}_1}(G_t)$ for $i=1,\cdots,s$.
Let $G_{u_i^\prime}=\mathcal T_J(G_{u_i})$ for  $i=0,1,\cdots,s$.
Since  $J\subseteq G_u=G_{u_s}$, we know that $G_{u_s^\prime}=\mathcal T_J(G_{u_s})=G_{u_s}$, by Remark \ref{rmkselft}. By Theorem \ref{thmisomu}, we have the following diagram:
$$\xymatrix{G_t=G_{u_0}\ar[rr]^{\mu_{{\bf g}_1}}\ar[d]^{\mathcal T_J}&&G_{u_1}\ar[rr]^{\mu_{{\bf g}_2}}\ar[d]^{\mathcal T_J}&&G_{u_2}\ar[rr]^{\mu_{{\bf g}_3}}
\ar[d]^{\mathcal T_J}&&\;\;\;\;\cdots&G_{u_{s-1}}
\ar[rr]^{\mu_{{\bf g}_s}}\ar[d]^{\mathcal T_J}&&G_{u_s}=G_u\ar@{=}[d]^{\mathcal T_J}\\
G_{t^\prime}=G_{u_0^\prime}\ar[rr]^{\varphi_1}&&G_{u_1^\prime}\ar[rr]^{\varphi_2}&&G_{u_2^\prime}\ar[rr]^{\varphi_3}&&\;\;\;\;\cdots&
G_{u_{s-1}^\prime}
\ar[rr]^{\varphi_s}&&G_{u_s^\prime}=G_{u_s}
}$$
where each $\varphi_i:G_{u_{i-1}^\prime}\rightarrow G_{u_i^\prime}$ is either the identity or a mutation, and in both cases, it can go back from $G_{u_i^\prime}$ to $G_{u_{i-1}^\prime}$ by the identity or a mutation for $i=1,\cdots,s$. Denote by $\psi_i:G_{u_i^\prime}\rightarrow G_{u_{i-1}^\prime}$ satisfying that $\psi_i\varphi_i:G_{u_{i-1}^\prime}\rightarrow G_{u_{i-1}^\prime}$ is the identity. We know that each $\psi_i$ is also either the identity or a mutation for $i=1,\cdots,s$.

So $G_{t^\prime}=G_{u_0^\prime}$ can be obtained from $G_t=G_{u_0}$ by the sequence $$(\mu_{{\bf g}_1},\cdots,\mu_{{\bf g}_s},id_{G_{u_s}},\psi_s,\cdots,\psi_2,\psi_1)$$ and this sequence can reduce to a sequence of mutations by deleting the identities appearing in it. This completes the proof.
\end{proof}

The following theorem is inspired by \cite[Conjecture 4.14(3)]{FZ2}.

\begin{Theorem}\label{thmconnect}

Let $\mathcal G_\mathbf{T}$ be a  $\mathcal G$-system at $t_0$, $G_t\in \mathcal G_\mathbf{T}$, and $J=G_t\cap G_{t_0}$.  If $G_t$ is $G_{t_0}$-mutation-reachable,  then there exists a sequence of mutations $(\mu_{{\bf g}_1^\prime},\cdots,\mu_{{\bf g}_m^\prime})$ such that $G_t=\mu_{{\bf g}_m^\prime}\cdots\mu_{{\bf g}_2^\prime}\mu_{{\bf g}_1^\prime}(G_{t_0})$ and ${\bf g}_j^\prime\notin J$ for $j=1,\cdots,m$.
\end{Theorem}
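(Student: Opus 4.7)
The plan is to mimic the strategy used for the categorified Theorem (the one just before Subsection~5 ends), now inside a $\mathcal G$-system and invoking Theorem~\ref{thmisomu} in place of its cluster-tilting analogue. The key idea is to apply $\mathcal T_J$ simultaneously along an existing mutation path from $G_{t_0}$ to $G_t$ and observe that $J$ is preserved throughout, forcing every non-trivial step to be a mutation at some $g$-vector outside $J$.

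Concretely, since $G_t$ is $G_{t_0}$-mutation-reachable, write
\[
G_t = \mu_{{\bf g}_s}\cdots\mu_{{\bf g}_2}\mu_{{\bf g}_1}(G_{t_0}),
\]
set $G_{u_0}=G_{t_0}$ and $G_{u_i}=\mu_{{\bf g}_i}\cdots\mu_{{\bf g}_1}(G_{t_0})$ for $i=1,\dots,s$, and let $G_{u_i'}:=\mathcal T_J(G_{u_i})$. The first thing I would verify is the two boundary identities $G_{u_0'}=G_{t_0}$ and $G_{u_s'}=G_t$: both follow instantly from Remark~\ref{rmkselft}(ii) together with the hypothesis $J=G_t\cap G_{t_0}\subseteq G_{t_0}\cap G_t$.

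Next, apply Theorem~\ref{thmisomu} to every mutation step $\mu_{{\bf g}_i}:G_{u_{i-1}}\to G_{u_i}$. This yields a diagram
\[
\xymatrix{G_{t_0}=G_{u_0}\ar[r]^-{\mu_{{\bf g}_1}}\ar@{=}[d]^{\mathcal T_J}&G_{u_1}\ar[r]^-{\mu_{{\bf g}_2}}\ar[d]^{\mathcal T_J}&\cdots\ar[r]^-{\mu_{{\bf g}_s}}&G_{u_s}=G_t\ar@{=}[d]^{\mathcal T_J}\\
G_{t_0}=G_{u_0'}\ar[r]^-{\varphi_1}&G_{u_1'}\ar[r]^-{\varphi_2}&\cdots\ar[r]^-{\varphi_s}&G_{u_s'}=G_t
}
\]
in which each $\varphi_i$ is either the identity or a mutation. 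By construction $J\subseteq G_{u_i'}$ for every $i$, so whenever $\varphi_i$ is a genuine mutation $\mu_{{\bf g}_i'}$, the mutated $g$-vector ${\bf g}_i'$ is the unique element of $G_{u_{i-1}'}\setminus G_{u_i'}$ and must therefore lie outside~$J$.

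Finally, delete the identity arrows from the sequence $(\varphi_1,\ldots,\varphi_s)$ to obtain a sequence of mutations $(\mu_{{\bf g}_1'},\ldots,\mu_{{\bf g}_m'})$ that carries $G_{t_0}$ to $G_t$, each of whose mutated $g$-vectors avoids $J$, as required. The only delicate point is the verification that the two boundary $g$-clusters are actually fixed by $\mathcal T_J$; once that is in hand, Theorem~\ref{thmisomu} does all the real work, so I do not anticipate a serious obstacle beyond careful bookkeeping parallel to the proof of Theorem~\ref{thmtwrea}.
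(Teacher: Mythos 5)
Your proposal is correct and follows the paper's own proof essentially step for step: decompose the mutation path, push it through $\mathcal T_J$ using Theorem~\ref{thmisomu}, fix the two endpoints via Remark~\ref{rmkselft}, note that any genuine mutation in the image sequence cannot touch $J$ since $J$ is a common subset of consecutive $g$-clusters, and delete the identity steps. The only cosmetic difference is that the paper indexes the intermediate $g$-clusters by $t_i$ rather than $u_i$; the logic is identical.
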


\begin{proof}
The proof is similar with that of Theorem \ref{thmtwrea}.
Since $G_t$ is $G_{t_0}$-mutation-reachable, we can assume that
$$G_t=\mu_{{\bf g}_s}\cdots\mu_{{\bf g}_2}\mu_{{\bf g}_1}(G_{t_0}),$$
 where ${\bf g}_{i+1}$ is a $g$-vector in $\mu_{{\bf g}_i}\cdots\mu_{{\bf g}_2}\mu_{{\bf g}_1}(G_{t_0})$ for $i=0,\cdots,s-1$.

We denote  $G_{t_i}=\mu_{{\bf g}_i}\cdots\mu_{{\bf g}_2}\mu_{{\bf g}_1}(G_{t_0})$ for $i=1,\cdots,s$.
Let $G_{t_i^\prime}=\mathcal T_J(G_{t_i})$ for  $i=0,1,\cdots,s$, then $J\subseteq G_{t_i^\prime}$ for $i=0,1,\cdots, s$.

Since  $J\subseteq G_t\cap G_{t_0}=G_{t_s}\cap G_{u_0}$, we know that
$G_{t_s^\prime}=\mathcal T_J(G_{t_s})=G_{t_s}=G_t$ and $G_{t_0^\prime}=\mathcal T_J(G_{t_0})=G_{t_0}$, by Remark \ref{rmkselft}.

By Theorem \ref{thmisomu}, we have the following diagram:
$$\xymatrix{G_{t_0}\ar[rr]^{\mu_{{\bf g}_1}}\ar@{=}[d]^{\mathcal T_J}&&G_{t_1}\ar[rr]^{\mu_{{\bf g}_2}}\ar[d]^{\mathcal T_J}&&G_{t_2}\ar[rr]^{\mu_{{\bf g}_3}}
\ar[d]^{\mathcal T_J}&&\;\;\;\;\cdots&G_{t_{s-1}}
\ar[rr]^{\mu_{{\bf g}_s}}\ar[d]^{\mathcal T_J}&&G_{t_s}=G_t\ar@{=}[d]^{\mathcal T_J}\\
G_{t_0}=G_{t_0^\prime}\ar[rr]^{\varphi_1}&&G_{t_1^\prime}\ar[rr]^{\varphi_2}&&G_{t_2^\prime}
\ar[rr]^{\varphi_3}&&\;\;\;\;\cdots&
G_{t_{s-1}^\prime}
\ar[rr]^{\varphi_s}&&G_{t_s^\prime}=G_t
}$$
where each $\varphi_i:G_{t_{i-1}^\prime}\rightarrow G_{t_i^\prime}$ is either the identity or a mutation.
If $\varphi_i:G_{t_{i-1}^\prime}\rightarrow G_{t_i^\prime}$ is a mutation, say $\varphi_i=\mu_{{\bf g}_{k_i}^\prime}$, then ${\bf g}_{k_i}^\prime \notin J$, by the fact that $J\subseteq G_{t_{i-1}^\prime}\cap G_{t_i^\prime}$.

So when we go from $G_{t_0}$ to $G_t$ along the sequence
$(\varphi_1,\varphi_2,\cdots,\varphi_s)$, we will not do any mutation of the form $\mu_{{\bf g}^\prime}$ with ${\bf g}^\prime\in J$.
By deleting the identities  appearing in the sequence $(\varphi_1,\varphi_2,\cdots,\varphi_s)$, we can get a sequence of mutations $(\mu_{{\bf g}_1^\prime},\cdots,\mu_{{\bf g}_m^\prime})$ such that $G_t=\mu_{{\bf g}_m^\prime}\cdots\mu_{{\bf g}_2^\prime}\mu_{{\bf g}_1^\prime}(G_{t_0})$ and ${\bf g}_j^\prime\notin J$ for $j=1,\cdots,m$.
\end{proof}

\section{$\mathcal G$-systems from triangulated categories and module categories}
In this section we show that $\mathcal G$-systems naturally arise  from triangulated categories and module categories, and the mutations and co-Bongartz completions in $\mathcal G$-systems are compatible with those in categories.

\subsection{$\mathcal G$-systems from cluster tilting theory}
In this subsection, we fix a $K$-linear, Krull-Schmidt, Hom-finite $2$-CY triangulated category $\mathcal C$ with a basic cluster tilting object $R=\bigoplus\limits_{i=1}^nR_i$.

For a  matrix $A=(a_{ij})_{n\times n}$ and $I,J\subseteq\{1,\cdots,n\}$, we denote by $A|_{I\times J}$ the submatrix of $A$ given by entries $a_{ij}$ with $i\in I$ and $j\in J$.
\begin{Proposition}\label{prosantwi}
 Let $T=\bigoplus\limits_{i=1}^nT_i$ and $R=\bigoplus\limits_{i=1}^nR_i$  be two basic cluster tilting objects in $\mathcal C$. Let  $U=\bigoplus\limits_{j=p+1}^nR_j$, $T^\prime=\mathcal T_U(T)$, and denote  $I=\{1,\cdots,p\}$. Then
 $G_T^R=G_{T^\prime}^RG_T^{T^\prime}$ and
 $G_T^{R}|_{I\times [1,n]}$ has the form of
$$ G_T^{R}|_{I\times [1,n]}=G_{T^\prime}^{R}|_{I\times I}Q,$$
for some $Q\in M_{|I|\times n}({\mathbb Z}_{\geq 0})$.
\end{Proposition}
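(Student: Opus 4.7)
My plan is to reduce the proposition to two ingredients: a transitivity identity $G_T^R = G_{T'}^R G_T^{T'}$ for $g$-vectors, and an entrywise positivity of the first $p$ rows of $G_T^{T'}$ coming from the defining triangle of $\mathcal{T}_U(T)$. For the transitivity, I would argue column-by-column that
$${\bf g}^R(T_k) = \sum_{i=1}^n \big({\bf g}^{T'}(T_k)\big)_i \cdot {\bf g}^R(T'_i).$$
Starting from the canonical triangle $\bigoplus_i (T'_i)^{b_i} \to \bigoplus_i (T'_i)^{a_i} \to T_k \to \bigoplus_i (T'_i)^{b_i}[1]$ that defines ${\bf g}^{T'}(T_k) = (a_i - b_i)_i$, the plan is to apply Palu's additivity (Proposition \ref{prodk}(iii)) to ${\bf g}^R$. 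Since Palu's formula requires the connecting morphism to factor through $\mathrm{add}(R[1])$ rather than just $\mathrm{add}(T'[1])$, some work is needed to reroute it via the $\mathrm{add}(R)$-approximation triangles of each $T'_i$ (Proposition \ref{protilt}) using the octahedral axiom.

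Next, I would relabel indices so that $T'_j = R_j$ for $j = p+1, \ldots, n$, which is legitimate because $U = \bigoplus_{j>p} R_j$ is a direct summand of $T' = \mathcal{T}_U(T)$. Then ${\bf g}^R(T'_j) = e_j$ for $j > p$, so $G_{T'}^R$ has the block form
$$G_{T'}^R = \begin{pmatrix} A & 0 \\ B & I_{n-p} \end{pmatrix}, \qquad A := G_{T'}^R|_{I \times I},$$
and restricting the transitivity identity to rows in $I$ yields $G_T^R|_{I \times [1,n]} = A \cdot Q$, where $Q := G_T^{T'}|_{I \times [1,n]}$ is the top $p$ rows of $G_T^{T'}$. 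It thus remains only to verify that $Q$ is entrywise nonnegative.

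For the positivity of $Q$, I plan to use the defining triangle $T[-1] \to U'' \to X_U \to T$ of $\mathcal{T}_U(T)$ from Theorem \ref{thmsantwi}, which decomposes as a direct sum over the summands $T_k$ of $T$ into triangles $U''_k \to X_{U,k} \to T_k \to U''_k[1]$ with $U''_k \in \mathrm{add}(U)$ and $X_{U,k} \in \mathrm{add}(X_U) \subseteq \mathrm{add}(T')$. The connecting morphism $T_k \to U''_k[1]$ lands in $\mathrm{add}(T'[1])$, so Palu's formula applied with the cluster tilting object $T'$ gives
$${\bf g}^{T'}(T_k) = {\bf g}^{T'}(X_{U,k}) - {\bf g}^{T'}(U''_k).$$
The first term is entrywise nonnegative because $X_{U,k} \in \mathrm{add}(T')$, while the second is supported only in rows $p+1, \ldots, n$ because $U''_k \in \mathrm{add}\big(\bigoplus_{j>p} T'_j\big)$; hence the first $p$ entries of ${\bf g}^{T'}(T_k)$ are nonnegative, giving $Q \in M_{|I| \times n}(\mathbb{Z}_{\geq 0})$ as required.

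The main obstacle is the transitivity step, since the connecting morphism in the $T'$-triangle of $T_k$ does not a priori factor through $\mathrm{add}(R[1])$; if the direct octahedral manipulation proves awkward, a viable alternative is to transfer to the module category via the bijection of Theorem \ref{thmairbijection} and establish the analogous change-of-basis identity there.
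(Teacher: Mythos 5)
Your plan is essentially the paper's proof, and both halves (transitivity via Palu's additivity and positivity via the block structure of $G_{T'}^R$) are sound; you have, however, manufactured an obstacle in the transitivity step that does not actually exist. The key observation you are missing is that you should apply Palu's formula (Proposition~\ref{prodk}(iii)) to compute ${\bf g}^R(T_k)$ using the \emph{same} triangle you already use for the positivity argument, namely the summand
$U''_k \to X_{U,k}\to T_k \xrightarrow{\ f_k[1]\ } U''_k[1]$
of the defining triangle of $\mathcal T_U(T)$. Here $U''_k\in{\rm add}(U)={\rm add}\bigl(\bigoplus_{j>p} R_j\bigr)\subseteq {\rm add}(R)$, so the connecting morphism $f_k[1]$ factors through ${\rm add}(R[1])$ automatically. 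Proposition~\ref{prodk}(iii) then gives ${\bf g}^R(T_k)={\bf g}^R(X_{U,k})-{\bf g}^R(U''_k)$, and expanding $X_{U,k}$ and $U''_k$ as finite direct sums of the $T'_l$ yields ${\bf g}^R(T_k)=G_{T'}^R\,{\bf g}^{T'}(T_k)$ for each $k$, i.e.\ $G_T^R=G_{T'}^R G_T^{T'}$. No octahedral juggling and no passage to the module category via Theorem~\ref{thmairbijection} is required.

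Put differently: your worry arose because you imagined starting from an arbitrary ${\rm add}(T')$-approximation triangle of $T_k$, where the connecting morphism is only known to factor through ${\rm add}(T'[1])$. But since $U$ is by construction a direct summand of \emph{both} $R$ and $T'$, the specific approximation triangle coming from $\mathcal T_U(T)$ has its connecting morphism landing in ${\rm add}(U[1])\subseteq{\rm add}(R[1])\cap{\rm add}(T'[1])$, so it simultaneously computes ${\bf g}^{T'}(T_k)$ (verifying $Q\geq 0$, exactly as you argue) and feeds into Palu's additivity for ${\bf g}^R$. With that one observation inserted, your proof coincides with the paper's.
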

\begin{proof}
Consider the following triangle,
$$\xymatrix{T[-1]\ar[r]^f&U^\prime\ar[r]&X_U\ar[r]&T
},$$
where $f$ is a left minimal ${\rm add}(U)$-approximation of $T[-1]$. Then $T^\prime=\mathcal T_U(T)=(U\oplus X_U)^\flat$ is a basic cluster tilting object in $\mathcal C$, by Theorem \ref{thmsantwi}. We know that $U$ is direct summand of $T^\prime$. Without  loss of generality, we can assume that $U=\bigoplus\limits_{j=p+1}^nT_j^\prime$.

For each $T_i[-1]$, there exists a triangle

$$\xymatrix{T_i[-1]\ar[r]^{f_i}&U_i^\prime\ar[r]&X_i\ar[r]&T_i
},$$
which is direct summand of the triangle (as a complex) $\xymatrix{T[-1]\ar[r]^f&U^\prime\ar[r]&X_U\ar[r]&T
}$.
In particular, $U_i^\prime=\bigoplus\limits_{j=p+1}^n (T_j^{\prime})^{b_{ji}}\in {\rm add}(U)$ and $X_i=\bigoplus\limits_{l=1}^n (T_l^{\prime})^{a_{li}}\in {\rm add}(X_U) \subseteq {\rm add}(T^\prime)$.
Thus $${\bf g}^{T^\prime}(T_i)=(a_{1i},\cdots,a_{pi},a_{p+1,i}-b_{p+1,i},\cdots,a_{ni}-b_{ni})^{\rm T}.$$

Since $U_i^\prime[1]\in {\rm add}(U[1])\subseteq {\rm add}(R[1])$, we know that in the triangle
$$\xymatrix{U_i^\prime\ar[r]&X_i\ar[r]&T_i\ar[r]^{f_i[1]}&U_i^\prime[1]
},$$
 $f_i[1]$ factors though an object in ${\rm add}(R[1])$. By Proposition \ref{prodk} (iii), we get that
\begin{eqnarray}
{\bf g}^{R}(T_i)&=&{\bf g}^{R} (X_i)-{\bf g}^{R}(U_i^\prime)={\bf g}^{R}(\bigoplus\limits_{l=1}^n (T_l^{\prime})^{a_{li}})-
{\bf g}^{R}(\bigoplus\limits_{j=p+1}^n (T_j^{\prime})^{b_{ji}})\nonumber\\
&=&\sum\limits_{j=1}^p a_{ji}{\bf g}^{R}(T_j^\prime)+\sum_{l=p+1}^n(a_{li}-b_{li}){\bf g}^{R}(T_j^\prime).\nonumber
\end{eqnarray}
This just means that $G_T^{R}=G_{T^\prime}^{R}G_T^{T^\prime}$.

By the fact that $\bigoplus\limits_{j=p+1}^nR_j=U=\bigoplus\limits_{j=p+1}^nT_j^\prime$, we know $G_{T^\prime}^{R}$ has the form of
$$G_{T^\prime}^{R}=\begin{pmatrix}G_{T^\prime}^{R}|_{I\times I}&0\\ \ast&I_{n-p}
\end{pmatrix}.$$
Set $Q=(a_{ji})$, then we know $G_T^{T^\prime}|_{I\times [1,n]}=Q\in M_{|I|\times n}({\mathbb Z}_{\geq 0})$, by  $a_{ji}\geq 0$ for $j=1,\cdots,p$ and $i=1,\cdots,n$.
 Thus we get the following equality $$G_T^{R}|_{I\times [1,n]}=G_{T^\prime}^{R}|_{I\times [1,n]}G_T^{T^\prime}=\begin{pmatrix}G_{T^\prime}^{R}|_{I\times I}&0
\end{pmatrix}G_T^{T^\prime}=G_{T^\prime}^{R}|_{I\times I}Q,$$ where $Q\in M_{|I|\times n}({\mathbb Z}_{\geq 0})$.
\end{proof}

\begin{Theorem}
Let $\mathcal C$ be a $K$-linear, Krull-Schmidt, Hom-finite $2$-CY triangulated category with a basic cluster tilting object $R=\bigoplus\limits_{i=1}^nR_i$ and  $\mathbf{T}$ be the set of basic cluster tilting objects of $\mathcal C$. Then $\mathcal G^R_{\mathbf{T}}:=\{G_T^R|T\in\mathbf{T}\}$ is a $\mathcal G$-system at $R$ and
 the following commutative diagrams hold.

$$\xymatrix{V\ar[rr]^{\mu_{k}}\ar@{<->}[d]&&W\ar@{<->}[d]\\
G_V^R\ar[rr]^{\mu_{k}}&&G_W^R
}\;\;\;\;\;\;\;\;\;\;\xymatrix{T\ar[rr]^{\mathcal T_U}\ar@{<->}[d]&&T^\prime\ar@{<->}[d]\\
G_T^R\ar[rr]^{\mathcal T_J}&&G_{T^\prime}^R
}$$
where $V,W,T,T^\prime\in\mathbf{T}$ and $U$ is a direct summand of $R$ and $$J=\{{\bf g}^R(U_i)|U_i \text{ is an indecomposable direct summand of }U\}.$$
\end{Theorem}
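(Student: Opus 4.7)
The plan is to verify the three axioms of a $\mathcal G$-system in turn, then separately check the two compatibility diagrams. Since $\det(G_T^R)=\pm 1$ for every basic cluster tilting $T$ by Proposition \ref{prodk}(i), each $G_T^R$ is a genuine $\mathbb Z$-basis of $\mathbb Z^n$. The initial $g$-cluster is $G_R^R=\{e_1,\dots,e_n\}$.

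For the \emph{Mutation Condition}, write $T=U\oplus T_k$ with $U$ an almost cluster tilting summand. Theorem \ref{thm2tilt} produces a unique second basic cluster tilting object $T_1=U\oplus Y$ with $Y\ncong T_k$, and the $g$-vector injection (Proposition \ref{prodk}(ii)) ensures that $G_T^R$ and $G_{T_1}^R$ coincide in the $n-1$ columns indexed by the summands of $U$ and differ in the $k$-th column. For the \emph{Co-Bongartz Completion Condition}, given $T$ and $J\subseteq G_R^R$, let $U:=\bigoplus\{R_i\mid e_i\in J\}$, a rigid direct summand of $R$. Theorem \ref{thmsantwi} yields $T^\prime:=\mathcal T_U(T)$ a basic cluster tilting object containing $U$ in ${\rm add}(T^\prime)$, hence $J\subseteq G_{T^\prime}^R$. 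Proposition \ref{prosantwi} then provides exactly the required nonnegativity of the rows of the transition matrix $G_T^{T^\prime}$ indexed by the positions outside $J$.

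For the \emph{Uniqueness Condition}, suppose $\sum_{i\in I}r_i\,{\bf g}^R(T_i^u)=\sum_{j\in I^\prime}r_j^\prime\,{\bf g}^R(T_j^v)$ with positive (integer, after clearing denominators if needed) coefficients. Form the objects $M:=\bigoplus_{i\in I}(T_i^u)^{r_i}\in{\rm add}(T^u)$ and $N:=\bigoplus_{j\in I^\prime}(T_j^v)^{r_j^\prime}\in{\rm add}(T^v)$. Each is rigid because it lies in the additive hull of a single cluster tilting object, and additivity of the $g$-vector gives ${\bf g}^R(M)={\bf g}^R(N)$; Proposition \ref{prodk}(ii) therefore yields $M\cong N$, whereupon Krull-Schmidt supplies a bijection $\sigma:I^\prime\to I$ matching both the indecomposables and their multiplicities.

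For the compatibility squares, the mutation case is immediate: if $W=\mu_k(V)$ in $\mathcal C$, then $V$ and $W$ differ in a single indecomposable summand, so $G_V^R$ and $G_W^R$ share $n-1$ columns; the uniqueness of mutation in Proposition-Definition \ref{protwog} forces $G_W^R=\mu_{{\bf g}^R(V_k)}(G_V^R)$. For co-Bongartz completion, with $T^\prime=\mathcal T_U(T)$ and $J=\{{\bf g}^R(U_i)\}$, Proposition \ref{prosantwi} shows that $G_{T^\prime}^R$ satisfies both condition (a) ($J\subseteq G_{T^\prime}^R$) and condition (b) (sign-coherence of the appropriate rows of $G_T^{T^\prime}$) of $\mathcal T_J(G_T^R)$, so the uniqueness in Proposition-Definition \ref{prodef} gives $G_{T^\prime}^R=\mathcal T_J(G_T^R)$.

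The main obstacle is the Uniqueness Condition. Lifting an equality of positive combinations of $g$-vectors to an isomorphism of rigid objects requires forming the direct sums $M$ and $N$ inside the rigid subcategories ${\rm add}(T^u)$ and ${\rm add}(T^v)$, and then invoking the $g$-vector injection on rigid objects. It is critical here that each $T_i^u$ is a summand of a fixed cluster tilting object $T^u$ so that $M$ is genuinely rigid; the argument would fail for arbitrary collections of indecomposable rigid objects. The other axioms are largely packaging of Theorems \ref{thm2tilt} and \ref{thmsantwi} together with the bookkeeping in Proposition \ref{prosantwi}.
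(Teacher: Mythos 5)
Your proof is correct and follows essentially the same route as the paper: Proposition \ref{prodk}(i) for the basis property, Theorem \ref{thm2tilt} together with $g$-vector injectivity (Proposition \ref{prodk}(ii)) for the Mutation Condition, Proposition \ref{prosantwi} for the Co-Bongartz Completion Condition, and Proposition \ref{prodk}(ii) again for the Uniqueness Condition. The paper dispatches the uniqueness argument and the commutative diagrams as one-line appeals; your expansion---in particular packaging the two positive combinations into rigid objects $M\in{\rm add}(T^u)$, $N\in{\rm add}(T^v)$ and invoking Krull--Schmidt---is exactly what those one-liners leave implicit.
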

\begin{proof}
 By Proposition \ref{prodk} (i), the column vectors of $G_T^R$ form a  $\mathbb Z$-basis of $\mathbb Z^n$ for any $T\in\mathbf{T}$.

By Theorem \ref{thm2tilt} and Proposition \ref{prodk} (ii), $G_\mathbf{T}^R$ satisfies the Mutation Condition  in the definition of $\mathcal G$-system. The Uniqueness Condition follows from Proposition \ref{prodk} (ii).

We mainly show that the Co-Bongartz Completion Condition holds. For any $T\in\mathbf{T}$ and  any direct summand $U$ of $R$, set $T^\prime=\mathcal T_U(T)$. By Proposition \ref{prosantwi}, we know that
if ${\bf g}^R(T_k)=r_{1k}^{t^\prime}{\bf g}^R(T_1^\prime)+\cdots+r_{nk}^{t^\prime}{\bf g}^R(T_n^\prime)$,
then $r_{ik}^{t^\prime}\geq 0$ for any $T_i^\prime\notin{\rm add}(U)$ and $k=1,\cdots,n$. Thus the  Co-Bongartz Completion Condition holds. So  $\mathcal G^R_{\mathbf{T}}$ is a $\mathcal G$-system at $R$. The commutative diagrams are easy to check.
\end{proof}

\subsection{$\mathcal G$-systems from silting theory}
In this subsection, we fix a $K$-linear, Krull-Schmidt, Hom-finite  triangulated category $\mathcal D$ with a basic silting object $S=\bigoplus\limits_{i=1}^nS_i$.

\begin{Lemma}\label{lemclosed}
Let  $\mathcal D$  be a $K$-linear, Krull-Schmidt, Hom-finite  triangulated category with a basic silting object $S=\bigoplus\limits_{i=1}^nS_i$. Let $U$ and $T$ be two objects in $S\ast S[1]$, then ${\rm Hom}_{\mathcal D}(T, U[i])=0$ for any $i\geq 2$.
\end{Lemma}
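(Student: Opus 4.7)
The plan is to use Proposition \ref{projasso} twice: once to extract vanishing information from the hypothesis $T \in S \ast S[1]$, and once in the form of a defining triangle for $U \in S \ast S[1]$.

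First I would note that by Proposition \ref{projasso} applied to $T \in S \ast S[1]$, we have $\mathrm{Hom}_{\mathcal D}(T, S[j+1]) = 0$ for every $j > 0$, i.e.\ $\mathrm{Hom}_{\mathcal D}(T, S[k]) = 0$ for every $k \geq 2$. This is the only piece of information I will use about $T$.

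Next, using the definition of $S \ast S[1]$ applied to $U$, I would fix a triangle
$$\xymatrix{S_0 \ar[r] & U \ar[r] & S_1[1] \ar[r] & S_0[1]},$$
where $S_0, S_1 \in \mathrm{add}(S)$. Shifting by $[i]$ and applying the cohomological functor $\mathrm{Hom}_{\mathcal D}(T, -)$ yields an exact sequence
$$\mathrm{Hom}_{\mathcal D}(T, S_0[i]) \longrightarrow \mathrm{Hom}_{\mathcal D}(T, U[i]) \longrightarrow \mathrm{Hom}_{\mathcal D}(T, S_1[i+1]).$$
For $i \geq 2$, both outer terms vanish by the first paragraph (taking $k = i \geq 2$ and $k = i+1 \geq 3$ respectively, and using that $S_0, S_1 \in \mathrm{add}(S)$). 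Therefore $\mathrm{Hom}_{\mathcal D}(T, U[i]) = 0$, which is the conclusion.

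There is no real obstacle here; the only subtlety is to make sure the shift index $k$ stays $\geq 2$ in both invocations, which is automatic from $i \geq 2$. The argument is symmetric to an equally short proof that applies $\mathrm{Hom}_{\mathcal D}(-, U[i])$ to a defining triangle of $T$ and uses $\mathrm{Hom}_{\mathcal D}(S, U[\ell]) = 0$ for $\ell > 0$ coming from $U \in S \ast S[1]$; either presentation works.
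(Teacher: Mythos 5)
Your proof is correct and takes essentially the same route as the paper: the paper also fixes a triangle $X\to U\to Y\to X[1]$ with $X\in\mathrm{add}(S)$, $Y\in\mathrm{add}(S[1])$, extracts $\mathrm{Hom}_{\mathcal D}(T,S[k])=0$ for $k\geq 2$ from Proposition \ref{projasso}, and applies $\mathrm{Hom}_{\mathcal D}(T[-i],-)$ (the same as your $\mathrm{Hom}_{\mathcal D}(T,-)$ applied to the shifted triangle) to conclude. The only differences are notational, and your remark about the symmetric variant is also valid.
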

\begin{proof}
Since $U\in S\ast S[1]$, there exists a triangle of the form
$$X\rightarrow U\rightarrow Y\rightarrow X[1],$$
where $X\in{\rm add}(S)$ and $Y\in{\rm add}(S[1])$. By Proposition \ref{projasso} and $T\in S\ast S[1]$, we know that
${\rm Hom}_{\mathcal D}(T,S[j+1])=0$ for any $j>0$. In particular, we have ${\rm Hom}_{\mathcal D}(T,X[i])=0$ and ${\rm Hom}_{\mathcal D}(T,Y[i])=0$ for any $i\geq 2$.

For each $i\geq 2$, applying the functor ${\rm Hom}_{\mathcal D}(T[-i],-)$ to the triangle $X\rightarrow U\rightarrow Y\rightarrow X[1],$ we get the following exact sequence.
$$0={\rm Hom}_{\mathcal D}(T[-i],X)\rightarrow {\rm Hom}_{\mathcal D}(T[-i],U)\rightarrow {\rm Hom}_{\mathcal D}(T[-i],Y)=0.$$
So ${\rm Hom}_{\mathcal D}(T, U[i])=0$ for any $i\geq 2$.
\end{proof}

\begin{Theorem}\label{thmstwi}
Let  $\mathcal D$  be a $K$-linear, Krull-Schmidt, Hom-finite  triangulated category with a basic silting object $S=\bigoplus\limits_{i=1}^nS_i$. Let $U\in {\rm add}(S)$ and $T\in S\ast S[1]$ be a basic silting object, then $\mathcal T_U(T)$ is a basic silting object and   $\mathcal T_U(T)\in S\ast S[1]$.
\end{Theorem}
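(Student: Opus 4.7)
The plan is to imitate the $2$-CY argument of Proposition \ref{protrigid} and Theorem \ref{thmsantwi}, but using the approximation property, membership in $S\ast S[1]$, and Lemma \ref{lemclosed} in place of Calabi--Yau duality. First I would set up the defining triangle
$$\xymatrix{T[-1]\ar[r]^f&U'\ar[r]&X_U\ar[r]&T},$$
where $f$ is a left minimal $\mathrm{add}(U)$-approximation of $T[-1]$, so that $\mathcal T_U(T)=(U\oplus X_U)^\flat$. The rotation $U'\to X_U\to T\to U'[1]$ exhibits $X_U$ as an extension of objects in $S\ast S[1]$ (using $U'\in\mathrm{add}(U)\subseteq\mathrm{add}(S)\subseteq S\ast S[1]$ and $T\in S\ast S[1]$), so Corollary \ref{corclosed} gives $X_U\in S\ast S[1]$. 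This is the key preparatory fact.

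Next I would check that $U\oplus X_U$ is presilting, i.e.\ that ${\rm Hom}_{\mathcal D}(U\oplus X_U,(U\oplus X_U)[i])=0$ for every $i>0$. The summands $\mathrm{Hom}(U,U[i])$ vanish because $U\in\mathrm{add}(S)$; the summands $\mathrm{Hom}(U,X_U[i])$ vanish by applying $\mathrm{Hom}(U,-)$ to the triangle and using $\mathrm{Hom}(U,U'[i])=0$ and $\mathrm{Hom}(U,T[i])=0$ (the latter by Proposition \ref{projasso} applied to $T\in S\ast S[1]$). For $\mathrm{Hom}(X_U,U[i])$, applying $\mathrm{Hom}(-,U)$ to the triangle produces
$$\mathrm{Hom}(U',U)\xrightarrow{\mathrm{Hom}(f,U)}\mathrm{Hom}(T[-1],U)\to\mathrm{Hom}(X_U,U[1])\to\mathrm{Hom}(U',U[1])=0,$$
and the first map is surjective because $f$ is an $\mathrm{add}(U)$-approximation; for $i\ge 2$ the relevant terms vanish from Lemma \ref{lemclosed} applied to $X_U,U\in S\ast S[1]$. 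Finally $\mathrm{Hom}(X_U,X_U[i])$ is handled by applying $\mathrm{Hom}(X_U,-)$ to the triangle: $\mathrm{Hom}(X_U,U'[i])$ has just been killed, and $\mathrm{Hom}(X_U,T[i])$ dies from the long exact sequence obtained by applying $\mathrm{Hom}(-,T[i])$ to the defining triangle (using $\mathrm{Hom}(T,T[i])=0$ and $\mathrm{Hom}(U',T[i])=0$).

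To upgrade presilting to silting, I would observe that $T$ lies in $\mathrm{thick}(U\oplus X_U)$ because of the defining triangle, hence $\mathcal D=\mathrm{thick}(T)\subseteq\mathrm{thick}(U\oplus X_U)\subseteq\mathcal D$. Passing to the basic representative, $\mathcal T_U(T)=(U\oplus X_U)^\flat$ is a basic silting object. For the last assertion, $S\ast S[1]$ is closed under direct summands since $\mathrm{Hom}(S,S[1])=0$ (see \cite[Proposition 2.1(1)]{IY}), and $U\oplus X_U\in S\ast S[1]$, so $\mathcal T_U(T)\in S\ast S[1]$. The main obstacle is the $i=1$ case for $\mathrm{Hom}(X_U,U[1])$, which is the only place where the minimal approximation hypothesis on $f$ is genuinely used; everything else is driven by repeated use of Proposition \ref{projasso} and Lemma \ref{lemclosed}.
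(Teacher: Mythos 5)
Your argument is correct and follows essentially the same route as the paper: the same defining triangle, the same use of the approximation to kill $\mathrm{Hom}(X_U,U[1])$, the same invocation of Proposition \ref{projasso} and Lemma \ref{lemclosed} to kill the higher-degree Homs, the same thick-subcategory step to upgrade presilting to silting, and the same closure-under-direct-summand argument for membership in $S\ast S[1]$. The only organizational difference is that you establish $X_U\in S\ast S[1]$ up front via Corollary \ref{corclosed} and then apply Lemma \ref{lemclosed} directly to the pair $(X_U,U)$, whereas the paper defers the membership claim to the end and instead applies the lemma to $(T,U)$ together with a long exact sequence — a cosmetic rather than substantive variation.
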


\begin{proof}
Consider the following triangle
\begin{eqnarray}\label{eqnsilt}
\xymatrix{T[-1]\ar[r]^f&U^\prime\ar[r]^g&X_U\ar[r]^h&T},
\end{eqnarray}
where $f$ is a left minimal ${\rm add}(U)$-approximation of $T[-1]$. We know that $\mathcal T_U(T)=(U\oplus X_U)^\flat$.
Since $T$ is a silting object, it is easy to see that ${\rm thick}(\mathcal T_U(T))=\mathcal D$, by the triangle (\ref{eqnsilt}).

We first show that ${\rm Hom}_{\mathcal D}(X_U,U[1])=0$. Applying the functor ${\rm Hom}_{\mathcal D}(-,U)$ to the triangle (\ref{eqnsilt}), we get the following exact sequence
$$\xymatrix{{\rm Hom}_{\mathcal D}(U^\prime,U)\ar[rr]^{{\rm Hom}_{\mathcal D}(f,U)}&&{\rm Hom}_{\mathcal D}(T[-1],U)\ar[r]&{\rm Hom}_{\mathcal D}(X_U[-1],U)\ar[r]&{\rm Hom}_{\mathcal D}(U^\prime[-1],U)=0
}.$$
Since $f$ is a left minimal ${\rm add}(U)$-approximation of $T[-1]$, we know that ${\rm Hom}_{\mathcal D}(f,U)$ is surjective, thus ${\rm Hom}_{\mathcal D}(X_U[-1],U)\cong{\rm Hom}_{\mathcal D}(X_U,U[1])=0$.

We show that for each $i\geq 2$, ${\rm Hom}_{\mathcal D}(X_U,U[i])=0$. By Lemma   \ref{lemclosed}, we know that  ${\rm Hom}_{\mathcal D}(T, U[i])=0$ for $i\geq 2$. Since $U$ is presilting, we can get ${\rm Hom}_{\mathcal D}(U, U[j])=0$ for $j\geq 0$. In particular, we have ${\rm Hom}_{\mathcal D}(U^\prime, U[i])=0$ for $i\geq 2$. By the triangle (\ref{eqnsilt}), $X_U$ is an extension of $U^\prime$ and $X_U$, so ${\rm Hom}_{\mathcal D}(X_U,U[i])=0$ for any $i\geq 2$.

We show that ${\rm Hom}_{\mathcal D}(U\oplus X_U, T[i])=0$ for any $i>0$. By $T\in S\ast S[1]$ and Proposition \ref{projasso}, we know that ${\rm Hom}_{\mathcal D}(S,T[i])=0$ for any $i>0$. In particular, we get that  ${\rm Hom}_{\mathcal D}(U, T[i])=0$ and ${\rm Hom}_{\mathcal D}(U^\prime, T[i])=0$ for any $i>0$, by $U^\prime\in {\rm add}(U)\subseteq {\rm add}(S)$. For each $i>0$, applying the functor ${\rm Hom}_{\mathcal D}(-, T[i])$ to the triangle (\ref{eqnsilt}), we get the following exact sequence.
$$\xymatrix{0={\rm Hom}_{\mathcal D}(T,T[i])\ar[r]&{\rm Hom}_{\mathcal D}(X_U,T[i])\ar[r]&{\rm Hom}_{\mathcal D}(U^\prime, T[i])}=0.$$ So  ${\rm Hom}_{\mathcal D}(X_U, T[i])=0$ and thus ${\rm Hom}_{\mathcal D}(U\oplus X_U, T[i])=0$ for any $i>0$.

Now for each $i>0$, we show that ${\rm Hom}_{\mathcal D}(U\oplus X_U,X_U[i])=0$. Let $a\in {\rm Hom}_{\mathcal D}(U\oplus X_U,X_U[i])$, then $$(h[i])a\in {\rm Hom}_{\mathcal D}(U\oplus X_U, T[i])=0.$$
So there exists $b:U\oplus X_U\rightarrow U^\prime[i]$ such that $a=(g[i])b$, i.e., we have the following diagram.
$$\xymatrix{&&U\oplus X_U\ar[d]^a\ar@{.>}[ld]_b&\\
T[-1+i]\ar[r]^{f[i]}&U^\prime[i]\ar[r]^{g[i]}&X_U[i]\ar[r]^{h[i]}&T[i]
}$$
Since $b\in {\rm Hom}_{\mathcal D}(U\oplus X_U,U^\prime[i])=0$, we get $a=(g[i])b=0$. So  ${\rm Hom}_{\mathcal D}(U\oplus X_U,X_U[i])=0$.

Thus we have proved that ${\rm Hom}_{\mathcal D}(U\oplus X_U,(U\oplus X_U)[i])=0$ for any $i>0$. So $\mathcal T_U(T)=(U\oplus X_U)^\flat$ is a basic presilting object in $\mathcal D$. By ${\rm thick}(\mathcal T_U(T))=\mathcal D$, we obtain that $\mathcal T_U(T)$ is a basic silting object.

It remains to show that  $\mathcal T_U(T)\in S\ast S[1]$. By the triangle (\ref{eqnsilt}), we know that $X_U$ is an extension of $U^\prime$ and $T$. Then by Corollary \ref{corclosed} and the fact that $U^\prime, T\in S\ast S[1]$, we get that $X_U\in S\ast S[1]$. So $U\oplus X_U\in  S\ast S[1]$.
By ${\rm Hom}_{\mathcal D}(S,S[1])=0$ and \cite[Proposition 2.1(1)]{IY}, we know that $S\ast S[1]$ is closed under direct summand. So $\mathcal T_U(T)=(U\oplus X_U)^\flat\in S\ast S[1]$.
\end{proof}

\begin{Theorem}\label{thmgsys}
Let $\mathcal D$ be a $K$-linear, Krull-Schmidt, Hom-finite  triangulated category with a basic cluster tilting object $S=\bigoplus\limits_{i=1}^nS_i$, and  $\mathbf{T}$ be the set of basic silting objects in $S\ast S[1]$. Then $\mathcal G^S_{\mathbf{T}}:=\{G_T^S|T\in\mathbf{T}\}$ is a $\mathcal G$-system at $S$ and
 the following commutative diagrams hold.

$$\xymatrix{V\ar[rr]^{\mu_{k}}\ar@{<->}[d]&&W\ar@{<->}[d]\\
G_V^S\ar[rr]^{\mu_{k}}&&G_W^S
}\;\;\;\;\;\;\;\;\;\;\xymatrix{T\ar[rr]^{\mathcal T_U}\ar@{<->}[d]&&T^\prime\ar@{<->}[d]\\
G_T^S\ar[rr]^{\mathcal T_J}&&G_{T^\prime}^S
}$$
where $V,W,T,T^\prime\in\mathbf{T}$ and $U$ is a direct summand of $S$ and $$J=\{{\bf g}^S(U_i)|U_i \text{ is an indecomposable direct summand of }U\}.$$
\end{Theorem}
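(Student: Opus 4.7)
The plan is to mirror the cluster tilting argument of the previous subsection, using as silting-theoretic inputs: the existence of mutations (Corollary \ref{corsmutation}), the twist-completion $\mathcal T_U(T) \in S\ast S[1]$ (Theorem \ref{thmstwi}), injectivity of the $g$-vector map (Corollary \ref{corsmu}), the identification $\mathbf g^S = \mathbf g\circ \tilde F$ (Proposition \ref{proggvector}), and the bijection with $\tau$-rigid pairs (Theorem \ref{thmijy}).

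First, for every $T\in\mathbf T$ the columns of $G_T^S$ form a $\mathbb Z$-basis of $\mathbb Z^n$ by Remark \ref{rmksbasis}. For the \textbf{Mutation Condition}, fix $T=\bigoplus T_i\in\mathbf T$ and $k$; then $T/T_k$ is an almost silting object in $S\ast S[1]$, so by Corollary \ref{corsmutation} there is a unique second basic silting $T'=\mu_{T_k}(T)\in\mathbf T$ sharing the summand $T/T_k$. Corollary \ref{corsmu} guarantees that ${\bf g}^S$ separates indecomposable presilting objects in $S\ast S[1]$, so $G_T^S\cap G_{T'}^S$ equals precisely $G_T^S\setminus\{{\bf g}^S(T_k)\}$, which is the desired equality, and simultaneously supplies the upper commutative square.

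For the \textbf{Co-Bongartz Completion Condition}, note that $G_S^S=\{[S_1],\ldots,[S_n]\}$ is the standard basis of $\mathbb Z^n$; hence any $J\subseteq G_S^S$ corresponds to a unique direct summand $U$ of $S$. Set $T':=\mathcal T_U(T)$; Theorem \ref{thmstwi} gives $T'\in\mathbf T$ with $U$ a direct summand, whence $J\subseteq G_{T'}^S$. To get the row-nonnegativity statement, I will carry out the silting analogue of Proposition \ref{prosantwi}: decompose the defining triangle $T[-1]\to U'\to X_U\to T$ into summands indexed by indecomposables $T_i$ of $T$,
\[
\xymatrix{T_i[-1]\ar[r] & U_i'\ar[r] & X_i\ar[r] & T_i},
\]
with $U_i'\in\mathrm{add}(U)$ and $X_i=\bigoplus_l (T_l')^{a_{li}}\in\mathrm{add}(T')$, $U_i'=\bigoplus_{T_j'\in\mathrm{add}(U)}(T_j')^{b_{ji}}$. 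Because $[\cdot]$ is additive on triangles in $K_0(\mathcal D)$, we obtain
\[
\mathbf g^S(T_i)=\sum_{l}a_{li}\,\mathbf g^S(T_l')-\sum_{T_j'\in\mathrm{add}(U)}b_{ji}\,\mathbf g^S(T_j'),
\]
so the coefficient in front of each $\mathbf g^S(T_l')\notin J$ is $a_{li}\ge 0$. This gives both condition (b) of Proposition-Definition \ref{prodef} and the right-hand commutative square.

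For the \textbf{Uniqueness Condition}, suppose $\sum_{i\in I}r_i\,\mathbf g^S(T_{i;U})=\sum_{j\in I'}r_j'\,\mathbf g^S(T_{j;V})$ with $r_i,r_j'>0$ and $U,V\in\mathbf T$. Since $\mathbf g^S$ is additive on direct sums, the two sides equal $\mathbf g^S(M)$ and $\mathbf g^S(N)$ where $M=\bigoplus T_{i;U}^{r_i}$ and $N=\bigoplus T_{j;V}^{r_j'}$ are presilting objects in $S\ast S[1]$. By Proposition \ref{proggvector} this equals $\mathbf g(\tilde F(M))=\mathbf g(\tilde F(N))$ in $\mathrm{mod}A$, so Theorem \ref{thmairinj} forces $\tilde F(M)\cong\tilde F(N)$, and then Theorem \ref{thmijy} gives $M\cong N$; Krull--Schmidt produces the required bijection $\sigma$.

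The main obstacle I anticipate is the Co-Bongartz step: one must verify carefully that the triangle defining $\mathcal T_U(T)$ splits into summands indexed by the indecomposable summands of $T$ in the silting setting, so that the computation of $\mathbf g^S(T_i)$ above goes through with $U_i'\in\mathrm{add}(U)$ and $X_i\in\mathrm{add}(T')$. Everything else either reduces to the already-proved silting facts or to the direct verification that the two labelled diagrams commute, which is immediate from the preceding identifications.
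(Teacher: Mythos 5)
Your proposal is correct and follows essentially the same route as the paper: the columns of $G_T^S$ are a basis by Remark \ref{rmksbasis}, the Mutation and Uniqueness Conditions are deduced from Corollary \ref{corsmutation} together with the injectivity in Corollary \ref{corsmu} (your expansion of the Uniqueness step via $\tilde F$ and Krull--Schmidt is exactly what that corollary encapsulates), and the Co-Bongartz Completion Condition is verified by decomposing the defining left-minimal $\mathrm{add}(U)$-approximation triangle summand-by-summand and reading off nonnegative coefficients in front of the $\mathbf g^S(T_l')$ with $T_l'\notin\mathrm{add}(U)$, which is the same $K_0$-computation the paper performs. The obstacle you flag is not really one: a minimal left $\mathrm{add}(U)$-approximation of $T[-1]=\bigoplus_k T_k[-1]$ is the direct sum of minimal left $\mathrm{add}(U)$-approximations of the $T_k[-1]$, so the triangle splits into the summands you need with $U_k'\in\mathrm{add}(U)$ and $X_k\in\mathrm{add}(X_U)\subseteq\mathrm{add}(T')$.
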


\begin{proof}
 By Remark \ref{rmksbasis}, the column vectors of $G_T^S$ form a  $\mathbb Z$-basis of $\mathbb Z^n$ for any $T\in\mathbf{T}$.

By Corollary \ref{corsmutation} and Corollary \ref{corsmu}, $\mathcal G_\mathbf{T}^S$ satisfies the Mutation Condition and Uniqueness Condition in the definition of $\mathcal G$-system.

Now we show that the Co-Bongartz Completion Condition holds.
For any $T\in\mathbf{T}$ and  any direct summand $U$ of $S$, consider the following triangle
\begin{eqnarray}\label{eqnk0}
\xymatrix{T[-1]\ar[r]^f&U^\prime\ar[r]^g&X_U\ar[r]^h&T},
\end{eqnarray}
where $f$ is a left minimal ${\rm add}(U)$-approximation of $T[-1]$. By Theorem \ref{thmstwi}, we know that $T^\prime:=\mathcal T_U(T)=(U\oplus X_U)^\flat\in\mathbf{T}$.

Consider the direct summand of the triangle (\ref{eqnk0}) (as a complex) given by $T_k[-1]$
$$\xymatrix{T_k[-1]\ar[r]^{f_k}&U_k^\prime\ar[r]&X_k\ar[r]&T_k}$$
where $f_k$ is a left minimal ${\rm add}(U)$-approximation of $T_k[-1]$. We know that $$[T_k]=[X_k]-[U_k^\prime]\in {\rm K}_0(\mathcal D),$$
so ${\bf g}^S(T_k)={\bf g}^S(X_k)-{\bf g}^S(U_k^\prime).$
We can assume that $X_k=\bigoplus\limits_{i=1}^n(T_i^\prime)^{r_i}$ and $U_k^\prime=\bigoplus\limits_{i=1}^n(T_i^\prime)^{r_i^\prime}$, where $r_i,r_i^\prime\geq 0$ and $r_i^\prime=0$ for any $T_i^\prime\notin {\rm add}(U)$. So
 \begin{eqnarray}
 {\bf g}^S(T_k)&=&(r_1-r_1^\prime){\bf g}^S(T_1^\prime)+\cdots+(r_n-r_n^\prime){\bf g}^S(T_n^\prime)\nonumber\\
 &=&\sum\limits_{T_i^\prime\notin {\rm add}(U)}r_i{\bf g}^S(T_i^\prime)+\sum\limits_{T_i^\prime\in {\rm add}(U)}(r_i-r_i^\prime){\bf g}^S(T_i^\prime).\nonumber
 \end{eqnarray}
Thus the coefficients before ${\bf g}^S(T_i^\prime)$ are nonnegative for any $T_i^\prime\notin {\rm add}(U)$.
So the Co-Bongartz Completion Condition holds. Hence, $\mathcal G^S_{\mathbf{T}}$ is a $\mathcal G$-system at $S$. The commutative diagrams are easy to check.
\end{proof}

\subsection{$\mathcal G$-systems from  $\tau$-tilting theory}

In this subsection, we fix a finite dimensional basic algebra $A$ over $K$, and let ${\rm mod}A$ be the category of the finitely generated left $A$-modules.

Let $\mathcal X$ and $\mathcal Z$ be two subcategories of ${\rm mod}A$, denote by $\mathcal X\ast \mathcal Z$ the subcategory of ${\rm mod}A$ consisting modules $Y$ such that there exists a short exact sequence in ${\rm mod}A$ of the form
$$0\rightarrow X\rightarrow Y\rightarrow Z\rightarrow 0,$$
where $X\in\mathcal X$ and $Z\in\mathcal Z$.

\begin{Definition}
Let $Q$ be a projective $A$-module, and $\mathfrak T$ be a functorially finite torsion class in ${\rm mod}A$. The full subcategory $\mathcal T_Q(\mathfrak T):={\rm Fac}(Q)\ast \mathfrak T$ is called the {\bf co-Bongartz completion} of $Q$ with respect to  $\mathfrak T$.
\end{Definition}

For a module $M\in{\rm mod}A$, recall that $M^\bot$ is the subcategory of ${\rm mod}A$ given by
$$M^\bot=\{X\in {\rm mod}A|{\rm Hom}_A(M, X)=0\}.$$

\begin{Proposition}\cite[Proposition 3.29]{J}\label{projtor} Let  $Q$ be a projective $A$-module, and $\mathfrak T$ be a functorially finite torsion class in ${\rm mod}A$, then ${\rm Fac}(Q)\ast(\mathfrak{T}\cap Q^\bot)$ is a functorially finite torsion class in ${\rm mod}A$.
\end{Proposition}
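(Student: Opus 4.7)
The overall plan is three-fold: first, reformulate ${\rm Fac}(Q) \ast (\mathfrak{T} \cap Q^\bot)$ using the canonical torsion pair $({\rm Fac}(Q), Q^\bot)$; second, verify the two axioms of a torsion class; third, establish functorial finiteness via the AIR correspondence (Theorem \ref{proorder}).

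Since $Q$ is projective, $({\rm Fac}(Q), Q^\bot)$ forms a torsion pair in ${\rm mod}\,A$, so every module $M$ fits into a canonical exact sequence $0 \to M_Q \to M \to M^Q \to 0$ with $M_Q \in {\rm Fac}(Q)$ and $M^Q \in Q^\bot$. My first step is to establish the characterization
\[
M \in {\rm Fac}(Q) \ast (\mathfrak{T} \cap Q^\bot) \Longleftrightarrow M^Q \in \mathfrak{T}.
\]
The direction $\Leftarrow$ is given by the canonical sequence itself. For $\Rightarrow$, if $0 \to M_1 \to M \to M_2 \to 0$ witnesses membership of $M$ in the left-hand side, then by maximality of $M_Q$ one has $M_1 \subseteq M_Q$, so $M^Q$ is a quotient of $M_2 \in \mathfrak{T}$ and hence lies in $\mathfrak{T}$.

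Armed with this characterization, the torsion-class axioms become routine. For closure under quotients: given a surjection $\pi \colon M \twoheadrightarrow N$, the composite $M \xrightarrow{\pi} N \twoheadrightarrow N^Q$ annihilates $M_Q$ (since ${\rm Hom}_A({\rm Fac}(Q), Q^\bot) = 0$), hence factors through $M^Q$ and exhibits $N^Q$ as a quotient of $M^Q \in \mathfrak{T}$. For closure under extensions: given $0 \to A \to B \to C \to 0$ with $A^Q, C^Q \in \mathfrak{T}$, a diagram chase shows that the induced map $B^Q \twoheadrightarrow C^Q$ is surjective with kernel $(A + B_Q)/B_Q$, which is a quotient of $A^Q \in \mathfrak{T}$; extension-closure of $\mathfrak{T}$ then gives $B^Q \in \mathfrak{T}$.

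The main obstacle is functorial finiteness. My strategy is to apply Theorem \ref{proorder} by producing an explicit support $\tau$-tilting module $T''$ with ${\rm Fac}(T'') = {\rm Fac}(Q) \ast (\mathfrak{T} \cap Q^\bot)$. Let $T$ be the support $\tau$-tilting module satisfying ${\rm Fac}(T) = \mathfrak{T}$; decompose $T = \bigoplus_i T_i$ into indecomposables and set $T'' := \bigl(Q \oplus \bigoplus_i T_i^Q\bigr)^\flat$. The inclusion ${\rm Fac}(T'') \subseteq {\rm Fac}(Q) \ast (\mathfrak{T} \cap Q^\bot)$ is checked summand by summand; the reverse follows because every $M$ in the target has $M^Q$ a quotient of a power of $T$, and applying $(-)^Q$ realizes this as a quotient of a power of $\bigoplus_i T_i^Q$. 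The delicate step---and where I expect most of the work---is confirming $\tau$-rigidity of $T''$: one must verify the vanishing of ${\rm Hom}_A(T_i^Q, \tau(T_j^Q))$ and ${\rm Hom}_A(Q, T_i^Q)$, for which the projectivity of $Q$ is essential in controlling how the Auslander--Reiten translate interacts with the truncation $T_i \to T_i^Q$. As a safeguard against complications here, one can bypass the $\tau$-tilting machinery entirely and build left and right ${\rm Fac}(Q) \ast (\mathfrak{T} \cap Q^\bot)$-approximations of an arbitrary $X$ directly, by composing a left $\mathfrak{T}$-approximation with the canonical projection to $Q^\bot$ and gluing on a ${\rm Fac}(Q)$-piece via the canonical sequence.
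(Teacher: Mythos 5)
The paper does not give a proof of this proposition at all: it cites \cite[Proposition 3.29]{J} and takes the result as an input. So there is no internal proof to compare against, and your proposal must be judged on its own.

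Your first two stages are essentially sound. The characterization $M\in{\rm Fac}(Q)\ast(\mathfrak T\cap Q^\bot)\Leftrightarrow M^Q\in\mathfrak T$ is correct, and closure under quotients follows cleanly. The extension-closure step is also correct, but the ``diagram chase'' elides the one point where projectivity of $Q$ is actually needed: to identify the kernel of $B^Q\twoheadrightarrow C^Q$ with $(A+B_Q)/B_Q\cong A^Q$ you must show that the image of $B_Q$ in $C$ is all of $C_Q$, and that is precisely where you lift maps $Q\to C$ through the epimorphism $B\twoheadrightarrow C$. Without spelling this out the argument only gives that $A^Q$ is a submodule of the kernel, not the whole kernel, and the phrase ``which is a quotient of $A^Q$'' (it is in fact isomorphic to $A^Q$) does not by itself close the gap.

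The genuine gap is in the functorial-finiteness step, which you yourself flag as ``the main obstacle.'' You propose the module $T''=(Q\oplus\bigoplus_i T_i^Q)^\flat$ and reduce the whole problem to showing $T''$ is support $\tau$-tilting, but then state that confirming ${\rm Hom}_A(T_i^Q,\tau(T_j^Q))=0$ is ``delicate'' without giving an argument. This is not a routine verification: $\tau$ does not interact well with the truncation $T_j\twoheadrightarrow T_j^Q$, and it is not clear that $\bigoplus_i T_i^Q$ is $\tau$-rigid in general; establishing it would require a real idea (Jasso's own proof goes through the machinery of $\tau$-tilting reduction precisely to control such rigidity). Your ``safeguard'' of building approximations directly is similarly unfinished: contravariant finiteness of a torsion class is automatic (right approximations are torsion-submodule inclusions), so the issue is covariant finiteness, and the sketch ``compose a left $\mathfrak T$-approximation with the projection to $Q^\bot$ and glue on a ${\rm Fac}(Q)$-piece'' does not explain how the glued object is produced, why it lies in ${\rm Fac}(Q)\ast(\mathfrak T\cap Q^\bot)$, or why the resulting map is a left approximation. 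As written, the proposal establishes the torsion-class part but not functorial finiteness, which is the substance of the cited result.
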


\begin{Proposition}\label{proctor}
Let $Q$ be a projective $A$-module, and $\mathfrak T$ be a functorially finite torsion class in ${\rm mod}A$, then $\mathcal T_Q(\mathfrak{T})={\rm Fac}(Q)\ast(\mathfrak{T}\cap Q^\bot)$ and $\mathcal T_Q(\mathfrak{T})$ is a functorially finite torsion class in ${\rm mod}A$.
\end{Proposition}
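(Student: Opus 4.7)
The plan is to build on Proposition \ref{projtor}, which already provides that ${\rm Fac}(Q)\ast(\mathfrak T\cap Q^\bot)$ is a functorially finite torsion class in ${\rm mod}A$. So the real content of the claim is the equality
\[
\mathcal T_Q(\mathfrak T) \;=\; {\rm Fac}(Q)\ast \mathfrak T \;=\; {\rm Fac}(Q)\ast(\mathfrak T\cap Q^\bot),
\]
after which functorial finiteness and the torsion-class property are inherited from Proposition \ref{projtor}. The inclusion $\supseteq$ is trivial from $\mathfrak T\cap Q^\bot\subseteq\mathfrak T$, so the work is all in the other direction.

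First I would record the auxiliary fact that $({\rm Fac}(Q),Q^\bot)$ is a torsion pair in ${\rm mod}A$. Since $Q$ is projective (hence $\tau$-rigid with $\tau Q=0$), ${\rm Fac}(Q)$ is a functorially finite torsion class, and one checks that its associated torsion-free class is exactly $Q^\bot$: any module in ${\rm Fac}(Q)$ is a quotient of some $Q^n$, and the left exactness of ${\rm Hom}_A(-,Y)$ gives $Y\in Q^\bot\Longleftrightarrow {\rm Hom}_A({\rm Fac}(Q),Y)=0$. In particular every module admits a canonical sequence for this torsion pair.

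Next, I take an arbitrary $Y\in{\rm Fac}(Q)\ast\mathfrak T$ with a short exact sequence $0\to X\to Y\to Z\to 0$, where $X\in{\rm Fac}(Q)$ and $Z\in\mathfrak T$. Apply the canonical sequence for $Z$ with respect to the torsion pair $({\rm Fac}(Q),Q^\bot)$ to obtain $0\to Z_0\to Z\to Z_1\to 0$ with $Z_0\in{\rm Fac}(Q)$ and $Z_1\in Q^\bot$. Because $\mathfrak T$ is a torsion class it is closed under quotients, so the quotient $Z_1$ of $Z\in\mathfrak T$ lies in $\mathfrak T$, giving $Z_1\in\mathfrak T\cap Q^\bot$. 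Let $K$ be the kernel of the composition $Y\twoheadrightarrow Z\twoheadrightarrow Z_1$. Then we have $0\to K\to Y\to Z_1\to 0$, and the snake lemma (or a direct diagram chase) produces an extension $0\to X\to K\to Z_0\to 0$. Since $X,Z_0\in{\rm Fac}(Q)$ and ${\rm Fac}(Q)$ is closed under extensions, $K\in{\rm Fac}(Q)$; hence $Y\in{\rm Fac}(Q)\ast(\mathfrak T\cap Q^\bot)$, proving the remaining inclusion.

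I do not foresee a major obstacle here; the main subtlety is simply the verification that $({\rm Fac}(Q),Q^\bot)$ really is a torsion pair (so that canonical sequences exist), which is where the projectivity of $Q$ enters. Everything else is a standard pullback/snake lemma manipulation combined with the closure properties of the torsion class $\mathfrak T$ and of ${\rm Fac}(Q)$.
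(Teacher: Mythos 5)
Your proof is correct, and it follows a route that is genuinely, though mildly, different from the paper's. Both of you reduce the problem to the inclusion ${\rm Fac}(Q)\ast\mathfrak T\subseteq{\rm Fac}(Q)\ast(\mathfrak T\cap Q^\bot)$, both use the torsion pair $({\rm Fac}(Q),Q^\bot)$, and both use that $\mathfrak T$ is closed under quotients. The difference is in which object you filter: the paper applies the canonical sequence of $({\rm Fac}(Q),Q^\bot)$ to $Y$ itself, producing $0\to U\to Y\to V\to 0$, and then lifts $X\hookrightarrow Y$ through the right ${\rm Fac}(Q)$-approximation $U\to Y$ to run the snake lemma and conclude that $V$ is a quotient of $Z$, hence in $\mathfrak T$. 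You instead apply the canonical sequence to the quotient $Z$, pull the resulting torsion subobject $Z_0$ back along $Y\twoheadrightarrow Z$ to get $K=\ker(Y\to Z_1)$, and then use closure of ${\rm Fac}(Q)$ under extensions to see $K\in{\rm Fac}(Q)$. Your version avoids the approximation-lifting step and replaces it with a standard pullback plus closure under extensions, which is slightly more self-contained; the paper's version puts the torsion filtration where it ultimately belongs (on $Y$) and reads $V$ off directly. Both are equally short and both rest on the same auxiliary fact that $({\rm Fac}(Q),Q^\bot)$ is a torsion pair, which you correctly flag and justify.
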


\begin{proof}
Clearly, ${\rm Fac}(Q)\ast(\mathfrak{T}\cap Q^\bot)\subseteq \mathcal T_Q(\mathfrak{T})={\rm Fac}(Q)\ast \mathfrak T$. Now we show that $\mathcal T_Q(\mathfrak{T})\subseteq {\rm Fac}(Q)\ast(\mathfrak{T}\cap Q^\bot)$. For any $Y\in\mathcal T_Q(\mathfrak{T})$, there exists a short exact sequence of the form
$$\xymatrix{0\ar[r]&X\ar[r]^{f_1}&Y\ar[r]^{f_2}&Z\ar[r]&0
},$$
where $X\in{\rm Fac}(Q)$ and $Z\in\mathfrak{T}$. Consider the canonical sequence of $Y$ with respect to the torsion pair $({\rm Fac}(Q),Q^\bot)$,
$$\xymatrix{0\ar[r]&U\ar[r]^{g_1}&Y\ar[r]^{g_2}&V\ar[r]&0,
}$$
where $g_1:U\rightarrow Y$ is a right minimal ${\rm Fac}(Q)$-approximation of $Y$ and $V\in Q^\bot$. Since $X\in{\rm Fac}(Q)$, we know that $f_1$ factors through $g_1$, thus we have the following commutative diagram

$$\xymatrix{0\ar@{=}[d]\ar[r]&X\ar[d]^{h_1}\ar[r]^{f_1}&Y\ar@{=}[d]\ar[r]^{f_2}&Z\ar@{.>}[d]^{h_2}\ar[r]&0\ar@{=}[d]\\
0\ar[r]&U\ar[r]^{g_1}&Y\ar[r]^{g_2}&V\ar[r]&0
}$$
By Snake Lemma, we know that $h_2:Z\rightarrow V$ is surjective. Since $Z\in\mathfrak{T}$ and $\mathfrak{T}$ is closed under factor modules, we get that $V\in\mathfrak{T}$. Thus $V\in\mathfrak{T}\cap Q^\bot$, and $Y\in {\rm Fac}(Q)\ast(\mathfrak{T}\cap Q^\bot)$. Hence, $\mathcal T_Q(\mathfrak{T})={\rm Fac}(Q)\ast(\mathfrak{T}\cap Q^\bot)$.
Then by Proposition \ref{projtor}, we know that $\mathcal T_Q(\mathfrak{T})$ is a functorially finite torsion class in ${\rm mod}A$.

\end{proof}

\begin{Definition}
Let $Q$ be a projective $A$-module, $M$ be a basic support $\tau$-tilting module and $(M,P)$ be the corresponding support $\tau$-tilting pair. The {\bf co-Bongartz completion}  $\mathcal T_Q(M)$ of $Q$ with respect to $M$ (respectively, the {\bf co-Bongartz completion} $\mathcal T_Q(M,P)$ of $Q$ with respect to $(M,P)$) is defined to be the new basic support $\tau$-tilting module $M^\prime$ (respectively, the new basic support $\tau$-tilting pair $(M^\prime,P^\prime)$) given by the following commutative diagram

$$\xymatrix{(M,P)\ar@{.>}[rrr]^{\mathcal T_Q}\ar[d]&&&(M^\prime, P^\prime)
\\
M\ar@{.>}[rrr]^{\mathcal T_Q}\ar[d]_{{\rm Fac}}&&&M^\prime=\mathcal P(\mathcal T_Q({\rm Fac}(M)))\ar[u]\\
{\rm Fac}(M)\ar[rrr]^{\mathcal T_Q}&&&\mathcal T_Q({\rm Fac}(M))\ar[u]_{\mathcal P}
}$$
\end{Definition}

\begin{Theorem}
Let $A$ be a finite dimensional basic algebra over $K$, and $\mathbf{T}$ be the set of basic support $\tau$-tilting pairs. Then $\mathcal G_\mathbf{T}:=\{G_{(M,P^M)}|(M,P^M)\in\mathbf{T}\}$ is a $\mathcal G$-system at $(A,0)\in \mathbf{T}$ and the following commutative diagrams hold.

$$\xymatrix{(\bar V_0,\bar V_1)\ar[rr]^{\mu_{k}}\ar@{<->}[d]&&(\bar W_0,\bar W_1)\ar@{<->}[d]\\
G_{(\bar V_0,\bar V_1)}\ar[rr]^{\mu_{k}}&&G_{(\bar W_0,\bar W_1)}
}\;\;\;\;\;\;\;\;\;\;\xymatrix{(\bar T_0,\bar T_1)\ar[rr]^{\mathcal T_Q}\ar@{<->}[d]&&(\bar T_0^\prime,\bar T_1^\prime)\ar@{<->}[d]\\
G_{(\bar T_0,\bar T_1)}\ar[rr]^{\mathcal T_J}&&G_{(\bar T_0^\prime,\bar T_1^\prime)}
}$$
where $(\bar V_0,\bar V_1),(\bar W_0,\bar W_1),(\bar T_0,\bar T_1),(\bar T_0^\prime,\bar T_1^\prime)\in\mathbf{T}$ and $Q$ is a direct summand of $\prescript{}{A}{A}$ and $$J=\{{\bf g}^S(Q_i)|Q_i \text{ is an indecomposable direct summand of }Q\}.$$
\end{Theorem}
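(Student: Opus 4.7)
The plan is to reduce the statement to Theorem \ref{thmgsys} by specializing to $\mathcal D = {\rm K}^b({\rm proj}\, A)$ with basic silting object $S = A$, and transferring the $\mathcal G$-system structure along the equivalence between two-term silting objects and support $\tau$-tilting pairs. First I would invoke Theorem \ref{thmijy} to identify basic silting objects in $S \ast S[1]$ with basic support $\tau$-tilting pairs in ${\rm mod}\, A$ via $\tilde F$. By Proposition \ref{proggvector}, under this bijection the $g$-vectors coincide, so the collection $\mathcal G_\mathbf{T}$ of $G$-matrices of support $\tau$-tilting pairs agrees column by column with the collection $\mathcal G^S_{\mathbf{T}'}$ studied in Theorem \ref{thmgsys}. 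Since $S = A$ is identified with $(A, 0)$ under $\tilde F$, the first assertion (that $\mathcal G_\mathbf{T}$ is a $\mathcal G$-system at $(A,0)$) then follows directly.

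For the compatibility diagrams, the mutation square is immediate from $\tilde F$ being a bijection that intertwines silting mutations in $S \ast S[1]$ with $\tau$-tilting mutations on support $\tau$-tilting pairs (built into Theorem \ref{thmijy}). The co-Bongartz square, on the other hand, requires matching up two a priori different notions of co-Bongartz completion. Given a projective direct summand $Q$ of $_AA$, let $U \in {\rm add}(S)$ be the corresponding direct summand of $S = A$, so that $\tilde F(U) = (Q, 0)$. The key claim to establish is that the silting co-Bongartz completion $\mathcal T_U(T)$ from Theorem \ref{thmstwi}, where $T \in S \ast S[1]$ corresponds to $(M, P^M)$, is sent under $\tilde F$ to the $\tau$-tilting co-Bongartz completion $\mathcal T_Q(M, P^M) = \mathcal P({\rm Fac}(Q) \ast ({\rm Fac}(M) \cap Q^\bot))$.

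The main obstacle is precisely this last identification. My approach would be to apply $F = {\rm Hom}_{\mathcal D}(S, -)$ to the defining triangle
$$T[-1] \longrightarrow U' \longrightarrow X_U \longrightarrow T$$
that produces $\mathcal T_U(T) = (U \oplus X_U)^\flat$, and then compute the image in ${\rm mod}\, A$ using the equivalence $S \ast S[1]/[S[1]] \xrightarrow{\sim} {\rm mod}\, A$. Using Proposition \ref{proctor} to rewrite $\mathcal T_Q({\rm Fac}(M)) = {\rm Fac}(Q) \ast ({\rm Fac}(M) \cap Q^\bot)$ as a functorially finite torsion class, together with Theorem \ref{proorder} relating functorially finite torsion classes and support $\tau$-tilting modules, the $F$-image of $U \oplus X_U$ should be forced to coincide with $\mathcal P({\rm Fac}(Q) \ast ({\rm Fac}(M) \cap Q^\bot))$; the torsion direct summand of the canonical sequence for $F(X_U)$ with respect to $({\rm Fac}(Q), Q^\bot)$ lands in $Q^\bot$ and contributes the factor $\mathcal P({\rm Fac}(M) \cap Q^\bot)$, while $U$ contributes the $\mathcal P({\rm Fac}(Q))$-summand. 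Once this identification is in place, the second commutative diagram is automatic from Theorem \ref{thmgsys}, completing the proof.
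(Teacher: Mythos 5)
Your setup is exactly right and follows the paper: specialize to $\mathcal D=\mathrm{K}^b(\mathrm{proj}\,A)$ with $S=A$, invoke Theorem \ref{thmgsys} to obtain the $\mathcal G$-system from silting theory, and transfer it along $\tilde F$ using Theorem \ref{thmijy} and Proposition \ref{proggvector}. The first diagram is also handled correctly. The gap is in the final step, where you try to identify $\mathcal T_U(T)$ under $\tilde F$ with $\mathcal T_Q(\bar T_0,\bar T_1)$.

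Your proposed mechanism for this identification is not coherent. You say ``the torsion direct summand of the canonical sequence for $F(X_U)$ with respect to $(\mathrm{Fac}(Q),Q^\bot)$ lands in $Q^\bot$'' --- but in the canonical sequence $0\to X^{\mathfrak T}\to X\to X^{\mathfrak F}\to 0$ for the torsion pair $(\mathrm{Fac}(Q),Q^\bot)$, the torsion part lives in $\mathrm{Fac}(Q)$ and the torsion-free part in $Q^\bot$, so the claim is backwards. More seriously, even with the signs corrected, trying to produce $\mathcal P(\mathrm{Fac}(Q)*(\mathrm{Fac}(M)\cap Q^\bot))$ by splitting $F(U\oplus X_U)$ into a ``$\mathcal P(\mathrm{Fac}(Q))$-summand'' and a ``$\mathcal P(\mathrm{Fac}(M)\cap Q^\bot)$-summand'' is not a valid move: $\mathcal P$ of a torsion class is the sum of indecomposable Ext-projectives in that class, and it does not decompose along a glued torsion class in this manner. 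What the paper actually does is bypass any direct computation of $\mathcal P$: it proves the equality of torsion classes $\mathrm{Fac}(\bar T_0')=\mathcal T_Q(\mathrm{Fac}(\bar T_0))$ by two inclusions, using the long exact sequence obtained by applying $F=\mathrm{Hom}_{\mathcal D}(S,-)$ to the triangle $T[-1]\to U'\to X_U\to T$. The inclusion $\mathrm{Fac}(\bar T_0')\subseteq\mathcal T_Q(\mathrm{Fac}(\bar T_0))$ follows because the image $M:=\mathrm{Im}\,F(f)$ lies in $\mathrm{Fac}(Q)$ and sits in a short exact sequence with $F(T)$, and the reverse inclusion follows because $\mathrm{Fac}(Q)$ and $\mathrm{Fac}(\bar T_0)$ are both visibly inside $\mathrm{Fac}(\bar T_0')$, which is closed under extensions. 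Then Theorem \ref{proorder} forces the two support $\tau$-tilting pairs to agree. You should redirect your argument to establish this torsion-class equality rather than attempt to split $\mathcal P$.
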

\begin{proof}
Let $\mathcal D={\rm K}^b({\rm add}A)$, we know that $S:=A\in\mathcal D$ is a basic silting object in $\mathcal D$.
Let  $\mathbf{T}^\prime$ be the set of basic silting objects in $S\ast S[1]$. By Theorem \ref{thmgsys}, we know that $\mathcal G^S_{\mathbf{T}^\prime}:=\{G_T^S|T\in\mathbf{T}\}$ is a $\mathcal G$-system at $S$.

By Theorem \ref{thmijy}, Proposition \ref{proggvector} and the fact that ${\rm End}_{\mathcal D}(A)^{\rm op}\cong A$, there exists a bijection $\tilde F: \mathbf{T}^\prime\rightarrow \mathbf{T}$ such that $G_T^S=G_{\tilde F(T)}$ for any $T\in\mathbf{T}^\prime$, where $G_{\tilde F(T)}$ is the $G$-matrix of the support $\tau$-tilting pair $\tilde F(T)$ in ${\rm mod}A$.
So $\mathcal G_\mathbf{T}=\{G_{(M,P^M)}|(M,P^M)\in\mathbf{T}\}$ is a $\mathcal G$-system at $\tilde F(S)=(A,0)\in \mathbf{T}$.

The first commutative diagram follows directly from Theorem \ref{thmairinj} and the definition of mutations. Now we mainly show that the second commutative diagram holds.  Let $Q$ be a direct summand of $\prescript{}{A}{A}$ and $J=\{{\bf g}^S(Q_i)|Q_i \text{ is an indecomposable direct summand of }Q\}$. For any $G_{(\bar T_0,\bar T_1)}\in\mathcal G_\mathbf{T}$, let $G_{(\bar T_0^\prime,\bar T_1^\prime)}=\mathcal T_J(G_{(\bar T_0,\bar T_1)})$.
By Theorem \ref{thmijy}, there exist unique $T, T^\prime\in\mathbf{T}^\prime$ such that $\tilde F(T)=(\bar T_0,\bar T_1)$ and $\tilde F(T^\prime)=(\bar T_0^\prime,\bar T_1^\prime)$.
By Proposition \ref{proggvector}, we know that $G_{T^\prime}^S=\mathcal T_J(G_T^S)$, i.e., we have the following diagram.
$$\xymatrix{G_{(\bar T_0,\bar T_1)}\ar[r]^{\mathcal T_J}\ar@{=}[d]&G_{(\bar T_0^\prime,\bar T_1^\prime)}\ar@{=}[d]\\
G_T^S\ar[r]^{\mathcal T_J}&G_{T^\prime}^S
}$$

 Let $U$ be the direct summand of $S$ such that ${\rm Hom}_{\mathcal D}(S, U)\cong Q$. By Theorem \ref{thmgsys}, we get $T^\prime=\mathcal T_U(T)$, i.e., we have the following commutative diagram.
$$\xymatrix{T\ar[rr]^{\mathcal T_U}\ar@{<->}[d]&&T^\prime\ar@{<->}[d]\\
G_T^S\ar[rr]^{\mathcal T_J}&&G_{T^\prime}^S
}$$

Now we show that $(\bar T_0^\prime,\bar T_1^\prime)=\mathcal T_Q(\bar T_0,\bar T_1)$. By the definition of co-Bongartz completion in ${\rm mod} A$, it suffices to show that ${\rm Fac}(\bar T_0^\prime)=\mathcal T_Q({\rm Fac}(\bar T_0))={\rm Fac}(Q)\ast {\rm Fac}(\bar T_0)$.

Consider the following triangle
\begin{eqnarray}\label{eqnkmod}
\xymatrix{T[-1]\ar[r]^f&U^\prime\ar[r]^g&X_U\ar[r]^h&T},
\end{eqnarray}
where $f$ is a left minimal ${\rm add}(U)$-approximation of $T[-1]$. So $T^\prime=\mathcal T_U(T)=(U\oplus X_U)^\flat\in\mathbf{T}$.
Applying the functor ${\rm Hom}_{\mathcal D}(S,-)$ to the triangle (\ref{eqnkmod}), we get the following exact sequence in ${\rm mod}A$.
$$\xymatrix{{\rm Hom}_{\mathcal D}(S,U^\prime)\ar[rr]^{{\rm Hom}_{\mathcal D}(S,f)}&&{\rm Hom}_{\mathcal D}(S,X_U)\ar[rr]^{{\rm Hom}_{\mathcal D}(S,g)}&&{\rm Hom}_{\mathcal D}(S,T)\ar[r]&0
}.$$
Let $M={\rm Im}({\rm Hom}_{\mathcal D}(S,f))$, then we have the following two exact sequences.
\begin{eqnarray}
\label{eqnexact1}&&\xymatrix{0\ar[r]&M\ar[r]&{\rm Hom}_{\mathcal D}(S,X_U)\ar[r]&{\rm Hom}_{\mathcal D}(S,T)\ar[r]&0
}\\
\label{eqnexact2}&&\xymatrix{{\rm Hom}_{\mathcal D}(S,U^\prime)\ar[rr]^{{\rm Hom}_{\mathcal D}(S,f)}&&M\ar[r]&0}
\end{eqnarray}
By ${\rm Hom}_{\mathcal D}(S,U^\prime)\in{\rm add}(Q)\subseteq {\rm Fac}(Q)$ and the exact sequence (\ref{eqnexact2}), we know that $M\in{\rm Fac}(Q)$. Then by the exact sequence (\ref{eqnexact1}), we know that ${\rm Hom}_{\mathcal D}(S,X_U)\in \mathcal T_Q({\rm Fac}(\bar T_0))={\rm Fac}(Q)\ast {\rm Fac}(\bar T_0).$
So ${\rm Hom}_{\mathcal D}(S,U\oplus X_U)\in \mathcal T_Q({\rm Fac}(\bar T_0))$ and we can get $\bar T_0^\prime={\rm Hom}_{\mathcal D}(S,T^\prime)\in \mathcal T_Q({\rm Fac}(\bar T_0))$. So ${\rm Fac}(\bar T_0^\prime)\subseteq \mathcal T_Q({\rm Fac}(\bar T_0))$. On the other hand, by $U,X_U\in{\rm add}(T^\prime)$, we know  $Q\cong {\rm Hom}_{\mathcal D}(S,U)\in{\rm add}(\bar T_0^\prime)$ and ${\rm Hom}_{\mathcal D}(S,X_U)\in{\rm add}(T_0^\prime)$. Thus ${\rm Fac}(Q)\subseteq {\rm Fac}(\bar T_0^\prime)$ and ${\rm Fac}({\rm Hom}_{\mathcal D}(S,X_U))\subseteq {\rm Fac}(\bar T_0^\prime)$. By the exact sequence (\ref{eqnexact1}), we know that $${\rm Fac}(\bar T_0)={\rm Fac}({\rm Hom_{\mathcal D}}(S,T))\subseteq {\rm Fac}({\rm Hom}_{\mathcal D}(S,X_U))\subseteq {\rm Fac}(\bar T_0^\prime).$$
Thus both ${\rm Fac}(Q)$ and ${\rm Fac}(\bar T_0)$ are subcategories of ${\rm Fac}(\bar T_0^\prime)$. Since ${\rm Fac}(\bar T_0^\prime)$ is a torsion class, which is closed under extension, we get that
$$\mathcal T_Q({\rm Fac}(\bar T_0))={\rm Fac}(Q)\ast {\rm Fac}(\bar T_0)\subseteq {\rm Fac}(\bar T_0^\prime).$$
So $ {\rm Fac}(\bar T_0^\prime)=\mathcal T_Q({\rm Fac}(\bar T_0))$ and thus $(\bar T_0^\prime,\bar T_1^\prime)=\mathcal T_Q(\bar T_0,\bar T_1)$. Then we have the following commutative diagram.
$$\xymatrix{(\bar T_0,\bar T_1)\ar[rr]^{\mathcal T_Q}\ar@{<->}[d]&&(\bar T_0^\prime,\bar T_1^\prime)\ar@{<->}[d]\\
T\ar[rr]^{\mathcal T_U}&&T^\prime
}$$
In summary, we get the following commutative diagram.
$$\xymatrix{(\bar T_0,\bar T_1)\ar[rr]^{\mathcal T_Q}\ar@{<->}[d]&&(\bar T_0^\prime,\bar T_1^\prime)\ar@{<->}[d]\\
G_{(\bar T_0,\bar T_1)}\ar[rr]^{\mathcal T_J}&&G_{(\bar T_0^\prime,\bar T_1^\prime)}
}$$
\end{proof}

\section{Co-Bongartz completions in cluster algebras}
In this section we show that ``co-Bongartz completion" can be defined on the set of clusters of a cluster algebra, and we show that the definition of ``co-Bongartz completion" does not depend on the choice of the initial cluster.
\subsection{Basics on cluster algebras }
Since the things we focus on does not depend on the coefficients of cluster algebras, we will restrict ourselves in  coefficient-free case.

Recall that an $n\times n$ integer matrix $B=(b_{ij})$ is called  {\bf skew-symmetrizable} if there is a positive integer diagonal matrix $S$ such that $SB$ is skew-symmetric, where $S$ is called a {\bf skew-symmetrizer} of $B$.

We take an ambient field $\mathcal F=\mathbb Q(u_1,\cdots,u_n)$  to be the field of rational functions in $n$ independent variables with coefficients in $\mathbb Z$.

\begin{Definition}
A {\bf  seed} in $\mathcal F$ is a pair $({\bf x},B)$ satisfying that

(i)  ${\bf x}=\{x_1,\cdots, x_n\}$ is a free generating set of   $\mathcal F$ over $\mathbb {Z}$. ${\bf x}$ is called the  {\bf cluster} of  $({\bf x},B)$ and $x_1\cdots,x_n$  are called {\bf cluster variables}.

(ii) $B=(b_{ij})_{n\times n}$ is a skew-symmetrizable matrix, called an {\bf exchange matrix}.
\end{Definition}

Let  $({\bf x},B)$ be a seed in $\mathcal F$, a monomial in $x_1,\cdots,x_n$ is called a {\bf cluster monomial} in ${\bf x}$.

\begin{Definition}\label{defmutation}
Let $({\bf x},B)$ be a  seed in $\mathcal F$. Define the {\bf mutation}  of  $({\bf x},B)$ at $k\in\{1,\cdots,n\}$ as a new pair $\mu_k({\bf x},B)=( {\bf x}^{\prime}, B^{\prime})$ in $\mathcal F$ given by
\begin{eqnarray}
b_{ij}^{^\prime}&=&\begin{cases}-b_{ij}~,& i=k\text{ or } j=k;\\ b_{ij}+{\rm sgn}(b_{ik}){\rm max}(b_{ik}b_{kj},0)~,&\text{otherwise}.\end{cases}\nonumber\\
x_i^{\prime}&=&\begin{cases}x_i~,&\text{if } i\neq k\\x_k^{-1}(\prod\limits_{b_{jk}>0}x_j^{b_{jk}}+ \prod\limits_{b_{jk}<0}x_j^{-b_{jk}}),~& \text{if }i=k.\end{cases}\nonumber
\end{eqnarray}
$B^\prime$ is called mutation of $B$ at $k$, and is denoted by $B^\prime=\mu_k(B)$.
\end{Definition}
It can be seen that $\mu_k({\bf x},B)$ is also a  seed and $\mu_k(\mu_k({\bf x},B))=({\bf x},B)$.

Let $\mathbb T_n$ be the  $n$-regular tree, and label the edges of $\mathbb T_n$ by  $1,\dots,n$ such that the $n$  different edges adjacent to the same vertex of $\mathbb T_n$ receive different labels.
\begin{Definition}
  A {\bf cluster pattern}  $\mathcal S$ in $\mathcal F$ is an assignment of a seed  $({\bf x}_t,B_t)$ to every vertex $t$ of the infinite $n$-regular tree $\mathbb T_n$, such that $({\bf x}_{t^\prime}, B_{t^\prime})=\mu_k( {\bf x}_t, B_t)$ for any edge $t^{~\underline{\quad k \quad}}~ t^{\prime}$.
\end{Definition}

We always denote by ${\bf x}_t=\{x_{1;t},\dots, x_{n;t}\}$ and $B_t=(b_{ij}^t)_{n\times n}$. The {\bf  cluster algebra}  $\mathcal A(\mathcal S)$  associated with a  cluster pattern $\mathcal S$ is the $\mathbb {Z}$-subalgebra of the field $\mathcal F$ generated by all cluster variables of  $\mathcal S$, i.e., $\mathcal A(\mathcal S)=\mathbb Z[{\mathcal X}]$, where  ${\mathcal X}=\bigcup_{t\in\mathbb T_n}{\bf x}_t$.

\begin{Theorem}(\cite{FZ} Laurent phenomenon) \label{laurent} Let $\mathcal A(\mathcal S)$ be a cluster algebra, then  for any cluster variable $x_{i;t}$ and seed $({\bf x}_{t_0},B_{t_0})$ of $\mathcal A(\mathcal S)$, we have
$$x_{i;t}\in \mathcal L_{t_0}=\mathbb {Z}[x_{1;t_0}^{\pm1},\dots,x_{n;t_0}^{\pm1}].$$
\end{Theorem}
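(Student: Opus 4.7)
The plan is to establish the Laurent phenomenon by induction on the graph distance $d(t_0, t)$ in $\mathbb{T}_n$, following the original strategy of Fomin and Zelevinsky. The base case $d(t_0, t) \le 1$ is immediate from Definition~\ref{defmutation}: if $t = t_0$ then $x_{i;t} \in {\bf x}_{t_0}$, while if $t$ is obtained from $t_0$ by mutation at some index $k$, the exchange relation gives $x_{k;t} = x_{k;t_0}^{-1}(M^+ + M^-)$, where $M^\pm = \prod_{\pm b_{jk}^{t_0} > 0} x_{j;t_0}^{\pm b_{jk}^{t_0}}$ is a monomial in $\{x_{j;t_0} : j \neq k\}$; this element lies in $\mathcal{L}_{t_0}$ by inspection.

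For the inductive step, suppose $d(t_0, t) = d \ge 2$, and write the geodesic as $t_0 - t_1 - \cdots - t_d = t$ with the edge $t_0 - t_1$ labelled $k$. Applying the inductive hypothesis with basepoint $t_1$ (noting $d(t_1, t) = d - 1$), one has $x_{i;t} \in \mathcal{L}_{t_1}$, so that
$$x_{i;t} \;=\; \frac{N(x_{1;t_1}, \dots, x_{n;t_1})}{\prod_{j=1}^n x_{j;t_1}^{a_j}}$$
for some integer polynomial $N$ and exponents $a_j \ge 0$. Substituting $x_{j;t_1} = x_{j;t_0}$ for $j \neq k$ and $x_{k;t_1} = L_k / x_{k;t_0}$, where $L_k := \prod_{b_{jk}^{t_0} > 0} x_{j;t_0}^{b_{jk}^{t_0}} + \prod_{b_{jk}^{t_0} < 0} x_{j;t_0}^{-b_{jk}^{t_0}}$, then yields
$$x_{i;t} \;=\; \frac{x_{k;t_0}^{a_k}\, N\!\bigl(x_{1;t_0}, \dots, L_k/x_{k;t_0}, \dots, x_{n;t_0}\bigr)}{L_k^{a_k}\, \prod_{j \neq k} x_{j;t_0}^{a_j}}.$$
After clearing the internal powers of $x_{k;t_0}$ hidden inside $N$, the numerator visibly lies in $\mathcal{L}_{t_0}$; the sole obstruction to $x_{i;t} \in \mathcal{L}_{t_0}$ is the extra factor $L_k^{a_k}$ in the denominator.

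The main content of the proof -- what Fomin-Zelevinsky call the \emph{Caterpillar Lemma} -- removes this obstruction by providing a \emph{second} Laurent expansion of $x_{i;t}$ with respect to another seed $t_0'$ adjacent to $t_0$ via an edge labelled $m \neq k$, chosen so that the corresponding denominator involves an analogous polynomial $L_m$ rather than $L_k$. Since $x_{i;t}$ is a single element of $\mathcal{F}$, the two Laurent expressions must coincide; and since $L_k$ is coprime to each $x_{j;t_0}$ (and to $L_m$) in the UFD $\mathbb{Z}[x_{1;t_0}, \dots, x_{n;t_0}]$, the factor $L_k^{a_k}$ is forced to divide the numerator, whence it cancels and the Laurent property is inherited. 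The hard part will be setting up the inductive scheme cleanly: rather than a single induction, one must run a simultaneous induction over a whole caterpillar-shaped family of seeds attached to the geodesic, and track how the polynomials $L_k$ transform under the matrix mutation rule of Definition~\ref{defmutation} in order to verify the coprimality at each step. This bookkeeping is the technical heart of the argument in \cite{FZ}, and is noticeably more delicate when $n \ge 3$, since mutations at different indices interact nontrivially through the off-diagonal entries of $B$.
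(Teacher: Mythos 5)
The paper provides no proof of this statement---it is cited from \cite{FZ}---so there is no in-paper argument to compare against. Your sketch accurately reproduces the overall strategy of Fomin and Zelevinsky: induction on $d(t_0,t)$, expressing $x_{i;t}$ as a Laurent polynomial in ${\bf x}_{t_1}$, substituting to find that the sole obstruction is the factor $L_k^{a_k}$ in the denominator, and removing it via a coprimality argument in the UFD $\mathbb{Z}[x_{1;t_0},\dots,x_{n;t_0}]$. The base case and the substitution step are fine.

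However, as you yourself acknowledge, the technical heart is left open, and this is a genuine gap rather than mere bookkeeping. First, the coprimality you invoke is not quite the right statement: what is needed (FZ, Lemma~3.2) is a coprimality between the exchange binomial $L_k$ at a given seed and its transform $L_k'$ at the seed obtained by mutating at a \emph{different} index, and verifying this requires unwinding the matrix mutation rule; the claim that $L_k$ and $L_m$ are coprime for $m\neq k$ at a fixed seed is neither exactly what is used nor obviously sufficient. Second, the naive single-chain induction you set up cannot by itself produce the ``second Laurent expansion'' at the step where you need it: that is precisely why the caterpillar scaffolding (auxiliary seeds hanging off each vertex of the geodesic) is introduced, and the three-seed condition in the Caterpillar Lemma is load-bearing, not decorative. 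As written, your proposal is a correct roadmap to the FZ proof, but it is not a self-contained argument; the caterpillar induction and the coprimality lemma would need to be stated and proved in full before this could be accepted as a proof of the theorem.
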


\begin{Theorem}(\cite{CL} Cluster formula) Let $\mathcal A(\mathcal S)$ be a cluster algebra. For any two seeds $({\bf x}_t,B_t)$ and $({\bf x}_{t_0},B_{t_0})$, we have
$$H_{t_0}^t(B_tS^{-1}){H_{t_0}^t}^{\rm T}=B_{t_0}S^{-1}\;\;\;\text{ and }\;\;\;{\rm det}(H_{t_0}^t)=\pm 1,$$ where $S$ is a skew-symmetrizer of $B_{t_0}$ and $H_{t_0}^{t}$ is the matrix given by

$$H_{t_0}^{t}={\rm diag}(x_{1;t_0},\cdots,x_{n;t_0})\begin{pmatrix} \frac{\partial x_{1;t}}{\partial x_{1;t_0}}&\cdots& \frac{\partial x_{n;t}}{\partial x_{1;t_0}}\\
\vdots&\cdots&\vdots\\
 \frac{\partial x_{1;t}}{\partial x_{n;t_0}}&\cdots& \frac{\partial x_{n;t}}{\partial x_{n;t_0}}
\end{pmatrix}{\rm diag}(x_{1;t}^{-1},\cdots,x_{n;t}^{-1}).$$
\end{Theorem}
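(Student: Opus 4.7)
The plan is to prove both assertions simultaneously by induction on the graph distance $d(t_0,t)$ in $\mathbb{T}_n$. The base case $t=t_0$ is immediate: $H_{t_0}^{t_0}$ is the identity matrix, so $\det(H_{t_0}^{t_0})=1$ and the matrix equation becomes a tautology. For the inductive step, observe that $(H_{t_0}^t)_{ij}=\partial\log x_{j;t}/\partial\log x_{i;t_0}$ is a logarithmic derivative, so the ordinary chain rule yields the multiplicative composition law
$$H_{t_0}^{t'} \;=\; H_{t_0}^{t}\, H_{t}^{t'}$$
whenever $t'=\mu_k(t)$. Combining this identity with the inductive hypothesis and the standard fact that $S$ remains a skew-symmetrizer of every matrix in the mutation class of $B_{t_0}$ (an easy entry-wise check from Definition \ref{defmutation}), the proof reduces to the single-mutation version: for $t'=\mu_k(t)$, one verifies
$$\det(H_t^{t'})=\pm 1 \qquad\text{and}\qquad H_t^{t'}(B_{t'}S^{-1})(H_t^{t'})^{\mathrm T}=B_tS^{-1}.$$

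To analyse this one-step version, compute $H_t^{t'}$ explicitly. Since $x_{j;t'}=x_{j;t}$ for $j\neq k$, the matrix $H_t^{t'}$ agrees with the identity outside its $k$-th column. Writing
$$P=\prod_{b_{ik}^t>0}x_{i;t}^{b_{ik}^t}, \qquad Q=\prod_{b_{ik}^t<0}x_{i;t}^{-b_{ik}^t},$$
logarithmic differentiation of $x_{k;t'}=x_{k;t}^{-1}(P+Q)$ produces $(H_t^{t'})_{kk}=-1$ (using $b_{kk}^t=0$) and $(H_t^{t'})_{ik}=\bigl([b_{ik}^t]_+P+[-b_{ik}^t]_+Q\bigr)/(P+Q)$ for $i\neq k$. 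Expanding the determinant along the $k$-th column then gives $\det(H_t^{t'})=-1$ at once.

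It remains to verify the matrix identity in the one-step case. Because $H_t^{t'}-I$ is supported in its $k$-th column, each entry of $H_t^{t'}(B_{t'}S^{-1})(H_t^{t'})^{\mathrm T}$ collapses into a short combination of $b_{pq}^t, b_{pk}^t, b_{kq}^t$ together with the rational quantities $\alpha:=P/(P+Q)$ and $\beta:=Q/(P+Q)$. Substituting the mutation formula $b_{pq}^{t'}=b_{pq}^t+\operatorname{sgn}(b_{pk}^t)[b_{pk}^tb_{kq}^t]_+$ for $p,q\neq k$ (together with $b_{pk}^{t'}=-b_{pk}^t$ and $b_{kq}^{t'}=-b_{kq}^t$), and exploiting the skew-symmetrizer relation $s_ib_{ij}^t=-s_jb_{ji}^t$ and the identity $[x]_+-[-x]_+=x$, the $\alpha,\beta$-dependence cancels and one recovers the desired constant entry $b_{pq}^t/s_q$ on the right.

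The main obstacle is precisely this final cancellation: each entry on the left is a priori a genuine rational function of the $x_{i;t}$ via $\alpha$ and $\beta$, whereas $B_tS^{-1}$ has scalar entries, so the $(P,Q)$-dependence must vanish identically. The bookkeeping splits into subcases according to the signs of $b_{pk}^t$ and $b_{kq}^t$, and skew-symmetrizability must be invoked at the exact step where $\alpha P+\beta Q$-type expressions are collapsed via $\alpha+\beta=1$; once this is organised correctly the verification is mechanical, and combined with the chain-rule reduction above it completes the induction.
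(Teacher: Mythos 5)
The paper does not actually prove this theorem: it is imported verbatim, with a citation, from \cite{CL}, and the present manuscript simply records the statement and moves on. There is therefore no internal proof to compare your proposal against.

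That said, your argument is correct and self-contained. The chain-rule composition law $H_{t_0}^{t'} = H_{t_0}^{t}H_{t}^{t'}$ does follow immediately from interpreting $(H_{t_0}^t)_{ij}$ as $\partial\log x_{j;t}/\partial\log x_{i;t_0}$, and since $(H_t^{t'})_{kj}=0$ for $j\neq k$ while $(H_t^{t'})_{kk}=-1$, expanding the determinant along the $k$-th row (or column, as you do) gives $\det H_t^{t'}=-1$ instantly. For the single-mutation matrix identity, it is worth recording the algebra you allude to. Writing $H=H_t^{t'}=I+we_k^{\mathrm T}$ with $w_k=-2$ and $w_i=[b_{ik}]_+\alpha+[-b_{ik}]_+\beta$ for $i\neq k$, and using $e_k^{\mathrm T}B'S^{-1}e_k=0$, one gets
$$H(B'S^{-1})H^{\mathrm T}=B'S^{-1}+w\,(e_k^{\mathrm T}B'S^{-1})+(B'S^{-1}e_k)\,w^{\mathrm T}.$$
The $(p,k)$ and $(k,q)$ entries check out using $w_k=-2$. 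For $p,q\neq k$, after multiplying by $s_q$ the required identity reduces to
$$\mathrm{sgn}(b_{pk})[b_{pk}b_{kq}]_+=\alpha\bigl([b_{pk}]_+b_{kq}+b_{pk}[-b_{kq}]_+\bigr)+\beta\bigl([-b_{pk}]_+b_{kq}+b_{pk}[b_{kq}]_+\bigr),$$
where the skew-symmetrizer relation $s_qb_{qk}=-s_kb_{kq}$ was used to convert $w_qs_q/s_k$ into an expression in $b_{kq}$. A four-way sign check on $(\mathrm{sgn}\,b_{pk},\mathrm{sgn}\,b_{kq})$ shows that in each case the coefficients of $\alpha$ and $\beta$ are equal, so $\alpha+\beta=1$ collapses the right side to the left; this is exactly the cancellation you foresaw. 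The preservation of the skew-symmetrizer under mutation, which you invoke, is likewise a routine sign check of the same flavour. So the proof as proposed is sound; you have simply supplied a proof that the present paper delegates to a reference.
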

From the cluster formula, we can see that for a cluster algebra $\mathcal A(\mathcal S)$ with an initial seed $({\bf x}_{t_0},B_{t_0})$, $B_t$ is uniquely determined by ${\bf x}_t$.

\subsection{$C$-matrices and $G$-matrices of cluster algebras}
Since we did not talk about cluster algebras with principal coefficients, we use the notation {\em matrix pattern} to introduce the $C$-matrices and $G$-matrices of cluster algebras.
\begin{Definition}
Let $\mathbb T_n$ be the $n$-regular tree with an initial vertex $t_0\in\mathbb T_n$, and $B$ be a skew-symmetrizable integer matrix.  A {\bf matrix patter $\mathcal M(B,t_0)$ at $t_0$} is an assignment of a triple $(B_t,C_t,G_t)$ of $n\times n$ integer matrices to every vertex $t$ of the $n$-regular tree $\mathbb T_n$ such that $(B_{t_0},C_{t_0},G_{t_0})=(B,I_n,I_n)$ and for any edge $t^{~\underline{\quad k \quad}}~ t^{\prime}$, we have $B_{t^\prime}=\mu_k(B_t)$ and
the following exchange relations:
\begin{eqnarray}
c_{ij}^{t^\prime}&=&\begin{cases}-c_{ij}^t~,& j=k;\\ c_{ij}^t+{\rm sgn}(c_{ik}^t){\rm max}(c_{ik}^tb_{kj}^t,0)~,
&j\neq k.\end{cases}\nonumber\\
g_{ij}^{t^\prime}&=&\begin{cases}g_{ij}^t~,& j\neq k;\\-g_{ik}^t+\sum\limits_{b_{jk}^t>0}g_{ij}^tb_{jk}^t-\sum\limits_{c_{jk}^t>0}b_{ij}^{t_0}c_{jk}^t ~,&j=k.\end{cases}\nonumber
\end{eqnarray}
$\mu_k(B_t,C_t,G_t):=(B_{t^\prime},C_{t^\prime},G_{t^\prime})$ is called the {\bf mutation} of $(B_t,C_t,G_t)$ at $k$.
\end{Definition}
It can be checked that $\mu_k\mu_k(B_t,C_t,G_t)=(B_t,C_t,G_t)$, so the above definition is well-defined.

Let $\mathcal M(B,t_0)$ be a matrix pattern at $t_0\in\mathbb T_n$, the triple $(B_t,C_t,G_t)$ at $t$ is called a {\bf matrix seed}. We call $C_t, G_t$ the {\bf $C$-matrix, $G$-matrix} of $B$ respectively.

\begin{Definition}
Let $\mathcal A(\mathcal S)$ be a cluster algebra with an initial seed $({\bf x}_{t_0},B_{t_0})$, and $\mathcal M(B_{t_0},t_0)$ be the matrix pattern at $t_0$. The matrix $G_t$ (respectively, the matrix $C_t$) in the matrix seed $(B_t,C_t,G_t)$ is called the {\bf $G$-matrix} (respectively, {\bf $C$-matrix}) of ${\bf x}_t$ with respect to ${\bf x}_{t_0}$. Sometimes, we denote it by $G_t^{t_0}$ (respectively, $C_t^{t_0}$). The $i$-th column vector ${\bf g}_{i;t}^{t_0}$ of $G_t^{t_0}$ is called the {\bf$g$-vector} of the cluster variable $x_{i;t}$ with respect to the cluster ${\bf x}_{t_0}$ and is denoted by ${\bf g}^{t_0}(x_{i;t}):={\bf g}_{i;t}^{t_0}$. For a cluster monomial ${\bf x}_t^{\bf v}$, the $g$-vector of  ${\bf x}_t^{\bf v}$ is defined to be the vector ${\bf g}^{t_0}({\bf x}_t^{\bf v})=G_t^{t_0}{\bf v}$.
\end{Definition}
\begin{Remark}
The definition above is well-defined for cluster algebra with principal coefficients \cite{FZ3}. Combine the principal coefficients case with \cite[Proposition 6.1]{CL1}, we can get the above definition is well defined for cluster algebra with trivial coefficients.
\end{Remark}

\begin{Theorem}\cite{GHKK} Let $\mathcal A(\mathcal S)$ be a cluster algebra, and  $({\bf x}_{t_0},B_{t_0}), ({\bf x}_t,B_t)$ be two seeds of $\mathcal A(\mathcal S)$, then

(i)  each column vector of $C_t^{t_0}$ is either a nonnegative vector or a nonpositive vector.

(ii) each row vector of $G_t^{t_0}$ is either a nonnegative vector or a nonpositive vector.
\end{Theorem}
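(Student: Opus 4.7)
The plan is to follow the scattering diagram strategy of Gross--Hacking--Keel--Kontsevich for part (i), and then deduce part (ii) via the tropical duality between $C$-matrices and $G$-matrices due to Nakanishi--Zelevinsky. The starting observation is that $c$-vectors admit a geometric interpretation as normal vectors to walls in a certain consistent scattering diagram, while $g$-vectors span the chambers cut out by those walls. Sign-coherence of the $c$-vectors is then a direct consequence of how walls are produced inductively, and sign-coherence of the rows of $G_t^{t_0}$ is a purely formal consequence via the duality identity.

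First I would construct the consistent scattering diagram $\mathfrak{D}_{B_{t_0}}$ in $\mathbb{R}^n$ associated to the initial exchange matrix $B_{t_0}$. Starting from $n$ explicit initial walls supported on the coordinate hyperplanes and carrying wall-functions read off from the rows of $B_{t_0}$, the Kontsevich--Soibelman lemma produces a unique consistent scattering diagram by adding walls whose supports are non-negative integer combinations of the supports of already-present walls. Next I would identify the cluster complex as a subfan of $\mathfrak{D}_{B_{t_0}}$: to each vertex $t\in\mathbb{T}_n$ one associates the ``cluster chamber'' spanned by the columns of $G_t^{t_0}$, with mutation $\mu_k$ corresponding to crossing the wall opposite to ${\bf g}_{k;t}^{t_0}$. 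Under this identification, the $k$-th column of $C_t^{t_0}$ becomes the (primitive) normal vector of that wall. Because every wall produced by the Kontsevich--Soibelman construction has support lying in a single closed half-space of $\mathbb{R}^n$ (its support is a non-negative combination of initial supports, or of its negatives), the normal vector to any such wall is sign-coherent. This proves (i). For the skew-symmetrizable (non-simply-laced) case, I would first pass to a skew-symmetric unfolding and then descend, since folding preserves sign-coherence of $c$-vectors.

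For part (ii) I would invoke the tropical duality identity
\begin{equation*}
G_t^{t_0}\, S\, \bigl(C_{t_0}^{t}\bigr)^{\mathrm T} = S,
\end{equation*}
where $S$ is a skew-symmetrizer of $B_{t_0}$, valid once (i) is known. This identity, combined with the fact that each column of $C_{t_0}^{t}$ is sign-coherent (which is precisely part (i) applied at the seed $t$ with ``initial'' seed $t_0$ viewed from $t$), forces each row of $G_t^{t_0}$ to inherit sign-coherence: the identity rewrites $G_t^{t_0}$ as $S\cdot\bigl(C_{t_0}^{t}\bigr)^{-\mathrm T}\cdot S^{-1}$, and sign-coherence of columns of a $\pm 1$-determinant integer matrix with sign-coherent columns is preserved on rows of its inverse-transpose up to the diagonal rescaling by $S$. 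The main obstacle in the whole argument is the construction and consistency of $\mathfrak{D}_{B_{t_0}}$ together with the identification of cluster variables with ``theta functions'' attached to chambers; once this dictionary is in place, both sign-coherence statements are essentially tautological. A second, subtler obstacle is the passage between the skew-symmetric and general skew-symmetrizable settings, handled by a folding argument that must be shown to be compatible with the scattering diagram construction.
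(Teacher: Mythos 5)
The paper does not supply a proof of this theorem at all: it is stated as a citation to Gross--Hacking--Keel--Kontsevich and used as a black box. So the comparison is really between your sketch and the argument in the cited reference. Your overall strategy (scattering diagram for the $C$-matrix statement, then tropical duality for the $G$-matrix statement) is indeed the route GHKK take, but as written the sketch has two substantive problems.

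First, in the scattering-diagram step you write that ``every wall produced by the Kontsevich--Soibelman construction has support lying in a single closed half-space of $\mathbb{R}^n$ (its support is a non-negative combination of initial supports, or of its negatives)." This is not the relevant fact, and ``non-negative combination of supports" does not make sense for codimension-one cones. What GHKK actually prove is that the \emph{normal direction} of every wall lies in the positive cone $N^+$ generated by the standard basis vectors. A wall bounding the cluster chamber $\sigma_t$ has normal $\pm n_{k;t}$ with $n_{k;t}\in N^+$, and the $k$-th $c$-vector equals this normal up to sign, which is exactly sign-coherence. Confusing the support of a wall with its normal direction is not just a wording issue: the support of a wall is a full hyperplane through the origin for the initial walls, and it is not contained in any closed half-space, so the statement as written is false. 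Second, and more seriously, your deduction of (ii) from (i) rests on the claim that ``sign-coherence of columns of a $\pm 1$-determinant integer matrix with sign-coherent columns is preserved on rows of its inverse-transpose up to the diagonal rescaling by $S$." This is false as a linear algebra statement: take $M=\left(\begin{smallmatrix}1&0\\1&1\end{smallmatrix}\right)$, which has determinant $1$ and sign-coherent columns, yet $M^{-\mathrm T}=\left(\begin{smallmatrix}1&-1\\0&1\end{smallmatrix}\right)$ has the non-sign-coherent row $(1,-1)$. The actual implication (column sign-coherence of $C$-matrices implies row sign-coherence of $G$-matrices) is a nontrivial theorem of Nakanishi--Zelevinsky, proved by induction on $\mathbb{T}_n$ using the mutation recursions of $B_t$, $C_t$ and $G_t$ simultaneously, not a formal consequence of a single duality identity applied to one pair of matrices. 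You should cite and use that result as a theorem rather than attempt to reproduce it as a linear-algebra corollary.
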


\begin{Theorem}\cite{GHKK}\label{thmghkk} Let $\mathcal A(\mathcal S)$ be a cluster algebra with an initial seed $({\bf x}_{t_0},B_{t_0})$, then the map ${\bf x}_t^{\bf v}\mapsto {\bf g}^{t_0}({\bf x}_t^{\bf v})$ is a injection from the set of cluster monomials of $\mathcal A(\mathcal S)$ to $\mathbb Z^n$.
\end{Theorem}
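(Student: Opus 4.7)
The plan is to separate the problem into cluster monomials from a single seed versus from distinct seeds. Fix a seed $({\bf x}_t,B_t)$; the $g$-vector of ${\bf x}_t^{\bf v}$ is $G_t^{t_0}{\bf v}$ by the definition given just above the statement. Since $\det G_t^{t_0}=\pm 1$ (which follows inductively from the $G$-matrix exchange relation, or from the cluster formula $\det H_{t_0}^t=\pm 1$ combined with the standard compatibility between $H_{t_0}^t$ and $G_t^{t_0}$), the linear map ${\bf v}\mapsto G_t^{t_0}{\bf v}$ is injective on $\mathbb Z^n$, so cluster monomials from the same seed are automatically distinguished by their $g$-vectors.

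For cluster monomials from distinct seeds $t$ and $t'$, I would show that the $g$-vector cones $\mathcal C_t:=G_t^{t_0}\mathbb R_{\geq 0}^n$ form a simplicial fan (the cluster complex), i.e.\ any two cones $\mathcal C_t$ and $\mathcal C_{t'}$ meet only along a common face, and such a face is cut out by the $g$-vectors of a common sub-cluster $U\subseteq {\bf x}_t\cap {\bf x}_{t'}$. Granting this, any integer point in $G_t^{t_0}\mathbb Z_{\geq 0}^n\cap G_{t'}^{t_0}\mathbb Z_{\geq 0}^n$ lies in the nonnegative integral cone spanned by the $g$-vectors of $U$, and the single-seed injectivity above (applied after completing $U$ to any cluster containing it) then forces both cluster monomials to be monomials in $U$ with identical exponents, hence equal.

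The main obstacle is precisely this fan property, which is the deep input of Gross--Hacking--Keel--Kontsevich: for arbitrary skew-symmetrizable cluster algebras it is established via scattering diagrams, realizing cluster monomials as a distinguished subset of the linearly independent theta basis indexed by lattice points of the cluster complex, with theta functions separated by their tropical $g$-vectors. An alternative route, more in the spirit of the present paper, would be to promote the collection $\{G_t^{t_0}\}_{t}$ to a $\mathcal G$-system and to upgrade the Uniqueness Condition so that it applies to nonnegative integer combinations of $g$-vectors, using the row sign-coherence proved in Section 5. This would reduce the injectivity on cluster monomials to the uniqueness axiom in the $\mathcal G$-system framework, but it ultimately still requires verifying the $\mathcal G$-system axioms for cluster algebras, which relies on sign-coherence results of the Gross--Hacking--Keel--Kontsevich type.
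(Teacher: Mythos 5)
This theorem appears in the paper only as a citation to \cite{GHKK}; no proof is given there, so there is no internal argument to compare against. Your sketch is a correct account of the standard derivation: within a fixed seed, unimodularity of $G_t^{t_0}$ (equivalently $\det G_t^{t_0}=\pm 1$, which the paper records in Proposition \ref{pronzcl}) gives injectivity, and across seeds one needs that the $g$-vector cones $\{G_t^{t_0}\mathbb R_{\geq 0}^n\}_t$ form a simplicial fan whose face lattice is the cluster complex — precisely the deep scattering-diagram input of Gross--Hacking--Keel--Kontsevich. Your reduction from the fan property to the injectivity statement is sound: a common $g$-vector must lie in a shared face, which is spanned by $g$-vectors of common cluster variables, and then the single-seed case finishes it. You are also right to flag that the $\mathcal G$-system route would be circular in the context of this paper: the proof of Theorem \ref{thmmatrix} uses the present theorem to verify the Uniqueness Condition, so one cannot appeal to the $\mathcal G$-system axioms to prove it.
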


\begin{Proposition}\cite{NZ,CL}\label{pronzcl} Let $\mathcal A(\mathcal S)$ be a cluster algebra, and  $({\bf x}_{t_0},B_{t_0}), ({\bf x}_t,B_t)$ be two seeds of $\mathcal A(\mathcal S)$. Then $G_t(B_tS^{-1})G_t^{\rm T}=B_{t_0}S^{-1}$ and $S^{-1}G_t^{\rm T}SC_t=I_n$, where $S$ is any skew-symmetrizer of $B_{t_0}$.
\end{Proposition}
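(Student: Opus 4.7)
The plan is to prove both identities by induction on the distance $d(t,t_0)$ in the $n$-regular tree $\mathbb{T}_n$. The base case $t=t_0$ is immediate since $G_{t_0}=C_{t_0}=I_n$: identity $(1)$ reduces to $B_{t_0}S^{-1}=B_{t_0}S^{-1}$ and identity $(2)$ to $S^{-1}I_nSI_n=I_n$. For the inductive step, I would suppose both identities hold at $t$ and let $t'$ be adjacent to $t$ along an edge labeled $k$.

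The essential input is the sign-coherence of $C$-matrices cited above: the $k$-th column of $C_t$ has a well-defined sign $\epsilon\in\{+1,-1\}$. Sign-coherence allows one to rewrite the piecewise mutation rules for $G_t$ and $C_t$ uniformly in $\epsilon$: for instance, $\mathrm{sgn}(c_{ik}^t)\max(c_{ik}^t b_{kj}^t,0)=c_{ik}^t[\epsilon b_{kj}^t]_+$ in the $C$-matrix formula, while the conditional sum $\sum_{c_{jk}^t>0} b_{ij}^{t_0}c_{jk}^t$ in the $G$-matrix formula becomes $\sum_j b_{ij}^{t_0}c_{jk}^t$ when $\epsilon=+1$ and $0$ when $\epsilon=-1$. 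Using this, each of $C_{t'}$, $G_{t'}$, $B_{t'}$ can be expressed as a product of $C_t$, $G_t$, $B_t$ with matrices $J_\epsilon=J_\epsilon(k,B_t)$ that differ from $I_n$ only by a rank-one perturbation along row or column $k$.

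I would then establish the Nakanishi--Zelevinsky auxiliary identity $G_tB_t=B_{t_0}C_t$ by a parallel induction alongside identity $(2)$. Granting both at $t$, identity $(2)$ at $t'$ is verified by directly multiplying $G_{t'}^{\mathrm T}SC_{t'}$ and using the rank-one structure of the mutation factors to show the correction terms cancel against $S$---this cancellation is precisely what sign-coherence arranges. Similarly, one checks $G_{t'}B_{t'}=B_{t_0}C_{t'}$ from $G_tB_t=B_{t_0}C_t$ together with the mutation rule for $B_t$, which is of the form $B_{t'}=EB_tF$ for the appropriate rank-one-perturbed $E$ and $F$. Finally, identity $(1)$ at $t$ follows formally from $(2)$ and the auxiliary identity: $(2)$ gives $C_t^{-1}=S^{-1}G_t^{\mathrm T}S$, so substituting into $G_tB_t=B_{t_0}C_t$ yields $G_tB_tS^{-1}G_t^{\mathrm T}=B_{t_0}S^{-1}$, which is $(1)$.

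The main obstacle is the matrix bookkeeping for the two cases $\epsilon=+1$ and $\epsilon=-1$. Although sign-coherence reduces the a priori four possibilities (two signs on the $C$-side, two on the $G$-side) to two, each case still demands a short but careful computation, and one must track how the $B_{t_0}$-dependence hidden inside the $G$-matrix mutation rule interacts with the identities at $t$. One should also recall that any skew-symmetrizer $S$ of $B_{t_0}$ is automatically a skew-symmetrizer of every $B_t$ along the tree (a standard consequence of the mutation rule), so $B_tS^{-1}$ remains skew-symmetric throughout the induction, which is needed for the transposition manipulations.
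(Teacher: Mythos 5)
The paper states this proposition as a citation to \cite{NZ,CL} and does not supply a proof of its own, so there is no internal argument to measure you against; what can be checked is whether your outline is sound, and it is. Your sketch reconstructs the Nakanishi--Zelevinsky inductive argument: the base case is correct, the uniformization of the piecewise mutation rules via sign-coherence of the $C$-matrices (with a well-defined sign $\varepsilon$ on column $k$) is the right device, the parallel induction on the auxiliary identity $G_tB_t=B_{t_0}C_t$ is exactly the ingredient needed to absorb the $b_{ij}^{t_0}$-terms hiding in the $G$-matrix mutation rule, and the formal deduction of $(1)$ from $(2)$ and the auxiliary is correct: from $(S^{-1}G_t^{\rm T}S)C_t=I_n$ one gets $C_t^{-1}=S^{-1}G_t^{\rm T}S$, and substituting in $G_tB_tC_t^{-1}=B_{t_0}$ gives $G_tB_tS^{-1}G_t^{\rm T}=B_{t_0}S^{-1}$. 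Your remark that any skew-symmetrizer of $B_{t_0}$ serves for every $B_t$ is also correct and is quietly used. The one thing worth flagging is that the paper's second reference \cite{CL} actually yields identity $(1)$ by a different route: the ``cluster formula'' $H_{t_0}^t(B_tS^{-1})(H_{t_0}^t)^{\rm T}=B_{t_0}S^{-1}$ (quoted earlier in Subsection~7.1) combined with the identification $H_{t_0}^t=G_t$ gives $(1)$ directly from a chain-rule (Jacobian) computation on the Laurent expansions, with no induction and no appeal to sign-coherence for that half. Your route derives $(1)$ purely from $(2)$ and the auxiliary identity inside the tropical/matrix-pattern framework, which is more self-contained and avoids Laurent expansions, at the cost of routing everything through sign-coherence; the Jacobian route proves $(1)$ unconditionally but needs the nontrivial identification of $H_{t_0}^t$ with $G_t$. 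Both are legitimate; you have correctly identified where the real matrix bookkeeping lives and left it as a (genuinely routine but lengthy) verification.
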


\subsection{Co-Bongartz completions in cluster algebras} In this subsection we show that ``co-Bongartz completion" can be defined on the set of clusters of a cluster algebra, and we show that the definition of ``co-Bongartz completion" does not depend on the choice of the initial cluster.
\begin{Theorem}\cite[Theorem 4.8, Theorem 5.5]{CL1}\label{thmcaogpair}
 Let $\mathcal A(\mathcal S)$ be a cluster algebra, and  $({\bf x}_{t_0},B_{t_0}), ({\bf x}_t,B_t)$ be two seeds of $\mathcal A(\mathcal S)$. Then for any subset $U\subseteq {\bf x}_{t_0}$, there exists a unique cluster ${\bf x}_{t^\prime}$ satisfying the following two statements.

(a) $U\subseteq{\bf x}_{t^\prime}$.

(b) If $R_t^{t^\prime}$ is the matrix such that $G_t^{t_0}=G_{t^\prime}^{t_0}R_t^{t^\prime}$, then the $i$-th row vector of $R_t^{t^\prime}$ is in $\mathbb Z_{\geq 0}^n$ for any $i$ satisfying $x_{i;t^\prime}\notin U$.
\end{Theorem}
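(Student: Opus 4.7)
The plan is to prove uniqueness first and then existence by reducing to the elementary case $|U|=1$.

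For \textbf{uniqueness}, suppose ${\bf x}_{t_1}$ and ${\bf x}_{t_2}$ both satisfy (a) and (b). Since $U\subseteq{\bf x}_{t_1}\cap{\bf x}_{t_2}$, align the labellings so that elements of $U$ share indices in both clusters. For each $k$, expand ${\bf g}^{t_0}(x_{k;t})$ in the two $\mathbb{Z}$-bases $\{{\bf g}^{t_0}(x_{j;t_i})\}_{j=1}^n$ ($i=1,2$) of $\mathbb{Z}^n$; by (b) the coefficients on rows outside $U$ are nonnegative. Adding a sufficiently large multiple of the common $U$-part to clear negatives on the remaining rows, one obtains
\[
\sum_j a_j\,{\bf g}^{t_0}(x_{j;t_1})=\sum_j b_j\,{\bf g}^{t_0}(x_{j;t_2}),\qquad a_j,b_j\in\mathbb{Z}_{\ge 0}.
\]
Both sides are $g$-vectors of cluster monomials, so Theorem \ref{thmghkk} forces the underlying multisets of column $g$-vectors of $G_{t_1}^{t_0}$ and $G_{t_2}^{t_0}$ to coincide, whence ${\bf x}_{t_1}={\bf x}_{t_2}$. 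This is the exact analogue of the argument used in Proposition-Definition \ref{prodef} to establish uniqueness in a $\mathcal{G}$-system.

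For \textbf{existence}, I would first handle the elementary case $U=\{x_{k;t_0}\}$, and then iterate over the elements of $U$ one at a time, taking the output of one step as the input of the next. Uniqueness guarantees that the resulting cluster is independent of the order of iteration and satisfies the global statement of (b); this parallels Corollary \ref{cormintwista} in the $\mathcal G$-system setting. For the elementary case, inspect the $k$-th row of $G_t^{t_0}$: by row sign-coherence \cite{GHKK} it is either nonnegative (take ${\bf x}_{t^\prime}={\bf x}_t$) or nonpositive. In the nonpositive subcase I would use the tropical duality $S^{-1}(G_t^{t_0})^{\mathrm T}SC_t^{t_0}=I_n$ from Proposition \ref{pronzcl} together with column sign-coherence of $C_t^{t_0}$ to select an exchange index $\ell$ at which $\mu_\ell({\bf x}_t)$ strictly decreases a suitable nonnegative integer potential on row $k$ while respecting the sign structure required to iterate. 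After finitely many steps the $k$-th row becomes nonnegative, at which stage $x_{k;t_0}$ must sit in the resulting cluster.

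The principal \textbf{obstacle} is the elementary existence step. In the $2$-CY case, Theorem \ref{thmsantwi} produced $\mathcal T_U(T)$ directly from a left $\mathrm{add}(U)$-approximation triangle, but at the level of generality of skew-symmetrizable cluster algebras no such ambient category is available, so the construction has to be purely combinatorial at the matrix-pattern level, using sign-coherence and the cluster formula as the only structural ingredients. The subtle point is simultaneously verifying termination of the mutation sequence and the full row-nonnegativity of $R_t^{t^\prime}$ beyond just row $k$; this interplay of sign-coherence and tropical duality is the technical heart of the argument given in \cite{CL1}.
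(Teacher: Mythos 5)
The paper does not reprove this theorem: it is cited from \cite{CL1}, and what the paper supplies afterwards is only a summary of the construction of ${\bf x}_{t'}$ from the proof in \cite{CL1}. That construction (attributed to Muller \cite{M}) picks a mutation path $(k_1,\dots,k_s)$ from $t_0$ to $t$, records the corresponding sequence of $c$-vectors, deletes every $c$-vector with a nonzero component in a $U$-direction, and lets the surviving subsequence drive a new mutation path from $t_0$; the endpoint of that path is $t'$. Your approach is different in flavor (sign-coherence plus a decreasing ``potential'') and thus worth comparing, but there are concrete gaps.

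The \textbf{uniqueness} half of your argument is essentially sound and is in fact the same large-$N$ clearing trick that Proposition-Definition~\ref{prodef} uses, with Theorem~\ref{thmghkk} in the role of the Uniqueness Condition; you do still need the observation (as in that proof) that the transition matrix has no zero row, so that every column $g$-vector of $G_{t_1}^{t_0}$ shows up with a strictly positive coefficient in some identity, before the injectivity of cluster-monomial $g$-vectors can be applied columnwise.

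The \textbf{existence} half has two real problems. First, the base case of your elementary step is wrong: ``if the $k$-th row of $G_t^{t_0}$ is nonnegative, take ${\bf x}_{t'}={\bf x}_t$'' does not satisfy condition (a), because nonnegativity of a single row of $G_t^{t_0}$ does not force $x_{k;t_0}\in{\bf x}_t$. Already in type $A_2$ with $t=\mu_2\mu_1(t_0)$ one has $G_t^{t_0}=\begin{pmatrix}-1&-1\\ 1&0\end{pmatrix}$, whose second row is nonnegative, yet $x_{2;t_0}\notin{\bf x}_t$; the correct $t'$ here is $\mu_1(t_0)$, not $t$. Second, the reduction from general $U$ to the elementary case is not free: ``uniqueness guarantees that the resulting cluster is independent of the order of iteration and satisfies the global statement of (b)'' is circular. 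Uniqueness of the full solution does not by itself show that the iterated elementary solution is the full solution; that compatibility is exactly the content of Theorem~\ref{thmtwrea}'s companion, Theorem~5.1.11 / Corollary~\ref{cormintwista}, and its proof needs the block computation with the composed transition matrices $R_{t_3}^{t_4}R_t^{t_3}$ together with the Uniqueness Condition to recover membership of $J_2$ in $G_{t_4}$. As for the potential argument in the nonpositive subcase, you explicitly acknowledge that termination and simultaneous control of all non-$U$ rows of $R_t^{t'}$ are unverified; this is precisely the gap that the $c$-vector filtering construction of \cite{CL1} is designed to close, so you should either import that construction outright or make the potential function precise (in \cite{CL1} it is essentially the number of mutations surviving the $c$-vector filter) and prove it decreases.
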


\begin{Remark}
In \cite{CL1} the authors prove the existence of ${\bf x}_{t^\prime}$ under the fixed initial cluster ${\bf x}_{t_0}$. What we want to do in this subsection is to show that ${\bf x}_{t^\prime}$ does not depend on the choice of the initial cluster.
\end{Remark}
\begin{Definition} Keep the notations in above theorem. The cluster ${\bf x}_{t^\prime}$ is called the {\bf co-Bongartz completion} of $(U,{\bf x}_{t_0})$ with respect to ${\bf x}_t$, and is denoted by ${\bf x}_{t^\prime}=\mathcal T_{(U,{\bf x}_{t_0})}({\bf x}_t)$.
In the terminology in \cite{CL1}, $({\bf x}_t,{\bf x}_{t^\prime})$ is called a {\bf $g$-pair along ${\bf x}_{t_0}\backslash U$}.
\end{Definition}

{\bf Construction:} Now we  {\em summarize the construction of ${\bf x}_{t^\prime}=\mathcal T_{(U,{\bf x}_{t_0})}({\bf x}_t)$} from the proof of \cite[Theorem 4.8]{CL1}. The main idea essentially comes from \cite{M} by Muller.
Let $(B_t,C_t,G_t)$ obtained from $(B_{t_0},I_n,I_n)$ by mutations along
the sequence $(k_1,\cdots,k_s)$, i.e.,
$$(B_t,C_t,G_t)=\mu_{k_s}\cdots \mu_{k_2}\mu_{k_1}(B_{t_0},I_n,I_n).$$
Let ${\bf c}_i$ be the $k_i$-th column vector of the $C$-matrix in the matrix seed $\mu_{k_{i-1}}\cdots\mu_{k_2}\mu_{k_1}(B_{t_0},I_n,I_n)$, thus we have a sequence of vectors $({\bf c}_1,\cdots,{\bf c}_s)$.
Then we delete the vector ${\bf c}_i=(c_{1i},\cdots,c_{ni})^{\rm T}$ such that $c_{ji}\neq 0$ for some $j$ satisfying ${x}_{j;t_0}\in U$.
Thus we get a subsequence $({\bf c}_{i_1},\cdots,{\bf c}_{i_m})$, where $m\leq s$. This sequence of vectors will induce a sequence of mutations by the following method. If ${\bf c}_{i_1}$ is the $j_1$-th column vector of the $C$-matrix in
$(B_{t_0},I_n,I_n)$, then we mutate the matrix seed $(B_{t_0},I_n,I_n)$ at $j_1$. If  ${\bf c}_{i_2}$ is the $j_2$-th column vector of the $C$-matrix in
$\mu_{j_1}(B_{t_0},I_n,I_n)$, then we mutate the matrix seed $\mu_{j_1}(B_{t_0},I_n,I_n)$ at $j_2$. Continue this, we will get a sequence $(j_1,\cdots,j_m)$. It turns out $({\bf x}_{t^\prime},B_{t^\prime})=\mu_{j_m}\cdots\mu_{j_2}\mu_{j_1}({\bf x}_{t_0}, B_{t_0})$ and
$(B_{t^\prime}, C_{t^\prime},G_{t^\prime})=\mu_{j_m}\cdots\mu_{j_2}\mu_{j_1}(B_{t_0},I_n,I_n)$.

We give an example to illustrate the above process.

\begin{Example}\label{example1}
 Let ${\bf x}_{t_0}=(x_1,x_2,x_3)$ and $B_{t_0}=\begin{pmatrix}0&1&-1\\-1&0&1\\1&-1&0 \end{pmatrix}$. Let $({\bf x}_t,B_t)=\mu_2\mu_1\mu_3\mu_2({\bf x}_{t_0},B_{t_0})$  and $U=\{x_3\}\subseteq {\bf x}_{t_0}$. Then we know  $(B_t,C_t,G_t)=\mu_2\mu_1\mu_3\mu_2(B,I_3,I_3)$. For convenience, we write the triple $(B_t,C_t,G_t)$ as a big matrix $\begin{pmatrix}B_t\\C_t\\G_t\end{pmatrix}$.

\begin{eqnarray}
\begin{pmatrix} 0&1&-1\\-1&0&1\\1&-1&0\\  \hdashline[2pt/2pt] 1&0&0\\0&1&0\\0&0&1\\
\hdashline[2pt/2pt] 1&0&0\\0&1&0\\0&0&1\end{pmatrix}\xrightarrow{\mu_2}
\begin{pmatrix} 0&-1&0\\1&0&-1\\0&1&0\\ \hdashline[2pt/2pt] 1&0&0\\0&-1&1\\0&0&1\\
\hdashline[2pt/2pt] 1&0&0\\0&-1&0\\0&1&1\end{pmatrix}\xrightarrow{\mu_3}
\begin{pmatrix}0&-1&0\\1&0&1\\0&-1&0\\  \hdashline[2pt/2pt] 1&0&0\\0&0&-1\\0&1&-1\\
\hdashline[2pt/2pt] 1&0&0\\0&-1&-1\\0&1&0\end{pmatrix}\xrightarrow{\mu_1}
\begin{pmatrix}0&1&0\\-1&0&1\\0&-1&0\\  \hdashline[2pt/2pt] -1&0&0\\0&0&-1\\0&1&-1\\
\hdashline[2pt/2pt] -1&0&0\\0&-1&-1\\0&1&0\end{pmatrix}\xrightarrow{\mu_2}
\begin{pmatrix}0&-1&1\\1&0&-1\\-1&1&0\\  \hdashline[2pt/2pt] -1&0&0\\0&0&-1\\0&-1&0\\
\hdashline[2pt/2pt] -1&0&0\\0&0&-1\\0&-1&0\end{pmatrix}.\nonumber
\end{eqnarray}
The sequence of mutations $(\mu_2,\mu_3,\mu_1,\mu_2)$ induces a sequence of vectors $\begin{pmatrix}0\\1\\0 \end{pmatrix},\begin{pmatrix}0\\1\\1 \end{pmatrix},\begin{pmatrix}1\\0\\0 \end{pmatrix},\begin{pmatrix}0\\0\\1 \end{pmatrix}$. Delete the vectors such that the third component is non-zero, we get the subsequence of vectors
 $\begin{pmatrix}0\\1\\0 \end{pmatrix},\begin{pmatrix}1\\0\\0 \end{pmatrix}$. This subsequence induces a sequence of mutations $(\mu_2,\mu_1)$.
 \begin{eqnarray}
 \begin{pmatrix} 0&1&-1\\-1&0&1\\1&-1&0\\  \hdashline[2pt/2pt] 1&0&0\\0&1&0\\0&0&1\\
\hdashline[2pt/2pt] 1&0&0\\0&1&0\\0&0&1\end{pmatrix}\xrightarrow{\mu_2}
\begin{pmatrix} 0&-1&0\\1&0&-1\\0&1&0\\ \hdashline[2pt/2pt] 1&0&0\\0&-1&1\\0&0&1\\
\hdashline[2pt/2pt] 1&0&0\\0&-1&0\\0&1&1\end{pmatrix}
\xrightarrow{\mu_1}
\begin{pmatrix} 0&1&0\\-1&0&-1\\0&1&0\\ \hdashline[2pt/2pt] -1&0&0\\0&-1&1\\0&0&1\\
\hdashline[2pt/2pt] -1&0&0\\0&-1&0\\0&1&1\end{pmatrix}\nonumber
 \end{eqnarray}
 So $({\bf x}_{t^\prime},B_{t^\prime})=\mu_1\mu_2({\bf x}_{t_0},B_{t_0})$ and thus ${\bf x}_{t^\prime}=\mathcal T_{(x_3,{\bf x}_{t_0})}({\bf x}_t)=(\frac{x_1+x_2+x_3}{x_1x_2},\frac{x_1+x_3}{x_2},x_3)$.
\end{Example}

\begin{Theorem}\label{thmmatrix}
Let $\mathcal A(\mathcal S)$ be a cluster algebra, and  $\mathbf{T}=\mathbb T_n$ the $n$-regular tree. Then for any seed $({\bf x}_{t_0},B_{t_0})$ of $\mathcal A(\mathcal S)$, we have that  $\mathcal G_{\mathbf{T}}=\{G_t^{t_0}|t\in \mathbf T\}$ is a $\mathcal G$-system at $t_0$ and the following commutative diagrams hold.

$$\xymatrix{({\bf x}_t,B_t)\ar@{<->}[d]\ar@{<->}[r]^{\mu_k}&({\bf x}_{t^\prime},B_{t^\prime})\ar@{<->}[d]\\
G_t^{t_0}\ar@{<->}[r]^{\mu_k}&G_{t^\prime}^{t_0}
}\;\;\;\;\;\;\;\;\;\;\xymatrix{{\bf x}_t\ar[rr]^{\mathcal T_{(U,{\bf x}_{t_0})}}\ar@{<->}[d]&&{\bf x}_{t^\prime}\ar@{<->}[d]\\
G_{t}^{t_0}\ar[rr]^{\mathcal T_J}&&G_{t^\prime}^{t_0}}
$$
where $U$ is a subset of ${\bf x}_{t_0}$, $J=\{{\bf g}^{t_0}(x_{i;t_0})|x_{i;t_0}\in U\}$ and $\mu_k$ and $\mathcal T_J$ in the second row of the commutative diagrams are the mutation and co-Bongartz completion  in a $\mathcal G$-system.
\end{Theorem}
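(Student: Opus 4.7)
The plan is to verify the three axioms of a $\mathcal G$-system (Mutation Condition, Co-Bongartz Completion Condition, Uniqueness Condition) for $\mathcal G_{\mathbf T}$ and then read off the two commutative diagrams from the construction. The basis property of each $G_t^{t_0}$ needs to be checked first: Proposition \ref{pronzcl} gives $S^{-1}(G_t^{t_0})^{\rm T} S C_t^{t_0} = I_n$, which forces $\det(G_t^{t_0}) = \pm 1$, so the columns of $G_t^{t_0}$ indeed form a $\mathbb Z$-basis of $\mathbb Z^n$.

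For the Mutation Condition, I would use the exchange relation for $G$-matrices in the definition of a matrix pattern: mutation at $k$ only alters the $k$-th column of $G_t^{t_0}$, so $G_t^{t_0}$ and $G_{\mu_k(t)}^{t_0}$ share exactly the $n-1$ columns indexed by $j \neq k$. The existence of the mutated seed is guaranteed by the cluster pattern along $\mathbb T_n$. For the Co-Bongartz Completion Condition, the existence and the sign conditions on the transition matrix $R_t^{t'}$ are precisely the content of Theorem \ref{thmcaogpair} from \cite{CL1}, so nothing needs to be reproved here; one only needs to translate Remark \ref{rmkcondb}'s row-sign condition into the column relation $G_t^{t_0} = G_{t'}^{t_0} R_t^{t'}$ that appears in Theorem \ref{thmcaogpair}.

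The Uniqueness Condition is the main step that requires an argument beyond citation. Suppose $\sum_{i \in I} r_i {\bf g}_{i;u}^{t_0} = \sum_{j \in I'} r'_j {\bf g}_{j;v}^{t_0}$ with positive coefficients. I would interpret each side as the $g$-vector (with respect to ${\bf x}_{t_0}$) of a cluster monomial: $\sum_{i \in I} r_i {\bf g}_{i;u}^{t_0} = {\bf g}^{t_0}({\bf x}_u^{\bf r})$ where ${\bf r}$ has entries $r_i$ on $I$ and $0$ elsewhere, and similarly for the right-hand side. Since both are $g$-vectors of genuine cluster monomials, the injectivity from Theorem \ref{thmghkk} forces ${\bf x}_u^{\bf r} = {\bf x}_v^{{\bf r}'}$ as elements of $\mathcal A(\mathcal S)$. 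Using the algebraic independence of each cluster (since ${\bf x}_u, {\bf x}_v$ are each free generating sets of $\mathcal F$), matching this monomial equality variable-by-variable via the Laurent phenomenon (Theorem \ref{laurent}) produces the bijection $\sigma : I' \to I$ with $x_{j;v} = x_{\sigma(j);u}$ and $r'_j = r_{\sigma(j)}$, which then forces the $g$-vector equality ${\bf g}_{j;v}^{t_0} = {\bf g}_{\sigma(j);u}^{t_0}$. The hard part here is being careful that the indices with zero coefficients do not intervene: I would handle this by first passing to the submonomials supported on $I$ and $I'$ respectively before invoking unique factorization in the Laurent polynomial ring.

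Finally, the two commutative diagrams are immediate from how the structures are set up: the left square is just the definition that $\mu_k$ on seeds induces $\mu_k$ on $G$-matrices via the matrix pattern, and the uniqueness half of Proposition-Definition \ref{prodef} ensures that the $\mathcal G$-system mutation at the $k$-th column matches. The right square is the tautological translation of Theorem \ref{thmcaogpair}: the cluster ${\bf x}_{t'} = \mathcal T_{(U,{\bf x}_{t_0})}({\bf x}_t)$ was characterized by exactly the two conditions (a) and (b) that define $\mathcal T_J(G_t^{t_0})$ in Proposition-Definition \ref{prodef}, under the correspondence $J = \{{\bf g}^{t_0}(x_{i;t_0}) : x_{i;t_0} \in U\} = \{{\bf e}_i : x_{i;t_0} \in U\}$, so uniqueness on both sides forces the diagram to commute. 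I expect the Uniqueness Condition to be the only substantive step; everything else is bookkeeping against the cited results.
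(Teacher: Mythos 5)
Your overall plan matches the paper's: the published proof of Theorem \ref{thmmatrix} is a one-line citation of Proposition \ref{pronzcl}, Theorem \ref{thmghkk}, and Theorem \ref{thmcaogpair}, followed by ``the commutative diagrams are easy to check.'' Your verification of the basis property, the Mutation Condition, and the Co-Bongartz Completion Condition is in the same spirit and is essentially fine, though for the Mutation Condition you should also note that the mutated $k$-th $g$-vector $\mathbf{g}_{k;t'}$ is genuinely distinct from every column of $G_t^{t_0}$ (this follows from Theorem \ref{thmghkk} because $x_{k;t'}$ differs from every $x_{j;t}$), otherwise the set equality $G_t\cap G_{t'}=G_t\setminus\{\mathbf{g}_{k;t}\}$ is not immediate.

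The genuine gap is in the last step of your Uniqueness Condition argument. Two separate issues. First, the axiom as written allows arbitrary positive real $r_i, r'_j$, but your cluster-monomial interpretation requires nonnegative integers; you would need a reduction (the solution set of $\sum r_i\mathbf{g}_{i;u}=\sum r'_j\mathbf{g}_{j;v}$ is a rational linear subspace, so positive rational points are dense and one can clear denominators, then transfer the conclusion back by linear independence). Second, and more substantively, after Theorem \ref{thmghkk} gives $\prod_{i\in I}x_{i;u}^{r_i}=\prod_{j\in I'}x_{j;v}^{r'_j}$, unique factorization in $\mathbb{Z}[\mathbf{x}_u^{\pm 1}]$ only tells you that each $x_{j;v}$ with $j\in I'$ is a unit there, i.e.\ a $\pm$-Laurent monomial in $\mathbf{x}_u$. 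It does \emph{not} tell you that such an $x_{j;v}$ must equal some $x_{i;u}$, and injectivity of the $g$-vector map on cluster monomials (which distinguishes elements, not expressions) cannot supply this either. You genuinely need a stronger input at this point: the simplicial-fan property of $g$-vector cones from \cite{GHKK} (a point in the relative interiors of two cones of the fan forces the cones to coincide, hence the generating sets match), or the linear independence of cluster monomials, or an $F$-polynomial argument (a cluster variable with constant $F$-polynomial is an initial variable). The paper's terse citation of \cite{GHKK} is presumably leaning on one of these, and your write-up should invoke one explicitly rather than attributing the conclusion to unique factorization alone.
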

\begin{proof}
 By Proposition \ref{pronzcl}, the column vectors of $G_t$ form a  $\mathbb Z$-basis of $\mathbb Z^n$ for any $t\in\mathbf{T}$. The  Mutation Condition, Uniqueness Condition and  Co-Bongartz Completion Condition follow from   Theorem \ref{thmghkk} (iii), and Theorem \ref{thmcaogpair} directly. So $\mathcal G_{\mathbf{T}}$ is a $\mathcal G$-system at $t_0$. The commutative diagrams are easy to check.
\end{proof}

The following result is a direct corollary of Theorem \ref{thmmatrix}, Theorem \ref{thmconnect}, and one can also refer to \cite[Theorem 6.2]{CL1}.
\begin{Corollary}
\label{corconnect}
Let $\mathcal A(\mathcal S)$ be a cluster algebra with an initial seed $({\bf x}_{t_0},B_{t_0})$. Let ${\bf x}_{t_1}$ and ${\bf x}_{t_2}$ be two clusters of $\mathcal A(\mathcal S)$ satisfying $x_{i;t_1}=x_{i;t_2}$ for any $i\in I\subseteq \{1,\cdots,n\}$. Then there exists a sequence $(k_1,\cdots,k_s)$ with $k_j\notin I$ for $j=1,\cdots,s$ such that $({\bf x}_{t_2}, B_{t_2})=\mu_{k_s}\cdots\mu_{k_2}\mu_{k_1}({\bf x}_{t_1},B_{t_1})$.
\end{Corollary}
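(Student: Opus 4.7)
The plan is to translate the statement into a combinatorial assertion about the $\mathcal G$-system attached to the cluster algebra with initial vertex $t_1$, apply Theorem \ref{thmconnect}, and transport the conclusion back along the compatibility furnished by Theorem \ref{thmmatrix}.

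First I would apply Theorem \ref{thmmatrix} to the initial seed $({\bf x}_{t_1},B_{t_1})$: the collection $\mathcal G_{\mathbf T}=\{G_t^{t_1}:t\in\mathbb T_n\}$ is then a $\mathcal G$-system at $t_1$, with $G_{t_1}^{t_1}=I_n$, so that the initial $g$-vectors are the standard basis vectors $e_1,\ldots,e_n$. The commutative diagrams in Theorem \ref{thmmatrix} translate a seed mutation at index $k$ into the $\mathcal G$-system mutation at the $k$-th column of the current $G$-matrix, giving a bijection between seeds and $g$-clusters that intertwines the two mutation operations.

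For each $i\in I$, the equality $x_{i;t_1}=x_{i;t_2}$ and the injectivity of the $g$-vector map (Theorem \ref{thmghkk}) force ${\bf g}^{t_1}(x_{i;t_2})=e_i$, so $\{e_i:i\in I\}$ is contained in the set of common $g$-vectors $J:=G_{t_2}^{t_1}\cap G_{t_1}^{t_1}$. Since $\mathcal A(\mathcal S)$ is built from the cluster pattern on $\mathbb T_n$, the unique tree path from $t_1$ to $t_2$ supplies a sequence of seed mutations and hence, via Theorem \ref{thmmatrix}, a sequence of $\mathcal G$-system mutations carrying $G_{t_1}^{t_1}$ to $G_{t_2}^{t_1}$; in particular, $G_{t_2}^{t_1}$ is $G_{t_1}^{t_1}$-mutation-reachable. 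Now Theorem \ref{thmconnect} yields a sequence of $\mathcal G$-system mutations $(\mu_{{\bf g}_1'},\ldots,\mu_{{\bf g}_s'})$ with $G_{t_2}^{t_1}=\mu_{{\bf g}_s'}\cdots\mu_{{\bf g}_1'}(G_{t_1}^{t_1})$ and ${\bf g}_j'\notin J$ for every $j$. Transporting these steps under the Theorem \ref{thmmatrix} correspondence produces a sequence of seed mutations $(\mu_{k_1},\ldots,\mu_{k_s})$ such that $({\bf x}_{t_2},B_{t_2})=\mu_{k_s}\cdots\mu_{k_1}({\bf x}_{t_1},B_{t_1})$, where $k_j$ is the column index occupied by ${\bf g}_j'$ in the $G$-matrix reached after the first $j-1$ mutations.

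The remaining point, and the main obstacle, is to verify that $k_j\notin I$ for every $j$. I would argue by induction on $j$. Assume inductively that $k_1,\ldots,k_{j-1}\notin I$; then by the seed mutation rule of Definition \ref{defmutation} the variable $x_{i;t_1}$ for $i\in I$ has not been touched along the first $j-1$ mutations, so it still sits at position $i$, and hence the $i$-th column of the intermediate $G$-matrix is still $e_i$ for every $i\in I$. Consequently, if one had $k_j\in I$, then ${\bf g}_j'$ would be the $k_j$-th column of that intermediate matrix, namely $e_{k_j}\in\{e_i:i\in I\}\subseteq J$, contradicting the condition ${\bf g}_j'\notin J$ guaranteed by Theorem \ref{thmconnect}. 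This inductive step is really the heart of the proof: it rests on the position-preservation feature of seed mutations, which ensures that the common $g$-vectors remain lodged at their original positions throughout every intermediate $G$-matrix as long as the forbidden indices are avoided.
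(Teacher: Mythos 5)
Your proof is correct and follows exactly the route the paper intends when it says this is a ``direct corollary'' of Theorem \ref{thmmatrix} and Theorem \ref{thmconnect}: re-instantiate the $\mathcal G$-system with $t_1$ as base point (so $G_{t_1}^{t_1}=I_n$), use the $g$-vector injectivity to place $\{e_i:i\in I\}$ inside $J=G_{t_2}^{t_1}\cap G_{t_1}^{t_1}$, note mutation-reachability via the tree path in $\mathbb T_n$, and transport the resulting mutation sequence from Theorem \ref{thmconnect} back to seed mutations via Theorem \ref{thmmatrix}. Your inductive verification that $k_j\notin I$ -- observing that avoiding $I$ so far keeps the $i$-th column of the intermediate $G$-matrix equal to $e_i$, so a mutation at $k_j\in I$ would be a $\mathcal G$-system mutation at $e_{k_j}\in J$ -- is precisely the position-tracking point that the paper leaves implicit, and it is the right way to close the argument.
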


For a  matrix $A=(a_{ij})_{n\times n}$ and $I,J\subseteq\{1,\cdots,n\}$, we denote by $A|_{I\times J}$ the submatrix of $A$ given by entries $a_{ij}$ with $i\in I$ and $j\in J$. We denote by $[A]_+^{k\bullet}$ and $[A]_+^{\bullet k}$ the matrices given by $$([A]_+^{k\bullet})_{ij}:=\begin{cases}0&i\neq k,\\ [a_{kj}]_+&i=k. \end{cases}\;\text{ and }
([A]_+^{\bullet k})_{ij}:=\begin{cases}0&j\neq k,\\ [a_{ik}]_+&j=k. \end{cases}
$$
\begin{Lemma}\cite[(4.2)]{NZ} \label{lemnz}
Let $B$ be a skew-symmetrizable matrix, and $B^\prime=\mu_k(B)$. Suppose that $t_0^{~\underline{\quad k \quad}}~ u$ are two adjacent vertices in $\mathbb T_n$, and  $\mathcal M(B,t_0)$ (respectively, $\mathcal M(B^\prime,u)$) be a matrix pattern at $t_0$ (respectively, at $u$). Then
$$G_t^{u}=(J_k+[\varepsilon B_{t_0}]_+^{\bullet k})G_t^{t_0}+B_{t_0}[-\varepsilon G_t^{t_0}]_+^{k \bullet},$$
for any choice sign $\varepsilon=\pm 1$, where $J_k=I_n-2E_{kk}$.
\end{Lemma}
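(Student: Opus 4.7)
The plan is to argue by simultaneous induction on $d=d_{\mathbb T_n}(t_0,t)$, proving in parallel with the stated identity a dual formula expressing $C_t^u$ in terms of $C_t^{t_0}$, $B_{t_0}$, and $k$. The sign parameter $\varepsilon\in\{\pm1\}$ is handled via sign-coherence: since every row of $G_t^{t_0}$ and every column of $C_t^{t_0}$ is either nonnegative or nonpositive (by the \cite{GHKK} theorem cited above), one can at each step choose $\varepsilon$ so that the tropical terms $[\pm\,\cdot]_+$ collapse to their simpler form, and this choice is consistent across the two dual formulas.

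For the base case $d=0$ (so $t=t_0$ and $G_t^{t_0}=I_n$), I would verify the identity by direct computation. The left-hand side $G_{t_0}^u$ is obtained by applying one $k$-mutation to the initial matrix seed $(B',I_n,I_n)$ of $\mathcal M(B',u)$, since $u\,\underline{\ k\ }\,t_0$. Using the $G$-mutation rule from the matrix-pattern definition, a brief calculation yields $(G_{t_0}^u)_{ij}=\delta_{ij}$ for $j\neq k$ and $(G_{t_0}^u)_{ik}=-\delta_{ik}+[b^{t_0}_{ik}]_+$, where I invoke $B'=\mu_k(B_{t_0})$ and the elementary identity $[x]_+-x=[-x]_+$. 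With $G_t^{t_0}=I_n$, the right-hand side simplifies to $J_k+[\varepsilon B_{t_0}]_+^{\bullet k}+B_{t_0}[-\varepsilon I_n]_+^{k\bullet}$, and the same identity $[x]_+-x=[-x]_+$ shows this equals the same matrix for both $\varepsilon=\pm 1$.

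For the inductive step, fix $t'$ adjacent to $t$ via edge $\ell$ with $d(t_0,t')=d+1$, and apply the $G$-mutation rule in both ambient patterns. In $\mathcal M(B_{t_0},t_0)$, $G_{t'}^{t_0}$ is determined from $G_t^{t_0}$ and $C_t^{t_0}$ using the base matrix $B_{t_0}$; in $\mathcal M(B',u)$, $G_{t'}^u$ is determined from $G_t^u$ and $C_t^u$ analogously, with base matrix $B'$. I would substitute the inductive hypothesis for $G_t^u$ together with the companion identity for $C_t^u$, expand, and match coefficients column by column. Choosing $\varepsilon$ equal to the common sign of the $\ell$th column of $C_t^{t_0}$ at this step kills the branching in the tropical terms and reduces the verification to a finite linear-algebra identity, using $B'=\mu_k(B_{t_0})$, the recursions $B_{t'}=\mu_\ell(B_t)$, $C_{t'}^{\ast}=\mu_\ell(C_t^{\ast})$, and the tropical duality $S^{-1}(G_t^{t_0})^T S\,C_t^{t_0}=I_n$ from Proposition \ref{pronzcl}.

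The main obstacle is the coupling to the $C$-matrix: the $G$-mutation rule feeds $C$-entries into the new $\ell$th column, so the induction cannot close for the $G$-formula in isolation. I would therefore run the induction jointly with the analogous $C$-matrix identity, so that both hypotheses are available to substitute at each step; this is also where the dual role of the parameter $\varepsilon$ becomes essential, as it encodes the symmetry between the two formulas and, together with sign-coherence, lets one sidestep the sign-dependent case splits in the mutation rules.
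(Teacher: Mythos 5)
The paper does not prove this lemma; it imports it as \cite[(4.2)]{NZ}, so there is no in-paper argument to compare against. Evaluating your proposal on its own terms and against the standard proof in \cite{NZ}: your base-case computation is correct. With $G_{t_0}^{t_0}=I_n$, one application of the $G$-mutation rule at $k$ in $\mathcal M(B',u)$ gives $k$-th column $-\mathbf e_k+[\mathbf b^{t_0}_k]_+$ (via $[x]_+-x=[-x]_+$ and $B'=\mu_k(B_{t_0})$), and the right-hand side reduces to the same matrix for both signs. It is also worth noting that the $\varepsilon$-independence of the right-hand side is purely algebraic, not a sign-coherence fact: the difference between the two choices is $B_{t_0}^{\bullet k}G_t^{t_0}-B_{t_0}(G_t^{t_0})^{k\bullet}$, and both terms have $(i,j)$-entry $b^{t_0}_{ik}g^{t_0}_{kj}$, so they cancel.

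Two concerns about the inductive step, which is where the whole content of the lemma sits. First, you have conflated two independent sign choices. The $\varepsilon$ in the statement parameterizes $[\varepsilon B_{t_0}]_+^{\bullet k}$ and $[-\varepsilon G_t^{t_0}]_+^{k\bullet}$; these involve the $k$-th column of $B_{t_0}$ and $k$-th row of $G_t^{t_0}$, and are controlled by the edge $t_0\,\underline{\ k\ }\,u$. "Choosing $\varepsilon$ equal to the common sign of the $\ell$-th column of $C_t^{t_0}$" does nothing to those terms; the sign that governs the tropical branching in the $\ell$-mutation rule is a separate quantity (the sign of $\mathbf c^{t_0}_{\ell;t}$, and correspondingly of $\mathbf c^{u}_{\ell;t}$). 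You would need to argue that these two $\ell$-signs coincide across the base change, which is not automatic and is itself part of what has to be proved. Second, the step you defer ("expand and match coefficients column by column") is genuinely the crux: the $G$-mutation rule in each pattern pumps the $\ell$-th column of $C_t$ and the ambient $B$-matrix of the \emph{base vertex} into the new $\ell$-th column, so the two identities for $G$ and $C$ must be shown to propagate compatibly through tropical case splits. That verification is not given, so as it stands the proposal has a real gap.

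A cleaner route, and the one \cite{NZ} actually take, is to prove the companion $C$-matrix identity $C_t^u=(J_k+[-\varepsilon B_{t_0}]_+^{k\bullet})C_t^{t_0}$ \emph{alone} by induction — this closes cleanly because the $C$-mutation rule in the paper's Definition 7.2.3 is self-contained (it never references $G$), so there is no coupling problem — and then deduce the $G$-formula directly from the $C$-formula via the tropical duality $S^{-1}(G_t^{t_0})^{\rm T}SC_t^{t_0}=I_n$ of Proposition \ref{pronzcl}, applied in both patterns $\mathcal M(B_{t_0},t_0)$ and $\mathcal M(B',u)$. This sidesteps the joint induction entirely and isolates the sign-coherence input to the single $C$-induction. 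I would recommend restructuring along these lines rather than pursuing the coupled induction.
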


\begin{Theorem}\label{thmnotdepend}
Let $\mathcal A(\mathcal S)$ be a  cluster algebra and $({\bf x}_{t_0},B_{t_0}), ({\bf x}_{v},B_{v})$ be any two seeds with $x_{i;t_0}=x_{i;v}$ for any $i\in I\subseteq \{1,\cdots,n\}$. Let $U=\{x_{i;t_0}|i\in I\}\subseteq {\bf x}_{t_0}\cap{\bf x}_{v}$, then for any cluster ${\bf x}_t$, we have
$$\mathcal T_{(U,{\bf x}_{t_0})}({\bf x}_t)=\mathcal T_{(U,{\bf x}_v)}({\bf x}_t).$$
\end{Theorem}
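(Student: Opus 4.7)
The plan is to reduce to the case where $({\bf x}_v, B_v)$ is obtained from $({\bf x}_{t_0}, B_{t_0})$ by a single mutation, then verify the defining properties of the co-Bongartz completion using the uniqueness in Theorem \ref{thmcaogpair} together with the Nakanishi--Zelevinsky formula of Lemma \ref{lemnz}. Since $x_{i;t_0} = x_{i;v}$ for every $i \in I$, Corollary \ref{corconnect} supplies a mutation sequence $(\mu_{k_1}, \ldots, \mu_{k_s})$ with $k_j \notin I$ for all $j$ that takes $({\bf x}_{t_0}, B_{t_0})$ to $({\bf x}_v, B_v)$. By induction on $s$ it suffices to treat the case $({\bf x}_v, B_v) = \mu_k({\bf x}_{t_0}, B_{t_0})$ for some fixed $k \notin I$.

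Set ${\bf x}_{t^\prime} := \mathcal T_{(U,{\bf x}_{t_0})}({\bf x}_t)$ and let $R_0$ denote the transition matrix satisfying $G_t^{t_0} = G_{t^\prime}^{t_0} R_0$, whose $i$-th row is in $\mathbb Z_{\geq 0}^n$ for every $i$ with $x_{i;t^\prime} \notin U$. By the uniqueness in Theorem \ref{thmcaogpair}, it is enough to show that the transition matrix $R$ defined by $G_t^v = G_{t^\prime}^v R$ has the same row non-negativity property. I will prove the stronger statement $R = R_0$.

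The key technical step, and the place I expect the main difficulty, is to verify that the $k$-th rows of $G_t^{t_0}$ and $G_{t^\prime}^{t_0}$ share a common sign $\varepsilon \in \{\pm 1\}$. Since $U \subseteq {\bf x}_{t^\prime}$, the columns of $G_{t^\prime}^{t_0}$ indexed by $\{i : x_{i;t^\prime} \in U\}$ are precisely the standard basis vectors $e_j$ with $j \in I$. Because $k \notin I$, each such column has zero $k$-th entry, so the $k$-th row of $G_{t^\prime}^{t_0}$ is supported on the indices $i$ with $x_{i;t^\prime} \notin U$; by row sign-coherence its entries there share a common sign $\varepsilon$ (either choice being valid when the row vanishes). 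The identity $G_t^{t_0} = G_{t^\prime}^{t_0} R_0$, combined with the non-negativity of the $i$-th row of $R_0$ for each $i$ with $x_{i;t^\prime} \notin U$, then expresses the $k$-th row of $G_t^{t_0}$ as a sum of non-negative row vectors with coefficients of uniform sign $\varepsilon$, forcing it also to have sign $\varepsilon$.

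With this sign agreement in hand, applying Lemma \ref{lemnz} with the common $\varepsilon$ makes the correction term $B_{t_0}[-\varepsilon G^{t_0}]_+^{k \bullet}$ vanish for both matrices, yielding $G_t^v = P_\varepsilon G_t^{t_0}$ and $G_{t^\prime}^v = P_\varepsilon G_{t^\prime}^{t_0}$ with the same $P_\varepsilon := J_k + [\varepsilon B_{t_0}]_+^{\bullet k}$. Substituting, $G_t^v = P_\varepsilon G_{t^\prime}^{t_0} R_0 = G_{t^\prime}^v R_0$, so $R = R_0$ and the required row non-negativity is inherited from $R_0$. Hence ${\bf x}_{t^\prime}$ satisfies both defining conditions of $\mathcal T_{(U,{\bf x}_v)}({\bf x}_t)$, completing the proof. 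The hypothesis $k \notin I$ is used essentially in the sign-agreement step: without it the two $P_\varepsilon$'s produced by the NZ formula could differ and $R$ would in general not equal $R_0$.
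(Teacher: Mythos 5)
Your proof is correct and follows essentially the same route as the paper: both reduce to a single adjacent mutation via Corollary \ref{corconnect}, establish that the $k$-th rows of $G_t^{t_0}$ and $G_{t'}^{t_0}$ share a common sign $\varepsilon$ (using the column structure of $G_{t'}^{t_0}$ coming from $U \subseteq {\bf x}_{t'}$, row sign-coherence, and the non-negativity of the relevant rows of $R_0$), and then invoke Lemma \ref{lemnz} with that common $\varepsilon$ so that the correction terms vanish and the two $G$-matrices are multiplied by the same matrix $P_\varepsilon = J_k + [\varepsilon B_{t_0}]_+^{\bullet k}$. The only stylistic difference is that you extract the clean global statement $R = R_0$ and conclude immediately, whereas the paper carries out the equivalent computation at the level of the submatrix $G_t^u|_{I^c \times [1,n]} = G_{t'}^u|_{I^c\times I^c}Q$; your version is a slightly tighter packaging of the same argument.
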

From the above theorem, we can see that  the cluster ${\bf x}_{t^\prime}:=\mathcal T_{(U,{\bf x}_{t_0})}({\bf x}_t)$ is uniquely determined by $U$ and does not depend on the choice of the initial cluster containing $U$. So there is no confusion to denote by ${\bf x}_{t^\prime}=\mathcal T_{U}({\bf x}_t)$, which is called the {\bf co-Bongartz completion} of $U$ with respect to ${\bf x}_t$.

\begin{proof}
By Corollary \ref{corconnect}, it suffices to show that for any $t_0^{~\underline{\quad k \quad}}~ u$~\;with $k\notin I$, we have $$\mathcal T_{(U,{\bf x}_{t_0})}({\bf x}_t)=\mathcal T_{(U,{\bf x}_u)}({\bf x}_t).$$ Let ${\bf x}_{t^\prime}=\mathcal T_{(U,{\bf x}_{t_0})}({\bf x}_t)$. We show ${\bf x}_{t^\prime}=\mathcal T_{(U,{\bf x}_u)}({\bf x}_t)$ by showing $G_{t^\prime}^u=\mathcal T_{J}(G_{t}^{u})$, where $$J=\{{\bf g}^{u}(x_{i;u})|x_{i;u}\in U\}=\{{\bf e}_i|i\in I\}.$$

Without loss of generality, we can assume that  $I=\{p+1,\cdots, n\}$ and $I^c:=\{1,\cdots,p\}$. By ${\bf x}_{t^\prime}=\mathcal T_{(U,{\bf x}_{t_0})}({\bf x}_t)$, we know that $G_{t^\prime}^{t_0}=\mathcal T_{J}(G_{t}^{t_0})$, where $J=\{{\bf g}^{t_0}(x_{i;t_0})|x_{i;t_0}\in U\}=\{{\bf e}_i|i\in I\}$.
 By the definition of co-Bongartz completion in $\mathcal G$-system, we have the following two facts.

Fact (i):  $J$ is a subset of $G_{t^\prime}^{t_0}$ (as a set of column vectors). Without loss of generality, we assume that $G_{t^\prime}^{t_0}$ has the form of
$$G_{t^\prime}^{t_0}=\begin{pmatrix}G_{t^\prime}^{t_0}|_{I^c\times I^c} &0\\
\ast& I_{n-p}\end{pmatrix}.$$
Fact (ii): there exist some $Q\in M_{|I^c|\times n}(\mathbb Z_{\geq 0})$ such that
\begin{equation}\label{eqng00a}
G_t^{t_0}|_{I^c\times [1,n]}=G_{t^\prime}^{t_0}|_{I^c\times I^c}Q,
\end{equation}
 By Fact (ii) and $k\in I^c$, the $k$-th row vector of $G_t^{ B_{t_0};t_0}$ and the $k$-th row vector of $G_{t^\prime}^{ B_{t_0};t_0}$ have the same sign, say the sign is $\varepsilon_k\in \{\pm 1\}$.

By Lemma \ref{lemnz}, we know that
\begin{eqnarray}
\label{eqng1}  G_t^{u}&=&(J_k+[\varepsilon_k B_{t_0}]_+^{\bullet k})G_t^{t_0}+B_{t_0}[-\varepsilon_k G_t^{t_0}]_+^{k \bullet}=(J_k+[\varepsilon_k B_{t_0}]_+^{\bullet k})G_t^{t_0};\\
\label{eqng2}   G_{t^\prime}^{u}&=&(J_k+[\varepsilon_k B_{t_0}]_+^{\bullet k})G_{t^\prime}^{t_0}+B_{t_0}[-\varepsilon_k G_{t^\prime}^{t_0}]_+^{k \bullet}=(J_k+[\varepsilon_k B_{t_0}]_+^{\bullet k})G_{t^\prime}^{t_0}.
\end{eqnarray}
We also know that the matrix $J_k+[\varepsilon_k B_{t_0}]_+^{\bullet k}$ has the form of
$$J_k+[\varepsilon_k B_{t_0}]_+^{\bullet k}=\begin{pmatrix}(J_k+[\varepsilon_k B_{t_0}]_+^{\bullet k})|_{I^c\times I^c}&0\\ \ast &I_{n-p}  \end{pmatrix}.$$
Thus by the equalities  (\ref{eqng1}) and (\ref{eqng2}), we know that
\begin{eqnarray}
G_t^{u}|_{I^c\times [1,n]}&=&(J_k+[\varepsilon_k B_{t_0}]_+^{\bullet k})|_{I^c\times I^c}G_{t}^{t_0}|_{I^c\times [1,n]};\nonumber\\
 G_{t^\prime}^{u}|_{I^c\times I^c}&=&(J_k+[\varepsilon_k B_{t_0}]_+^{\bullet k})|_{I^c\times I^c}G_{t^\prime}^{t_0}|_{I^c\times I^c}.\nonumber
\end{eqnarray}
Then by the equality (\ref{eqng00a}), we can get
\begin{eqnarray}
G_t^{u}|_{I^c\times [1,n]}&=&(J_k+[\varepsilon_k B_{t_0}]_+^{\bullet k})|_{I^c\times I^c}G_{t}^{t_0}|_{I^c\times [1,n]}\nonumber\\
&=&(J_k+[\varepsilon_k B_{t_0}]_+^{\bullet k})|_{I^c\times I^c}\left(G_{t^\prime}^{t_0}|_{I^c\times I^c}Q\right)\nonumber\\
&=&\left((J_k+[\varepsilon_k B_{t_0}]_+^{\bullet k})|_{I^c\times I^c}G_{t^\prime}^{t_0}|_{I^c\times I^c}\right)Q\nonumber\\
&=& G_{t^\prime}^{u}|_{I^c\times I^c}Q,\nonumber
\end{eqnarray}
where $Q\in M_{|I^c|\times n}(\mathbb Z_{\geq 0})$.

By the equality (\ref{eqng2}), we can see that $J=\{{\bf e}_i|i\in I\}=\{{\bf e}_{p+1},\cdots,{\bf e}_n\}$ is a subset of $G_{t^\prime}^u$ (as a set of column vectors).  Hence, $G_{t^\prime}^u=\mathcal T_{J}(G_{t}^{u})$ and thus ${\bf x}_{t^\prime}=\mathcal T_{(U,{\bf x}_u)}({\bf x}_t)$. The completes the proof.
\end{proof}

\begin{Corollary}\label{corclustertwi}
 Let $\mathcal A(\mathcal S)$ be a cluster algebra, and $U$ be a subset of some cluster of $\mathcal A(\mathcal S)$. Then  ${\bf x}_{t^\prime}=\mathcal T_U({\bf x}_t)$ if and only if $U\subseteq {\bf x}_{t^\prime}$ and  the $i$-th row vector of $G_t^{t^\prime}$ is a nonnegative vector for any $i$ such that $x_{i;t^\prime}\notin U$.
\end{Corollary}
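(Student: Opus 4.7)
The plan is to exploit Theorem \ref{thmnotdepend}, which frees us to choose any convenient initial cluster containing $U$ when computing $\mathcal{T}_U({\bf x}_t)$. Since $U \subseteq {\bf x}_{t'}$ appears (or must be shown) on both sides of the biconditional, the clever choice is to take the initial cluster ${\bf x}_{t_0}$ to be ${\bf x}_{t'}$ itself. With this choice, $G_{t'}^{t_0} = G_{t'}^{t'} = I_n$, so the transition matrix $R_t^{t'}$ in Theorem \ref{thmcaogpair}, defined by $G_t^{t_0} = G_{t'}^{t_0} R_t^{t'}$, collapses to $R_t^{t'} = G_t^{t'}$. Thus the abstract nonnegativity condition on $R_t^{t'}$ in Theorem \ref{thmcaogpair}(b) becomes literally the nonnegativity condition on rows of $G_t^{t'}$ stated in the corollary.

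For the forward direction, I would assume ${\bf x}_{t'} = \mathcal{T}_U({\bf x}_t)$. By the definition of $\mathcal{T}_U$ together with Theorem \ref{thmnotdepend}, conditions (a) and (b) of Theorem \ref{thmcaogpair} hold for any choice of initial cluster containing $U$. From (a) we immediately get $U \subseteq {\bf x}_{t'}$, which in particular allows us to take ${\bf x}_{t_0} = {\bf x}_{t'}$; under that choice, condition (b) becomes the desired row-nonnegativity of $G_t^{t'}$.

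For the backward direction, I would assume $U \subseteq {\bf x}_{t'}$ together with the row-nonnegativity of $G_t^{t'}$ on rows indexed by $x_{i;t'} \notin U$. Again taking ${\bf x}_{t_0} = {\bf x}_{t'}$, the identification $R_t^{t'} = G_t^{t'}$ shows that ${\bf x}_{t'}$ satisfies both (a) and (b) of Theorem \ref{thmcaogpair}. By the uniqueness assertion in that theorem, ${\bf x}_{t'} = \mathcal{T}_{(U, {\bf x}_{t'})}({\bf x}_t)$, and then Theorem \ref{thmnotdepend} identifies this with $\mathcal{T}_U({\bf x}_t)$.

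There is no real obstacle here; the corollary is essentially a repackaging of Theorem \ref{thmcaogpair} once Theorem \ref{thmnotdepend} is available. The only delicate point to spell out carefully is the identity $R_t^{t'} = G_t^{t'}$ when the initial cluster is chosen to coincide with ${\bf x}_{t'}$, which rests on the trivial equalities $G_t^{t_0} = G_t^{t'}$ and $G_{t'}^{t_0} = I_n$ in that case.
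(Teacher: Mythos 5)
Your proof is correct and is the same argument the paper gives, just spelled out more explicitly. The paper's proof is the one-line observation that, by Theorem \ref{thmnotdepend}, ${\bf x}_{t^\prime}=\mathcal T_U({\bf x}_t)$ iff ${\bf x}_{t^\prime}=\mathcal T_{(U,{\bf x}_{t^\prime})}({\bf x}_t)$, which is precisely your device of taking ${\bf x}_{t_0}={\bf x}_{t^\prime}$ so that $G_{t^\prime}^{t_0}=I_n$ and $R_t^{t^\prime}=G_t^{t^\prime}$ in Theorem \ref{thmcaogpair}; you have simply made the unwinding of the two directions explicit.
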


\begin{proof}

By Theorem \ref{thmnotdepend}, we know that ${\bf x}_{t^\prime}=\mathcal T_U({\bf x}_t)$ if and only if ${\bf x}_{t^\prime}=\mathcal T_{(U,{\bf x}_{t^\prime})}({\bf x}_t)$. Then the result follows.
\end{proof}
The following corollary follows directly from  Corollary \ref{cormintwista} and Theorem \ref{thmmatrix}.
\begin{Corollary}\label{corlast}
Let $\mathcal A(\mathcal S)$ be a cluster algebra, and $U=\{x_1,\cdots,x_s\}$ be a subset of some cluster of $\mathcal A(\mathcal S)$. Then for any cluster ${\bf x}_{t}$, we have
$$\mathcal T_U({\bf x}_t)=\mathcal T_{x_{i_s}}\cdots \mathcal T_{x_{i_s}}\mathcal T_{x_{i_1}}({\bf x}_t).$$
where $i_1,\cdots,i_s$ is any permutation of $1,\cdots,n$.
\end{Corollary}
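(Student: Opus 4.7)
The plan is to reduce the statement to its $\mathcal G$-system analogue (Corollary~\ref{cormintwista}) via the compatibility established in Theorem~\ref{thmmatrix} and the independence-of-initial-cluster result (Theorem~\ref{thmnotdepend}). Concretely, since $U$ is a subset of \emph{some} cluster, I would pick an initial seed $({\bf x}_{t_0}, B_{t_0})$ with $U \subseteq {\bf x}_{t_0}$; relabelling if necessary, we may assume $U = \{x_{1;t_0}, \ldots, x_{s;t_0}\}$, so that
\[
J := \{{\bf g}^{t_0}(x_{i;t_0}) : x_{i;t_0} \in U\} = \{{\bf e}_1, \ldots, {\bf e}_s\} \subseteq G_{t_0}^{t_0} = I_n.
\]

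First I would invoke Theorem~\ref{thmmatrix}: the collection $\mathcal G_{\mathbf T} = \{G_t^{t_0}\}_{t \in \mathbf T}$ is a $\mathcal G$-system at $t_0$, and the co-Bongartz completion on clusters intertwines with that on $g$-clusters. Hence writing ${\bf x}_{t'} = \mathcal T_U({\bf x}_t) = \mathcal T_{(U,{\bf x}_{t_0})}({\bf x}_t)$ corresponds, under the bijection ${\bf x}_t \leftrightarrow G_t^{t_0}$, to $G_{t'}^{t_0} = \mathcal T_J(G_t^{t_0})$. Similarly, each singleton $\{x_{i_k;t_0}\} \subseteq U$ gets sent to $\{{\bf e}_{i_k}\} \subseteq G_{t_0}^{t_0}$, so every elementary cluster-level co-Bongartz completion $\mathcal T_{x_{i_k;t_0}}(-)$ translates into the elementary $\mathcal G$-system operation $\mathcal T_{{\bf e}_{i_k}}(-)$.

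Next I would apply Corollary~\ref{cormintwista} inside the $\mathcal G$-system to get
\[
\mathcal T_J(G_t^{t_0}) \;=\; \mathcal T_{{\bf e}_{i_s}} \cdots \mathcal T_{{\bf e}_{i_2}} \mathcal T_{{\bf e}_{i_1}}(G_t^{t_0})
\]
for any permutation $(i_1,\ldots,i_s)$ of $(1,\ldots,s)$. Pulling back each of the $s$ elementary $\mathcal G$-system operations step-by-step through the bijection of Theorem~\ref{thmmatrix} (noting that each intermediate $g$-cluster is the $G$-matrix of an honest cluster, so the bijection applies at every stage) yields the identity
\[
\mathcal T_{(U,{\bf x}_{t_0})}({\bf x}_t) \;=\; \mathcal T_{(x_{i_s;t_0},{\bf x}_{t_0})} \cdots \mathcal T_{(x_{i_2;t_0},{\bf x}_{t_0})} \mathcal T_{(x_{i_1;t_0},{\bf x}_{t_0})}({\bf x}_t).
\]

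Finally, by Theorem~\ref{thmnotdepend}, each $\mathcal T_{(x_{i_k;t_0},{\bf x}_{t_0})}(-)$ is independent of the chosen initial cluster containing $x_{i_k;t_0}$, and likewise for $\mathcal T_{(U,{\bf x}_{t_0})}(-)$; hence both sides may be rewritten without reference to $t_0$, giving $\mathcal T_U({\bf x}_t) = \mathcal T_{x_{i_s}} \cdots \mathcal T_{x_{i_1}}({\bf x}_t)$, as required. There is no real obstacle: the entire argument is a bookkeeping translation, and the only point to be careful about is ensuring that the intermediate $g$-clusters produced by the elementary operations $\mathcal T_{{\bf e}_{i_k}}$ inside the $\mathcal G$-system really do correspond to clusters of $\mathcal A(\mathcal S)$ via Theorem~\ref{thmmatrix} at every stage — but this is immediate since the $\mathcal G$-system $\mathcal G_{\mathbf T}$ was defined from cluster algebra data, so every $g$-cluster it contains is of the form $G_{t''}^{t_0}$ for some seed at some $t'' \in \mathbf T$.
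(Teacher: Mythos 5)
Your proposal is correct and takes essentially the same route the paper does: the paper simply asserts that the corollary "follows directly from Corollary~\ref{cormintwista} and Theorem~\ref{thmmatrix}," and your argument fills in exactly the intended bookkeeping — fixing an initial cluster $\mathbf{x}_{t_0}$ containing $U$, transporting the identity through the commuting diagram of Theorem~\ref{thmmatrix} step by step, and dropping the $t_0$ from the notation via Theorem~\ref{thmnotdepend}. The only difference is that you spell out the role of Theorem~\ref{thmnotdepend}, which the paper leaves implicit in the notation $\mathcal T_U$ and $\mathcal T_{x_i}$.
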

By the above corollary, we know that in order to study the co-Bongartz completions in a cluster algebra, it suffices to study  the elementary co-Bongartz completions. In next section, we give a direct construction of elementary co-Bongartz completions on the unpunctured surfaces.

\section{Co-Bongartz completions on the  unpunctured surfaces}
In this section we first give the construction of ``elementary co-Bongartz completions"  on the unpunctured surfaces. Then we show that the elementary co-Bongartz completions on unpunctured surfaces are compatible with the elementary co-Bongartz completions in the associated cluster algebras. Co-Bongartz completions on unpunctured surfaces can be defined by a sequence of elementary co-Bongartz completions.
\subsection{Basics on unpunctured surfaces}
\begin{Definition}
(Bordered surface with marked points). Let $S$ be a connected oriented $2$-dimensional Riemann surface with (possibly empty) boundary. Fix a non-empty set $M$ of marked points
in the closure of $S$ with at least one marked point on each boundary component. The pair $(S,M)$
is called a {\bf bordered surface with marked points}. Marked points in the interior of $S$ are called
{\bf punctures}.
\end{Definition}
In this paper, we will only consider surfaces $(S, M)$ such that all marked points
lie on the boundary of S, and we will refer to $(S, M)$ simply as an {\bf unpunctured surface}.
For technical reasons, we require that $(S,M)$ if $S$ is a polygon, then $M$ has at least $4$ marked points on the boundary of $S$.

An {\bf arc} $\gamma$ in  $(S,M)$ is a curve in $S$, considered up to isotopy, satisfying
that
\begin{itemize}
\item the endpoints of $\gamma$ are in $M$; $\gamma$ does not cross itself, except that its endpoints may coincide;

\item except for the endpoints, $\gamma$ is disjoint from $M$ and from the boundary of $S$;
\item $\gamma$ does not cut
out a monogon or a bigon.
\end{itemize}

A {\bf boundary arc} is a curve which lies in the boundary of $S$ and connects two
marked points without passing through a third.

\begin{Definition}
(Compatibility of arcs) Two arcs are called {\bf compatible} if they do not
intersect in the interior of $S$; more precisely, there are curves in their respective isotopy
classes which do not intersect in the interior of $S$.
\end{Definition}

A {\bf triangulation} of $(S,M)$ is a  maximal collection of compatible arcs together with all
boundary arcs. The arcs of a triangulation
cut the surface into {\em triangles}.

Note that each triangulation of $(S,M)$ has $n+m$ elements, $n$ of which are arcs in $S$, and the remaining $m$ elements are boundary arcs. The number of boundary arcs $m$ is equal to the number of marked points and $n=6g+3b+m-6$, where $g$ is the genus of $S$, $b$ is the number of boundary components (see \cite[Proposition 2.10]{FST}).

Choose any triangulation $T$ of $(S,M)$, let $\tau_1,\cdots,\tau_n$ be the $n$ interior arcs of $T$ and denote the $m$ boundary arcs of $(S,M)$ by $\tau_{n+1},\cdots,\tau_{n+m}$. For any triangle $\triangle$ in $T$, define a matrix
$B^\triangle=(b_{ij}^\triangle)_{1\leq i,j\leq n}$ by
\begin{eqnarray}
b_{ij}^\triangle=\begin{cases}1&\text{if } \tau_i \text{ and } \tau_j \text{ are sides of } \triangle \text{ with } \tau_j \text{ following } \tau_i \text{ in the clockwise order};\\
-1&\text{if } \tau_i \text{ and } \tau_j \text{ are sides of } \triangle \text{ with } \tau_j \text{ following } \tau_i \text{ in the anticlockwise order};\\
0& \text{otherwise}.\end{cases}\nonumber
\end{eqnarray}
Then define the {\bf signed adjacency matrix $B_T=(b_{ij}^T)_{1\leq i,j\leq n}$ of $T$} by $b_{ij}^T=\sum\limits_\triangle b_{ij}^\triangle$, where  the sum is taken over all
triangles in $T$. Note that the matrix $B_T$ is skew-symmetric and each of its entries $b_{ij}^T$ is
either $0,-1,1,-2$, or $2$.

For each $k=1,\cdots,n$, there is a unique quadrilateral in $T\backslash\{\tau_k\}$  such that $\tau_k$ is one of the
diagonals of this quadrilateral. Let $\tau_k^\prime$ be the other diagonal in that quadrilateral. Define the {\bf flip} $\mu_k(T)$ to be the new triangulation $T\backslash\{\tau_k\}\cup \{\tau_k^\prime\}$.

\begin{Theorem}(\cite[Theorem 7.11]{FST} and \cite[Theorem 5.1]{FT})\label{thmbijection}
 Fix an unpunctured surface $(S,M)$ with an initial triangulation $T=\{\tau_1,\cdots,\tau_n,\tau_{n+1},\cdots,\tau_{n+m}\}$, where $\tau_{n+1},\cdots,\tau_{n+m}$ are boundary arcs.
Let $\mathcal A(B_T)$ be a cluster algebra with the initial exchange matrix $B_T$. Then there exists a bijection
$x$ from the set of arcs of $(S,M)$ to the set of cluster variables of  $\mathcal A(B_T)$ given by $\gamma\mapsto x_\gamma$, which induces a bijection ${\bf x}$ from the set of
triangulations  of $(S,M)$ to the set of  clusters of $\mathcal A(B_T)$ given by $T^\prime=\{\tau_1^\prime,\cdots,\tau_n^\prime,\tau_{n+1},\cdots,\tau_{n+m}
\}\mapsto{\bf x}_{T^\prime}:=\{x_{\tau_1^\prime},\cdots,x_{\tau_n^\prime}\}$
. Furthermore,  the flips in $(S,M)$ are compatible with the mutations in $\mathcal A(B_T)$ under the map ${\bf x}$.
\end{Theorem}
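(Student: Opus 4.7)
The plan is to build the bijection by starting from the initial triangulation and propagating via flips, then to check well-definedness by comparing with the mutation structure, and finally to establish injectivity and surjectivity using a combinatorial Laurent expansion formula.

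First I would define the map on the initial data: set $x(\tau_i) = x_i$ for $i=1,\ldots,n$, where $x_1,\ldots,x_n$ are the initial cluster variables of $\mathcal A(B_T)$, and define ${\bf x}(T) = \{x_1,\ldots,x_n\}$. The key classical fact to invoke here is that any two triangulations of an unpunctured surface are connected by a finite sequence of flips, and that flipping the $k$-th arc of a triangulation $T'$ corresponds to the local picture of a quadrilateral whose diagonals are the old and new arcs. I would check that the signed adjacency matrix transforms under a flip exactly according to the matrix mutation rule of Definition~\ref{defmutation}; this is a local verification in the quadrilateral, using that the only entries of $B_{T'}$ that change are those involving $\tau_k'$. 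Given this, I would extend $x$ to all arcs and all triangulations by induction along flip sequences: if $T'' = \mu_k(T')$, define $x_{\tau_k''}$ as the image under mutation at $k$ of the cluster associated to $T'$.

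Next comes well-definedness, which is the first substantive step. Two different flip sequences $T = T^{(0)} \to T^{(1)} \to \cdots \to T^{(s)} = T'$ and $T = \widetilde{T}^{(0)} \to \cdots \to \widetilde{T}^{(s')} = T'$ must produce the same cluster in $\mathcal A(B_T)$. The plan is to exhibit an explicit, intrinsic formula for $x_\gamma$ as a Laurent polynomial in $x_1,\ldots,x_n$ depending only on $\gamma$ (not on the flip path used to reach it). The natural choice is the combinatorial model given by the sum over perfect matchings of the snake graph $G_\gamma$ associated with the crossing pattern of $\gamma$ with the arcs of $T$; equivalently, the $T$-path expansion. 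One verifies (i) that this formula agrees with $x_i$ on the initial arcs, and (ii) that it transforms correctly under a single flip by checking the exchange relation in a quadrilateral — the classical Ptolemy-type identity for snake graphs. Uniqueness of the value then forces well-definedness, and simultaneously gives ${\bf x}(T') \subseteq \mathcal A(B_T)$ as a cluster with exchange matrix $B_{T'}$.

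For injectivity, I would show that the Laurent expansion determines $\gamma$: distinct arcs have distinct crossing sequences with $T$, and distinct crossing sequences lead to non-isomorphic snake graphs, hence distinct Laurent polynomials (for example by comparing denominator vectors — each crossing contributes a factor of the corresponding $x_i$ in the denominator, so the denominator vector recovers the intersection numbers, and an unpunctured arc is determined by its intersection numbers with a fixed triangulation). Surjectivity on cluster variables is handled by the fact that every cluster of $\mathcal A(B_T)$ is obtained from the initial one by a sequence of mutations; the inductive construction above provides a triangulation producing that cluster. Finally, the flip/mutation compatibility is automatic from the construction, since mutations were used to define the map on non-initial clusters. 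The hard part is step two: setting up an intrinsic, path-independent description of $x_\gamma$, since without it one cannot separate the questions of well-definedness and of injectivity. This is precisely where the snake-graph or $T$-path machinery of Fomin–Shapiro–Thurston and Musiker–Schiffler–Williams enters, and it is the technical core of the theorem.
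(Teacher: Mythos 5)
The paper does not prove this theorem; it is quoted as a black box from Fomin--Shapiro--Thurston and Fomin--Thurston, so there is no internal argument to compare against. Your sketch is a correct reconstruction of the standard literature proof: flip-connectivity of triangulations, the local check that the signed adjacency matrix mutates under a flip exactly as an exchange matrix, an intrinsic Laurent expansion ($T$-paths in \cite{R}, or equivalently the snake-graph perfect-matching formula) to make the inductive definition path-independent, and denominator vectors together with the fact that an unpunctured arc is determined by its crossing numbers with a fixed triangulation for injectivity. The two facts you invoke as ``classical'' are exactly the nontrivial inputs and deserve explicit citation if this were written out: that the flip graph is connected (so the inductive extension reaches every triangulation), and that an arc on an \emph{unpunctured} surface is determined by its intersection vector with $T$ --- the latter is precisely what fails on punctured surfaces and is why tagged arcs are needed there, so it is not a throwaway remark but a load-bearing step of the injectivity argument.
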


\subsection{$g$-vectors of arcs}
We refer to \cite{R} for this subsection. Since the signed adjacency matrix $B_T$ in this paper is $-B_T$ in \cite{R},
some notations  in this subsection may be not the same with that in \cite{R}.

Let  $T$ be a triangulation of $(S,M)$, $\gamma$ be any arc in $(S,M)$ that crosses $T$ exactly $d$ times. We  fix an orientation for $\gamma$
and we denote its starting point by $s$ and its endpoint by $r$, with $s,r\in M$.  Let
$s=p_0, p_1,\cdots, p_d, p_{d+1}=r$ be the intersection points of $\gamma$ and $T$ in order of occurrence on $\gamma$, and $\tau_{i_k}\in T$ be the arc crossing with $\gamma$ at $p_k$ for $k=1,2,\cdots,d$. One can refer to Figure \ref{fig1} for an illustration.

For $k=0,1,\cdots,d$, let $\gamma_k$ denote the segment of the path $\gamma$ from the point $p_k$ to the point
$p_{k+1}$. Each $\gamma_k$ lies in exactly one triangle $\triangle_k$ in $T$. If $1\leq k\leq d-1$, the triangle $\triangle_k$ is formed by
the arcs $\tau_{i_k},\tau_{i_{k+1}}$ and a third arc that we denote by $\tau_{[\gamma_k]}$.
In the triangle $\triangle_0$, $\tau_{i_1}$ is one of the sides.
Denote the side of $\triangle_0$ that lies clockwise of $\tau_{i_1}$ by $\tau_{i_0}$ and the other side by $\tau_{[\gamma_0]}$. Similarly,
$\tau_{i_d}$ is one of the sides of $\triangle_d$. Denote the side that lies clockwise of $\tau_{i_d}$ by $\tau_{i_{d+1}}$  and the other side
by $\tau_{[\gamma_d]}$.

\begin{figure}
\centering
\begin{tikzpicture}
[xscale=0.9,yscale=0.8,> = stealth, 
	shorten < = -8pt,
shorten >=-8pt]

\draw  (0,0) ellipse (6 and 4);

\node (v1)at (-4.5,2.6) {$\bullet$};
\node (v2)at (0,4) {$\bullet$};
\node (v3)at (4.5,2.6) {$\bullet$};
\node (v4)at (6,0) {$\bullet$};
\node (v6)at (3,-3.5) {$\bullet$};
\node (v7)at (-3,-3.5) {$\bullet$};
\node (v8)at (-6,0) {$\bullet$};
\node at (-6.8,0){$s=p_0$};
\node at (6.8,0){$r=p_5$};
\draw  (v7)--node[very near end,right]{$\tau_{i_1}$}(v1);
\draw(v7)--node[very near end,right]{$\tau_{i_2}$}(v2);
\draw(v7)--node[near start,right]{$\tau_{i_3}$}(v3);
\draw(v3)--(v6)node[very near end,left]{$\tau_{i_4}$};
\draw[red](v8)--(v4);

\node (p1) at (-3.85,0) {$\bullet$};
\node at (-3.60,0.2) {$p_1$};

\node (p2)at (-1.59,0) {$\bullet$};
\node at (-1.79,0.2) {$p_2$};

\node (p3)at (1.32,0) {$\bullet$};
\node at (1.12,0.2) {$p_3$};

\node at (3.85,0) {$\bullet$};
\node at (4.2,0.2) {$p_4$};

\node at (-6.3,1.3) {$\tau_{\gamma_{[0]}}$};
\node at (-2.5,4) {$\tau_{\gamma_{[1]}}$};

\node at (2.5,4) {$\tau_{\gamma_{[2]}}$};
\node at (6,1.5) {$\tau_{i_5}$};

\node at (5.5,-2.5) {$\tau_{\gamma_{[4]}}$};

\node at (-0.5,-4.3) {$\tau_{\gamma_{[3]}}$};

\node at (-5.8,-1.7) {$\tau_{i_0}$};

\node at (-4.5,-1.5) {$\triangle_0$};
\node at (-2.5,2) {$\triangle_1$};
\node at (1,2) {$\triangle_2$};

\node at (1.5,-2) {$\triangle_3$};
\node at (4.5,-1.5) {$\triangle_4$};
\node at (-5,-0.2) {$\gamma_0$};

\node at (-2.5,-0.2) {$\gamma_1$};
\node at (-0.5,-0.2) {$\gamma_2$};
\node at (2.5,-0.2) {$\gamma_3$};
\node at (5,-0.2) {$\gamma_4$};
\end{tikzpicture}
\caption{}\label{fig1}
\end{figure}

A {\bf $T$-path} is a path $\alpha$ in $S$ on the triangulation $T$, that is, there exist arcs $\alpha_1,\cdots,\alpha_l\in T$ such that $\alpha$ is the concatenation of paths $\alpha=\alpha_1\alpha_2\cdots\alpha_l$.

A $T$-path $\alpha=\alpha_1\cdots\alpha_l$  is called a {\bf complete $(T,\gamma)$-path} if the
following axioms hold:

(T1) The even arcs are precisely the arcs crossed by $\gamma$ in order, that is, $\alpha_{2k}=\tau_{i_k}$.

(T2) For all $k=0,1,\cdots,d$, the segment $\gamma_k$ is homotopic to the segment of the path $\alpha$ starting
at the point $p_k$ following $\alpha_{2k}=\tau_{i_k}, \alpha_{2k+1}$, and $\alpha_{2k+2}=\tau_{i_{k+1}}$ until the point $p_{k+1}$.

Note that every complete $(T,\gamma)$-path has length $2d+1$ (i.e., $l=2d+1$) and starts and ends at the same point as $\gamma$.

The triangulation $T$ cuts $S$ into triangles. For each triangle $\triangle$ in $T$, fix the orientation of $\triangle$ by clockwise orientation when looking at
it from outside the surface.

\begin{Remark}\label{rmknoteq}
Since $(S,M)$ is an unpunctured surface, the three sides of each triangle $\triangle$ in $T$ are distinct. In particular, $\tau_{i_k}\neq \tau_{i_{k+1}}$ for $k=0,1,\cdots,d$, since they are two sides of the triangle $\triangle_k$, which means that $\alpha_{2k}\neq \alpha_{2k+2}$ in a complete $(T,\gamma)$-path $\alpha=\alpha_1\alpha_2\cdots\alpha_{2d+1}$.
\end{Remark}

\begin{Theorem}\cite[Theorem 6.2]{R} Let $T=\{\tau_1,\cdots,\tau_n,\tau_{n+1},\cdots,\tau_{n+m}\}$ be a triangulation of an unpunctured surface
$(S,M)$, and let $\gamma$ be an arc. Then there is precisely one complete $(T,\gamma)$-path $\alpha_\gamma^T=\alpha_1\cdots\alpha_{2d+1}$ satisfying that the orientation of $\alpha_{2k}=\tau_{i_k}$ induced by $\alpha$ coincides with the orientation of $\tau_{i_k}$ in the triangle $\triangle_k$. We call $\alpha_\gamma^T$ the minimal $(T,\gamma)$-path.
\end{Theorem}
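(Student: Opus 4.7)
The plan is to verify existence and uniqueness simultaneously by showing that the minimality condition already pins down every orientation and every choice of odd arc.

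First, the minimality condition directly fixes the orientation of each even arc $\alpha_{2k}=\tau_{i_k}$ for $k=1,\dots,d$: it must be the clockwise orientation of $\tau_{i_k}$ viewed inside $\triangle_k$. Since the clockwise orientation of a shared side reverses when one passes to the adjacent triangle, the starting vertex of $\alpha_{2k+2}$ (prescribed as the start of the clockwise traversal of $\tau_{i_{k+1}}$ in $\triangle_{k+1}$) coincides with the endpoint of the clockwise traversal of $\tau_{i_{k+1}}$ inside $\triangle_k$. Thus, after orientations are fixed, the endpoint of $\alpha_{2k}$ and the start of $\alpha_{2k+2}$ are forced to be specific vertices of $\triangle_k$, which I will call $e_k$ and $\tilde e_{k+1}$.

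Next, I would determine each odd arc $\alpha_{2k+1}$ for $k=1,\dots,d-1$ by analyzing the cyclic position of $\tau_{i_k}$ and $\tau_{i_{k+1}}$ on $\partial\triangle_k$. By Remark \ref{rmknoteq}, $\tau_{i_k}\neq\tau_{i_{k+1}}$, and any two distinct sides of a triangle are adjacent in its cyclic order, so exactly one of two cases arises: either $\tau_{i_{k+1}}$ immediately follows $\tau_{i_k}$ clockwise in $\triangle_k$, in which case $e_k$ is the shared vertex of the two sides and $\tilde e_{k+1}$ is the opposite endpoint of $\tau_{i_{k+1}}$, forcing $\alpha_{2k+1}=\tau_{i_{k+1}}$ (traversed in its clockwise direction in $\triangle_k$); or $\tau_{i_k}$ immediately follows $\tau_{i_{k+1}}$ clockwise, in which case $\tilde e_{k+1}$ is the shared vertex and $e_k$ the opposite endpoint of $\tau_{i_k}$, forcing $\alpha_{2k+1}=\tau_{i_k}$ (traversed against its clockwise direction). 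In both situations condition (T2) is automatic: the constructed segment from $p_k$ to $p_{k+1}$ lies inside the closed disk $\overline{\triangle_k}$, which is simply connected, so it is homotopic rel endpoints to $\gamma_k$. The boundary cases $k=0$ and $k=d$ would be handled by the same argument using the conventions $\tau_{i_0},\tau_{[\gamma_0]}$ and $\tau_{i_{d+1}},\tau_{[\gamma_d]}$: since $p_1$ lies in the interior of $\tau_{i_1}$, the marked point $s$ must be the vertex of $\triangle_0$ opposite $\tau_{i_1}$, and $\alpha_1$ is then forced to be the unique side of $\triangle_0$ connecting $s$ to the prescribed start of $\alpha_2$, which turns out to be $\tau_{i_0}$; symmetric reasoning pins down $\alpha_{2d+1}$.

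The main obstacle is ruling out other arcs of $T$ as possible choices for $\alpha_{2k+1}$, since a triangulation may in principle contain several distinct arcs joining the same pair of marked points. I would control this using condition (T2): the composite segment from $p_k$ to $p_{k+1}$ must be homotopic rel endpoints to $\gamma_k\subset\overline{\triangle_k}$, so $\alpha_{2k+1}$ is constrained (up to such homotopy) to lie on the boundary of the disk $\overline{\triangle_k}$. As distinct arcs of $T$ with the same endpoints lie in distinct homotopy classes rel endpoints, $\alpha_{2k+1}$ must coincide with the specific side of $\triangle_k$ identified in the case analysis above, yielding the desired uniqueness.
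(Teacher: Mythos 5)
The present paper cites this theorem from \cite{R} without supplying a proof, so there is no internal argument to compare against; I can only assess your proposal on its own terms. Your proof is essentially sound and reproduces the intended recursion: fixing the $\triangle_k$-clockwise orientation on each even arc pins down the two endpoints that $\alpha_{2k+1}$ must connect, the cyclic order of $\tau_{i_k}$ and $\tau_{i_{k+1}}$ on $\partial\triangle_k$ then singles out the correct side, and (T2) is used both to confirm the constructed path and to exclude competing arcs of $T$ with the same endpoints; the outcome is precisely the dichotomy $\alpha_{2k+1}\in\{\tau_{i_k},\tau_{i_{k+1}}\}$ recorded in Remark \ref{rmkmin}. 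Two small points deserve more care. First, the closure $\overline{\triangle_k}$, viewed as a subset of $S$, need not be simply connected, since distinct vertices of the triangle may coincide in $S$ (the three sides are distinct by Remark \ref{rmknoteq}, but their endpoints need not be); the homotopy to $\gamma_k$ should instead be obtained by lifting both the constructed boundary segment and $\gamma_k$ to the abstract closed disk that maps onto $\overline{\triangle_k}$, where they have the same lifted endpoints because the characteristic map is injective on edge interiors. Second, the uniqueness step tacitly uses the standard fact that simple arcs homotopic rel endpoints are isotopic, which is what justifies the assertion that distinct arcs of $T$ with the same endpoints lie in distinct homotopy classes. With these clarifications the argument is correct.
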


\begin{Remark}\label{rmkmin}
Let $\alpha_\gamma^T=\alpha_1\cdots\alpha_{2d+1}$ be the minimal $(T,\gamma)$-path. There are the following facts from the proof of \cite[Theorem 6.2]{R}.

\begin{itemize}
 \item $\alpha_1=\tau_{i_0}$ and $\alpha_{2d+1}=\tau_{i_{d+1}}$;

\item  $(\alpha_{2k-1},\alpha_{2k},\alpha_{2k+1})$ must be one of the following cases:

$$(\tau_{i_{k-1}},\tau_{i_k},\tau_{i_k}),\;\;(\tau_{i_{k-1}},\tau_{i_k},\tau_{i_{k+1}}),\;\;
(\tau_{i_{k}},\tau_{i_k},\tau_{i_k}),\;\;(\tau_{i_{k}},\tau_{i_k},\tau_{i_{k+1}}),$$
where $k=1,2,\cdots,d$.
 \end{itemize}
\end{Remark}

Keep the above notations, for $k=1,\cdots,d$, we define three subsets $I_{\tau_{i_k}}^+,I_{\tau_{i_k}}^-,I_{\tau_{i_k}}^0$ of the set of crossing points $\{p_1,p_2,\cdots,p_d\}$ identifying it with $[1,d]:=\{1,2,\cdots,d\}$ given by
\begin{eqnarray}
I_{\tau_{i_k}}^+&:=&\{j| \tau_{i_j}=\tau_{i_k},\text{ and } (\alpha_{2j-1},\alpha_{2j},\alpha_{2j+1})=(\tau_{i_j},\tau_{i_j},\tau_{i_j})\}\subseteq [2,d-1],\nonumber\\
I_{\tau_{i_k}}^-&:=&\{j| \tau_{i_j}=\tau_{i_k},\text{ and } (\alpha_{2j-1},\alpha_{2j},\alpha_{2j+1})=(\tau_{i_{j-1}},\tau_{i_j},\tau_{i_{j+1}})\}\subseteq [1,d],\nonumber\\
I_{\tau_{i_k}}^{0,<}&:=&\{j| \tau_{i_j}=\tau_{i_k},\text{ and } (\alpha_{2j-1},\alpha_{2j},\alpha_{2j+1})=(\tau_{i_j},\tau_{i_j},\tau_{i_{j+1}})\}\subseteq [2,d],\nonumber\\
I_{\tau_{i_k}}^{0,>}&:=&\{ j| \tau_{i_j}=\tau_{i_k},\text{ and } (\alpha_{2j-1},\alpha_{2j},\alpha_{2j+1})=(\tau_{i_{j-1}},\tau_{i_j},\tau_{i_{j}})\}\subseteq [1,d-1]
.\nonumber
\end{eqnarray}

The following are the local shapes of the crossings of $\gamma$ and $T$ at $p_j$ and $p_{l}$ for $j\in I_{\tau_k}^+$ and for $l\in I_{\tau_k}^-$.

\begin{tikzpicture}
[xscale=1,yscale=1,> = stealth, 
	shorten < = -4pt,
shorten >=-4pt]
\node (v1) at (-7,4) {};
\node (v2) at (0,4) {};
\node (v3) at (7,4) {};
\node (v4) at (-7,-0.5) {};
\node (v5) at (0,-0.5) {};
\node (v6) at (7,-0.5) {};
\draw(v1)--(v3);
\draw(v3)--(v6);
\draw(v6)--(v4);
\draw(v4)--(v1);
\draw(v2)--(v5);

\draw(-5,1.5)--(-4,3.5)--(-3,1.5)--(-2,3.5);
\draw(-5,2.5)--(-0.5,2.5);

\draw(5,3)--(3,3)--(5,1.5)--(3,1.5);
\draw(4,3.6)--(4,1.4);

\node at (-4.2,2.2) {$p_{j-1}$};
\node at (-3.3,2.7) {$p_j$};
\node at (-2.2,2.2) {$p_{j+1}$};
\node at (-1.2,2.7) {$\gamma$};
\node at (-4.3,3.5) {$v_1$};
\node at (-3.3,1.5) {$v_2$};

\node at (-3.5,1) {Here $\tau_{i_j}=\tau_{i_k}$, and the triangles $p_{j-1}v_1p_j$};

\node at (-4.35,0.6) { and $p_jv_2p_{j+1}$ are contractible.};

\node at (-5,-0.1) {In this case, $j\in I_{\tau_{i_k}}^+$.};

\node at (3.7,3.6) {$\gamma$};
\node at (4.4,3.2) {$p_{l-1}$};

\node at (4.3,2.3) {$p_{l}$};

\node at (3.65,1.7) {$p_{l+1}$};
\node at (2.5,3) {$v_1$};
\node at (5.5,1.5) {$v_2$};

\node at (3.5,1) {Here $\tau_{i_l}=\tau_{i_k}$, and the triangles $p_{l-1}v_1p_l$};

\node at (2.65,0.6) { and $p_lv_2p_{l+1}$ are contractible.};

\node at (2,-0.1) {In this case, $l\in I_{\tau_{i_k}}^-$.};

\end{tikzpicture}

Define ${\bf e}_{\tau_1}, {\bf e}_{\tau_2},\cdots, {\bf e}_{\tau_n}$  to be the standard basis vectors of $\mathbb Z^n$, and let ${\bf e}_{\tau_j}$ be the zero vector in $\mathbb Z^n$  if $\tau_j$ is a
boundary arc, i.e., $j=n+1,\cdots,n+m$.

\begin{Definition}(i) Let $\alpha_\gamma^T=\alpha_1\cdots\alpha_{2d+1}$ be the minimal $(T,\gamma)$-path, the
{\bf $g$-vector $g_\gamma^T$ of $\gamma$ with respect to $T$} is defined to be the vector $g_\gamma^T=\sum\limits_{i=1}^{2d+1}(-1)^{i+1}{\bf e}_{\alpha_i}={\bf e}_{\tau_{i_0}}+{\bf e}_{\tau_{i_{d+1}}}+\sum\limits_{i=2}^{2d}(-1)^{i+1}{\bf e}_{\alpha_i}\in\mathbb Z^n$.

(ii) Let $T^\prime=\{\gamma_1,\cdots,\gamma_n,\tau_{n+1},\cdots,\tau_{n+m}\}$ be another triangulation of $(S,M)$, the matrix $$G_{T^\prime}^T=(g_{\gamma_1}^T,\cdots,g_{\gamma_n}^T)$$ is called the {\bf $G$-matrix of $T^\prime$ with respect to $T$}.
\end{Definition}

\begin{Example}
In the Figure \ref{fig1}, $\alpha=\tau_{i_0}\tau_{i_1}\tau_{i_1}\tau_{i_2}\tau_{i_2}\tau_{i_3} \tau_{i_4}\tau_{i_4}\tau_{i_5}$ is the minimal  complete $(T,\gamma)$-paths, and the $g$-vector of $\gamma$ with respect to $T$ is $g_\gamma^T=-{\bf e}_{\tau_{i_3}}$.
\end{Example}

\begin{Proposition}\label{proi+}
Keep the above notations. The coefficient of ${\bf e}_{\tau_{i_k}}$ in $\sum\limits_{i=2}^{2d}(-1)^{i+1}{\bf e}_{\alpha_i}$ is $$|I_{\tau_{i_k}}^+|-|I_{\tau_{i_k}}^-|.$$
\end{Proposition}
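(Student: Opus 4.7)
The plan is to unwind the definition of the $g$-vector by separating the alternating sum into its even-indexed and odd-indexed terms, and then bookkeep the contributions of each index using the four-case classification of triples $(\alpha_{2j-1},\alpha_{2j},\alpha_{2j+1})$ listed in Remark \ref{rmkmin}.

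First I would rewrite
\[
\sum_{i=2}^{2d}(-1)^{i+1}{\bf e}_{\alpha_i} \;=\; -\sum_{j=1}^{d}{\bf e}_{\tau_{i_j}} \;+\; \sum_{j=1}^{d-1}{\bf e}_{\alpha_{2j+1}},
\]
using that the even-indexed $\alpha_{2j}$ equal $\tau_{i_j}$ for $j=1,\dots,d$ (with sign $-1$) and the odd-indexed terms run over $j=1,\dots,d-1$ (with sign $+1$). The contribution of the even part to the coefficient of ${\bf e}_{\tau_{i_k}}$ is then simply $-N_k$, where $N_k$ counts the $j\in[1,d]$ with $\tau_{i_j}=\tau_{i_k}$, and $N_k = |I_{\tau_{i_k}}^+|+|I_{\tau_{i_k}}^-|+|I_{\tau_{i_k}}^{0,<}|+|I_{\tau_{i_k}}^{0,>}|$ by the four-case classification.

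Next I would analyze the odd part. For each $j\in[1,d-1]$, reading off $\alpha_{2j+1}$ from the four possible triples at position $j$, one sees that $\alpha_{2j+1}=\tau_{i_j}$ precisely when $j\in I^+\cup I^{0,>}$ (cases $(\tau_{i_j},\tau_{i_j},\tau_{i_j})$ and $(\tau_{i_{j-1}},\tau_{i_j},\tau_{i_j})$), while $\alpha_{2j+1}=\tau_{i_{j+1}}$ precisely when $j\in I^-\cup I^{0,<}$. Counting those $j$ with $\alpha_{2j+1}=\tau_{i_k}$ splits into two sums; the first is immediately $|I_{\tau_{i_k}}^+|+|I_{\tau_{i_k}}^{0,>}|$ (using that these index sets already lie in $[1,d-1]$). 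For the second sum, re-index $l=j+1\in[2,d]$ and observe the key compatibility: $\alpha_{2j+1}=\alpha_{2l-1}$, so the condition $j\in I^-\cup I^{0,<}$ is equivalent to position $l$ being of type $I^+$ or $I^{0,<}$ (i.e.\ the triple at $l$ begins with $\tau_{i_l}$). Combined with $\tau_{i_l}=\tau_{i_k}$, this sum contributes $|I_{\tau_{i_k}}^+|+|I_{\tau_{i_k}}^{0,<}|$.

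Finally, adding the two contributions to the odd part yields $2|I_{\tau_{i_k}}^+|+|I_{\tau_{i_k}}^{0,<}|+|I_{\tau_{i_k}}^{0,>}|$, and subtracting $N_k$ from the even part leaves exactly $|I_{\tau_{i_k}}^+|-|I_{\tau_{i_k}}^-|$, as required. The main delicate point is the second step of the odd-part count: one must justify the double characterization of $\alpha_{2j+1}=\alpha_{2(j+1)-1}$ from the types at the two adjacent positions $j$ and $j+1$, and check carefully that the boundary restrictions ($I^+\subseteq[2,d-1]$, $I^{0,<}\subseteq[2,d]$, $I^{0,>}\subseteq[1,d-1]$) ensure no index is over- or under-counted when intersecting with the summation range. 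Everything else is a straightforward bookkeeping after that reindexing trick.
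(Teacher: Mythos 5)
Your proof is correct, but it takes a genuinely different route from the paper's. The paper works directly with the index set $I_{\tau_{i_k}}=\{i\in[2,2d]\,|\,\alpha_i=\tau_{i_k}\}$ and shows, using Remarks \ref{rmknoteq} and \ref{rmkmin}, that it decomposes as a disjoint union
$$I_{\tau_{i_k}}=\Bigl(\bigsqcup_{j\in I^+}\{2j-1,2j,2j+1\}\Bigr)\sqcup\Bigl(\bigsqcup_{j\in I^-}\{2j\}\Bigr)\sqcup\Bigl(\bigsqcup_{j\in I^{0,<}}\{2j-1,2j\}\Bigr)\sqcup\Bigl(\bigsqcup_{j\in I^{0,>}}\{2j,2j+1\}\Bigr),$$
from which the alternating sum is evaluated block by block ($+1-1+1=1$, $-1$, $+1-1=0$, $-1+1=0$). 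You instead split the alternating sum into its even and odd parts up front, count the even part as $-N_k$ where $N_k=|I^+|+|I^-|+|I^{0,<}|+|I^{0,>}|$, and then analyze the odd part via the re-indexing $l=j+1$. These are two presentations of the same bookkeeping, and both hinge on the same input: your ``key compatibility'' step---that $j$ being of the types ending with $\tau_{i_{j+1}}$ is equivalent to $l=j+1$ being of the types beginning with $\tau_{i_l}$---is exactly the $\tau_{i_j}\neq\tau_{i_{j+1}}$ assertion of Remark \ref{rmknoteq}, and it is this same fact the paper uses to justify disjointness of its union. The paper's formulation is slightly more economical (one partition, then read off signs), while yours makes the even/odd structure of the index explicit, at the cost of the re-indexing argument and double-counting checks you flag. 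One small expository point: you should cite Remark \ref{rmknoteq} explicitly at the re-indexing step, since without it the ``double characterization'' would not hold (if $\tau_{i_j}=\tau_{i_{j+1}}$ were allowed, the value $\alpha_{2j+1}$ could simultaneously satisfy both type descriptions).
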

\begin{proof}
Consider the set
$$I_{\tau_{i_k}}=\{i\in[2,2d]|\alpha_i=\tau_{i_k}\}.$$
An easy fact from Remark \ref{rmknoteq} and Remark \ref{rmkmin} is that
\begin{eqnarray}
I_{\tau_{i_k}}=\left(\bigsqcup\limits_{j\in I_{\tau_{i_k}}^+}\{2j-1,2j,2j+1\}\right)\bigsqcup\left(\bigsqcup\limits_{j\in I_{\tau_{i_k}}^-}\{2j\}\right)\bigsqcup\left(\bigsqcup\limits_{j\in I_{\tau_{i_k}}^{0,<}}\{2j-1,2j\}\right)\bigsqcup\left(\bigsqcup\limits_{j\in I_{\tau_{i_k}}^{0,>}}\{2j,2j+1\}\right).\nonumber
\end{eqnarray}
We know that $\sum\limits_{i\in[2,d] \text{ and }\alpha_i=\tau_{i_k}}(-1)^{i+1}{\bf e}_{\alpha_i}=\sum\limits_{i\in I_{\tau_{i_k}}}(-1)^{i+1}{\bf e}_{\tau_{i_k}}$ and
\begin{eqnarray}
&&\sum\limits_{i\in I_{\tau_{i_k}}}(-1)^{i+1}{\bf e}_{\tau_{i_k}}\nonumber\\
&=&\sum\limits_{j\in I_{\tau_{i_k}}^+}({\bf e}_{\tau_{i_k}}-{\bf e}_{\tau_{i_k}}+{\bf e}_{\tau_{i_k}})+\sum\limits_{j\in I_{\tau_{i_k}}^-}(-{\bf e}_{\tau_{i_k}})+\sum\limits_{j\in I_{\tau_{i_k}}^{0,<}}({\bf e}_{\tau_{i_k}}-{\bf e}_{\tau_{i_k}})
+\sum\limits_{j\in I_{\tau_{i_k}}^{0,>}}(-{\bf e}_{\tau_{i_k}}+{\bf e}_{\tau_{i_k}})\nonumber\\
&=&\left(|I_{\tau_{i_k}}^+|-|I_{\tau_{i_k}}^-|\right){\bf e}_{\tau_{i_k}}.\nonumber
\end{eqnarray}
So the coefficient of ${\bf e}_{\tau_{i_k}}$ in $\sum\limits_{i=2}^{2d}(-1)^{i+1}{\bf e}_{\alpha_i}$ is $|I_{\tau_{i_k}}^+|-|I_{\tau_{i_k}}^-|$.
\end{proof}
\begin{Corollary}\label{corgexp}
Keep the above notations.
Denote by $I^-(T,\gamma)=\bigcup\limits_{k=1}^d I_{\tau_{i_k}}^-$ and $$I^+(T,\gamma)=\left(\bigcup\limits_{k=1}^d I_{\tau_{i_k}}^+\right)\bigcup\{j|j\in\{0,d+1\} \text{and } \tau_{i_j} \text{ is not a bound arc.}\}.$$
Then the $g$-vector $g_\gamma^T$ of $\gamma$ with respect to $T$ is
$$g_\gamma^T=\sum\limits_{j\in I^+(T,\gamma)}{\bf e}_{\tau_{i_j}}-\sum\limits_{j\in I^-(T,\gamma)}{\bf e}_{\tau_{i_j}}.$$
\end{Corollary}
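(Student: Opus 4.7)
The plan is to deduce the corollary directly from Proposition \ref{proi+} together with the definition of $g_\gamma^T$, so almost all of the work has already been done; what remains is essentially a repackaging of the multiplicities $|I_{\tau_{i_k}}^+| - |I_{\tau_{i_k}}^-|$ as a sum indexed by position indices rather than by arcs.

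First I would split the defining expression
\[
g_\gamma^T \;=\; {\bf e}_{\tau_{i_0}} + {\bf e}_{\tau_{i_{d+1}}} + \sum_{i=2}^{2d}(-1)^{i+1}{\bf e}_{\alpha_i}
\]
into its two pieces. The boundary terms ${\bf e}_{\tau_{i_0}}$ and ${\bf e}_{\tau_{i_{d+1}}}$ vanish precisely when $\tau_{i_0}$, respectively $\tau_{i_{d+1}}$, is a boundary arc (by the convention set just before the definition). Hence
\[
{\bf e}_{\tau_{i_0}} + {\bf e}_{\tau_{i_{d+1}}} \;=\; \sum_{j \in \{0,d+1\},\ \tau_{i_j} \text{ not boundary}} {\bf e}_{\tau_{i_j}},
\]
which is exactly the contribution to $\sum_{j\in I^+(T,\gamma)} {\bf e}_{\tau_{i_j}}$ coming from the two endpoint indices $j=0$ and $j=d+1$.

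Next I would process the middle sum. Let $A$ denote the set of distinct arcs appearing among $\tau_{i_1},\ldots,\tau_{i_d}$. For each $\tau\in A$, write $I_\tau^+$ and $I_\tau^-$ for the common value of $I_{\tau_{i_k}}^+$, $I_{\tau_{i_k}}^-$ as $k$ ranges over the positions with $\tau_{i_k}=\tau$; these subsets of $[1,d]$ depend only on $\tau$, and $\{I_\tau^+\}_{\tau\in A}$ is a pairwise disjoint family (and similarly for $I_\tau^-$), since a given position index $j$ singles out a unique arc $\tau_{i_j}$. Applying Proposition \ref{proi+} once for each $\tau\in A$ yields
\[
\sum_{i=2}^{2d}(-1)^{i+1}{\bf e}_{\alpha_i} \;=\; \sum_{\tau\in A}\bigl(|I_\tau^+|-|I_\tau^-|\bigr){\bf e}_\tau \;=\; \sum_{\tau\in A}\sum_{j\in I_\tau^+}{\bf e}_{\tau_{i_j}} \;-\; \sum_{\tau\in A}\sum_{j\in I_\tau^-}{\bf e}_{\tau_{i_j}},
\]
where in the last step I used $\tau={\tau_{i_j}}$ for $j\in I_\tau^\pm$. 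By the disjointness noted above, these double sums collapse to single sums over $\bigcup_{k=1}^d I_{\tau_{i_k}}^+$ and $\bigcup_{k=1}^d I_{\tau_{i_k}}^-$ respectively.

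Finally I would add the two pieces. Since $I_{\tau_{i_k}}^+\subseteq[2,d-1]$ and $I_{\tau_{i_k}}^-\subseteq[1,d]$, the index sets from the middle sum are disjoint from $\{0,d+1\}$, so there is no overlap and the endpoint contribution fits into the definition of $I^+(T,\gamma)$ without duplication. This gives the claimed formula. There is no genuine obstacle here; the only point requiring care is the bookkeeping for repeated arcs, which is handled cleanly by the observation that $I_\tau^\pm$ is determined by the arc $\tau$ alone, so passing between the arc-indexed and position-indexed sums is a disjoint-union rearrangement rather than a counting argument.
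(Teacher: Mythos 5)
Your proof is correct and follows essentially the same route as the paper's: the paper's own proof simply records the disjointness $I_{\tau_{i_k}}^\pm \cap I_{\tau_{i_j}}^\pm = \emptyset$ for $\tau_{i_j} \neq \tau_{i_k}$ and then cites the definition of the $g$-vector together with Proposition \ref{proi+}, which is precisely the bookkeeping you carry out in detail. Your additional remark that $I_\tau^\pm$ is determined by the arc $\tau$ alone is the right way to make that citation rigorous, and the index-range check $I_{\tau_{i_k}}^\pm \subseteq [1,d]$ versus $\{0,d+1\}$ correctly handles the endpoint terms.
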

\begin{proof}
Note that $I_{\tau_{i_k}}^\pm \cap I_{\tau_{i_j}}^\pm=\phi$ for any $\tau_{i_j}\neq \tau_{i_k}$.
The result follows from the definition of $g$-vectors and Proposition \ref{proi+}.
\end{proof}

Let $T^\prime$ be another triangulation of $(S,M)$, we denote by
\begin{eqnarray}
I^+(T,T^\prime):=\bigsqcup\limits_{\gamma\in T^\prime}I^+(T,\gamma)\;\;\text{ and }\;\;I^-(T,T^\prime):=\bigsqcup\limits_{\gamma\in T^\prime}I^-(T,\gamma).\nonumber
\end{eqnarray}
Note that the elements in $I^{\pm}(T,\gamma)$ (respectively, $I^{\pm}(T,T^\prime)$) represents certain crossing points between $\gamma$ (respectively, $T^\prime$) and $T$. For $j\in I^\pm(T,\gamma)$ (respectively, $j\in I^\pm(T,T^\prime)$), denote by $\tau_{i_j}$ be the arc in $T$ crossing with $\gamma$ (respectively, $T^\prime$) at $j$.
\begin{Theorem}\label{thmempty}
Keep the above notations.

(i) for any $j\in I^+(T,\gamma)$ and $l\in I^-(T,\gamma)$, we have $\tau_{i_j}\neq \tau_{i_l}$;

(ii) for any $j\in I^+(T,T^\prime)$ and $l\in I^-(T,T^\prime)$, we have $\tau_{i_j}\neq \tau_{i_l}$.
\end{Theorem}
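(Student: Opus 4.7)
My plan is to prove both parts by exploiting the local geometric structure of the minimal $(T,\gamma)$-path at crossings of type $I^+$ and $I^-$ as encoded in the two pictures. At an $I^+$ crossing $p_j$, we have $\alpha_{2j-1}=\alpha_{2j}=\alpha_{2j+1}=\tau_{i_j}$, and the two contractible triangles $p_{j-1}v_1p_j$ and $p_jv_2p_{j+1}$ (with $v_1=v^{(j-1,j)}$, $v_2=v^{(j,j+1)}$ being the two distinct endpoints of $\tau_{i_j}$) pin $\gamma$ into a specific ``wiggling'' configuration along $\tau_{i_j}$. At an $I^-$ crossing $p_l$, the path $\gamma$ passes transversally through three distinct consecutive arcs $\tau_{i_{l-1}},\tau_{i_l},\tau_{i_{l+1}}$, with again two contractible triangles, but now positioned on opposite sides of $\tau_{i_l}$.

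For part (i), I would argue by contradiction: suppose $\tau:=\tau_{i_j}=\tau_{i_l}$ for some $j\in I^+(T,\gamma)$ and $l\in I^-(T,\gamma)$. Using the orientation axiom (the orientation of $\alpha_{2k}$ induced by $\alpha$ must match the orientation of $\tau_{i_k}$ in $\triangle_k$), I determine the starting vertex $v_{2k-1}$ of $\alpha_{2k}$ in each case: at $j\in I^+$ we get $v_{2j-1}$ equal to the endpoint of $\tau$ opposite to $v^{(j-1,j)}$, while at $l\in I^-$ we get $v_{2l-1}=v^{(l-1,l)}$. Combining this with the geometric configurations pictured, both local behaviors around $p_j$ and $p_l$ are constrained by which triangle adjacent to $\tau$ contains $\gamma_j$ and $\gamma_l$. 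A case analysis according to whether $\gamma$ traverses $\tau$ in the same or opposite directions at $p_j$ and $p_l$ yields in each case a pair of subarcs of $\alpha$ whose concatenation could be shortened, contradicting the minimality of $\alpha$. The endpoint cases $j\in\{0,d+1\}$ are handled by a similar but shorter argument at the boundary of $\alpha$.

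For part (ii), the case where $j,l$ come from the same arc of $T'$ reduces immediately to part (i). In the case that $j$ comes from $\gamma_a\in T'$ and $l$ from $\gamma_b\in T'$ with $\gamma_a\neq\gamma_b$, suppose for contradiction that $\tau:=\tau_{i_j}=\tau_{i_l}$. The $I^+$ configuration at $p_j$ confines $\gamma_a$ to a neighborhood of $\tau$ that touches both endpoints of $\tau$ (through the two contractible triangles $p_{j-1}v_1p_j$ and $p_jv_2p_{j+1}$), while the $I^-$ configuration at $p_l$ makes $\gamma_b$ pass cleanly across $\tau$, moving between the two triangles of $T$ adjacent to $\tau$. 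I will show that for every admissible positioning of the three successive crossings of $\gamma_a$ with $\tau$ relative to $\gamma_b$'s single crossing, the two arcs are forced to meet transversally in the interior of $S$, contradicting their coexistence in the triangulation $T'$.

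The main obstacle lies in the detailed geometric verification in part (ii) Case 2: one must confirm that the $I^+$ and $I^-$ local configurations truly cannot coexist for non-crossing arcs, which requires a careful placement analysis of the four contractible triangles (two from $\gamma_a$'s $I^+$ picture and two from $\gamma_b$'s $I^-$ picture) within the two triangles of $T$ adjacent to $\tau$. For part (i), extracting a concrete shortening of $\alpha$ from the assumed coexistence of $I^+$ and $I^-$ crossings is technically delicate, since the relevant portions of $\alpha$ between indices $2j$ and $2l$ must be tracked through arbitrarily many intermediate crossings.
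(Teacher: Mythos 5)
Your plan for part (i) rests on the wrong notion of minimality. In this paper, following Schiffler, ``minimal $(T,\gamma)$-path'' is simply a name for the unique complete $(T,\gamma)$-path whose even segments carry the orientation induced from the triangles $\triangle_k$; it is not characterized by length, and by axiom (T1) every complete $(T,\gamma)$-path has the same fixed length $2d+1$. So ``a pair of subarcs of $\alpha$ whose concatenation could be shortened'' does not contradict anything: complete $(T,\gamma)$-paths are allowed to backtrack (an $I^+$ crossing is precisely such a backtrack along $\tau_{i_j}$), and removing a backtrack does not produce a shorter minimal path, it produces a sequence that is no longer a complete $(T,\gamma)$-path at all. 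The contradiction the paper actually extracts is topological and lives on $\gamma$, not on $\alpha$: setting $\tau := \tau_{i_j} = \tau_{i_l}$ and looking at the unique quadrilateral of $T$ having $\tau$ as a diagonal, the $I^+$ configuration at $p_j$ pins a sub-segment of $\gamma$ against one endpoint of $\tau$ while the $I^-$ configuration at $p_l$ sends another sub-segment of $\gamma$ cleanly across $\tau$ near the opposite endpoint, and inside that quadrilateral these two sub-segments of $\gamma$ are forced to meet transversally, contradicting the fact that $\gamma$ is an arc and hence has no self-crossing.

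Your part (ii), case 2, correctly aims at this type of contradiction (the two arcs $\gamma_a,\gamma_b\in T^\prime$ would be forced to cross, violating compatibility), and that is essentially the argument the paper gives for both parts; part (i) is the special case where both local pictures come from the same arc $\gamma$, so the forced intersection is a self-crossing. Once you replace the ``shorten $\alpha$'' contradiction in part (i) by the ``$\gamma$ self-crosses'' contradiction, the delicate tracking of $\alpha$ between indices $2j$ and $2l$ that you flag as an obstacle becomes unnecessary: the crossing is visible locally, inside the single quadrilateral around $\tau$.
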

\begin{proof}
(i) Assume by contradiction that there exist $j\in I^+(T,\gamma)$ and $l\in I^-(T,\gamma)$ such that $\tau_{i_j}=\tau_{i_l}$. There is an unique quadrilateral $\diamondsuit$ in $T$ such that $\tau_{i_j}=\tau_{i_l}$ is one of the diagonals of $\diamondsuit$. Then the picture is the following.
$$
\begin{tikzpicture}
[xscale=1,yscale=0.5,> = stealth, 
	shorten < = -1pt,
shorten >=-1pt]

\draw (0,0)--(0,4);
\draw(0,4)--(4,4);
\draw(4,4)--(4,0);
\draw(4,0)--(0,0);

\draw(0,4)--(4,0);
\node at (2.8,2) {$\tau_{i_j}=\tau_{i_l}$};
\node at (-0.3,4) {$v_1$};
\node at (4.3,0) {$v_2$};
\end{tikzpicture}
$$

By the above picture, we know that $j\in I^+(T,\gamma)$ and $l\in I^-(T,\gamma)$ with $\tau_{i_j}=\tau_{i_l}$ will result in that $\gamma$ has self-crossing, which is a contradiction. So for any $j\in I^+(T,\gamma)$ and $l\in I^-(T,\gamma)$, we have $\tau_{i_j}\neq \tau_{i_l}$.

(ii) The proof is similar to (i). Assume by contradiction that there exist $j\in I^+(T,T^\prime)$ and $l\in I^-(T,T^\prime)$ such that $\tau_{i_j}=\tau_{i_l}$. This will result in the arcs in $T^\prime$ have at least one crossing, which is a contradiction. So for any $j\in I^+(T,T^\prime)$ and $l\in I^-(T,T^\prime)$, we have $\tau_{i_j}\neq \tau_{i_l}$.
\end{proof}
\begin{Corollary}\label{corgcom}
Let $g_\gamma^T=(g_1,\cdots,g_n)^{\rm T}$ be the $g$-vector of $\gamma$ with respect to $T$. Then

(i) $g_k>0$ if and only if there exists a crossing point $j\in I^+(T,\gamma)$ such that $\gamma$ crosses with $\tau_k$ at $j$.

(ii) $g_k<0$ if and only if there exists a crossing point $j\in I^-(T,\gamma)$ such that $\gamma$ crosses with $\tau_k$ at $j$.
\end{Corollary}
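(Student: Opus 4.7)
The plan is to deduce this corollary directly from the $g$-vector formula in Corollary \ref{corgexp} combined with the disjointness property in Theorem \ref{thmempty}(i). By Corollary \ref{corgexp}, we have
$$g_\gamma^T=\sum_{j\in I^+(T,\gamma)}{\bf e}_{\tau_{i_j}}-\sum_{j\in I^-(T,\gamma)}{\bf e}_{\tau_{i_j}},$$
so the $k$-th coordinate satisfies
$$g_k=|\{j\in I^+(T,\gamma):\tau_{i_j}=\tau_k\}|-|\{j\in I^-(T,\gamma):\tau_{i_j}=\tau_k\}|.$$
The key point I would extract from Theorem \ref{thmempty}(i) is that at most one of these two sets can be nonempty: if some $j\in I^+(T,\gamma)$ satisfied $\tau_{i_j}=\tau_k$ while some $l\in I^-(T,\gamma)$ satisfied $\tau_{i_l}=\tau_k$, then $\tau_{i_j}=\tau_{i_l}$, contradicting Theorem \ref{thmempty}(i).

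With this disjointness in hand, the argument splits into two symmetric directions. For (i), the forward direction follows because $g_k>0$ forces the first cardinality to be positive (otherwise $g_k\le 0$), which gives a crossing point $j\in I^+(T,\gamma)$ with $\tau_{i_j}=\tau_k$. Conversely, the existence of such a $j\in I^+(T,\gamma)$ makes the first set nonempty, while the disjointness property forces the second set to be empty, yielding $g_k\ge 1>0$. The argument for (ii) is entirely analogous, with the roles of $I^+$ and $I^-$ reversed.

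I do not expect any serious obstacle: all the combinatorial content is already packaged in Corollary \ref{corgexp} (which rewrites the alternating sum over the minimal $(T,\gamma)$-path as a sum indexed only by the crossings of type $I^\pm$) and in Theorem \ref{thmempty}(i) (the no-self-crossing argument). The only mild care needed is to observe that multiple indices of $I^+(T,\gamma)$ landing on the same arc $\tau_k$ only reinforce positivity (and similarly for $I^-$), so the sign of $g_k$ is entirely controlled by which of the two sets of preimages is nonempty. This makes the proof essentially a one-line deduction from the two preceding results.
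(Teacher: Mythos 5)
Your proof is correct and takes exactly the same route as the paper, which simply cites Theorem \ref{thmempty} and Corollary \ref{corgexp}; you have just filled in the (short) deduction that the paper leaves to the reader, namely that the disjointness of $\{\tau_{i_j}:j\in I^+\}$ and $\{\tau_{i_l}:l\in I^-\}$ means the sign of each coordinate $g_k$ is determined by which of the two index sets hits $\tau_k$.
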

\begin{proof}
It follows from Theorem \ref{thmempty} and Corollary \ref{corgexp}.
\end{proof}

\begin{Corollary}(Row sign-coherence of $G$-matrices) Let $T^\prime=\{\gamma_1,\cdots,\gamma_n,\tau_{n+1},\cdots,\tau_{n+m}\}$ be another triangulation of $(S,M)$, and $G_{T^\prime}^T=(g_{ij;T^\prime}^T)$ be the $G$-matrix of $T^\prime$ with respect to $T$. Then
 each row vector of $G_{T^\prime}^T$ is either a non-negative vector or a non-positive vector.
\end{Corollary}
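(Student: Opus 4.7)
The plan is to read off the sign of each entry of $G_{T'}^T$ from the preceding two results and show that the sign is forced to be consistent along any row. Fix $k\in\{1,\dots,n\}$ and consider the $k$-th row, whose $l$-th entry is the coefficient of $\mathbf e_{\tau_k}$ in the $g$-vector $g_{\gamma_l}^T$. By Corollary \ref{corgcom}, this entry is strictly positive exactly when there is a crossing point $j\in I^+(T,\gamma_l)$ with $\tau_{i_j}=\tau_k$, and strictly negative exactly when there is a crossing point $j\in I^-(T,\gamma_l)$ with $\tau_{i_j}=\tau_k$; otherwise it is zero. So to prove row sign-coherence it suffices to exclude the simultaneous occurrence, across the arcs $\gamma_1,\dots,\gamma_n$ of $T'$, of a crossing in $I^+(T,T')$ and a crossing in $I^-(T,T')$ that both take place along the same arc $\tau_k\in T$.

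This exclusion is supplied directly by Theorem \ref{thmempty}(ii): for any $j\in I^+(T,T')$ and $l\in I^-(T,T')$, one has $\tau_{i_j}\neq\tau_{i_l}$. Hence the set of arcs $\tau_k$ that contribute positively to some row entry of $G_{T'}^T$ is disjoint from the set of arcs $\tau_k$ that contribute negatively, which is precisely the statement that every row of $G_{T'}^T$ has entries of a single sign.

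The proof therefore reduces to citing Corollary \ref{corgcom} to translate ``sign of a row entry'' into ``type ($+$ or $-$) of a crossing'', and then invoking Theorem \ref{thmempty}(ii) to rule out the forbidden mixed configuration. The only point that requires a moment of care is bookkeeping: one must note that a fixed arc $\tau_k\in T$ can be crossed by several different arcs of $T'$ (and by a single arc $\gamma_l$ at several different points), so the indexing sets $I^\pm(T,T')=\bigsqcup_{\gamma\in T'} I^\pm(T,\gamma)$ must be used in aggregate form rather than per-arc form. This is exactly the version of the statement that Theorem \ref{thmempty}(ii) provides, so no further work is needed. I do not expect any genuine obstacle in this argument; the whole content has been pushed into the geometric non-self-crossing observation already established in Theorem \ref{thmempty}.
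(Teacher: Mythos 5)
Your argument is correct and is essentially identical to the paper's proof: both translate the sign of a row entry into the type of crossing via Corollary \ref{corgcom} and then invoke Theorem \ref{thmempty}(ii) in its aggregate form $I^\pm(T,T')$ to rule out a mixed-sign row. The only cosmetic difference is that the paper phrases it as a proof by contradiction, while you state it directly.
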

\begin{proof}
Assume by contradiction that there exist $g_{kp;T^\prime}^T$ and $g_{kq;T^\prime}^T$ in the $k$-th row of $G_{T^\prime}^T$ such that $g_{kp;T^\prime}^T>0$ and $g_{kq;T^\prime}^T<0$. By Corollary \ref{corgcom}, there exist
$j\in I^+(T,\gamma_p)$ and $l\in I^-(T,\gamma_q)$ such that $\tau_k$ crosses with $\gamma_p$  at $j\in I^+(T,\gamma_p) \subseteq I^+(T,T^\prime)$ and  crosses with $\gamma_q$  at $l\in I^-(T,\gamma_q)\subseteq I^-(T,T^\prime)$, which contradicts Theorem \ref{thmempty} (ii). So each row vector of $G_{T^\prime}^T=(g_{ij;T^\prime}^T)$ is either a non-negative vector or a non-positive vector.
\end{proof}

\begin{Theorem}\cite[Theorem 6.4]{R} \label{thmarcgvector}Let $T=\{\tau_1,\cdots,\tau_n,\tau_{n+1},\cdots,\tau_{n+m}\}$ be a triangulation of an unpunctured surface $(S,M)$, and let $\gamma$ be an arc. Then the $g$-vector  of $\gamma$ with respect to $T$ is
equal to the $g$-vector of  the cluster variable $x_\gamma$ with respect to the cluster
${\bf x}_T=\{x_{\tau_1},\cdots,x_{\tau_n}\}$,
where $x$ and ${\bf x}$ are the bijections in Theorem \ref{thmbijection}.
\end{Theorem}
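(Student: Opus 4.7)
The statement is attributed to \cite[Theorem 6.4]{R}, so my plan is to sketch how one establishes it using Schiffler's $T$-path expansion formula for cluster algebras from unpunctured surfaces. The core idea is that the definition of $g_\gamma^T$ given in the excerpt, namely the exponent vector $\sum_{i=1}^{2d+1}(-1)^{i+1}\mathbf{e}_{\alpha_i}$ arising from the minimal $(T,\gamma)$-path $\alpha_\gamma^T$, is already formally the $x$-degree of a single monomial term in that expansion. What remains is to match it against the cluster-algebra definition of the $g$-vector of $x_\gamma$.

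First, I would pass to the cluster algebra with principal coefficients at $T$, and invoke Schiffler's expansion
$$x_\gamma = \sum_{\alpha} x(\alpha)\, y(\alpha),$$
where the sum runs over all complete $(T,\gamma)$-paths, $x(\alpha) = \prod_{i=1}^{2d+1} x_{\alpha_i}^{(-1)^{i+1}}$, and $y(\alpha)$ is a monomial in $y_1,\ldots,y_n$ determined by the combinatorics of the path. Next, I would appeal to the $\mathbb{Z}^n$-grading on the principal-coefficient cluster algebra in which $\deg(x_i)=\mathbf{e}_i$ and $\deg(y_j)=-\mathbf{b}_j^{t_0}$ (the $j$-th column of $B_{t_0}$): every cluster variable is homogeneous, its common degree is by definition its $g$-vector, and this $g$-vector coincides with the coefficient-free notion by Proposition \ref{pronzcl}. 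Hence the $g$-vector can be read off from any single term in the above expansion, provided one also subtracts the $y$-correction.

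The natural candidate term is the one coming from the minimal $(T,\gamma)$-path $\alpha_\gamma^T$. The second step is to verify that $y(\alpha_\gamma^T) = 1$, i.e.\ that the $y$-monomial attached to the minimal path is trivial. This is exactly where the orientation convention in the definition of $\alpha_\gamma^T$ plays its role: the path $\alpha_\gamma^T$ was characterized by requiring that each even arc $\alpha_{2k} = \tau_{i_k}$ is traversed with the orientation induced by the clockwise orientation of the triangle $\triangle_k$. Unwinding Schiffler's formula for $y(\alpha)$ shows that this precisely kills every $y$-contribution, because at each crossing $p_k$ the triangle on either side of $\tau_{i_k}$ is incorporated in a ``neutral'' way. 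Once $y(\alpha_\gamma^T) = 1$ is established, homogeneity yields
$$\mathbf{g}(x_\gamma) \;=\; \deg_x\bigl(x(\alpha_\gamma^T)\bigr) \;=\; \sum_{i=1}^{2d+1}(-1)^{i+1}\mathbf{e}_{\alpha_i} \;=\; g_\gamma^T,$$
which is the desired equality.

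The main obstacle is the second step: pinning down Schiffler's definition of $y(\alpha)$ and checking that the clockwise-orientation characterization of the minimal $(T,\gamma)$-path is exactly the condition $y(\alpha) = 1$. In practice this is a careful combinatorial bookkeeping over the triangles $\triangle_0,\ldots,\triangle_d$ traversed by $\gamma$, using the four possible local shapes listed in Remark \ref{rmkmin} to show that contributions from each $p_k$ cancel under the minimality hypothesis. Once this is handled, Theorems \ref{thmempty} and \ref{corgcom} (already proved in the excerpt) ensure that the resulting combinatorial vector $g_\gamma^T$ is consistent with the row sign-coherence known on the cluster-algebra side, so no further matching is required.
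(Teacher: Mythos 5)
The paper supplies no proof of this statement; it is cited verbatim from \cite[Theorem 6.4]{R} and used as a black box, so there is no in-house argument to compare against. Your sketch is a faithful reconstruction of Schiffler's strategy in \cite{R}: pass to principal coefficients, invoke the $T$-path expansion $x_\gamma = \sum_\alpha x(\alpha)y(\alpha)$, use homogeneity of cluster variables in the $\mathbb{Z}^n$-grading with $\deg(x_i)=\mathbf{e}_i$ and $\deg(y_j)$ determined by the $j$-th column of $B$, and identify the minimal $(T,\gamma)$-path as the unique term carrying a trivial $y$-monomial. You are right that verifying $y(\alpha_\gamma^T)=1$ is the crux, and that is exactly what Schiffler's argument does by unwinding the clockwise-orientation convention at each crossing. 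Two small corrections are in order. First, your appeal to Proposition \ref{pronzcl} is misplaced: that result records the tropical-duality identities between $G$- and $C$-matrices, not the compatibility of the coefficient-free $g$-vector notion with the principal-coefficient one; the relevant observation in this paper is the Remark following the definition of the $G$-matrix in Subsection 7.2, which cites \cite{FZ3} and \cite{CL1}. Second, the closing invocation of Theorem \ref{thmempty} and Corollary \ref{corgcom} as ensuring ``consistency with row sign-coherence'' is a red herring: sign-coherence is far too weak an invariant to conclude an equality of vectors, and no further matching is needed or available once $y(\alpha_\gamma^T)=1$ is in hand.
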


\subsection{Co-Bongartz completions on  unpunctured surfaces}
In this subsection, we give the definition of the elementary co-Bongartz completion of an arc $\beta$ with respect to another arc $\alpha$, which is used to construct the co-Bongartz completions on unpunctured surfaces.

 {\bf Definition-Construction:} Let $(S,M)$ be an unpunctured surface, and $\alpha,\beta$ be any two (boundary) arcs of $(S,M)$.

(i) If $\alpha$ and $\beta$ are compatible, the {\bf elementary co-Bongartz completion} of $\beta$ with respect to $\alpha$ is defined to be $\mathcal T_\beta(\alpha):=\{\beta\}\cup\{\alpha\}$.

(ii) If $\alpha$ and $\beta$ are not compatible, then they have some crossings. We use $\beta$ to cut $\alpha$ into several segments $\{\alpha_i\}$. For each segment $\alpha_i$, we deform it a new (boundary) arc $\alpha_i^\prime$. The deformed principle is that we go along $\alpha_i$ and clockwise to $\beta$ when meeting the crossings between $\alpha$ and $\beta$. The obtained route corresponding a curve with endpoints in $M$, which is denoted by $\alpha_i^\prime$. Then the {\bf elementary co-Bongartz completion} of $\beta$ with respect to $\alpha$ is defined to be $\mathcal T_\beta(\alpha):=\{\beta\}\cup\{\alpha_i^\prime\}$.

Now we illustrate the case that $\alpha$ and $\beta$ are not compatible in details. Assume that $\alpha$ crosses $\beta$ exactly $d$ times ($d\geq 1$).
We  fix an orientation for $\alpha$
and we denote its starting point by $s$ and its endpoint by $r$, with $s,r\in M$.  Let
$s=p_0, p_1,\cdots, p_d, p_{d+1}=r$ be the intersection points of $\alpha$ and $\beta$ in order of occurrence on $\alpha$.
For $k=0,1,\cdots,d$, let $\alpha_k$ denote the segment of the path $\alpha$ from the point $p_k$ to the point
$p_{k+1}$, and $E(\beta)$ be the set of endpoints of $\beta$.
One can refer to Figure \ref{fig2} for an illustration. For each $\alpha_k,\;k=0,\cdots,d$, we define an (boundary) arc $\alpha_k^\prime$ associate with $\alpha_k$  given by

\begin{itemize}
\item For $1\leq k\leq d-1$, let $\alpha_k^\prime$ be the (boundary) arc isotopy to the curve
$$(e_1\xrightarrow[]{\text{along }\beta} p_k)\text{ then anticlockwise to } (p_k\xrightarrow[]{\text{along }\alpha}p_{k+1})\text{ then clockwise to } (p_{k+1}\xrightarrow[]{\text{along }\beta}e_2),$$
where $e_1, e_2\in E(\beta)$ and the sub-curves $(e_1\xrightarrow[]{\text{along }\beta} p_k)$ and $(p_{k+1}\xrightarrow[]{\text{along }\beta}e_2)$ are uniquely determined by the ``anticlockwise direction and clockwise direction" in the above curve.

 \item For $k=0$, let $\alpha_0^\prime$ be the (boundary) arc isotopy to the curve

$$(s=p_0\xrightarrow[]{\text{along }\alpha}p_1) \text{ then clockwise to }(p_1\xrightarrow[]{\text{along }\beta} e_2),$$
where $e_2\in E(\beta)$ and the sub-curve $(p_1\xrightarrow[]{\text{along }\beta} e_2)$ are uniquely determined by the ``clockwise direction" in the above curve.

    \item For $k=d$, let $\alpha_d^\prime$ be the (boundary) arc isotopy to the curve

    $$(e_1\xrightarrow[]{\text{along }\beta} p_d) \text{ then anticlockwise to } (p_d\xrightarrow[]{\text{along }\alpha}p_{d+1}=r),$$
    where $e_1\in E(\beta)$ and the sub-curve $(e_1\xrightarrow[]{\text{along }\beta} p_d)$ are uniquely determined by the ``anticlockwise direction" in the above curve.

\end{itemize}
 In the case that $\alpha$ and $\beta$ are not compatible, the {\bf elementary co-Bongartz completion} of $\beta$ with respect to $\alpha$ is defined to be the set of (boundary) arcs  $$\mathcal T_\beta(\alpha):=\{\beta\}\cup \{\alpha_0^\prime,\cdots,\alpha_d^\prime\}.$$
 One can refer to Figure \ref{fig3} for  an illustration.

\begin{figure}
\centering
\begin{tikzpicture}[xscale=0.65,yscale=0.65,> = stealth 
	]

\node (n1)at (-10,0) {$\bullet$};
\node (n2)at (10,0) {$\bullet$};

\draw [red,shorten < =-8 pt,
shorten >=-8 pt] (n1)--(n2);

\draw  plot[smooth, tension=.7] coordinates {(-8.5,4) (-4,-4) (0,4) (4.5,-4) (8.5,4)};
\node at (-8.5,3.8) {$\bullet$};
\node at (8.5,3.8) {$\bullet$};

\node at (-6.9,0) {$\bullet$};
\node at (-7,-0.3) {$p_1$};
\node at (-2,0) {$\bullet$};
\node at (-1.8,-0.3) {$p_2$};
\node at (2.23,0) {$\bullet$};
\node at (2.13,-0.3) {$p_3$};
\node at (7.18,0) {$\bullet$};
\node at (7.3,-0.3) {$p_4$};
\node at (-8.5,4.1) {$s=p_0$};
\node at(8.6,4.1) {$r=p_5$};
\node at (-7.7,3) {$\alpha_0$};
\node at (-3.8,-4.3) {$\alpha_1$};
\node at (0,4.3) {$\alpha_2$};
\node at (4.5,-4.3) {$\alpha_3$};
\node at (8.2,1.5) {$\alpha_4$};
\draw[very thick] (-4.2,-1.5) ellipse (0.5 and 0.5);
\draw[very thick] (0.1,2) ellipse (0.5 and 0.5);
\draw[very thick](4.5,-2.4) ellipse (0.5 and 0.5);
\filldraw[fill=green!20,draw=black]  (-4.2,-1.5) ellipse (0.5 and 0.5);
\filldraw[fill=green!20,draw=black] (0.1,2) ellipse (0.5 and 0.5);
\filldraw[fill=green!20,draw=black]   (4.5,-2.4) ellipse (0.5 and 0.5);

\node at (-9,-0.4) {$\beta$};
\node at (8.5,2.5) {$\alpha$};
\end{tikzpicture}

  \caption{}\label{fig2}
\end{figure}

\begin{figure}
\centering
\begin{tikzpicture}[xscale=0.65,yscale=0.65,> = stealth 
	]

\node (n1)at (-10,0) {$\bullet$};
\node (n2)at (10,0) {$\bullet$};

\draw [red,shorten < = -8pt,
shorten >=-8pt] (n1)--(n2);

\draw  plot[smooth, tension=.7] coordinates {(-8.5,4) (-4,-4) (0,4) (4.5,-4) (8.5,4)};
\node at (-8.5,3.8) {$\bullet$};
\node at (8.5,3.8) {$\bullet$};

\node at (-6.9,0) {$\bullet$};
\node at (-7,-0.3) {$p_1$};
\node at (-2,0) {$\bullet$};
\node at (-1.8,-0.3) {$p_2$};
\node at (2.23,0) {$\bullet$};
\node at (2.13,-0.3) {$p_3$};
\node at (7.18,0) {$\bullet$};
\node at (7.3,-0.3) {$p_4$};
\node at (-8.5,4.1) {$s=p_0$};
\node at(8.6,4.1) {$r=p_5$};
\node at (-7.7,3) {$\alpha_0$};
\node at (-3.8,-4.3) {$\alpha_1$};
\node at (0,4.3) {$\alpha_2$};
\node at (4.5,-4.3) {$\alpha_3$};
\node at (8.2,1.5) {$\alpha_4$};
\draw[very thick] (-4.2,-1.5) ellipse (0.5 and 0.5);
\draw[very thick] (0.1,2) ellipse (0.5 and 0.5);
\draw[very thick](4.5,-2.4) ellipse (0.5 and 0.5);
\filldraw[fill=green!20,draw=black]  (-4.2,-1.5) ellipse (0.5 and 0.5);
\filldraw[fill=green!20,draw=black] (0.1,2) ellipse (0.5 and 0.5);
\filldraw[fill=green!20,draw=black]   (4.5,-2.4) ellipse (0.5 and 0.5);

\node at (-9,-0.4) {$\beta$};
\node at (8.5,2.5) {$\alpha$};

\draw[blue] plot[smooth, tension=.7] coordinates { (-10,0) (-6.5,0.3) (-1,0.35)
(2.9,0.8) (8.52,4)
};

\draw[blue]  plot[smooth, tension=.7] coordinates {(-10,0) (-6,0.7) (-0.5,1) (1,2)
(0,2.8) (-3,2.5)  (-7,1.5)  (-10,0)};

\draw[blue]  plot[smooth, tension=.7] coordinates{(-10,0) (-8.52,3.9)};

\draw[blue]  plot[smooth, tension=.7] coordinates { (10,0) (7.5,-0.5) (1.5,-0.5)
(-4,-0.5)
(-5.5,-1.5) (-4,-2.5) (-1,-1.4) (2,-1) (7.5,-0.8) (10,0)};

\draw[blue]  plot[smooth, tension=.7] coordinates {(10,0) (8.5,-1.5) (5,-1.5)
(3.5,-2) (4,-3.3) (5.2,-3.1)   (9,-2)(10,0)};

\node at (-9.5,2.5) {$\alpha_0^\prime$};
\node at (-4.5,2.7) {$\alpha_2^\prime$};
\node at (0,-1.5) {$\alpha_1^\prime$};
\node at (7,-3.3) {$\alpha_3^\prime$};

\node at (5.5,2.5) {$\alpha_4^\prime$};
\end{tikzpicture}

  \caption{}\label{fig3}
\end{figure}

\begin{Lemma}\label{lemcomp1}
Keep the notations above. Then

(i) each  arc in $\mathcal T_\beta(\alpha)\backslash\{\beta\}$ is compatible with $\beta$;

(ii) any two arcs in $\mathcal T_\beta(\alpha)$ are compatible.
\end{Lemma}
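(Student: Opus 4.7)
The plan is to handle the trivial case first and then make a local geometric analysis in a small regular neighborhood of $\alpha\cup\beta$ for the non-trivial case.

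\textbf{Trivial case.} If $\alpha$ and $\beta$ are already compatible, then $\mathcal{T}_\beta(\alpha)=\{\alpha,\beta\}$, so both (i) and (ii) are immediate from the definition.

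\textbf{Non-trivial case, part (i).} Assume $\alpha$ and $\beta$ cross $d\ge 1$ times, with crossings $p_1,\ldots,p_d$ in order along $\alpha$. I will argue that each deformed segment $\alpha_k'$ is compatible with $\beta$. By construction, $\alpha_k'$ is obtained by concatenating a portion of $\beta$, the segment $\alpha_k$ of $\alpha$, and another portion of $\beta$, where the two $\beta$-portions are attached to $\alpha_k$ via the ``clockwise'' turning rule at the crossing points $p_k,p_{k+1}$. In a small regular neighborhood $N(\beta)$ of $\beta$, the curve $\alpha_k'$ thus runs along one fixed side of $\beta$ (the clockwise side with respect to the orientation of $\alpha$) and exits $N(\beta)$ only at endpoints of $\beta$, which lie in $M$. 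Pushing $\alpha_k'$ slightly off $\beta$ on that side produces an isotopic representative whose interior is disjoint from $\beta$; together with the fact that $\alpha_k$ itself meets $\beta$ only at its endpoints $p_k,p_{k+1}$ (which disappear after the deformation), this shows $\alpha_k'$ and $\beta$ are compatible.

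\textbf{Non-trivial case, part (ii).} For distinct $\alpha_j'$ and $\alpha_k'$ I will decompose each into its ``$\alpha$-portion'' (namely $\alpha_j$, resp.\ $\alpha_k$) and its ``$\beta$-portions''. The $\alpha$-portions are disjoint subsegments of the simple curve $\alpha$, so they contribute no crossings. For the $\beta$-portions I distinguish two subcases. If $|j-k|\ge 2$, the $\beta$-portions of $\alpha_j'$ and $\alpha_k'$ are both pushed to the same side of $\beta$ (the clockwise side of $\alpha$), and their projections onto $\beta$ either are disjoint or are nested subarcs of $\beta$; in either situation a standard isotopy in $N(\beta)$ makes them disjoint. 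If $|j-k|=1$, say $k=j+1$, then $\alpha_j'$ and $\alpha_k'$ meet the crossing $p_k$ through $\beta$. But here the ``anticlockwise entry / clockwise exit'' rule is decisive: at $p_k$, the curve $\alpha_j'$ turns clockwise from $\alpha$ onto $\beta$, while $\alpha_k'$ leaves $\beta$ onto $\alpha$ via the anticlockwise turn, placing the two curves in opposite local quadrants around $p_k$. Hence near $p_k$ they can be isotoped to be disjoint, and away from $p_k$ they lie on the same side of $\beta$ and can be separated as above.

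\textbf{Main obstacle.} The only real work is the last paragraph, in particular the bookkeeping at shared crossings: one must verify carefully that the anticlockwise/clockwise convention consistently places adjacent deformations $\alpha_{k-1}'$ and $\alpha_k'$ on opposite local sides of $\alpha$ at $p_k$, and that the special boundary segments $\alpha_0'$ and $\alpha_d'$ (which have one endpoint at a marked point of $\alpha$ rather than at an endpoint of $\beta$) are handled by the same argument with one of the two $\beta$-portions absent. Once this local analysis is in place, a small isotopy in a regular neighborhood of $\alpha\cup\beta$ realizes all curves in $\mathcal{T}_\beta(\alpha)$ as pairwise non-crossing (in their interiors), establishing (ii).
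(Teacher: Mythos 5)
Your plan follows essentially the same route as the paper's proof: handle the compatible case trivially, then for the non-compatible case decompose each deformed curve into its $\alpha$-segment and its two $\beta$-portions, argue that the portions create no transverse crossings, and finish with a local picture at a shared crossing point. The case split by $|j-k|$ corresponds exactly to the paper's split according to whether $\{p_i,p_{i+1}\}\cap\{p_j,p_{j+1}\}$ is empty, and your local observation that the clockwise/anticlockwise rule places the two deformed curves in opposite quadrants at a shared $p_k$ is precisely the picture the paper draws.

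One claim in your part (ii) is not correct as stated, although it does not damage the conclusion. You assert that for $|j-k|\ge 2$ (and again in the $|j-k|=1$ case ``away from $p_k$'') the $\beta$-portions of $\alpha_j'$ and $\alpha_k'$ are pushed to the \emph{same} side of $\beta$. But the side of $\beta$ to which the $\beta$-portions of $\alpha_j'$ get pushed is forced by the side on which the segment $\alpha_j$ sits, and $\alpha_j$ alternates sides of $\beta$ as $j$ increases (since $\alpha$ is simple and crosses $\beta$ transversely at each $p_i$). So when $j$ and $k$ have opposite parity the two families of $\beta$-portions are pushed to \emph{opposite} sides of $\beta$. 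This actually makes disjointness even easier, so the conclusion stands, but the ``same side'' statement (and the parenthetical ``the clockwise side of $\alpha$,'' which conflates a side of $\alpha$ with a side of $\beta$) should be corrected. The paper avoids this issue by simply observing that the $\beta$-portions are subarcs of a single simple arc $\beta$ and the $\alpha$-portions are segments of $\alpha$ containing no interior intersection points with $\beta$, so the only possible transverse intersections occur at shared endpoints, which reduces everything to the local quadrant analysis at a shared $p_k$.
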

\begin{proof}
(i) If $\alpha$ is an (boundary) arc compatible with $\beta$, then $\mathcal T_\beta(\alpha)=\{\beta\}\cup \{\alpha\}$. In this case, the result is clear. Now we assume that $\alpha$ is not compatible with $\beta$. In this case,
$\mathcal T_\beta(\alpha)=\{\beta\}\cup\{\alpha_0^\prime,\cdots,\alpha_d^\prime\}$ and each $\alpha_k^\prime$ with $1\leq k\leq d$ is  the (boundary) arc isotopy to the curve
$$\rho:=(e_1\xrightarrow[]{\text{along }\beta} p_k)\text{ then anticlockwise to } (p_k\xrightarrow[]{\text{along }\alpha}p_{k+1})\text{ then clockwise to } (p_{k+1}\xrightarrow[]{\text{along }\beta}e_2),$$
where $e_1, e_2\in E(\beta)$ and the sub-curves $(e_1\xrightarrow[]{\text{along }\beta} p_k)$ and $(p_{k+1}\xrightarrow[]{\text{along }\beta}e_2)$ are uniquely determined by the ``anticlockwise direction and clockwise direction" in the above curve.
By the construction, we know that the cure $\rho$ does not cross $\beta$  transversely, so $\alpha_k^\prime$ is compatible with $\beta$.
Similarly, $\alpha_0^\prime$ and $\alpha_d^\prime$ are compatible with $\beta$. So each  arc in $\mathcal T_\beta(\alpha)\backslash\{\beta\}$ is compatible with $\beta$.

(ii) If $\alpha$ is compatible with $\beta$, then $\mathcal T_\beta(\alpha)=\{\beta\}\cup\{\alpha\}$. In this case, the result is clear. So we can assume that $\alpha$ is not compatible with $\beta$. Keep the notations introduced before. Let $\alpha_i^\prime,\alpha_j^\prime$ be two arcs in $\mathcal T_\beta(\alpha)\backslash\{\beta\}$.
 We first consider the case $1\leq i,j\leq d-1$. In this case,
 we know that $\alpha_i^\prime$ is   isotopy to the curve
$$\rho:=(e_1\xrightarrow[]{\text{along }\beta} p_i)\text{ then anticlockwise to } (p_i\xrightarrow[]{\text{along }\alpha}p_{i+1})\text{ then clockwise to } (p_{i+1}\xrightarrow[]{\text{along }\beta}e_2),$$
and $\alpha_j^\prime$ is   isotopy to the curve
$$\rho^\prime:=(e_1^\prime\xrightarrow[]{\text{along }\beta} p_j)\text{ then anticlockwise to } (p_j\xrightarrow[]{\text{along }\alpha}p_{j+1})\text{ then clockwise to } (p_{j+1}\xrightarrow[]{\text{along }\beta}e_2^\prime),$$
where $e_1, e_2, e_1^\prime,e_2^\prime\in E(\beta)$ and the sub-curves $$(e_1\xrightarrow[]{\text{along }\beta} p_i),\;(p_{i+1}\xrightarrow[]{\text{along }\beta}e_2),\;(e_1^\prime\xrightarrow[]{\text{along }\beta} p_j),\;(p_{j+1}\xrightarrow[]{\text{along }\beta}e_2^\prime)$$
are uniquely determined by the ``anticlockwise directions and clockwise directions" in $\rho$ and $\rho^\prime$.

We know that the sub-curve $(e_1\xrightarrow[]{\text{along }\beta} p_i)$ of $\rho$ does not cross the interior of sub-curves $(e_1^\prime\xrightarrow[]{\text{along }\beta} p_j),\;(p_{j+1}\xrightarrow[]{\text{along }\beta}e_2^\prime)$ of $\rho^\prime$ transversely, since $(e_1\xrightarrow[]{\text{along }\beta} p_i)$ and  $$(e_1^\prime\xrightarrow[]{\text{along }\beta} p_j),\;(p_{j+1}\xrightarrow[]{\text{along }\beta}e_2^\prime)$$ are sub-curves of $\beta$, which has no self-crossings.

The sub-curve $(e_1\xrightarrow[]{\text{along }\beta} p_i)$ of $\rho$ also does not cross the interior of sub-curve $(p_j\xrightarrow[]{\text{along }\alpha}p_{j+1})$ transversely, since $(e_1\xrightarrow[]{\text{along }\beta} p_i)$ is a sub-curve of  $\beta$ and $p_j$ and $p_{j+1}$ are  intersection points of $\alpha$ and $\beta$ in order, and there exists no  intersection points of $\alpha$ and $\beta$  in the interior of $(p_j\xrightarrow[]{\text{along }\alpha}p_{j+1})$.

So the sub-curve  $(e_1\xrightarrow[]{\text{along }\beta} p_i)$ of $\rho$ does not cross $\rho^\prime\backslash\{p_j,p_{j+1}\}$ transversely. Similarly, we can show the sub-curve $(p_i\xrightarrow[]{\text{along }\alpha}p_{i+1})$ and the sub-curve $(p_{i+1}\xrightarrow[]{\text{along }\beta}e_2)$ of $\rho$ do not cross $\rho^\prime\backslash\{p_j,p_{j+1}\}$ transversely. Thus $\rho\backslash\{p_i,p_{i+1}\}$ does not cross $\rho^\prime\backslash\{p_j,p_{j+1}\}$ transversely.

If $\{p_i,p_{i+1}\}\cap\{p_j,p_{j+1}\}=\phi$, then $\rho$ does not cross $\rho^\prime$ transversely and thus $\rho$ and $\rho^\prime$ are compatible. So $\alpha_i^\prime$ and $\alpha_j^\prime$ are compatible.

If $\{p_i,p_{i+1}\}\cap\{p_j,p_{j+1}\}\neq \phi$, then we must have $p_i=p_j$ or $p_{i+1}=p_{j+1}$ or $p_i=p_{j+1}$ or $p_j=p_{i+1}$. Since $\alpha$ has no self-crossing, we know that if $1\leq l_1\neq l_2\leq d-1$, then $p_{l_1}\neq p_{l_2}$. So if $\{p_i,p_{i+1}\}\cap\{p_j,p_{j+1}\}\neq \phi$, we can get $i=j-1$ or $i=j$ or $i=j+1$.

If $i=j$, then $\rho=\rho^\prime$. In this case, $\alpha_i^\prime$ and $\alpha_j^\prime$ are compatible.

If $i=j-1$, then $j=i+1$ and $p_j=p_{i+1}$. Recall that,
\begin{eqnarray}
\rho&=&(e_1\xrightarrow[]{\text{along }\beta} p_i)\text{ then anticlockwise to } (p_i\xrightarrow[]{\text{along }\alpha}p_{i+1})\text{ then clockwise to } (p_{i+1}\xrightarrow[]{\text{along }\beta}e_2),\nonumber\\
\rho^\prime&=&(e_1^\prime\xrightarrow[]{\text{along }\beta} p_j)\text{ then anticlockwise to } (p_j\xrightarrow[]{\text{along }\alpha}p_{j+1})\text{ then clockwise to } (p_{j+1}\xrightarrow[]{\text{along }\beta}e_2^\prime),\nonumber\\
&=&(e_1^\prime\xrightarrow[]{\text{along }\beta} p_{i+1})\text{ then anticlockwise to } (p_{i+1}\xrightarrow[]{\text{along }\alpha}p_{i+2})\text{ then clockwise to } (p_{i+2}\xrightarrow[]{\text{along }\beta}e_2^\prime),\nonumber\\
&\xlongequal{\text{as curves}}&
(e_2^\prime\xrightarrow[]{\text{along }\beta} p_{i+2})\text{ then anticlockwise to } (p_{2+1}\xrightarrow[]{\text{along }\alpha}p_{i+1})\text{ then clockwise to } (p_{i+1}\xrightarrow[]{\text{along }\beta}e_1^\prime).\nonumber
\end{eqnarray}
Locally, at the point $p_{i+1}$, the curves $\rho$ and $\rho^\prime$ look like the following.

$$\begin{tikzpicture}[xscale=0.7,yscale=0.7,> = stealth]

\node(v1) at (0,0) {$\bullet$};
\node at (0,-0.3) {$p_{i+1}$};

\node at (-1.5,0.3) {$\alpha_i$};
\node(v2) at (3,0) {$\bullet$};

\node at (3,-0.3) {$p_{i+2}$};

\node at (1.5,-0.3) {$\alpha_{i+1}$};
\node(v3) at (-3,0) {$\bullet$};

\node at (-3,0.3) {$p_{i}$};

\draw[> = stealth, 
	shorten < = 0pt,
shorten >=0pt](-3,0)--(3,0);

\draw[blue]  plot[smooth, tension=.7] coordinates {(-3,-0.5) (0,-0.5) (-0.4,-1)};

\draw[blue]  plot[smooth, tension=.7] coordinates {(3,0.5) (0,0.5) (0.4,1)};
\node at (0,-1.5) {$\rho$};

\node at (0,1.3) {$\rho^\prime$};
\end{tikzpicture}
$$

In this case, $\rho$ can not cross with $\rho^\prime$ transversely at $p_j=p_{i+1}$. Thus $\rho$ can not cross with $\rho^\prime$ transversely by the discussion before. So if $i=j-1$, then $\alpha_i^\prime$ and $\alpha_j^\prime$ are compatible. Similarly, if $i=j+1$, $\alpha_i^\prime$ and $\alpha_j^\prime$ are compatible.

 Hence, if $1\leq i,j\leq d-1$, then $\alpha_i^\prime$ and $\alpha_j^\prime$ are  compatible. For $i$ and $j$ in the remain cases, the proof is similar. So any two arcs in $\mathcal T_\beta(\alpha)$ are compatible.
\end{proof}

\begin{Lemma}\label{lemcomp2}
Let $\alpha,\beta,\gamma$ be three arcs. If  $\gamma$ is compatible with both $\alpha$ and $\beta$, then $\gamma$ is compatible with the arcs in $\mathcal T_\beta(\alpha)$.
\end{Lemma}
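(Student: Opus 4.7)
The plan is to reduce compatibility of $\gamma$ with each arc in $\mathcal T_\beta(\alpha)$ to compatibility of $\gamma$ with the building blocks of those arcs, namely sub-curves of $\alpha$ and $\beta$, and then carefully handle the joining points.

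First, dispose of the easy case: if $\alpha$ and $\beta$ are compatible, then $\mathcal T_\beta(\alpha)=\{\beta,\alpha\}$, and the conclusion is immediate from the hypothesis. So from now on I would assume $\alpha$ and $\beta$ have $d\ge 1$ transverse intersections $p_1,\ldots,p_d$, giving the decomposition $\alpha=\alpha_0\cup\alpha_1\cup\cdots\cup\alpha_d$ into segments, and $\mathcal T_\beta(\alpha)=\{\beta\}\cup\{\alpha_0',\ldots,\alpha_d'\}$ as in the Definition-Construction. Since $\gamma$ is compatible with $\beta$ by hypothesis, it remains to verify that $\gamma$ is compatible with each $\alpha_k'$.

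The key observation is that, after choosing representatives in the isotopy class, each $\alpha_k'$ is homotopic (rel endpoints) to a concatenation of the form
\[
\rho_k \;=\; \bigl(e_1\xrightarrow{\beta} p_k\bigr)\cdot\bigl(p_k\xrightarrow{\alpha}p_{k+1}\bigr)\cdot\bigl(p_{k+1}\xrightarrow{\beta}e_2\bigr),
\]
with the obvious modifications when $k=0$ or $k=d$. Now suppose, toward a contradiction, that $\gamma$ crosses $\alpha_k'$ transversely at some point $q$. Push $\gamma$ to a representative in minimal position with $\alpha$ and with $\beta$ separately; by the hypothesis, $\gamma$ is then disjoint from each of $\alpha$ and $\beta$ in the interior of $S$. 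Thus the crossing $q$ cannot lie in the interior of any of the three sub-curves constituting $\rho_k$: the two outer sub-curves lie on $\beta$, so a transverse crossing there would give a transverse crossing of $\gamma$ with $\beta$; and the middle sub-curve lies on $\alpha$, so a transverse crossing there would give a transverse crossing of $\gamma$ with $\alpha$. This would contradict compatibility.

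The main obstacle is therefore the behaviour of $\gamma$ at the joining points $p_k$ and $p_{k+1}$, where $\rho_k$ turns from $\beta$ onto $\alpha$ (or conversely). At $p_k$, the arcs $\alpha$ and $\beta$ cross transversely, and the rule ``anticlockwise/clockwise to $\alpha$'' singles out one of the four local quadrants for $\alpha_k'$ to pass through (and similarly at $p_{k+1}$). I would examine this local picture: $\gamma$, being compatible with both $\alpha$ and $\beta$, can be pushed off the transverse crossing point $p_k$ of $\alpha$ and $\beta$, so in a small disk neighbourhood of $p_k$ the curve $\gamma$ is disjoint from $\alpha\cup\beta$. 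Consequently, in this disk, $\alpha_k'$ follows the prescribed quadrant while $\gamma$ stays away from the crossing point, and any would-be crossing of $\gamma$ with $\alpha_k'$ in the disk can be removed by isotopy. Hence $\gamma$ is isotopic to a representative disjoint from $\alpha_k'$, i.e.\ compatible with it. The same argument handles the endpoint cases $k=0,d$, where only one of the two joining points appears. This establishes the claim for all arcs in $\mathcal T_\beta(\alpha)\setminus\{\beta\}$, completing the proof.
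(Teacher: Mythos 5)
Your proof is correct and follows essentially the same route as the paper: $\alpha_k'$ is isotopic to the curve $\rho_k$ lying entirely inside $\alpha\cup\beta$, and since $\gamma$ is compatible with both $\alpha$ and $\beta$ it admits a representative disjoint from $\alpha\cup\beta$, hence from $\rho_k$, hence (after a small perturbation of $\rho_k$ to $\alpha_k'$) from $\alpha_k'$. Your extra local analysis at the joining points $p_k,p_{k+1}$ is harmless but unnecessary, since disjointness of $\gamma$ from all of $\alpha\cup\beta$ already keeps $\gamma$ at positive distance from $\rho_k$ near those points and thus from any sufficiently small smoothing.
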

\begin{proof}
If $\alpha$ is compatible with $\beta$, then $\mathcal T_\beta(\alpha)=\{\beta\}\cup\{\alpha\}$. In this case, the result is clear.
So we can assume that $\alpha$ is not compatible with $\beta$.

Keep the notations introduced before. Let $\alpha_k^\prime$ be an arc in $\mathcal T_\beta(\alpha)\backslash\{\beta\}$. We first consider $1\leq k\leq d-1$.  In this case£¬ we know that $\alpha_k^\prime$ is   isotopy to the curve
$$\rho:=(e_1\xrightarrow[]{\text{along }\beta} p_k)\text{ then anticlockwise to } (p_k\xrightarrow[]{\text{along }\alpha}p_{k+1})\text{ then clockwise to } (p_{k+1}\xrightarrow[]{\text{along }\beta}e_2),$$
where $e_1, e_2\in E(\beta)$ and the sub-curves $(e_1\xrightarrow[]{\text{along }\beta} p_k)$ and $(p_{k+1}\xrightarrow[]{\text{along }\beta}e_2)$ are uniquely determined by the ``anticlockwise direction and clockwise direction" in the above curve.

Since $\gamma$ is compatible with both $\alpha$ and $\beta$, we know that $\gamma$ has no crossings with $\rho$. So $\gamma$ is compatible with $\alpha_k^\prime$. Similarly, we can show that  $\gamma$ is compatible with $\alpha_0^\prime$ and $\alpha_d^\prime$. Hence, we get that  $\gamma$ is compatible with any arc in $\mathcal T_\beta(\alpha)=\{\beta\}\cup\{\alpha_0^\prime,\cdots,\alpha_d^\prime\}$.
\end{proof}

\begin{Lemma}\label{lemcomp3}
Let $\alpha$ and $\gamma$ be two compatible arcs, then  the arcs in $\mathcal T_\beta(\alpha)$ are compatible with the arcs in $\mathcal T_\beta(\gamma)$.
\end{Lemma}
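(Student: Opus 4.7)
The plan is to prove the statement by reducing it to a case-by-case compatibility check over the four families of pairs coming from $\mathcal T_\beta(\alpha)\times\mathcal T_\beta(\gamma)$. Write $\mathcal T_\beta(\alpha)=\{\beta\}\cup\{\alpha_0',\ldots,\alpha_d'\}$ and $\mathcal T_\beta(\gamma)=\{\beta\}\cup\{\gamma_0',\ldots,\gamma_e'\}$. The pair $(\beta,\beta)$ is trivially compatible, and pairs of the form $(\beta,\gamma_l')$ or $(\alpha_k',\beta)$ are compatible by Lemma \ref{lemcomp1}(i) applied to $\mathcal T_\beta(\gamma)$ and $\mathcal T_\beta(\alpha)$ respectively. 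All of the work therefore lies in showing that each $\alpha_k'$ is compatible with each $\gamma_l'$.

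First I would isotope $\alpha$ and $\gamma$ so that they meet only at common marked endpoints in $M$; this is possible by the assumed compatibility of $\alpha$ and $\gamma$. A consequence is that the crossing points $p_1,\ldots,p_d$ of $\alpha$ with $\beta$ and the crossing points $q_1,\ldots,q_e$ of $\gamma$ with $\beta$ form disjoint subsets of the interior of $\beta$. By construction $\alpha_k'$ is isotopic to
\[
\rho=(e_1\xrightarrow{\beta} p_k),\; (p_k\xrightarrow{\alpha} p_{k+1}),\; (p_{k+1}\xrightarrow{\beta} e_2),
\]
and $\gamma_l'$ is isotopic to
\[
\rho'=(e_1'\xrightarrow{\beta} q_l),\; (q_l\xrightarrow{\gamma} q_{l+1}),\; (q_{l+1}\xrightarrow{\beta} e_2'),
\]
where each boundary $\beta$-segment is determined by the prescribed anticlockwise or clockwise turning rule, with the obvious modifications for $k\in\{0,d\}$ or $l\in\{0,e\}$.

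Next I would verify piecewise that the interiors of $\rho$ and $\rho'$ do not cross transversely. Two sub-arcs of $\beta$ cannot cross transversely because $\beta$ has no self-intersections. The $\alpha$-piece of $\rho$ cannot cross either of the sub-$\beta$ pieces of $\rho'$ in its interior, since its only transverse meetings with $\beta$ are at $p_k,p_{k+1}$, which are disjoint from $\{q_j\}$; the symmetric statement handles the $\gamma$-piece of $\rho'$ against the sub-$\beta$ pieces of $\rho$. The $\alpha$-piece of $\rho$ and the $\gamma$-piece of $\rho'$ are disjoint in their interiors because $\alpha\cap\gamma\subseteq M$ after isotopy.

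The hard part will be the local analysis around common boundary points. If $\rho$ and $\rho'$ share a marked endpoint $e\in M$, or if some sub-$\beta$ segment of $\rho$ overlaps a sub-$\beta$ segment of $\rho'$ on a common portion of $\beta$, we must check that no transverse crossing is forced. Mirroring the final case analysis in the proof of Lemma \ref{lemcomp1}(ii), I would draw the local picture around each such shared point. When two sub-$\beta$ segments overlap, the clockwise/anticlockwise rule places the turning corners of $\rho$ and $\rho'$ at the corresponding $p$- and $q$-crossings on the same side of $\beta$, so the two curves locally run parallel and can be pushed off each other. When the only shared point is a common marked endpoint of the $\beta$-segments, both curves emanate on the same side of $\beta$ by construction and are again disjoint in a neighborhood of $e$. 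Combined with the interior analysis, this produces disjoint representatives of $\rho$ and $\rho'$, establishing that $\alpha_k'$ and $\gamma_l'$ are compatible and completing the proof.
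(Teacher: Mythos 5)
Your proposal is correct and follows essentially the same route as the paper: reduce via Lemma \ref{lemcomp1} to the compatibility of $\alpha_k'$ with $\gamma_l'$, represent each by the explicit curve $\rho$ built from $\beta$-segments and an $\alpha$- or $\gamma$-segment, and verify piecewise that the representatives do not cross transversely, using that $\beta$ has no self-crossings, that the $\alpha$/$\gamma$ pieces hit $\beta$ only at the $p_i$'s/$q_j$'s, and that compatibility of $\alpha$ with $\gamma$ makes these crossing-point sets disjoint. The paper dispatches the endpoint/overlap local analysis by referring back to Lemma \ref{lemcomp1}(ii) and handles the degenerate case where $\alpha$ (or $\gamma$) is already compatible with $\beta$ by invoking Lemma \ref{lemcomp2}; you spell out the local analysis a bit more explicitly but should still note that when $\alpha$ and $\beta$ are compatible one has $\mathcal T_\beta(\alpha)=\{\beta,\alpha\}$ (no segmentation), which is most cleanly covered by citing Lemma \ref{lemcomp2} as the paper does.
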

\begin{proof}
By Lemma \ref{lemcomp1}, it suffices to show that the arcs in $\mathcal T_\beta(\alpha)\backslash\{\beta\}$ are compatible with the arcs in $\mathcal T_\beta(\gamma)\backslash\{\beta\}$.

If $\alpha$ is compatible with $\beta$, then $\mathcal T_\beta(\alpha)=\{\beta\}\cup\{\alpha\}$. In this case, the result follows from Lemma \ref{lemcomp2}. The proof for the case that $\gamma$ is compatible with $\beta$ is similar.

So we can assume that $\alpha$ crosses $\beta$ exactly $d_1$ times with $d_1\geq 1$ and $\gamma$ crosses $\beta$ exactly $d_2$ times with $d_2\geq 1$.
We fix an orientation for $\alpha$ (respectively, $\gamma$)
and we denote its starting point by $s_1$ (respectively, $s_2$) and its endpoint by $r_1$ (respectively, $r_2$), with $s_1,s_2,r_1,r_2\in M$.  Let
$s_1=p_0, p_1,\cdots, p_{d_1}, p_{d_1+1}=r_1$ (respectively, $s_2=p_0^\prime, p_1^\prime,\cdots, p_{d_2}^\prime, p_{d_2+1}^\prime=r_2$) be the intersection points of $\alpha$ (respectively, $\gamma$) and $\beta$ in order of occurrence on $\alpha$ (respectively, $\gamma$). For $k=0,1,\cdots,d_1$ (respectively, $k=0,1,\cdots,d_2$), let $\alpha_k$ (respectively, $\gamma_k$) denote the segment of the path $\alpha$ (respectively, $\gamma$) from the point $p_k$ (respectively, $p_k^\prime$) to the point
$p_{k+1}$ (respectively, $p_{k+1}^\prime$), and $E(\beta)$ be the set of endpoints of $\beta$.

 We know that $\mathcal T_\beta(\alpha)\backslash\{\beta\}=\{\alpha_0^\prime,\cdots,\alpha_{d_1}^\prime\}$ and $\mathcal T_\beta(\gamma)\backslash\{\beta\}=\{\gamma_0^\prime,\cdots,\gamma_{d_2}^\prime\}$. Let $\alpha_i^\prime$ be an arc in $\mathcal T_\beta(\alpha)\backslash\{\beta\}$, and $\gamma_j^\prime$ be an arc in $\mathcal T_\beta(\gamma)\backslash\{\beta\}$. We first consider the case $1\leq i\leq d_1-1$ and $1\leq j\leq d_2-1$. In this case, we know that  $\alpha_i^\prime$ is   isotopy to the curve
$$\rho:=(e_1\xrightarrow[]{\text{along }\beta} p_i)\text{ then anticlockwise to } (p_i\xrightarrow[]{\text{along }\alpha}p_{i+1})\text{ then clockwise to } (p_{i+1}\xrightarrow[]{\text{along }\beta}e_2),$$
and $\gamma_j^\prime$ is   isotopy to the curve
$$\rho^\prime:=(e_1^\prime\xrightarrow[]{\text{along }\beta} p_j^\prime)\text{ then anticlockwise to } (p_j^\prime\xrightarrow[]{\text{along }\gamma}p_{j+1}^\prime)\text{ then clockwise to } (p_{j+1}^\prime\xrightarrow[]{\text{along }\beta}e_2^\prime),$$
where $e_1, e_2, e_1^\prime,e_2^\prime\in E(\beta)$ and the sub-curves $$(e_1\xrightarrow[]{\text{along }\beta} p_i),\;(p_{i+1}\xrightarrow[]{\text{along }\beta}e_2),\;(e_1^\prime\xrightarrow[]{\text{along }\beta} p_j^\prime),\;(p_{j+1}^\prime\xrightarrow[]{\text{along }\beta}e_2^\prime)$$
are uniquely determined by the ``anticlockwise directions and clockwise directions" in $\rho$ and $\rho^\prime$.
Since $\alpha$ and $\gamma$ are compatible, they have no crossings in the interior of the surface. Then by the similar discussion as the proof in Lemma \ref{lemcomp1} (ii), we can obtained that  $\rho\backslash\{p_i,p_{i+1}\}$ does not cross $\rho^\prime\backslash\{p_j^\prime,p_{j+1}^\prime\}$ transversely.
Since  $\alpha$ and $\gamma$  have no crossings in the interior of the surface, we have $\{p_i,p_{i+1}\}\cap \{p_j^\prime,p_{j+1}^\prime\}=\phi$. Thus $\rho$ does not cross $\rho^\prime$ transversely, which means that $\alpha_i^\prime$ and $\gamma_j^\prime$ are compatible for $1\leq i\leq d_1-1$ and $1\leq j\leq d_2-1$. For $i$ and $j$ in the remain cases, the proof is similar.
\end{proof}

\begin{Definition}
Let $(S,M)$ be an unpunctured surface with a triangulation $T$, and $\beta$ be an arc of  $(S,M)$. The {\bf elementary co-Bongartz completion} of $\beta$ with respect to $T$ is defined to be the set of (boundary) arcs
$$\mathcal T_\beta(T):=\bigcup\limits_{i=1}^{n+m}\mathcal T_\beta(\tau_i),$$
where $T=\{\tau_1,\cdots,\tau_n,\tau_{n+1},\cdots,\tau_{n+m}\}$.
\end{Definition}

\begin{Theorem}
Let $(S,M)$ be an unpunctured surface and $T=\{\tau_1,\cdots,\tau_n,\tau_{n+1},\cdots,\tau_{n+m}\}$ be a triangulation of $(S,M)$. Then $\mathcal T_\beta(T)$ is a triangulation of $(S,M)$ for any arc $\beta$.
\end{Theorem}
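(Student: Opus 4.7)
The plan is to verify that $\mathcal T_\beta(T)$ satisfies the two defining properties of a triangulation: pairwise compatibility of its arcs, and maximality.

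First I would establish pairwise compatibility using the three lemmas just proved. Within a single set $\mathcal T_\beta(\tau_i)$, the arcs are pairwise compatible by Lemma \ref{lemcomp1}(ii). For two distinct arcs $\tau_i,\tau_j \in T$, they are compatible since $T$ is a triangulation, so Lemma \ref{lemcomp3} yields that every arc of $\mathcal T_\beta(\tau_i)$ is compatible with every arc of $\mathcal T_\beta(\tau_j)$. Taking the union over all $i$ gives pairwise compatibility of all arcs in $\mathcal T_\beta(T) = \bigcup_i \mathcal T_\beta(\tau_i)$.

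Next I would observe that $\mathcal T_\beta(T)$ contains all $m$ boundary arcs: since $\beta$ is an arc of $(S,M)$ it cannot cross any boundary arc transversely in the interior of $S$, so each boundary arc $\tau_j$ is compatible with $\beta$ and $\mathcal T_\beta(\tau_j) = \{\beta\} \cup \{\tau_j\}$ by the first case of the definition. Hence every boundary arc lies in $\mathcal T_\beta(T)$.

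For maximality, I would argue by induction on $d = $ the total number of interior crossings of $\beta$ with $T$. When $d = 0$, the arc $\beta$ is compatible with every arc of $T$; by the maximality of $T$ this forces $\beta \in T$, and then $\mathcal T_\beta(\tau_i) = \{\beta,\tau_i\}$ for every $i$, giving $\mathcal T_\beta(T) = T$, which is a triangulation. For $d \geq 1$, one picks an interior arc $\tau_k \in T$ that $\beta$ crosses and flips at $\tau_k$ to obtain $T' = \mu_k(T)$; with an appropriate choice of $k$ (e.g., one for which the flip reduces crossings, which exists by standard arguments on bigon-free intersections), $\beta$ has strictly fewer crossings with $T'$. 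The inductive hypothesis gives that $\mathcal T_\beta(T')$ is a triangulation, and then one shows $\mathcal T_\beta(T) = \mathcal T_\beta(T')$ by tracing through the construction locally at the quadrilateral where the flip takes place --- the ``clockwise/anticlockwise'' rules in the definition of elementary co-Bongartz completion are designed so that the rerouted pieces of $\tau_k$ and $\tau_k'$, after being pushed along $\beta$, reproduce the same arcs. Alternatively (and perhaps more transparently), one can argue directly geometrically that $\mathcal T_\beta(T)$ cuts $S$ into triangles: in each triangle of $T$ that $\beta$ passes through, the segment of $\beta$ subdivides the triangle, and the arcs $\alpha_k^{(i)'}$ produced by the construction trace precisely the boundaries of the resulting refined triangulation obtained by incorporating $\beta$.

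The main obstacle will be the maximality step. Compatibility is essentially immediate from the preceding lemmas, but verifying that the union $\bigcup_i \mathcal T_\beta(\tau_i)$ has exactly the right cardinality (i.e., $n + m$ arcs after accounting for the many coincidences between the pieces $\alpha_k^{(i)'}$ coming from different arcs $\tau_i$) requires a careful local analysis, triangle by triangle, of which ``deformed pieces'' from adjacent arcs of $T$ become isotopic after being slid along $\beta$. Identifying these coincidences correctly is where most of the geometric content of the theorem lives, and the anticlockwise/clockwise rules in the definition are what make the counts balance out to give exactly a triangulation rather than an under- or over-complete system of arcs.
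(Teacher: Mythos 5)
Your compatibility argument is correct and is exactly what the paper does: Lemma 8.3.2 (\ref{lemcomp1}) gives compatibility within each $\mathcal T_\beta(\tau_i)$ and Lemma 8.3.4 (\ref{lemcomp3}) gives compatibility across different $\tau_i$'s. The observation about boundary arcs is also fine, though the paper does not bother to single it out.

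The maximality step, however, is where the real content lies, and your two proposed routes both have genuine gaps. The induction-on-crossings route rests on the claim that $\mathcal T_\beta(T) = \mathcal T_\beta(T')$ when $T'$ is obtained from $T$ by flipping at a suitably chosen arc $\tau_k$ that crosses $\beta$. This is not justified, and it is exactly the kind of claim that needs proof: the analogous statements in the paper (Theorem \ref{thmsubseq} in the $2$-CY setting, Theorem \ref{thmisomu} in $\mathcal G$-systems) say only that mutating $T$ once changes the co-Bongartz completion by \emph{at most} one mutation — it can genuinely change. You would have to show that for your specific class of flips the completion is unchanged, and you do not indicate how; verifying this would essentially require the same local analysis you are trying to avoid. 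The second route (arguing that the $\alpha_k'$ trace out a triangulation of the complement of $\beta$ directly) is a restatement of the conclusion rather than a proof — you acknowledge as much when you say that ``identifying these coincidences correctly is where most of the geometric content of the theorem lives.'' The paper's maximality proof takes a different route entirely: it fixes an arbitrary arc $\gamma$ compatible with everything in $\mathcal T_\beta(T)$ and shows $\gamma \in \mathcal T_\beta(T)$, by first arguing (via a picture at a crossing with a deformed arc $\alpha_k'$) that $\gamma$ must share an endpoint with $\beta$, then identifying the triangle with sides $\gamma,\beta,\kappa$ and exhibiting $\gamma$ as a deformed segment $\omega_0'$ of some arc $\omega\in T$. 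This direct verification of maximality sidesteps both the cardinality count and the need for any invariance-under-flip lemma, and it is the piece your proposal is missing.
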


\begin{proof}If $\beta\in T$, then $\mathcal T_\beta(T)=T$ and the result follows. So we can assume that $\beta\notin T$.

By Lemma \ref{lemcomp1} and \ref{lemcomp3}, any two arcs in $\mathcal T_\beta(T)$ are compatible.
Now it suffices to show that if an arc $\gamma$ is compatible with any arc in $\mathcal T_\beta(T)$, then $\gamma\in \mathcal T_\beta(T)$.

Let $\gamma$ be an arc satisfying that $\gamma$ is compatible with any arc in $\mathcal T_\beta(T)$. In particular, $\gamma$ is compatible with $\beta$, thus $\mathcal T_\beta(\gamma)=\{\beta\}\cup\{\gamma\}$.

If $\gamma\in T$, then $\gamma\in \mathcal T_\beta(\gamma)\subseteq\mathcal T_\beta(T)$. If $\gamma\notin T$, we will show that $\gamma$ and $\beta$ have a common endpoint.

By $\gamma\notin T$,  there exists $\alpha:=\tau_i\in T$ such that $\gamma$ and $\alpha=\tau_i$ are not compatible. If $\alpha$ is compatible with $\beta$, then
 $\alpha=\tau_i\in\mathcal T_\beta(\alpha)=\{\beta\}\cup \{\alpha\}\subseteq\mathcal T_\beta(T)$. This contradicts that $\gamma$ is compatible with any arc in $\mathcal T_\beta(T)$. So $\alpha$ is not compatible with $\beta$.

We can assume that $\alpha$ crosses $\beta$ exactly $d$ times ($d\geq 1$).
We  fix an orientation for $\alpha$
and denote its starting point by $s$ and its endpoint by $r$, with $s,r\in M$.  Let
$s=p_0, p_1,\cdots, p_d, p_{d+1}=r$ be the intersection points of $\alpha$ and $\beta$ in order of occurrence on $\alpha$.
For $k=0,1,\cdots,d$, let $\alpha_k$ denote the segment of the path $\alpha$ from the point $p_k$ to the point
$p_{k+1}$, and $E(\beta)$ be the set of endpoints of $\beta$.

Since $\alpha$ and $\gamma$ are not compatible, they must have crossings. Let $P$ be a crossing between $\alpha$ and $\gamma$, thus $P\in \alpha\cap\gamma$. Since $\gamma$ and $\beta$ are compatible, we must have $P\notin \beta$.
So $P\notin\{p_1,\cdots,p_d\}\subseteq \beta\cap\alpha$. Thus there exists $k\in\{0,1,\cdots,d\}$ such that $P$ is a point in the interior the the curve $\alpha_k$, i.e., $P$ is a crossing between $\gamma$ and $\alpha_k$.
Since $k\in\{0,1,\cdots,d\}$, we know that either $1\leq k\leq d$ or $1\leq k+1\leq d$. Without loss of generality, we will assume $1\leq k\leq d$ in the sequel, i.e., the endpoint $p_k$ of $\alpha_k$ in a interior point in $\alpha$.

Recall that $\alpha_k^\prime$ is isotopy to the curve
$$\rho:=(e_1(\beta)\xrightarrow[]{\text{along }\beta} p_k)\text{ then anticlockwise to } (p_k\xrightarrow[]{\text{along }\alpha}p_{k+1})\text{ then clockwise to } (p_{k+1}\xrightarrow[]{\text{along }\beta}e_2(\beta)),$$
where $e_1(\beta), e_2(\beta)\in E(\beta)$ and the sub-curves $(e_1(\beta)\xrightarrow[]{\text{along }\beta} p_k)$ and $(p_{k+1}\xrightarrow[]{\text{along }\beta}e_2(\beta))$ are uniquely determined by the ``anticlockwise direction and clockwise direction" in the above curve.

Let $e_1(\gamma)$ be the endpoint of $\gamma$ such that $(e_1(\gamma)\xrightarrow[]{\text{along }\gamma} P)$ is anticlockwise to $(P\xrightarrow[]{\text{along }\alpha_k} p_{k+1})$. One can refer to the following picture.
$$
\begin{tikzpicture}[xscale=0.7,yscale=0.5]

\node at (-3,0) {$\bullet$};

\draw[red,> = stealth, 
	shorten < = 0pt,
shorten >=0 pt](-3,0)--(3,0);
\draw[dashed, red,> = stealth, 
	shorten < = 0pt,
shorten >=0 pt](3,0)--(4,0);

\node at (-2,3) {$\bullet$};

\draw [> = stealth, 
	shorten < = 0pt,
shorten >=0 pt](-2,3)--(3,3);
\draw[dashed,> = stealth, 
	shorten < = 0pt,
shorten >=0 pt](3,3)--(4,3);

\draw(-0.6,-1)--(1,3)--(1.6,4.5);
\draw[dashed](1.6,4.5)--(2,5.5);
\draw[dashed](-1,-2)--(-0.6,-1);

\node at (-3,-0.5) {$e_1(\beta)$};

\node at (2,-0.5) {$\beta$};

\node at (-0.2,0) {$\bullet$};
\node at (0,-0.5) {$p_k$};
\node at (1,3) {$\bullet$};

\node at (1.1,2.5) {$P$};
\node at (1.6,3.5) {$\alpha_k$};
\node at (-2,2.5) {$e_1(\gamma)$};
\node at (3,2.5) {$\gamma$};

\node at (-0.5,-1.5) {$\alpha$};

\draw [blue] plot[smooth, tension=.7] coordinates {(-3,0) (-0,2)   (1.3,5)};
\node at (-1,0.7) {$\alpha_k^\prime$};
\end{tikzpicture}
$$

One can see that if $e_1(\gamma)\neq e_1(\beta)$, then $\gamma$ can not be compatible with $\alpha_k^\prime\in\mathcal T_\beta(\alpha)\subseteq\mathcal T_\beta(T)$. This is a contradiction. So we must have $e_1(\gamma)= e_1(\beta)$. Thus $\gamma$ and $\beta$ have a common endpoint.

If $\gamma=\beta$, then $\gamma\in \mathcal T_\beta(T)$. So we can assume that $\gamma\neq \beta$. Since $\gamma$ and $\beta$ have a common endpoint, we know there exists a unique (boundary) arc $\kappa$ such that $\gamma,\beta,\kappa$ form the three distinct sides of a triangle $\triangle$. Without loss of generality, we assume that $\beta$ is clockwise to $\gamma$ in the triangle $\triangle$. One can refer to the following picture.

$$
\begin{tikzpicture}

\node at (2,2) {$\bullet$};
\node at (2,0) {$\bullet$};
\node at (-3,2) {$\bullet$};
\draw[-latex] (-3,2)--(2,2);
\draw[-latex](2,2)--(2,0);
\draw[-latex,red](2,0)--(-3,2);

\node at (2.3,1) {$\kappa$};

\node at (-1.3,0.9) {$\beta$};

\node at (-1.5,2.3) {$\gamma$};

\node at (0,4) {$\bullet$};
\node at (0,4) {$\bullet$};
\draw(0,0)--(0,4);
\draw (0,4)--(2,2);
\draw(2,2)--(0,0);

\node at (-0.3,3) {$\alpha$};

\node at (1,3.5) {$\chi$};

\node at (1.5,1) {$\omega_0$};
\node at (0.5,0) {$\omega$};
\end{tikzpicture}
$$

Fix the orientation of $\triangle$ by clockwise orientation when looking at
it from outside the surface. The orientation of $\triangle$ gives an orientation of  $\gamma$. Let $\alpha$ be the arc in $T$ such that the last crossing point between $\gamma$ and the arcs in $T$ lies on $\alpha$. Then there exists a triange $\triangle^\prime$ in $T$ with sides $\alpha,\chi,\omega\in T$ of the shape in the above picture. We have proved before that $\alpha$ is not compatible with $\beta$, so they have crossings. Thus $\omega$ also has the crossings with $\beta$. Let $\omega_0$ be the segment of $\omega$ in the interior of the triangle $\triangle$. The segment $\omega_0$ will deform into $\omega_0^\prime=\gamma\in\mathcal T_\beta(\omega)\subseteq \mathcal T_\beta(T)$. This completes the proof.
\end{proof}

\begin{Theorem}\label{thmlast}
 Fix an unpunctured surface $(S,M)$ with an initial triangulation $$T=\{\tau_1,\cdots,\tau_n, \tau_{n+1},\cdots,\tau_{n+m}\},$$ where $\tau_{n+1},\cdots,\tau_{n+m}$ are boundary arcs.
Let $\mathcal A(B_T)$ be a cluster algebra with the initial exchange matrix $B_T$. Let $x$ and ${\bf x}$ be the bijections given in Theorem \ref{thmbijection}. Then for any arc $\beta$, we have the following diagram.
$$\xymatrix{T\ar@{<->}[d]\ar[r]^{\mathcal T_\beta}&T^\prime\ar@{<->}[d]\\
{\bf x}_T\ar[r]^{\mathcal T_{x_{\beta}}}&{\bf x}_{T^\prime}
}$$
\end{Theorem}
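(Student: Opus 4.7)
The plan is to verify directly, using Corollary \ref{corclustertwi}, that the cluster ${\bf x}_{T'}$ corresponding to $T' := \mathcal T_\beta(T)$ equals the co-Bongartz completion $\mathcal T_{x_\beta}({\bf x}_T)$ in $\mathcal A(B_T)$. This reduces to two conditions: (i) $x_\beta \in {\bf x}_{T'}$, and (ii) for every arc $\tau' \in T'$ with $\tau' \neq \beta$, the row of $G_T^{T'}$ indexed by $\tau'$ is a nonnegative vector. Item (i) is immediate, since by construction $\beta \in \mathcal T_\beta(\tau)$ for every arc $\tau$, and so $\beta \in T' = \bigcup_i \mathcal T_\beta(\tau_i)$, hence $x_\beta \in {\bf x}_{T'}$.

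For (ii), I would convert cluster-algebra $g$-vectors into topological ones. By Theorem \ref{thmarcgvector}, the $g$-vector of the cluster variable $x_\tau$ with respect to ${\bf x}_{T'}$ equals the topological $g$-vector $g_\tau^{T'}$ read off the minimal $(T',\tau)$-path. By Corollary \ref{corgcom}, the $\tau'$-entry of $g_\tau^{T'}$ is negative if and only if some crossing of $\tau$ with $\tau'$ lies in $I^-(T',\tau)$. Thus (ii) is equivalent to the purely topological assertion: for every $\tau \in T$ and every $\tau' \in T' \setminus \{\beta\}$, no crossing point of $\tau$ with $\tau'$ belongs to $I^-(T',\tau)$.

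The heart of the argument, and the main obstacle, is establishing this topological statement. By construction each $\tau' \in T' \setminus \{\beta\}$ equals the deformed segment $\alpha_k'$ of some $\alpha \in T$, obtained by taking the piece $\alpha_k$ cut out of $\alpha$ by $\beta$ and pushing its endpoints along $\beta$ in the prescribed manner (anticlockwise at the initial endpoint, clockwise at the terminal endpoint). Fix $\tau \in T$ and a crossing $p$ of $\tau$ with such a $\tau'$. Since $\tau$ is compatible with $\alpha$ in $T$, the point $p$ cannot lie on the interior piece of $\tau'$ that coincides with $\alpha_k$; it must instead lie on one of the push-off portions running along $\beta$, and after a small isotopy it corresponds to a crossing of $\tau$ with $\beta$. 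I would then analyze the two triangles of $T'$ on either side of $p$: because $\tau'$ hugs $\beta$ on the side prescribed by the clockwise/anticlockwise rule, a case check using Remark \ref{rmkmin} shows the minimal $(T',\tau)$-path at $p$ must be of one of the forms $(\tau_{i_{j-1}},\tau_{i_j},\tau_{i_j})$, $(\tau_{i_j},\tau_{i_j},\tau_{i_j})$, or $(\tau_{i_j},\tau_{i_j},\tau_{i_{j+1}})$, but never the $I^-$ form $(\tau_{i_{j-1}},\tau_{i_j},\tau_{i_{j+1}})$. The clockwise/anticlockwise choice in the definition of $\mathcal T_\beta$ is calibrated precisely to force this sign.

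Combining these steps with Corollary \ref{corclustertwi} yields ${\bf x}_{T'} = \mathcal T_{x_\beta}({\bf x}_T)$, establishing the required commutative square. Item (i) and the dictionary used in (ii) are formal; essentially all of the geometric content is concentrated in the local $I^-$-exclusion argument of the previous paragraph.
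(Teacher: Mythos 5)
Your reduction mirrors the paper's exactly: apply Corollary~\ref{corclustertwi}, translate $g$-vectors via Theorem~\ref{thmarcgvector}, and use Corollary~\ref{corgcom} to recast row-nonnegativity of $G_T^{T'}$ as a topological $I^-$-exclusion statement for crossings of arcs of $T$ with arcs of $T'\setminus\{\beta\}$. The only divergence is in the final geometric step, which you describe but do not actually carry out. Where you propose to directly exclude the $I^-$ local form ``by a case check using Remark~\ref{rmkmin}'', the paper instead argues by contradiction: it assumes a crossing $p_j\in I^-(T',\tau_i)$ of $\tau_i$ with some $\gamma_k\in T'$, $\gamma_k\neq\beta$, observes that the $I^-$ configuration makes both triangles $p_{j-1}v_1p_j$ and $p_jv_2p_{j+1}$ contractible, notes (as you do) that the crossing must lie on the push-off portion of $\gamma_k$ running along $\beta$ because $\tau_i$ is compatible with the arc $\tau_l\in T$ that $\gamma_k$ was cut from, and then combines the clockwise/anticlockwise calibration with the contractibility of those two triangles to force $\beta$ isotopic to $\gamma_k$, contradicting $\gamma_k\neq\beta$. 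Note that Remark~\ref{rmkmin} alone cannot rule out the $I^-$ form: all four forms occur in general minimal paths, and the exclusion really does rest on the contractibility picture for $I^-$ together with the specific side on which $\gamma_k$ hugs $\beta$. That geometric content is precisely what your plan identifies as the heart of the matter and then leaves unverified; the rest of your outline is faithful to the paper's route.
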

\begin{proof}
Let $T^\prime=\mathcal T_\beta(T)=\{\gamma_1,\cdots,\gamma_n,\tau_{n+1},\cdots,\tau_{n+m}\}$, we will show that ${\bf x}_{T^\prime}=\mathcal T_{x_{\beta}}({\bf x}_T)$.
By Corollary \ref{corclustertwi}, it suffices to show that the $k$-th row vector of $G_{{\bf x}_T}^{{\bf x}_{T^\prime}}$ the $G$-matrix of ${\bf x}_T$ with respect to ${\bf x}_{T^\prime}$ is a nonnegative vector for any $k$ such that $x_{\gamma_k}\neq x_\beta$. This is equivalent to show that $k$-th row vector of $G_T^{T^\prime}=(g_{ij;T}^{T^\prime})$ is a nonnegative vector for any $k$ such that $\gamma_k\neq \beta$, by Theorem \ref{thmarcgvector}.

Without loss of generality, we set $\gamma_n=\beta$. We need to show that the $k$-th row vector of $G_T^{T^\prime}=(g_{ij;T}^{T^\prime})$ is in $\mathbb Z_{\geq0}^n$ for $k=1,2,\cdots,n-1$.

Assume by contradiction that there exists a $g_{ki;T}^{T^\prime}<0$ for some $k\in\{1,\cdots,n-1\}$. We consider the $g$-vector $g_{\tau_i}^{T^\prime}$ of $\tau_{i}$ with respect to $T^\prime$. By Corollary \ref{corgcom} (ii) and $g_{ki;T}^{T^\prime}<0$, we know that there exists $j\in I^-(T^\prime,\tau_i)$ such that $\tau_i$ crosses with $\gamma_k$ at $j$ (identifying it with $p_j$). Then the local picture is the following.
$$
\begin{tikzpicture}[xscale=1,yscale=1,> = stealth, 
	shorten < = -4pt,
shorten >=-4pt]

\draw [blue] (5,3)--(3,3)--(5,1.5)--(3,1.5);
\draw(4,3.6)--(4,1.4);

\node at (3.7,3.6) {$\tau_i$};
\node at (4.4,3.2) {$p_{j-1}$};

\node at (4.3,2.3) {$p_{j}$};

\node at (3.65,1.7) {$p_{j+1}$};
\node at (2.75,3) {$v_1$};
\node at (5.25,1.5) {$v_2$};

\node at (4.7,2) {$\gamma_k$};

\node at (3,3) {$\bullet$};

\node at (5,1.5) {$\bullet$};
\end{tikzpicture}
$$
where the triangles $p_{j-1}v_1p_j$ and $p_jv_2p_{j+1}$ are contractible.

Since $\gamma_k\in T^\prime=\mathcal T_\beta(T)$ and $\gamma_k\neq \gamma_n=\beta$, there exists $\tau_l\in T$ such that $\gamma_k\in \mathcal T_\beta(\tau_l)\backslash\{\beta\}$. We must have that $\tau_l$ is not compatible with $\beta$, otherwise, $ \mathcal T_\beta(\tau_l)\backslash\{\beta\}=\tau_l$ and $\gamma_k=\tau_l\in T$. This contradicts that $\gamma_k$ crosses with $\tau_i$ at $j$ (identifying it with $p_j$).
 So  $\tau_l$ is not compatible with $\beta$. Then by the construction of $\mathcal T_\beta(\tau_l)$, we know that $\gamma_k\in \mathcal T_\beta(\tau_l)\backslash\{\beta\}$ is isotopy to the curve $\rho$ with one of the following two forms.

$$\begin{tikzpicture}[xscale=1,yscale=1,> = stealth, 
	shorten < = -1pt,
shorten >=-1pt]

\draw [red](0,1)--node[above]{subcurve of $\beta$}(3,1);

\draw(3,1)--node[above]{subcurve of $\tau_l$}(6,1);
\draw [red](0,0)--node[above]{subcurve of $\beta$}(3,0);
\draw(3,0)--node[above]{subcurve of $\tau_l$}(6,0);

\draw [red](6,0)--node[above]{subcurve of $\beta$}(9,0);
\node at (-0.5,1) {$\rho=$};
\node at (-2,0.5) {or};
\node at (-0.5,0) {$\rho=$};
\end{tikzpicture}$$
Since $\tau_i$ and $\gamma_k$ are not compatible, we know that $\tau_i$ and $\rho$ are not compatible. Thus $\tau_i\cap \rho\neq\phi$, i.e., they must have crossing points. Since $\tau_i$ and $\tau_l$ are compatible, we know $\tau_i\cap \rho\subseteq \beta$.

Now we identify $\gamma_k$ with $\rho$. If $\rho$ has the first form, then the picture is

$$
\begin{tikzpicture}[xscale=1,yscale=1,> = stealth, 
	shorten < = -2pt,
shorten >=-2pt]

\draw [blue] (5,3)--(3,3);
\draw[red](3,3)--(4.2,2.1);
\draw(4.2,2.1)--(5,1.5);

\draw[blue](5,1.5)--(3,1.5);
\draw(4,3.6)--(4,1.4);

\node at (3.7,3.6) {$\tau_i$};
\node at (4.4,3.2) {$p_{j-1}$};

\node at (4.3,2.3) {$p_{j}$};

\node at (3.65,1.7) {$p_{j+1}$};
\node at (2.75,3) {$v_1$};
\node at (5.25,1.5) {$v_2$};

\node at (5,2) {$\gamma_k=\rho$};

\node at (3,3) {$\bullet$};

\node at (5,1.5) {$\bullet$};

\draw [blue] (12,3)--(10,3);
\draw(10,3)--(10.8,2.4);
\draw[red](10.8,2.4)--(12,1.5);

\draw[blue](12,1.5)--(10,1.5);
\draw(11,3.6)--(11,1.4);

\node at (10.7,3.6) {$\tau_i$};
\node at (11.4,3.2) {$p_{j-1}$};

\node at (11.3,2.3) {$p_{j}$};

\node at (10.65,1.7) {$p_{j+1}$};
\node at (9.75,3) {$v_1$};
\node at (12.25,1.5) {$v_2$};

\node at (12,2) {$\gamma_k=\rho$};

\node at (10,3) {$\bullet$};

\node at (12,1.5) {$\bullet$};

\node at (7.5,2.5) {or};
\node at (4,1) {Case (I)};

\node at (11,1) {Case (II)};
\end{tikzpicture}
$$

Without loss of generality, we assume that we are in Case (I). By the construction of $\gamma_k\in\mathcal T_\beta(\tau_l)\backslash \{\beta\}$, we know that $\beta$ is clockwise to $\gamma_k=\rho$ at the endpoint $v_1$ in Case (I).
Thus the other part of $\beta$ is contained in the triangle $p_jv_2p_{j+1}$. Since the triangle $p_jv_2p_{j+1}$ is contractible, we must have $\beta$ is isotopy to $\gamma_k=\rho$, i.e., $\gamma_k=\beta=\gamma_n$. Thus $k=n$, which  contradicts that $k\in \{1,\cdots,n-1\}$. So if $\rho$ has the first form, we have $g_{ki;T}^{T^\prime}\geq 0$ for any $k=1,\cdots,n-1$ and $i=1,\cdots,n$.

If $\rho$ has the second form, by the similar arguments, we can also conclude that $\gamma_k=\beta=\gamma_n$ and $k=n$, which is a contradiction. So in this case, we also have $g_{ki;T}^{T^\prime}\geq 0$ for any $k=1,\cdots,n-1$ and $i=1,\cdots,n$.

 So the $k$-th row vector of $G_T^{T^\prime}$ is in $\mathbb Z_{\geq0}^n$ for $k=1,2,\cdots,n-1$ and this completes the proof.
\end{proof}
The following result follows direct from Theorem \ref{thmlast} and Corollary \ref{corlast}.
\begin{Corollary}
Let $(S,M)$ be an unpunctured surface with a triangulation $T$ and  $\beta_1$, $\beta_2$ be any two compatible arcs. Then  $\mathcal T_{\beta_2}\mathcal T_{\beta_1}(T)=\mathcal T_{\beta_1}\mathcal T_{\beta_2}(T)$, i.e., the following commutative diagram holds.
$$\xymatrix{T\ar[r]^{\mathcal T_{\beta_1}}\ar[d]_{\mathcal T_{\beta_2}}&T^\prime\ar[d]^{\mathcal T_{\beta_2}}\\
T^{\prime\prime}\ar[r]^{\mathcal T_{\beta_1}}&T^{\prime\prime\prime}
}$$
\end{Corollary}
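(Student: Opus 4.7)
The plan is to pass from the surface to the associated cluster algebra via the bijection $T \mapsto {\bf x}_T$ of Theorem \ref{thmbijection}, apply the already-established commutativity of elementary co-Bongartz completions in a cluster algebra (Corollary \ref{corlast}), and then pull the identity back to the surface.

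First I would note that since $\beta_1$ and $\beta_2$ are compatible, they can be completed (together with the boundary arcs) to a common triangulation of $(S,M)$; under the bijection ${\bf x}$ of Theorem \ref{thmbijection} this means $\{x_{\beta_1}, x_{\beta_2}\}$ is a subset of some cluster of $\mathcal A(B_T)$. Hence the cluster-algebra object $\mathcal T_{\{x_{\beta_1},x_{\beta_2}\}}({\bf x}_T)$ is defined, and by Corollary \ref{corlast} we have
\begin{equation*}
\mathcal T_{x_{\beta_2}}\mathcal T_{x_{\beta_1}}({\bf x}_T) \;=\; \mathcal T_{\{x_{\beta_1},x_{\beta_2}\}}({\bf x}_T) \;=\; \mathcal T_{x_{\beta_1}}\mathcal T_{x_{\beta_2}}({\bf x}_T).
\end{equation*}

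Next I would invoke Theorem \ref{thmlast} twice on each side of the desired equation. Writing $T' = \mathcal T_{\beta_1}(T)$ and $T'' = \mathcal T_{\beta_2}(T')$, Theorem \ref{thmlast} gives ${\bf x}_{T'} = \mathcal T_{x_{\beta_1}}({\bf x}_T)$ and then ${\bf x}_{T''} = \mathcal T_{x_{\beta_2}}({\bf x}_{T'}) = \mathcal T_{x_{\beta_2}}\mathcal T_{x_{\beta_1}}({\bf x}_T)$. Symmetrically, setting $\widetilde T = \mathcal T_{\beta_2}(T)$ and $\widetilde{\widetilde T} = \mathcal T_{\beta_1}(\widetilde T)$, we get ${\bf x}_{\widetilde{\widetilde T}} = \mathcal T_{x_{\beta_1}}\mathcal T_{x_{\beta_2}}({\bf x}_T)$. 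Combining with the cluster-algebra identity above yields ${\bf x}_{T''} = {\bf x}_{\widetilde{\widetilde T}}$, and since ${\bf x}$ is a bijection between triangulations and clusters (Theorem \ref{thmbijection}), this forces $T'' = \widetilde{\widetilde T}$, i.e.\ $\mathcal T_{\beta_2}\mathcal T_{\beta_1}(T) = \mathcal T_{\beta_1}\mathcal T_{\beta_2}(T)$.

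A small point that needs care is the case where $\beta_i$ already belongs to $T$, or where $\beta_i$ is a boundary arc, so that the bijection ${\bf x}$ does not directly assign a cluster variable to $\beta_i$. In those degenerate cases either one of the elementary co-Bongartz completions is the identity (if $\beta_i \in T$) or $\mathcal T_{\beta_i}$ acts trivially on compatible arcs by definition, and commutativity is immediate; so the only substantive case is the one handled above. No step is really an obstacle here: the whole argument is a clean ``transport of structure'' along ${\bf x}$, and the real work (the one-variable case and the $\mathcal G$-system commutativity) has already been done in Theorem \ref{thmlast} and Corollary \ref{corlast}.
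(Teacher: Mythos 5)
Your proof is correct and is essentially the argument the paper has in mind: the paper simply states that the corollary ``follows directly from Theorem \ref{thmlast} and Corollary \ref{corlast},'' and what you have written is exactly the transport-of-structure argument that unpacks that citation. One small note: since the statement hypothesizes $\beta_1,\beta_2$ are \emph{arcs} (not boundary arcs), and Theorem \ref{thmlast} already covers the case $\beta_i\in T$ (where both sides reduce to the identity by Remark \ref{rmkselft}(ii)), the ``degenerate cases'' you flag at the end require no separate treatment.
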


Thanks to the above corollary, we can give the definition of co-Bongartz completions on unpunctured surfaces.

Let $(S,M)$ be an unpunctured surface with a triangulation $T$, and $U=\{\beta_1,\cdots,\beta_s\}$ be a compatible set of arcs. The {\bf co-Bongartz completion} of $U$ with respect to $T$ is defined to be the triangulation given by
$$\mathcal T_U(T):=\mathcal T_{\beta_{i_s}}\cdots\mathcal T_{\beta_{i_2}}\mathcal T_{\beta_{i_1}}(T),$$
where $i_1,\cdots,i_s$ is any permutation of $1,\cdots,s$.

Finally, we give an example for Theorem \ref{thmlast}.
\begin{Example}
Keep the notations in Example \ref{example1}. Let $T_{t_0}$ be the triangulation corresponding to the initial seed $({\bf x}_{t_0},B_{t_0})$, and $T_{t}=\mu_3\mu_1\mu_3\mu_2(T_{t_0})$ be the triangulation corresponding to the seed $({\bf x}_t,B_t)=\mu_3\mu_1\mu_3\mu_2({\bf x}_{t_0},B_{t_0})$, where $T_{t_0}$ and $T_t$ are given in the following figure.

$$\begin{tikzpicture}[xscale=0.5,yscale=0.5,> = stealth, 
	shorten < = -8pt,shorten >=-8pt]
\draw  (-6,0) ellipse (5 and 4);
\node(n1) at (-6,4) {$\bullet$};
\node(n2) at (-6,-4) {$\bullet$};

\node(n3) at (-10.35,2) {$\bullet$};

\node(n4) at (-1.65,2) {$\bullet$};

\node(n5) at (-10.35,-2) {$\bullet$};

\node(n6) at (-1.65,-2) {$\bullet$};

\draw  (6,0) ellipse (5 and 4);

\node(v1) at (6,4) {$\bullet$};
\node (v2)at (6,-4) {$\bullet$};

\node(v3) at (10.35,2) {$\bullet$};

\node(v4) at (1.65,2) {$\bullet$};

\node(v5) at (10.35,-2) {$\bullet$};

\node(v6) at (1.65,-2) {$\bullet$};

\draw (n1)--(n5);
\draw(n1)--(n6);
\draw(n5)--(n6);

\node at (-8.8,1) {$\beta_1$};
\node at (-3.1,1) {$\beta_2$};
\node at (-6,-2.6) {$\beta_3$};

\draw (v3)--(v4);
\draw(v2)--(v3);
\draw(v2)--(v4);
\draw[red](v5)--(v6);

\node at (6,2.4) {$\tau_1$};

\node at (9.2,-0.5) {$\tau_2$};
\node at (2.5,0) {$\tau_3$};
\node at (6,-1.5) {$\beta_3$};

\node at (-12.2,0) {$T_{t_0}=$};

\node at (12.1,0) {$=T_{t}$};
\end{tikzpicture}$$
Then by the construction of $\mathcal T_{\beta_3}(T_{t})$, we know that

$$\begin{tikzpicture}[xscale=0.5,yscale=0.5,> = stealth, 
	shorten < = -8pt,shorten >=-8pt]
\draw  (-6,0) ellipse (5 and 4);
\node(n1) at (-6,4) {$\bullet$};
\node(n2) at (-6,-4) {$\bullet$};

\node(n3) at (-10.35,2) {$\bullet$};

\node(n4) at (-1.65,2) {$\bullet$};

\node(n5) at (-10.35,-2) {$\bullet$};

\node(n6) at (-1.65,-2) {$\bullet$};

\draw (n3)--(n4);
\draw(n4)--(n5);
\draw(n5)--(n6);

\node at (-14,0) {$\mathcal T_{\beta_3}(T_t)=$};

\end{tikzpicture}$$
It can be seen that $\mathcal T_{\beta_3}(T_t)=\mu_1\mu_2(T_{t_0})$, which corresponds to  ${\bf x}_{t^\prime}=\mathcal T_{x_3}({\bf x}_t)$, where ${\bf x}_{t^\prime}$ is the cluster in the seed $({\bf x}_{t^\prime},B_{t^\prime})=\mu_1\mu_2({\bf x}_{t_0},B_{t_0})$.
\end{Example}

\vspace{5mm}
{\bf Acknowledgements:}\;I would like to thank  my supervisor Professor Fang Li for introducing me to this topic and for his encouragement these years. This project is supported by the National Natural Science Foundation of China (No.11671350 and No.11571173).

\def\cprime{$'$} \def\cprime{$'$}
\providecommand{\bysame}{\leavevmode\hbox to3em{\hrulefill}\thinspace}
\providecommand{\MR}{\relax\ifhmode\unskip\space\fi MR }
\providecommand{\MRhref}[2]{%
  \href{http://www.ams.org/mathscinet-getitem?mr=#1}{#2}
}
\providecommand{\href}[2]{#2}


\begin{thebibliography}{10}


\bibitem{AIR} T. Adachi, O. Iyama, and I. Reiten, $\tau$-tilting theory. Compos. Math.,
150(3):415-452, 2014.

\bibitem{AI} T. Aihara and O. Iyama, Silting mutation in triangulated categories. J. London Math. Soc.,
85(3): 633-668, June 2012.

\bibitem{ASV} I. Assem, R. Schiffler and V. Shramchenko, Cluster automorphisms and compatibility of
cluster variables. Glasg. Math. J. 56 (2014), no. 3, 705-720.

\bibitem{AS} M. Auslander and S. O. Smal{\o}, Almost split sequences in subcategories. J. Algebra 69 (1981),
426-454. Addendum: J. Algebra 71 (1981), 592-594.

\bibitem{BMR} A. B. Buan, R. Marsh and I. Reiten, Cluster-tilted algebras. Trans. Amer. Math. Soc. 359
(2007), 323-332.

 \bibitem{CL}  P. Cao and  F. Li, Some conjectures on generalized cluster algebras via the cluster formula and D-matrix pattern. J. Algebra 493 (2018), 57-78.

\bibitem{CL1}  P. Cao and F. Li, The enough g-pairs property and denominator vectors of cluster algebras. arXiv:1803.05281v2.
\bibitem{CL2}  P. Cao and F. Li, Unistructurality of cluster algebras. arXiv:1809.05116.



 \bibitem{DK} R. Dehy and B. Keller, On the combinatorics of rigid objects in $2$-Calabi-Yau categories. Int. Math. Res.
Not. IMRN 2008(11), Art. ID rnn029, 17 pp. (2008).

\bibitem{DF} H. Derksen, J. Fei, General presentations of algebras. Adv. Math. 278 (2015), 210-237.

 \bibitem{FST} S. Fomin, M. Shapiro and D. Thurston, Cluster algebras and triangulated surfaces. I. Cluster complexes. Acta Math., 201(1):83-146, 2008.

\bibitem{FT}   S. Fomin and D. Thurston, Cluster algebras and triangulated surfaces. Part II: Lambda lengths, preprint, 2008.

\bibitem{FZ} S. Fomin and A. Zelevinsky, Cluster algebras. I. Foundations. J. Amer. Math. Soc. 15 (2002),
no. 2, 497-529.


\bibitem{FZ2}S. Fomin and A. Zelevinsky, Cluster algebras: notes for the CDM-03 conference. Current
Developments in Mathematics, 2003, Int. Press, Somerville, MA, 2003, pp. 1-34.

\bibitem{FZ3} S. Fomin and A. Zelevinsky, Cluster algebras. IV. Coefficients. Compos. Math. 143(2007), 112-164.


\bibitem{GHKK} M. Gross, P. Hacking, S. Keel and M. Kontsevich, Canonical bases for cluster algebras.  J. Amer. Math. Soc. 31 (2018), 497-608.

\bibitem{IJY} O. Iyama, P. J{\o}rgensen and D. Yang, Intermediate co-t-structures, two-term silting objects, $\tau$-tilting modules, and torsion classes. Algebra Number Theory 8 (2014), no. 10, 2413-2431.

\bibitem{IY} O. Iyama and Y. Yoshino. Mutation in triangulated categories and rigid Cohen-Macaulay
modules. Invent. Math. 172(1), 117-168, 2008.

\bibitem{J} G. Jasso, Reduction of $\tau$-tilting modules and torsion pairs. Int. Math. Res. Not. IMRN 2015, no. 16, 7190-7237.

\bibitem{KR} B. Keller and I. Reiten, Cluster-tilted algebras are Gorenstein and stably Calabi-Yau. Adv. Math. 211(1),
123-151 (2007).


\bibitem{KZ} S.  Koenig and B. Zhu, From triangulated categories to abelian categories: cluster tilting in
a general framework.  Math. Z. 258 (2008), no. 1, 143-160.

\bibitem{M} G. Muller, The existence of a maximal green sequence is not invariant under quiver mutation. Electron. J. Combin., 23 (2016), no. 2, Paper 2.47, 1-23.


\bibitem{NZ} T. Nakanishi and A. Zelevinsky, On tropical dualities in cluster algebras. Algebraic groups and quantum groups, 217-226, Contemp. Math., 565, Amer. Math. Soc., Providence, RI, 2012.


\bibitem{P} Y. Palu,  Cluster characters for 2-Calabi-Yau triangulated categories. Ann. Inst. Fourier (Grenoble)
58(6), 2221-2248 (2008).


\bibitem{Pl} P. Plamondon, Cluster characters. Homological methods, representation theory, and cluster algebras. 101-125, CRM Short Courses, Springer, Cham, 2018.

\bibitem{Q} F. Qin, Bases for upper cluster algebras and tropical points, arXiv:1902.09507.

\bibitem{R}  R. Schiffler,  On cluster algebras arising from unpunctured surfaces. II. Adv. Math. 223 (2010), no. 6, 1885-1923.
\end{thebibliography}
\end{document}